\documentclass{amsart}
\usepackage{hyperref}
\usepackage{bbm}
\usepackage{stmaryrd}

\usepackage{amssymb}
\usepackage{tikz}
\usetikzlibrary{cd}
\usetikzlibrary{matrix,arrows}

\makeatletter

\newfont{\gothic}{eufm10 scaled 1100}

\theoremstyle{plain}    
\newtheorem{thm}{Theorem}[section]
\numberwithin{equation}{section} 
\numberwithin{figure}{section} 
\theoremstyle{plain}    
\newtheorem{cor}[thm]{Corollary} 
\theoremstyle{plain}    
\theoremstyle{plain}    
\theoremstyle{plain}
\newtheorem{lem}[thm]{Lemma} 
\theoremstyle{plain}    
\newtheorem{prop}[thm]{Proposition} 
\theoremstyle{plain}    
\newtheorem{Def}[thm]{Definition} 
\theoremstyle{remark}
\newtheorem{rem}[thm]{Remark}
\theoremstyle{remark}    

\theoremstyle{remark}    
\newtheorem{exm}[thm]{Example}

\usepackage[arrow,matrix,curve,ps]{xy}
\xyoption{dvips}
\CompileMatrices

\makeatother

\begin{document}

\title{Grothendieck's Equality vs Voevodsky's Equality}


\author{Thomas Eckl}

\keywords{}

\subjclass{}


\address{Thomas Eckl, Department of Mathematical Sciences, The University of Liverpool, Mathematical
               Sciences Building, Liverpool, L69 7ZL, England, U.K.}

\email{thomas.eckl@liv.ac.uk}

\urladdr{http://pcwww.liv.ac.uk/~eckl/}

\maketitle

\begin{abstract}
We discuss how canonical and universal constructions, properties and characterizations interact with equality in the framework of Homotopy Type Theory, comparing it with Grothendieck's use of equality and shedding further light on (efficient) formalisation of mathematics. This is achieved by investigating examples that range from monoids, groups, rings and modules to cohomology theories in the category of modules over commutative rings and culminate in a cohomological criterion of flatness. 
\end{abstract}


\pagestyle{myheadings}
\markboth{THOMAS ECKL}{GROTHENDIECK VS VOEVODSKY}

\setcounter{section}{-1}

\section{Introduction}

Unsurprisingly, the ongoing massive collaborative efforts to formalize mathematics, especially in the Mathlib \cite{Mathlib} based on the interactive theorem prover Lean \cite{Lean} and the projects derived from it, reveal glitches in the standard ways developed in the past century to ensure mathematical rigour. Not only the soundness of certain practices is scrutinized, but also their efficiency when it comes to formalization -- the very name of Lean alludes to considerations of the latter sort, and the trend away from set-theoretic foundations like the ZFC axioms towards type theory can be motivated along those lines. But the jury is still out on that matter, and is also so on the question which flavour of type theory is best suited for formalization (see e.g. \cite{BPL22}), and further investigations are needed.

In his recent paper \cite{Buz24} (see also \cite{BHL+22}) Buzzard reflects on his experiences when formalising more and more advanced chunks of mathematics belonging to very specific theories, culminating in the construction of Scholze's perfectoid spaces \cite{BCM19} (but by no means stopping there \cite[Fermat's Last Theorem]{BuzXena}). His most striking case study, however, looks at a rather basic topic in Commutative Algebra: the localisation of rings. It already exhibits several problems which will be important in the formalisation of many parts of mathematics, as they seem to require a deviation from well-established proof practices, and thus tie the discussion back to foundational issues:
\begin{itemize}
\item Formal proofs of properties of objects satisfying a universal property, like the localisation of a ring in a multiplicatively closed set, are inefficient or even impossible by just using the universal property. Instead, a specific construction of the object is needed.
\item If a property is shown for a specifically constructed object with a universal property it can be difficult to transfer it to other objects  satisfying the same universal property. The standard way of using the unique isomorphism between these objects induced by the universal property often requires a lot of supporting technical lemmas.
\item Grothendieck resolved this issue by simply identifying two objects satisfying the same universal property, calling the unique isomorphism between them ``canonical". This word pervades all of EGA (\cite{EGAIn}, ...) and all the literature based on it, but is nowhere properly defined and quite often used in an outright wrong way. 
\item Furthermore, this identification is not possible in the foundational framework of the Lean Mathlib \cite{Mathlib}. What proved to be efficient for localisations was rewriting the statements on localisation such that they apply to all rings satisfying a carefully chosen property proposed by Neil Strickland, which implies the usual universal property. Buzzard suggests that this will be the best way to deal with problems caused by universal objects in many cases: Find the most suitable characterisation of the universal objects which even may involve new mathematical ideas.
\item In \cite[Sec.~8]{Buz24} Buzzard also discusses the abuse, or simply incorrect use of the word ``canonical" for constructions involving sign choices, for example when defining boundary maps in homological algebra. He argues that there not only are several such choices, but none of them is preferable to the others without further context. Indeed, it seems best to record the choices used for a construction, as did Grothendieck by including the boundary maps as data of the universal $\delta$-functors. 
\end{itemize}

In the following, I will discuss these problems in the foundational framework of Homotopy Type Theory (for short: HoTT), using mostly the same examples as in \cite{Buz24}. At a first glance (see for example \cite{HoTTBook, Rij22}), one of the main ingredients of HoTT seems to offer a ready-made formal analogue of Grothendieck's identification practice: Univalence (\cite{Voe06}, \cite[Introduction \& Ch.4]{HoTTBook} and Sec.~\ref{univ-ssec}) postulates that the type of equalities is equivalent to the type of equivalences between two types, and equivalences between two types sit at the core of isomorphisms between objects in many categories, like rings, modules, schemes etc (see~\cite[Ch.9]{HoTTBook} and Sec.~\ref{cat-ssec}). Unfortunately, we will see that matters are more complicated; in short, that is due to univalence being more about attaching equivalences to equalities than about identifying equivalent types. Nevertheless, I will argue that the concepts and notions of Homotopy Type Theory allow to formulate the problems above in a clearer way, lead to recipes and guidelines on how to solve them and also provide a better understanding of why some of these approaches have limitations, whereas others are more efficient:

\begin{itemize}
\item In Homotopy Type Theory, universal properties are induction principles, usually of higher inductive types. Thus, universal properties are associated to (higher) inductive constructions, immediately providing objects satisfying the universal properties. 
\item They can also be used to characterize such objects, up to unique equality = canonical isomorphism, by univalence. In general, there are two strategies to produce such characterisations: Either, we require functions corresponding to the constructors and make sure that the constructors deliver all elements of the objects, and in a unique way, or we extract the inputs of constructors in a unique way from all the elements. Usually, an actual object satisfying the universal properties is needed to show the equivalence of the characterisations.

 Typical examples of following either strategy are discussed in Sec.~\ref{set-prod-ssec} on Cartesian products, Sec.~\ref{nat-type-ssec} on natural number types, and Sec.~\ref{set-quot-ssec} on set quotients, a higher inductive type. Strickland's characterisation of localized rings in Sec.~\ref{ring-mod-ssec} falls into the first kind, and the equational criterion for tensor products presented in Sec.~\ref{tensor-ssec} is another instance.
\item Characterisations of the first kind can often be simplified by condensing the equations ensuring the uniqueness of inputs needed to produce a given output by using a constructor. This is what happens in Strickland's characterization of localizations, but can also be seen with group quotients (see Prop.~\ref{quot-group-def}) and tensor products (Prop.~\ref{eq-char-tensor-prop} and~\ref{tensor-eq-crit-prop}).
\item Choices in universal constructions do not destroy uniqueness, but just lead to equivalent, and therefore by univalence equal objects. In Section~\ref{choice-sec}, we discuss these phenomena extensively on possible sign choices when constructing boundary maps in homological algebra, and how they influence the construction of universal $\delta$-functors. Choices can be either constructive in all cases, or they rely on the Axiom of Choice in general, but still can be constructed in special cases, for example when producing liftings required in the definition of projective $R$-modules (see~\ref{proj-mod-res-ssec}).
\item Most of the time, the presences of choices does not necessitate cumbersome checks of their equivalence when calculating with the objects produced by them. By just requiring their mere existence, as an indication that we do not want to fix a construction, we can still prove theorems, that is, propositions in HoTT terminology: their contents is completely encapsulated in their statements, as proofs are all equal. Then the mere existence can be resolved by induction on propositional truncation (see~Sec.~\ref{prop-set-ssec}), even leaving us the choice among the possibly several constructions.

We demonstrate this procedure in Sec.~\ref{calc-cohom-sec}, when proving a flatness criterion for (affine) hypersurfaces. 
\end{itemize}

The article is written from the viewpoint of a ``working mathematician"; in particular, the ``we" refers to their community. One consequence of this decision is that this paper does not systematically study concepts of foundations of mathematics. For example, we will not present the basic formal types of inductive constructions like $W$-types (see \cite[5.3]{HoTTBook}). Instead, by extensively exemplifying on how to produce different characterisations of inductively defined types and discussing their merits in terms of efficiency in (formalised) proofs, we keep the contents of the article closer to the practice of many mathematicians.

However, the exact characteristics of such a ``working mathematician" can vary a lot and may even be elusive, so in case of doubt, they boil down to just my own perceptions. But since I cannot consider myself to be an original genius I at least hope that many of my assumptions on the thinking of mathematicians are shared by those with an education similar to mine. If not, the resulting misrepresentations are completely my fault. 

In view of the rising use of Large Language Models and other reasoning models together with automatic formalization to Lean code allowing proof certification, in the race to IMO gold medals \cite{BuzXena2}, high scores in the Putnam Benchmark \cite{LogInt1} and more and more actual mathematical research results (see e.g. \cite{BuzXena3}; they are admittedly just in a restricted number of mathematical fields, but also see \cite{Gauss}), the emphasis of this article on how well certain ways of formalization fit to human mathematical practice may seem outdated. But understanding the characteristics and workings of ``human mathematics" as opposed to ``formal mathematics", that is the totality of all possible formal deductions, may very well be essential for AI agents to actually produce research level mathematics in more than just a few fields.

The terms above were coined by Aksenov, Bodia, Freedman and Mulligan in their paper \cite{ABFM26}, where they argue, and underpin by a test of the Lean Mathlib \cite{Mathlib}, that the nested hierarchy of definitions, lemmas and theorems (they forgot to mention examples) allows the human mind to encompass an exponentially growing collection of mathematical expressions, by using compression at a logarithmic rate. The authors do not draw an even more striking consequence of their findings: Any mathematics produced by a sort of ``calculator", human or not, will fall into the category of ``human mathematics" if all other parts of mathematics are computationally too complex to be discovered -- that is, if they are NP-hard to find (see Widgerson's book \cite{Wid19} on these all-important notions from the Theory of Computation). And if this is the case, exhibiting the successful strategies of mathematicians to produce new mathematical results is essential for achieving anything interesting with AI tools, as powerful as they may be: if $\mathrm{P} \neq \mathrm{NP}$ they cannot beat the complexity barrier.    

The article is written in an unusually detailed style, on the surface just regurgitating arguments well-known from standard text books and monographs. However, this close inspection is necessary to ensure that Homotopy Type Theory can adapt the proofs, and the close resemblance is a reassurance that Homotopy Type Theory can faithfully follow standard mathematical practices. Finally, that way the article can serve as an at least first sketch of a ``blueprint" established as a standard step towards formalization of mathematical material (see e.g.\ Tao's report in \cite{TaoBlog23}).

As the proof of considerations on formalization is formalizing the exemplifying mathematical material, the contents up to subsection 3.4 and some parts beyond can be found as Lean code in \cite{Lean-HoTT}. The tex-file of the article can be searched for tags of definition names in the Lean code. The project is based on Ebner's HoTT 3 Library \cite{HoTT3} using (an early version of) Lean 3, so must be ported to Lean 4 as soon as possible if it should be useful for future purposes. Fortunately, there already exist HoTT Libraries based on Lean 4, for example \cite{HoTT4}, which implements the restrictions imposed by HoTT on the Lean foundations in the same way as Ebner's library by introducing a large eliminator checker.

\section{Equalities in Homotopy Type Theory}   

In many type theories, not just dependent type theories or Homotopy Type Theory, types can be introduced by \textit{inductive constructions} (see \cite[5.1]{HoTTBook}). They consist of \textit{constructors} postulating the existence of objects and determining how objects can be produced from objects of other types or recursively from objects of the same type, as long as the recursion does not create infinite loops. From these constructors, a \textit{recursion} resp.\ \textit{induction principle} telling how to produce a function from the inductively defined type to a (dependent) type can be automatically extracted, together with the \textit{calculation rules} showing the evaluation of such functions on the constructed objects. Note that the function values can be propositions, and that in dependent type theories like HoTT, their type may depend on objects of the inductively defined type.

The easiest inductive definitions just contain constructors postulating the existence of an object of the type. For example, the type $\mathbbm{1}$ has one constructor $\ast: \mathbbm{1}$, whereas the type $\mathbbm{2}$ has two constructors $0_\mathbbm{2} : \mathbbm{2}$ and $1_\mathbbm{2} : \mathbbm{2}$. Functions on $\mathbbm{1}$ and $\mathbbm{2}$ are constructed by simply prescribing the function values of $\ast$ and $0_\mathbbm{2}, 1_\mathbbm{2}$, respectively. An extreme example is the inductive type $\mathbbm{O}$ that has no constructor at all. In turn, this means that we can construct a function $\mathbbm{O} \rightarrow T$ to any type $T$. Thus, $\mathbbm{O}$ acts like the false proposition from which we can deduce everything: Ex falso quodlibet sequitur. We also denote $\mathbbm{O}$ by $\mathbf{False}$.


We will discuss more advanced inductive types whose definition also uses recursion, in the next sections. In this section, we concentrate on the type of equalities between objects of a given type.

\subsection{Identity Types}

In Homotopy Type Theory, identity types $a =_A b$ of two objects $a, b : A$ are inductively defined: For given $a$, only for $a =_A a$ we can construct an equality, called \textit{reflexivity} or for short, $\mathtt{refl}$. This seems to restrict a lot the possible equalities, but already the constructor comprises more than visible at first glance: In Homotopy Type Theory, or  Type Theory in general, objects (or terms) are \textit{judgmentally} equal if they can be reduced to the same terms when using the calculation rules of inductive types (or function evaluation). In that sense, $2 + 2$, $2 \times 2$ and $4$ are all judgmentally equal, and the equalities are all identical to $\mathtt{refl}(4)$. We sometimes denote judgmental equalities by $2 + 2 \equiv 4$ etc. 

Furthermore, the existence of only one constructor does not prevent an identity type to have different inhabitants, so that there are more than one way that two objects of certain types can be equal, at least if we assume univalence -- we will discuss this in Sec.~\ref{univ-ssec} in more details.
In particular, Homotopy Type Theory does not postulate that equalities always are unique, or a special kind of ``sort", different from types, as does the ``logic enriched" type theory underlying the Lean Mathlib \cite{Mathlib}.

The characterisation of identity types between arbitrary objects of a given type requires some work. By induction, the only identity type of objects in $\mathbbm{1}$ to consider is $\ast =_{\mathbbm{1}} \ast$, and it is inhabited by $\mathtt{refl}(\ast)$. With univalence (see Sec.~\ref{univ-ssec}), we can show even more: $\ast =_{\mathbbm{1}} \ast$ and $\mathbbm{1}$ are equivalent, hence equal, so there is only one equality of type $\ast =_{\mathbbm{1}} \ast$. For the type $\mathbbm{2}$, we can use the \textit{encode-decode-method} to determine equalities: The \textit{code} maps $(0_\mathbbm{2}, 0_\mathbbm{2})$ and $(1_\mathbbm{2}, 1_\mathbbm{2})$ to $\mathbbm{1}$ and $(0_\mathbbm{2}, 1_\mathbbm{2})$ and $(1_\mathbbm{2}, 0_\mathbbm{2})$ to $\mathbbm{O}$. By double induction, we can \textit{encode} equalities $a =_{\mathbbm{2}} b$ by mapping them to $\ast : \mathbbm{1}$ and \textit{decode} codes to equalities by mapping $\ast$ to $\mathtt{refl}(0_\mathbbm{2})$ and  $\mathtt{refl}(1_\mathbbm{2})$, respectively, or deduce anything from $n : \mathbbm{O}$. Encoding and decoding are inverse to each other, so $n =_{\mathbbm{2}} m$ and $\mathtt{code}(n,m)$ are equivalent, hence by univalence equal, and furthermore, equalities between two objects of type $\mathbbm{2}$, if they exist, are equal.


The encode-decode method and its more explicit variants like the \textit{structure identity principle} (see \cite[11.6]{Rij22}) can be applied to every inductively constructed type (see \cite[2.13]{HoTTBook} for the natural numbers). The case of \textit{dependent pairs}, or \textit{$\Sigma$-types}, shows how to proceed when constructors have different types as arguments: Dependent pairs $(a_1, b_1), (a_2, b_2) : \Sigma_{a : A} B(a)$ over a type family $B : A \rightarrow \mathcal{U}, a \mapsto B(a)$ are equal if, and only if, we have an equality $p : a_1 =_A b_1$ and $b_1 : B(a_1)$ \textit{transported} to $B(a_2)$ is equal to $b_2$. The \textit{transport} in type families $B : A \rightarrow \mathcal{U}$ over equalities $p : a_1 =_A a_2$ can be defined by induction on the equality $p$ since $b_1$ should be transported to itself if $p \equiv \mathtt{refl}(a_1)$. Since most (algebraic) structures are nested $\Sigma$-types, this criterion allows to prove their equalities iteratively.


\subsection{Propositions and Sets} \label{prop-set-ssec}

Since identity types are types, there are identity types of equalities and identity types of equalities of equalities. Since it is also possible to invert and concatenate equalities, and the resulting equalities satisfy the expected laws, we can speak of the \textit{higher groupoid structure} of types. On the other hand, in many cases, all objects of a type are equal, or at least there is only one equality between two objects, and the ``higher" equalities are unique (see \cite[Lem.3.3.4]{HoTTBook} for a proof of one instance of this fact).
\begin{Def}
A type $P$ is a \textbf{proposition} if for all $x, y : P$ we have $x = y$.
\end{Def}  
\begin{Def}
A type $S$ is a \textbf{set} if for all $x, y : S$ and equalities $p, q : x = y$, we have $p = q$.
\end{Def}

Among the examples above, $\mathbbm{O}$ and $\mathbbm{1}$ are propositions. The results of the encode-decode method show that $\mathbbm{2}$ is a set, since there is at most one equality, encoded as $\ast$, between two objects of $\mathbbm{2}$. In the same way, we can show for example that the natural numbers $\mathbb{N}$ are a set (see \cite[Ex.3.1.4]{HoTTBook}).


The name ``proposition" is sometimes misleading because it alludes to only one of the possible uses of propositions: as statements which can have a proof or not, but the exact nature of the proof is \textit{irrelevant}. Another use of propositions is as types that encode \textit{uniquely determined choices}.

In Homotopy Type Theory, propositions are the natural kind of types on which we can perform logical operations: The negation of a proposition $P$ is a function $\mathtt{not\ } P :\equiv P \rightarrow \mathtt{False}$, and we can connect two propositions $P, Q$ to $P \mathtt{\ and\ } Q :\equiv P \times Q$, $P \mathtt{\ or\ } Q :\equiv P + Q$ and $P \mathtt{\ implies\ } Q :\equiv P \rightarrow Q$. If we introduce $\mathtt{not}$, $\mathtt{and}$, $\mathtt{or}$ and $\mathtt{implies}$ on arbitrary types, univalence will cause ``paradoxa", for example a contradiction to the Law of Excluded Middle (see \cite[Cor.3.2.7]{HoTTBook}). Of course, even the Law of Excluded Middle for propositions cannot be deduced from the basic type theory, but we can introduce it as an axiom, without causing inconsistencies:
\[ \mathrm{LEM}_{-1}\ :\ \mathrm{For\ all\ propositions\ } P,\  P + \neg P. \]   


For arbitrary types, it is possible to cut off all the higher equalities by \textit{propositional truncation}. This is actually a first example of a higher inductive construction: For each type $A$, we construct a type $\| A \|$ with a constructor $t : A \rightarrow \| A \|$ and a higher constructor of $t(|a|) = t(|b|)$ for each $|a|, |b| : \| A \|$, making $\| A \|$ into a proposition by \textit{truncating} the higher equalities. The recursion principle delivers a map $g : \| A \| \rightarrow B$ for each proposition $B$ with a map $f : A \rightarrow B$ such that $g(t(a)) = f(a)$, for all $a : A$. 
If we have an object $a : \| A \|$ we say that there \textit{merely} exists an object of type $A$.

From an object of a propositionally truncated type we cannot construct an object of the type itself, unless it is a proposition. But if we want to construct an object of a proposition and another object of a propositionally truncated type is available, we can apply induction on propositional truncation and continue to work with an object of the type, without truncation, and without checking independence from the choice. These kind of arguments will be at the core of our treatment of choices in the constructions of Sec.~\ref{choice-sec}. The problem solved there is not that we do not know how to construct objects of a specific type (which may or may not be the case), but that we do not know (or do not want to decide) which of the many possibilities to choose --- and we do not need to specify our choice as long as we only want to prove a proposition, or construct another uniquely determined object.

Iteratively we can also truncate arbitrary types $A$ to sets $\| A \|_0$, by propositionally truncating the identity types $a =_A b$ to $\| a =_A b \|$, for all $a, b : A$. Objects of $\| A \|_0$ are also objects of $A$, but equalities $a =_A b$ merely exist, for all $a, b : A$.

Propositional truncation is essential when formulating the Axiom of Choice. The naive translation to Homotopy Type Theory for a set $X$ and a family of sets $Y(x)$ over $X$,
\[ \mathrm{AC}_\infty\ :\ \mathrm{If\ for\ all\ } x : X\ \mathrm{there\ is\ } y : Y(x)\ \mathrm{then\ there\ is\ a\ function\ } \Pi_{x : X}, Y(x), \]
is a triviality, because functions in Homotopy Type Theory are always considered to be explicit, so they automatically make a choice for each argument. What the the Axiom of Choice really states is
\[ \mathrm{AC}_0\ :\ \mathrm{If\ for\ all\ } x : X\ \mathrm{there\ is\ } y : \| Y(x) \|\ \mathrm{then\ there\ is\ } f : \| \Pi_{x : X}, Y(x) \|, \]
that is, if we merely know the existence of objects in each $Y(x)$, without any rule to produce such elements, the axiom allows us to make a simultaneous choice for all $x : X$, but again merely knowing the existence of the resulting function.


\subsection{Univalence and Function Extensionality} \label{univ-ssec}

Up to now, we mainly discussed equalities in inductively defined types. This does not tell us how to construct and characterise equalities of types themselves, or of functions between types.

Univalence deals with equalities $A = B$ of arbitrary types $A, B$ in the same universe $\mathcal{U}$. Univalence postulates the equivalence of identity types and equivalence types, for short:
\[ \mathrm{UA}\ : \ (A =_\mathcal{U} B) \simeq (A \simeq B). \]
One way to define \textit{equivalences} $A \simeq B$ are as pairs of functions $f : A \rightarrow B$ and $g : B \rightarrow A$ inverse to each other, checked by equalities
\[ (f \circ g)(b) = b\ \mathrm{for\ all\ } b : B\ \mathrm{and\ } (g \circ f)(a) = a\ \mathrm{for\ all\ } a : A, \]
together with additional higher equalities between these equalities (see \cite[Def.4.2.1]{HoTTBook} for the details). 

For practical uses of univalence, it is most of the times enough to present the inverse maps $f$ and $g$, as \textit{adjointification} (\cite[Thm.4.2.3]{HoTTBook}) allows to construct a proper equivalence from these data without changing $f$ and $g$ (but possibly some of the equalities checking inverseness). If the types $A$ and $B$ are sets, the higher equalities are irrelevant (since uniquely determined) anyway, so the inverseness of $f$ and $g$ is sufficient. Therefore, univalence completely characterizes equalities of sets:
\begin{prop}   
Equalities $p : A = B$ of sets $A, B$ are in one-to-one correspondence with bijective maps $f : A \rightarrow B$. \hfill $\Box$
\end{prop}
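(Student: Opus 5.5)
The plan is to compose the univalence equivalence $\mathrm{UA} : (A =_\mathcal{U} B) \simeq (A \simeq B)$ with a second equivalence $(A \simeq B) \simeq \mathrm{Bij}(A,B)$. Here $\mathrm{Bij}(A,B) :\equiv \Sigma_{f : A \rightarrow B}\, \mathtt{isBij}(f)$, and "bijective" is read as: for every $b : B$ there is exactly one $a : A$ with $f(a) = b$. Equivalently, $f$ has a two-sided inverse. Since equivalences compose, this gives the claimed one-to-one correspondence. Concretely, $p : A = B$ is sent to the underlying function of $\mathrm{UA}(p)$, which is $\mathtt{transport}$ along $p$ in the identity family.

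For the second equivalence I use the description of $A \simeq B$ as $\Sigma_{f : A \rightarrow B}\,\mathtt{isequiv}(f)$. The key claim is that, when $A$ and $B$ are sets, both $\mathtt{isequiv}(f)$ and $\mathtt{isBij}(f)$ are propositions that imply each other. A $\Sigma$-type over a family of propositions is then determined by its first component. So two families of logically equivalent propositions over the same base $A \rightarrow B$ give equivalent total types; this follows from the $\Sigma$-type characterisation of equality in Sec.~1.1, because the second components are automatically equal.

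The steps in order are as follows.

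\begin{itemize}
\item[(i)] $\mathtt{isequiv}(f)$ is a proposition. This is the property built into the definition of equivalences via the higher coherences cited from \cite[Def.4.2.1]{HoTTBook}; I would take it from there.
\item[(ii)] $\mathtt{isBij}(f)$ is a proposition when $A$ and $B$ are sets. In the formulation "for all $b$, the fibre $\Sigma_{a:A} f(a)=b$ is contractible", this is again standard. In the formulation "there exist $g$ with $g\circ f = \mathrm{id}$ and $f \circ g = \mathrm{id}$", I would argue as follows. Two inverses $g, g'$ agree pointwise by the usual algebra $g = g\circ f\circ g' = g'$. The homotopies then agree because they take values in identity types of sets, which are propositions. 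Function extensionality turns both pointwise statements into equalities.
\item[(iii)] $\mathtt{isequiv}(f)$ and $\mathtt{isBij}(f)$ imply each other. An equivalence yields a quasi-inverse by forgetting the coherence data. Conversely, a two-sided inverse yields an equivalence by the adjointification theorem \cite[Thm.4.2.3]{HoTTBook}, which leaves $f$ and $g$ unchanged.
\end{itemize}

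The main obstacle is step (ii): showing that bijectivity carries no extra information. This is exactly where the set hypothesis enters. For general types, quasi-inverse data is not a proposition, which is why the higher coherences are needed in the first place. I would handle it using that the equality types $a = a'$ in $A$ and $b = b'$ in $B$ are propositions, together with function extensionality for the pointwise homotopies. Once (i)--(iii) are in place, the composite of the two equivalences is the desired bijection between $A = B$ and the bijective maps $A \rightarrow B$.
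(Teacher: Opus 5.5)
Your proposal is correct and follows essentially the route the paper sketches just before the statement. There, univalence is combined with adjointification, and the set hypothesis makes the higher data irrelevant, so that inverse pairs collapse to bijective maps $f$. Your step (ii) makes explicit what the paper only asserts: $g$ is determined by $f$, and for sets the homotopies and coherences carry no information.
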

Thus, there always will be a multitude of equalities between sets of the same cardinality $> 1$. This concept certainly contrasts with the definition of equality of sets in Zermelo-Fraenkel set theory, by the Axiom of Extensionality: Two sets are equal if they contain the same elements. In Homotopy Type Theory, on the other hand, the equivalences $=$ bijective maps between sets allow to jumble the elements freely and still produce an equality. This in turn seems to contradict mathematical instincts when applied for example to the set of natural numbers: An equality of $\mathbb{N}$ with itself should better lead to an equivalence identifying $0$ with itself. 

However, the mathematical instinct here reacts to the additional semiring structure on natural numbers, not present when we only discuss $\mathbb{N}$ as a set. Objects of a set are not incorporated in any additional structure and therefore cannot be distinguished. That allows them to be arbitrarily identified by equalities between the sets.

If on the other hand we provide $\mathbb{N}$ with the standard commutative semiring structure $s : \mathtt{isCommSemiring}(\mathbb{N})$, any equality of the dependent pair $(\mathbb{N}, s)$ with itself automatically induces a bijective map $\mathbb{N} \rightarrow \mathbb{N}$ preserving the commutative semiring structure $s$ --- in other words, an automorphism of the commutative semiring $\mathbb{N}$. Part of the structure says that $\mathbb{N}$ has $0$ as a zero, so the underlying bijection $\mathbb{N} \rightarrow \mathbb{N}$ of the automorphism must in particular map $0$ to $0$. 

Univalence and in extension Homotopy Type Theory is usually promoted as a tool to transport structures and properties from one type to another type equal to it, without need of further constructions or proofs. While this claim is certainly true, the untrained eye may exaggerate its benefits: For example, if we want to compare the transported structure with another one already present on the second type, we need to execute the checks we thought to have avoided, namely that the equivalence induced by the equality preserves the structures. 

In \cite[2.1]{VMA21}, a telling example of this problem can be found (see also \cite[5.2]{HoTTBook}): The inductive definition of natural numbers by introducing $0$ and the successor function is very efficient when proving the laws of calculations collected in the semiring structure for $\mathbb{N}$. On the other hand, it does not allow efficient calculations --- for this purpose, binary (or alternatively decimal, \ldots) numbers, call them $\mathrm{Bin}\mathbb{N}$, are much more suitable; however, proving the calculation laws for these representations of natural numbers is very tedious. In many arguments involving calculations with natural numbers you may want to use both laws and actual calculations, so to do this efficiently we would need to switch back and forth between the two representations of natural numbers. Now, it is easy to show that $\mathbb{N}$ and $\mathrm{Bin}\mathbb{N}$ are equal as sets, and we could construct many such equalities. But for the aim outlined above, you actually need the (unique) equality preserving addition and multiplication, together with $0$ and $1$, so you end up with constructing an isomorphism of semirings. We will further discuss this example at the end of Sec.~\ref{ind-cons-sec}.


In the end, univalence makes equalities of structures and structural isomorphisms literally and in a very precise technical sense \textit{equivalent}, so it is to be expected that using univalence instead of isomorphisms may not necessarily simplify arguments. In the example above, univalence will become really beneficial to calculations if it produces a \textit{judgmental} equality between the different presentations of the natural numbers and their semiring structures because then the switch between $\mathbb{N}$ and $\mathrm{Bin}\mathbb{N}$ will be performed automatically as a term reduction. \textit{Cubical Type Theory} provides such a construction, turning the Univalence Axiom $\mathrm{UA}$ into a constructively provable statement, and its effects are further described in \cite{VMA21}. Apart from some remarks in the final section, we will however not pursue these developments further in this article.

The final type of equalities we need to discuss are those between functions $f : A \rightarrow B$ from type $A$ to $B$. We want them to be equal if their function values $f(a)$ and $g(a)$ are equal for all inputs $a : A$. This principle is called \textit{Function Extensionality}. As with Univalence, in most presentations of Homotopy Type Theory this is still considered to be an axiom; however, it can also be deduced from Univalence (see \cite[4.9]{HoTTBook}, and Cubical Type Theory contains it as a constructively provable statement (see \cite{CCHM18}). 

\subsection{Categories} \label{cat-ssec}

Following \cite[Ch.~9]{HoTTBook} we call a type of objects together with a family of homomorphism \textit{sets} between two objects, provided with identity homomorphisms and a composition of homomorphisms satisfying the usual laws, as a \textit{precategory}. If we relax the requirement of homomorphisms between two given objects being a set to allowing higher equalities, we obtain $2$- or higher (pre)categories. When objects have underlying sets, and homomorphisms have an underlying map of sets, homomorphisms usually form sets, as maps between the underlying sets are a set again. For example, this happens for categories of algebraic objects like monoids, groups and rings. Thus, this definition of categories should not be seen as restrictive, but as a useful starting point already covering many interesting examples, and allowing a systematic extension to more complicated kinds of categories.

A precategory is called a \textit{category} if isomorphisms and equalities between two given objects are \textit{equivalent}. Sometimes, such a category is also called a \textit{univalent} \mbox{($1$-)category}, because the equivalence between isomorphisms and equalities can be seen as a generalisation of univalence. If objects of the category have underlying sets and homomorphisms have an underlying map of sets, the equivalence is usually indeed derived from univalence. As with univalence, the identification of equalities with isomorphisms should not be seen as destroying the structure given by isomorphisms, but actually as providing equalities with this structure. 

\section{Universal properties from inductive constructions} \label{ind-cons-sec}

The standard ingredients of an inductive construction hide another important feature of an inductively defined type (which always can be proven using the inductive principle): The constructors \textit{freely generate} the objects of the type. First of all, this means that every object is the result of exactly one of the constructors, and in a unique way. But if some constructors are recursive, even more is true: there is a unique way to produce an object by iteratively applying constructors. We discuss how to turn this into characterisations of inductive types on two examples, Cartesian products of two and more factors, and the natural numbers.

\subsection{Cartesian products} \label{set-prod-ssec}

The only constructor of the Cartesian product $A \times B$ of two types delivers a pair $(a, b)$ from two given objects $a$ of type $A$ and $b$ of type $B$. The recursion principle then associates a function $g : A \times B \rightarrow C$ to a function $f : A \rightarrow B \rightarrow C$ (that is, a function that gives a function $B \rightarrow C$ for each object $a$ of type $A$), and the calculation rule states that $g((a,b)) = f(a)(b)$ (and similarly for the induction principle). This may look less familiar than induction on $\mathbb{N}$, first of all because the difference between $g((a,b))$ and $f(a)(b)$ seems insignificant. Even worse, we are used to the universal property of a Cartesian product delivering a (unique) map $\phi$ from a type $C$ \textit{to} $A \times B$, for given maps $f: C \rightarrow A$, $g : C \rightarrow B$ and factorising $f$, $g$ by the (inductively defined) natural projections $p_A : A \times B \rightarrow A$, $p_B : A \times B \rightarrow B$, not the other way round. But how do we prove this universal property? We use the free generatedness of the Cartesian product, that is, each $p : A \times B$ is equal to the pair $(p_A(p), p_B(p))$, and for all $a : A$, $b : B$ there is exactly one object $p : A \times B$ such that $p_A(p) = a$ and $p_B(p) = b$! 

We will spell this out by first using the free generatedness of its constructor as a characterisation of Cartesian products of types:
\begin{Def} \label{prod-cons-char} 
A type $P$ is a \textbf{Cartesian product} of two types $A$ and $B$ if
\begin{itemize}
\item[(i)] there is a function $f : A \rightarrow B \rightarrow P$ (called \textbf{pair} constructor), and
\item[(ii)] for all $p : P$, there are unique $a : A$, $b : B$  such that $p = f(a)(b)$. 
\end{itemize}
\end{Def}

Induction immediately shows that the inductively defined product $A \times B$ satisfies this characterisation.
Condition \ref{prod-cons-char}(ii) allows the construction of \textit{projection maps} $p_A : P \rightarrow A$,
$p_B : P \rightarrow B$ such that $p_A(f(a)(b)) = a$ and $p_B(f(a)(b)) = b$, and $f(p_A(p),p_B(p)) = p$ for all $p : P$.   

From the characterisation we can also deduce the induction principle and the computation rules, and vice versa:
\begin{prop}  \label{cons-ind-prod-Def} 
A type $P$ is a Cartesian product of two types $A$, $B$ with a pair constructor $f : A \rightarrow B \rightarrow P$ if, and only if, for all types $Q$ with a map $g : A \rightarrow B \rightarrow Q$, there is a unique map $h : P \rightarrow Q$ such that $g = h \circ f$.
\end{prop}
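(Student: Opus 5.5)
For the forward direction, assume $P$ is a Cartesian product via $f$ in the sense of Definition~\ref{prod-cons-char}, and use the projection maps $p_A : P \rightarrow A$, $p_B : P \rightarrow B$ constructed after the definition. They satisfy $p_A(f(a)(b)) = a$, $p_B(f(a)(b)) = b$ and $f(p_A(p))(p_B(p)) = p$. Given $Q$ and $g : A \rightarrow B \rightarrow Q$, define
\[ h(p) :\equiv g(p_A(p))(p_B(p)). \]
Then $h(f(a)(b)) = g(a)(b)$ for all $a,b$, and by function extensionality (Sec.~\ref{univ-ssec}) this gives $h \circ f = g$; here $h\circ f$ means the curried composite $a \mapsto b \mapsto h(f(a)(b))$. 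For uniqueness, let $h'$ be another map with $h' \circ f = g$. For each $p : P$ we compute
\[ h'(p) = h'(f(p_A(p))(p_B(p))) = g(p_A(p))(p_B(p)) = h(p), \]
so $h' = h$ by function extensionality. Strictly, uniqueness means the type $\Sigma_{h : P \rightarrow Q}\, (h \circ f = g)$ is contractible. This also needs the second components to agree. That is automatic when $Q$ is a set; in general one checks it by transporting along the constructed equality, or one reads the statement as uniqueness of $h$ as the paper does informally.

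For the converse, assume the universal property and construct the projections from it. Applying it with $Q = A$ and $g = \lambda a\, b.\, a$ gives $p_A$ with $p_A(f(a)(b)) = a$. Similarly $Q = B$ and $g = \lambda a\, b.\, b$ gives $p_B$. Next apply it with $Q = P$ and $g = f$. Both $\mathrm{id}_P$ and $k :\equiv \lambda p.\, f(p_A(p))(p_B(p))$ satisfy $k(f(a)(b)) = f(a)(b)$, so uniqueness yields $k = \mathrm{id}_P$, i.e.\ $f(p_A(p))(p_B(p)) = p$. This gives existence in Definition~\ref{prod-cons-char}(ii) with $a :\equiv p_A(p)$, $b :\equiv p_B(p)$. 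For uniqueness of $a, b$: if $p = f(a')(b')$, then $a' = p_A(f(a')(b')) = p_A(p)$ and likewise $b' = p_B(p)$.

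The main obstacle is making ``unique'' precise in HoTT, in both the definition and the proposition. In (ii), uniqueness should mean that the fibre $\Sigma_{(a,b)} (p = f(a)(b))$ is contractible, so the equalities themselves, not just $a$ and $b$, must be shown to agree. I would handle this by showing that $f$, uncurried, is an equivalence $A \times B \simeq P$ with inverse $(p_A, p_B)$; this is exactly what the identities above establish. Contractibility of the fibres then follows from the standard fact that equivalences have contractible fibres \cite[Ch.4]{HoTTBook}. In the same way, uniqueness in the universal property follows because precomposition with an equivalence is itself an equivalence $(P \rightarrow Q) \simeq (A \rightarrow B \rightarrow Q)$. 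Reducing both directions to ``$f$ is an equivalence'' is the route I would take to avoid handling higher coherences by hand.
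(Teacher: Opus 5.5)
Your proof is correct and follows the paper's argument. The forward direction uses the projections, the identity $f(p_A(p))(p_B(p)) = p$ and function extensionality. The converse applies the universal property to $Q = A$ and $Q = B$, then uses uniqueness for $Q = P$, $g = f$; the paper routes this last step through the composite $P \rightarrow A \times B \rightarrow P$, which is exactly your $k$. Your extra reduction of both directions to ``uncurried $f$ is an equivalence'' soundly makes the contractibility reading of ``unique'' precise, which the paper leaves implicit.
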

\begin{proof}
If $P$ is a Cartesian product of $A, B$ and $Q$ a type with a map $g : A \rightarrow B \rightarrow Q$, we define $h(p)$ to be $g(p_A(p), p_B(p))$. Obviously, $g = h \circ f$, which also implies uniqueness of $h$ using $f(p_A(p),p_B(p)) = p$ and function extensionality. In particular, $A \times B$ satisfies the induction principle.

The converse is also true: If the induction principle holds, we can apply it on the maps $A \rightarrow B \rightarrow A$ and $A \rightarrow B \rightarrow B$ and obtain projection maps $p_A : P \rightarrow A$ and $p_B : P \rightarrow B$ such that $p_A(f(a)(b)) = a$ and $p_B(f(a)(b)) = b$. If furthermore $f(p_A(p),p_B(p)) = p$ for all $p : P$, we have shown Def.~\ref{prod-cons-char}(i), and (ii) follows from the computation rules of the projection maps. But the universal property also gives us maps $P \rightarrow A \times B$ and $A \times B \rightarrow P$ whose composition maps $p : P$ to $f(p_A(p),p_B(p))$. By uniqueness, this composition must be the identity map, hence the desired equality.  
\end{proof}

We can interpret the characterisation in this proposition as the universal property of an initial object in the category of \textit{pair types} associated to types $A$ and $B$, whose objects consist of types $Q$ and a \textit{pair map} $A \rightarrow B \rightarrow P$, and whose morphisms are maps $P \rightarrow Q$ factorizing the pair maps.

The two characterisations of a Cartesian product thus imply each other, but are also quite different: One requires a quantification over all types (in a universe), the other states the free generatedness of an inductive construction, thus decribing the objects of the inductive type directly, and therefore looking more efficient. 

Another method to obtain characterisations of a Cartesian product uses the projections retrieving the arguments with which objects in the Cartesian product are constructed and their calculation rules, or alternatively their universal properties, thus making it into a terminal object of a category of ``types with projections":

\begin{prop} \label{prod-proj-calc-char} 
A type $P$ is a Cartesian product of two types $A$ and $B$ if and only if
\begin{itemize}
\item[(i)] there are functions $p_A : P \rightarrow A$, $p_B : P \rightarrow B$ (called \textit{projections}), and
\item[(ii)] for all $a : A$, $b : B$ there is a unique $p : P$ such that $p_A(p) = a$ and $p_B(p) = b$. 
\end{itemize}
\end{prop}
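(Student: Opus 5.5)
Both directions go through the pair constructor and the projections, and the two uniqueness conditions are traded for each other via the identities $p_A(f(a)(b)) = a$, $p_B(f(a)(b)) = b$ and $f(p_A(p))(p_B(p)) = p$.

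For the forward direction, assume $P$ is a Cartesian product in the sense of Def.~\ref{prod-cons-char}, with pair constructor $f$. As remarked after that definition, condition (ii) yields projections $p_A$, $p_B$ satisfying the identities above for all $a : A$, $b : B$, $p : P$. This gives (i). For (ii), given $a, b$, existence is witnessed by $p :\equiv f(a)(b)$. For uniqueness, suppose $p : P$ satisfies $p_A(p) = a$ and $p_B(p) = b$. Then
\[ p = f(p_A(p))(p_B(p)) = f(a)(b), \]
rewriting along the two given equalities. Since we work in HoTT, ``unique'' should mean that the type $\Sigma_{p : P}\, (p_A(p) = a) \times (p_B(p) = b)$ is contractible. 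If $A$ and $B$ are sets, the equality components are propositions and the computation above suffices. In general we must also match the equality witnesses; I would handle this by exhibiting this $\Sigma$-type as the fibre over $(a,b)$ of the map $P \rightarrow A \times B$, $p \mapsto (p_A(p), p_B(p))$.

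For the converse, assume (i) and (ii). Define $f(a)(b)$ to be the unique $p$ given by (ii); this is a genuine function because contractible types have a centre. By construction $p_A(f(a)(b)) = a$ and $p_B(f(a)(b)) = b$. To verify Def.~\ref{prod-cons-char}(ii), take $p : P$. Existence is $p = f(p_A(p))(p_B(p))$: both sides have projections $p_A(p)$ and $p_B(p)$, so uniqueness in (ii) identifies them. For uniqueness, if $p = f(a')(b')$, applying $p_A$ and $p_B$ gives $a' = p_A(p)$ and $b' = p_B(p)$.

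The cleanest way to handle both directions at once is to note that each condition says a certain map is an equivalence. Def.~\ref{prod-cons-char}(ii) says that the uncurried map $A \times B \rightarrow P$, $(a,b) \mapsto f(a)(b)$, has contractible fibres. Condition (ii) of the proposition says that $P \rightarrow A \times B$, $p \mapsto (p_A(p), p_B(p))$, has contractible fibres. Both statements are equivalent to the maps being equivalences, and an equivalence and its inverse correspond to each other. The projections of $A \times B$ then correspond to $p_A$, $p_B$, and the pairing corresponds to $f$.

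The main obstacle is this bookkeeping of higher equalities when $A$ and $B$ are not sets, that is, making the uniqueness statements precise as contractibility of fibres. I would resolve it exactly by the reformulation above, using the standard fact that a map with contractible fibres is an equivalence (\cite[Ch.4]{HoTTBook}), rather than manipulating equality witnesses by hand.
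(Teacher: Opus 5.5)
Your proof is correct, and its elementwise core is the paper's argument. In the forward direction you use the projections built from Def.~\ref{prod-cons-char}(ii) right after that definition. In the converse you define the pairing $f(a)(b)$ as the unique $p$ from (ii) and get injectivity of $f$ by applying $p_A$ and $p_B$.

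You add two things the paper does not. First, you check both halves of the target conditions. In particular, in the converse you prove the existence half $p = f(p_A(p))(p_B(p))$ of Def.~\ref{prod-cons-char}(ii), which the paper's proof, stating only injectivity, leaves implicit. Second, you take the HoTT reading of ``unique'' seriously. For general types it means contractibility of $\Sigma$-types that contain path components. Applying the projections only identifies the components $a, b$, not the path witness $p = f(a')(b')$. That witness lives in $P$, so even for sets $A, B$ the elementwise argument needs $P$ to be a set.

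Your recasting handles all of this at once. You read the two conditions as ``the uncurried pairing $A \times B \rightarrow P$ is an equivalence'' and ``$\langle p_A, p_B\rangle : P \rightarrow A \times B$ is an equivalence''. This uses the characterisation of paths in $A \times B$ and of equivalences by contractible fibres. Passing between an equivalence and its inverse then removes all the bookkeeping, and as a by-product shows that $P$ is a set when $A$ and $B$ are. The paper's terse argument implicitly treats the relevant identity types as propositions, which is harmless when everything is a set. Your route costs a little equivalence machinery but proves the proposition for arbitrary types.
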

\begin{proof}
If $P$ is a Cartesian product as in Def.~\ref{prod-cons-char}, we can take the projection functions $p_A, p_B$ constructed after the definition which also satisfy (ii).

Conversely, (ii) provides a pair constructor $f : A \rightarrow B \rightarrow P$, and $f(a)(b) = f(a^\prime)(b^\prime)$ implies $a = a^\prime$, $b = b^\prime$, by applying the projections.
\end{proof}

\begin{prop}  
A type $P$ is a Cartesian product of two types $A$, $B$ with projections $p_A : P \rightarrow A$, $p_B : P \rightarrow B$ if, and only if, for all types $Q$ with maps $q_A : Q \rightarrow A$, $q_B : Q \rightarrow B$ there is a unique map $f : Q \rightarrow P$ such that $p_A \circ f = q_A$ and $p_B \circ f = q_B$.
\end{prop}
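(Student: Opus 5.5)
The proof runs in two directions, mirroring Prop.~\ref{cons-ind-prod-Def} but dualised: we use the projection characterisation of Prop.~\ref{prod-proj-calc-char} in place of Def.~\ref{prod-cons-char}.

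\emph{Cartesian product implies the universal property.} Suppose $P$ is a Cartesian product with projections $p_A, p_B$. By Prop.~\ref{prod-proj-calc-char}(ii), or equivalently via the pair constructor $f_P : A \rightarrow B \rightarrow P$ of Def.~\ref{prod-cons-char}, we set
\[ f(q) :\equiv f_P(q_A(q))(q_B(q)) \]
for given $q_A : Q \rightarrow A$, $q_B : Q \rightarrow B$. The computation rules $p_A(f_P(a)(b)) = a$ and $p_B(f_P(a)(b)) = b$ then give $p_A \circ f = q_A$ and $p_B \circ f = q_B$, using function extensionality. For uniqueness, let $f^\prime$ be another map with the same property. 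For each $q$, the element $f^\prime(q)$ has projections $q_A(q)$ and $q_B(q)$, so the uniqueness clause in Prop.~\ref{prod-proj-calc-char}(ii) gives $f^\prime(q) = f(q)$. Function extensionality then yields $f^\prime = f$. Strictly, uniqueness here means that the type of such $f$ together with the two equalities is contractible. Since $A$ and $B$ need not be sets, we should note that the equality data are also determined. I would handle this by treating the statement as uniqueness of the map, as the paper does in Prop.~\ref{cons-ind-prod-Def}.

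\emph{Universal property implies Cartesian product.} Apply the universal property to $Q :\equiv A \times B$ with its inductive projections. This gives $\phi : A \times B \rightarrow P$ with $p_A \circ \phi = \mathrm{pr}_A$ and $p_B \circ \phi = \mathrm{pr}_B$, so $f(a)(b) :\equiv \phi((a,b))$ is a candidate pair map. This settles existence in Prop.~\ref{prod-proj-calc-char}(ii).

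For uniqueness in (ii), it suffices to show that every $p : P$ equals $\phi((p_A(p), p_B(p)))$. Indeed, if $p, p^\prime$ have the same projections $a, b$, then both equal $\phi((a,b))$. To prove this, apply the universal property to $Q :\equiv P$ with $q_A = p_A$ and $q_B = p_B$. Both $\mathrm{id}_P$ and the composite
\[ \phi \circ \langle p_A, p_B \rangle : P \rightarrow A \times B \rightarrow P \]
commute with the projections. By uniqueness they are equal, which gives the required identity. This is exactly the trick used at the end of the proof of Prop.~\ref{cons-ind-prod-Def}.

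The main obstacle is this last step. The point is to recognise that uniqueness in (ii) must come from the uniqueness of the universal map applied with $Q = P$, rather than from any direct argument. It also requires that the inductive product $A \times B$ is available as a test object. The remaining verifications are routine applications of the computation rules and function extensionality.
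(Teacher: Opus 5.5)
Your proof is correct. The forward direction is the paper's argument: define $f(q)$ pointwise as the unique element with the prescribed projections (you phrase it via the pair map built from it), then get uniqueness of $f$ pointwise plus function extensionality. Your converse, however, takes a different route.

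The paper applies the universal property once, to $Q :\equiv \mathbbm{1}$ with the constant maps $\ast \mapsto a$ and $\ast \mapsto b$. Since maps $\mathbbm{1} \rightarrow P$ are just elements of $P$, the unique map it produces is exactly the unique $p$ with $p_A(p) = a$ and $p_B(p) = b$. So existence and uniqueness in Prop.~\ref{prod-proj-calc-char}(ii) come in a single stroke, and no auxiliary product is needed.

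You instead test against $A \times B$ for existence and against $P$ itself for uniqueness, copying the pattern of Prop.~\ref{cons-ind-prod-Def}. This is longer and relies on the inductive product being available as a test object. In exchange it proves more: $\phi$ and $\langle p_A, p_B \rangle$ are mutually inverse. In other words, you have already built the equivalence $P \simeq A \times B$ over the projections, which the paper only obtains later from ``the usual yoga'' before applying univalence.

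Two remarks. First, your claim that uniqueness in (ii) ``must'' come from the case $Q = P$ rather than a direct argument is not right; the $\mathbbm{1}$-trick is exactly such a direct argument. Second, if ``unique'' is read strictly as contractibility, the paper's route transfers it directly through $(\mathbbm{1} \rightarrow P) \simeq P$. Your argument as written only shows that any two such $p$ are equal. To recover contractibility you would add that the fibres of the equivalence $\langle p_A, p_B \rangle$ are contractible, and the fibre over $(a,b)$ is precisely $\Sigma_{p : P}\, (p_A(p) = a) \times (p_B(p) = b)$.
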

\begin{proof}
We show that the universal property is equivalent to the characterisation in Prop.~\ref{prod-proj-calc-char}.

If $P$ is a Cartesian product of $A, B$, for all $q : Q$ we define $f(q)$ as the unique $p$ such that $p_A(p) = q_A(q)$ and $p_B(p) = q_B(q)$. By function extensionality these equations imply $p_A \circ f = q_A$ and $p_B \circ f = q_B$, and the uniqueness of $f$ follows from the uniqueness of $p$.

The converse is also true: Just apply the standard universal property to the (inductively defined) unit type $\mathbbm{1}$ with just one object $\ast$ and the two maps $\mathbbm{1} \rightarrow A$, $\mathbbm{1} \rightarrow B$ given by $\ast \mapsto a$ and $\ast \mapsto b$, for any given $a : A$, $b : B$, to obtain the unique object $p : P$ with $p_A(p) = a$ and $p_B(p) = b$.
\end{proof}

Once again, the second characterisation requires a quantification over all types (in a universe), whereas the first decribes the objects of the inductive type directly, and therefore looks more efficient. As seen above, even the characterisation by constructor and free generatedness needs the projections, so we will indeed base our proofs later on only on this characterisation.

The usual yoga of universal objects tells us that for each type $P$ together with two maps $q_A : P \rightarrow A$ and $q_B : P \rightarrow B$ that satisfy either of the two previous characterisations, we have unique functions $P \rightarrow A \times B$ and $A \times B \rightarrow P$ that are inverse to each other and factorize the projection maps. ``Adjointifying" (see \cite[Thm.4.2.3]{HoTTBook} turns these inverse constructions into an equivalence of products respecting their projection maps, on which we can apply univalence to show that they are equal.

However, in Homotopy Type Theory it does not follow that the equality is ``canonical", that is, that two such equalities always must be equal: such a ``higher" equality for example involves equality of the equalities verifying factorization of the projection maps, and this may fail in arbitrary types. But they are automatically equal if  we restrict to types that are \textit{sets}, in the HoTT sense (see \ref{prop-set-ssec}). It is also straightforward to prove that the Cartesian product inductively defined above is a set if the two factors are sets (see \cite[2.6]{HoTTBook}). 
Thus, the Cartesian product inductively constructed above from two sets and any set satisfying either of the two universal properties for the same two factor sets are always (uniquely) equal in HoTT, justifying Grothendieck's practice for the set product. 

Note that this literally means that two products \textit{together with their universal properties} are equal, thus yielding much more than just equations (equivalently, bijections) between the product sets. We can show the existence by using the Structure Identity Principle (\cite[11.6]{Rij22}), which provides an equivalent  characterisation of the identity type of products with their universal property. This requires the choice of a characterisation that is more easily to handle than the original identity type; in the case of Cartesian products equality of types and of projection maps will do.

Finally, uniqueness follows from the upward closedness of truncatedness (\cite[Prop.12.4.3]{Rij22}): If a type is a proposition, it is also a set, or more explicitly, if any two objects of this type are equal, equalities between two objects are equal, too.

The same approaches can be used for a Cartesian product of $n$ sets: We can characterise them by their constructors and their free generatedness, by the induction principle, by the projection maps and their calculation rules, or by the universal property of projections, show that these characterisation imply each other, and that such products of $n$ sets together with one of the characterisations are unique,
via the Structure Identity Principle. 

At the same time, we can construct such products of $n$ sets iteratively from products with less factors.
\begin{prop}   
If $P_n$ and $P_m$ are products of $n$ sets $A_1, \ldots, A_n$ and $m$ sets $B_1, \ldots, B_m$, respectively, then $P_n \times P_m$ is a product of the $n+m$ sets $A_1, \ldots, A_n, B_1, \ldots, B_m$.
\end{prop}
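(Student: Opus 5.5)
We will use the characterisation of $n$-fold products by projection maps and their calculation rules, the $n$-ary analogue of Prop.~\ref{prod-proj-calc-char}. So a type $P$ is a product of $A_1, \ldots, A_n$ if there are projections $p_i : P \rightarrow A_i$ such that for every tuple $a_1 : A_1, \ldots, a_n : A_n$ there is a unique $p : P$ with $p_i(p) = a_i$ for all $i$. The paper states this is equivalent to the other characterisations, and this one avoids quantifying over a universe. Write $p_1, \ldots, p_n$ for the projections of $P_n$ and $q_1, \ldots, q_m$ for those of $P_m$. For $P_n \times P_m$ we define $n+m$ projections by composition:
\[ r_i := p_i \circ \mathrm{pr}_1 \ (1 \le i \le n), \qquad r_{n+j} := q_j \circ \mathrm{pr}_2 \ (1 \le j \le m), \]
where $\mathrm{pr}_1, \mathrm{pr}_2$ are the projections of the binary product given by Prop.~\ref{prod-proj-calc-char}.

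\textbf{Existence.} Take $a_1, \ldots, a_n, b_1, \ldots, b_m$. Apply the characterisation of $P_n$ to get $x : P_n$ with $p_i(x) = a_i$. Apply that of $P_m$ to get $y : P_m$ with $q_j(y) = b_j$. Then the pair $(x,y)$ satisfies $r_i((x,y)) = a_i$ and $r_{n+j}((x,y)) = b_j$, by the computation rules $\mathrm{pr}_1((x,y)) = x$ and $\mathrm{pr}_2((x,y)) = y$.

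\textbf{Uniqueness.} Suppose $z : P_n \times P_m$ satisfies the same $n+m$ equations. Then $\mathrm{pr}_1(z)$ satisfies the $n$ equations characterising $x$, so $\mathrm{pr}_1(z) = x$ by uniqueness in $P_n$. Likewise $\mathrm{pr}_2(z) = y$ by uniqueness in $P_m$. Uniqueness in Prop.~\ref{prod-proj-calc-char}(ii) for the binary product then gives $z = (x,y)$.

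\textbf{Uniqueness as a proposition.} The main point requiring care is what ``unique'' means in HoTT: the type $\Sigma_{p : P} \prod_i (r_i(p) = a_i)$ must be a proposition, not merely have equal first components. Here the hypothesis that all $A_k, B_l$ are sets enters. Each equation $r_i(p) = a_i$ is then a proposition, so the dependent pairs are equal as soon as their first components are equal, by the characterisation of equality in $\Sigma$-types via transport. It is also convenient to record that $P_n$, $P_m$ and $P_n \times P_m$ are sets. $P_n$ is a set because it is equivalent to the iterated product of the $A_i$, and the product of sets is a set by \cite[2.6]{HoTTBook}; the same holds for $P_m$ and then for $P_n \times P_m$. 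This keeps us in the setting where the uniqueness results above apply.

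Finally, if the $n$-ary product is formulated via a pair constructor instead, we transfer the statement to that form by the $n$-ary version of Prop.~\ref{prod-proj-calc-char}.
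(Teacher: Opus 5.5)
Your proof is correct, but it uses the dual characterisation to the one in the paper.

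The paper works with free generatedness of the constructor, the $n$-ary version of Def.~\ref{prod-cons-char}. It builds the constructor on $(n+m)$-tuples by splitting a tuple into an $n$-tuple and an $m$-tuple, applying the constructors of $P_n$ and $P_m$, and then the pair constructor. It gets the unique decomposition of an element of $P_n \times P_m$ by first decomposing it as a pair and then decomposing each component. You instead use the projection characterisation, the $n$-ary version of Prop.~\ref{prod-proj-calc-char}. You define the $n+m$ projections as composites $p_i \circ \mathrm{pr}_1$ and $q_j \circ \mathrm{pr}_2$, and check that each fibre has exactly one element.

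Both arguments are the same split-and-recombine idea, and both rely on the paper's assertion that the $n$-ary characterisations are equivalent. Your version has two advantages. The projections compose directly, so you never need the tuple type or uncurrying. You also make explicit what uniqueness means in HoTT: the fibre type $\Sigma_{p : P_n \times P_m} \prod_i (r_i(p) = a_i)$ is a proposition. And you say exactly where the sethood of the $A_k$, $B_l$ enters, which the paper leaves implicit. The paper's version stays within the free-generatedness viewpoint the section promotes, and it makes the $(n+m)$-ary constructor explicit.

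Your remark that $P_n$, $P_m$ and $P_n \times P_m$ are sets is harmless but not needed. Only the sethood of the factors is used, to make the equations $r_i(p) = a_i$ propositions.
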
 
\begin{proof}
We use the characterisation of products of $n$ sets by the free generatedness of their constructor. That is, we want to show that every element in $P_n \times P_m$ can be constructed from a uniquely determined $(n+m)$-tuple $(a_1, \ldots, a_n, b_1, \ldots, b_m)$, with $a_i : A_i$, $i = 1, \ldots, n$, $b_j : B_j$, $j = 1, \ldots, m$. Of course, the type of the $(n+m)$-tuples $(a_1, \ldots, b_m)$ already is a Cartesian product of $n+m$ sets, a fact disguised by the currying of functions in the case of two factors.

The unique construction of $p : P_n \times P_m$ from $(a_1, \ldots, b_m)$ proceeds by splitting up the $(n+m)$-tuple into the uniquely determined $n$-tuple $(a_1, \ldots, a_n)$ and the $m$-tuple $(b_1, \ldots, b_m)$, then using the characterisation above for the products $P_n$ and $P_m$, and finally using the analogous characterisation for the $2$-factor product $P_n \times P_m$ in Def.~\ref{prod-cons-char}.
\end{proof}

The uniqueness properties of products of $n$ sets together with this proposition show that any two ways of constructing products of more than two sets by iteratively taking the product of exactly two sets yield canonically equal sets. In particular, no pentagon identity as in monoidal categories is needed for the identifications.

\subsection{Natural types} \label{nat-type-ssec}

Cartesian products are rather simple inductive types, with just one constructor and no recursion. The paradigmatic example of a recursively defined type are the natural numbers $\mathbb{N}$, with two constructors: One postulates the existence of $0$ in $\mathbb{N}$, and the other constructor is the recursive successor operation $\mathtt{succ} : \mathbb{N} \rightarrow \mathbb{N}$. The inductive principle guarantees a (unique) function value for each natural number, if we know the value of $0$ and the value of the successor given the function value of any natural number. The calculation rules simply state what the function value of $0$ is and how to determine the function value of the successor from the value of any given natural number. 

Because of the recursive constructor, free generation means more than just every $n : \mathbb{N}$ being either $0$ or the successor of a uniquely determined $m : \mathbb{N}$. If we only assume that condition it would also cover the disjoint union of $\mathbb{N}$ and the integers $\mathbb{Z}$, with the standard successor function on $\mathbb{Z}$. Instead, free generatedness prescribes a unique number of iterations of the successor function to produce a given number $n : \mathbb{N}$ from $0$, and thus leads to the following characterisation:
\begin{Def} \label{nat-cons-def}  
A type $N$ is \textbf{natural} if
\begin{itemize}
\item[(i)] there is an object $n_0 : N$ and a function $s : N \rightarrow N$, and
\item[(ii)] for each $n : N$ there is a unique natural number $i : \mathbb{N}$ such that $n = s^{\circ i}(n_0)$, the function $s$ applied $i$ times on $n_0$.
\end{itemize}
\end{Def}

It is straightforward to show that the natural numbers $\mathbb{N}$ form such a natural type, 
and that natural types satisfy the analogue of the induction principle for natural numbers. This leads to a categorical characterisation identifying a natural type with an initial object (see \cite[5.4]{HoTTBook}):
\begin{prop} \label{nat-ind-def}  
A type $N$ with object $n_0$ and map $s : N \rightarrow N$ is natural if, and only if, for all types $A$ with an object $a_0 : A$ and a map $a_s : A \rightarrow A$ there is a unique map $f : N \rightarrow A$ such that $f(n_0) = a_0$ and $f(s(n)) = a_s(f(n))$, for all $n \in \mathbb{N}$. 
\end{prop}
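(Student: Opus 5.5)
The plan mirrors Prop.~\ref{cons-ind-prod-Def}: for the forward direction we use Def.~\ref{nat-cons-def}(ii) directly, and for the converse we compare with $\mathbb{N}$ through the universal property.

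\emph{Natural $\Rightarrow$ universal property.} Let $N$ be natural and let $(A, a_0, a_s)$ be given. By Def.~\ref{nat-cons-def}(ii), each $n : N$ determines a unique $i(n) : \mathbb{N}$ with $n = s^{\circ i(n)}(n_0)$. This gives an ``index'' map $i : N \rightarrow \mathbb{N}$, and we define
\[ f(n) :\equiv a_s^{\circ i(n)}(a_0). \]
Clearly $i(n_0) = 0$. Since $s(n) = s^{\circ (i(n)+1)}(n_0)$, uniqueness gives $i(s(n)) = i(n)+1$. Hence $f(n_0) = a_0$ and $f(s(n)) = a_s(f(n))$.

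For uniqueness, take any $g$ satisfying the same two equations. Induction on $k : \mathbb{N}$ shows $g(s^{\circ k}(n_0)) = a_s^{\circ k}(a_0)$. Applying this with $k = i(n)$ and using $n = s^{\circ i(n)}(n_0)$, function extensionality gives $g = f$.

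\emph{Universal property $\Rightarrow$ natural.} Apply the universal property with $A = \mathbb{N}$, $a_0 = 0$, $a_s = \mathtt{succ}$ to obtain $\iota : N \rightarrow \mathbb{N}$. In the other direction, the recursion principle of $\mathbb{N}$ gives $j : \mathbb{N} \rightarrow N$ with $j(k) = s^{\circ k}(n_0)$.

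\begin{itemize}
\item $\iota \circ j = \mathrm{id}_{\mathbb{N}}$ follows by induction on $\mathbb{N}$.
\item $j \circ \iota : N \rightarrow N$ sends $n_0$ to $n_0$ and commutes with $s$, as does $\mathrm{id}_N$. Applying the uniqueness part of the universal property with $A = N$ gives $j \circ \iota = \mathrm{id}_N$.
\end{itemize}

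Condition (i) of Def.~\ref{nat-cons-def} is given. For (ii), existence holds because $n = j(\iota(n)) = s^{\circ \iota(n)}(n_0)$. Uniqueness holds because $n = s^{\circ k}(n_0) = j(k)$ implies $k = \iota(j(k)) = \iota(n)$.

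The main subtle point is the type-theoretic reading of ``unique $i$'' in (ii). This should be read as contractibility of $\Sigma_{i:\mathbb{N}}\, (n = s^{\circ i}(n_0))$, so that the map $i$ is actually constructible, rather than as mere existence. For arbitrary $N$ the identity types in (ii) are not propositions, so uniqueness must include the equality witnesses.

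With that reading, the converse direction yields the center $(\iota(n), \text{the path from } j\circ\iota = \mathrm{id})$. Showing that it is contractible amounts to saying that $j$ is an equivalence with inverse $\iota$, since its fibers are then contractible. This is exactly what the two displayed identities provide, after adjointification \cite[Thm.4.2.3]{HoTTBook}. Alternatively, we may restrict to sets as in the surrounding discussion, in which case these coherence issues disappear.
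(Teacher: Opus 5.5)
Your proof is correct and follows the paper's argument essentially step by step: the forward direction defines $f$ through the index $i(n)$ and gets uniqueness by induction on the index plus function extensionality, while the converse takes $\iota : N \rightarrow \mathbb{N}$ from the universal property and the iteration map $j : \mathbb{N} \rightarrow N$, shows they are mutually inverse, and reads off Def.~\ref{nat-cons-def}(ii) from $n = j(\iota(n))$ and the injectivity of $j$. Your closing remark, that uniqueness in (ii) should be read as contractibility of the fibres of $j$ (equivalently, $j$ being an equivalence), spells out something the paper leaves implicit and is consistent with its proof.
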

\begin{proof}
If $N$ is natural and $A$ is a type with $a_0 : A$ and $a_s : A \rightarrow A$, we can define the map $f : N \rightarrow A$ by setting $f(s^{\circ i}(n_0)) = a_s^{\circ i}(a_0)$. Then $f$ certainly satisfies 
$f(n_0) := a_0$ and $f(s(m)) := a_s(f(m))$, and by induction on $i$ these are the only possible values of such a function $f$. Hence $f$ is unique, by function extensionality. Since $\mathbb{N}$ is natural, this implies that there will always be a unique map $g : \mathbb{N} \rightarrow N$ such that $g(0) = n_0$ and $g(\mathtt{succ}(n)) = s(g(n))$ for all natural numbers $n$.

So conversely, we can use the unique map $f : N \rightarrow \mathbb{N}$ such that $f(n_0) = 0$ and $f(s(n)) = \mathtt{succ}(f(n))$ together with $g$ to show Def.~\ref{nat-cons-def}(ii): By uniqueness, $f \circ g = \mathrm{id}_{\mathbb{N}}$ and $g \circ f = \mathrm{id}_N$. Then $n = s^{\circ f(n)}(n_0)$ holds, because $g(m) = s^{\circ m}(n_0)$ for all $m : \mathbb{N}$, by induction on $m$, and we can replace $n$ with $g(f(n))$. The inverseness of $f$ and $g$ also implies that $g$ is injective, hence the uniqueness part of
Def.~\ref{nat-cons-def}(ii).
\end{proof} 

The second part of the argument uses the inductively defined instance of a natural type, to show a property for all natural types. This also ocurred in the proof of Prop.~\ref{cons-ind-prod-Def}, when we discussed Cartesian products.

Starting with the number of iterations of the successor function needed to produce an element of the type when applied on the zero object, gives another characterisation of natural types:
\begin{prop} \label{ext-nat-char}   
A type $N$ with a function $i : N \rightarrow \mathbb{N}$ is natural if, and only if, for all $m : \mathbb{N}$ there exists a unique $n$ such that $i(n) = m$.
\end{prop}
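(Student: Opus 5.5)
The plan mirrors Prop.~\ref{prod-proj-calc-char}. There, the projections recovered the arguments of the constructor. Here the role of the projections is played by the ``counting'' map $i : N \rightarrow \mathbb{N}$, which recovers the number of iterations of the successor needed to produce an element. I read the statement as saying that $N$ carries a natural structure $(n_0, s)$ as in Def.~\ref{nat-cons-def} whose iteration count is $i$ if, and only if, $i$ is bijective in the sense stated. I prove both directions directly from Def.~\ref{nat-cons-def}, avoiding the universal property of Prop.~\ref{nat-ind-def}, since the characterisation by free generatedness is the more efficient one.

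For the forward direction, let $N$ be natural with $n_0$ and $s$. Def.~\ref{nat-cons-def}(ii) assigns to each $n : N$ a unique $i(n) : \mathbb{N}$ with $n = s^{\circ i(n)}(n_0)$. Because this datum is unique, it is a proposition, so $i$ is a genuine function and no choice is involved. Given $m : \mathbb{N}$, the element $n := s^{\circ m}(n_0)$ satisfies $i(n) = m$ by the uniqueness part of (ii). If also $i(n') = m$, then $n' = s^{\circ i(n')}(n_0) = s^{\circ m}(n_0) = n$. This gives the required unique existence.

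For the converse, assume that for every $m$ there is a unique $n$ with $i(n) = m$. Write $j : \mathbb{N} \rightarrow N$ for the resulting inverse function, again well defined because unique existence is a proposition. Then $i \circ j = \mathrm{id}_{\mathbb{N}}$ by construction, and $j \circ i = \mathrm{id}_N$ by uniqueness, using function extensionality. Now define $n_0 := j(0)$ and $s := j \circ \mathtt{succ} \circ i$, which gives Def.~\ref{nat-cons-def}(i). For (ii), first show $s^{\circ m}(n_0) = j(m)$ by induction on $m : \mathbb{N}$. The base case is the definition of $n_0$. The step case is $s(j(m)) = j(\mathtt{succ}(i(j(m)))) = j(\mathtt{succ}(m))$. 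Applying this with $m = i(n)$ gives $n = j(i(n)) = s^{\circ i(n)}(n_0)$, which is the existence part. If $n = s^{\circ k}(n_0) = j(k)$ for some $k$, then applying $i$ yields $k = i(n)$, which is the uniqueness part. It also shows that the iteration count of this natural structure is exactly the given $i$.

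The only delicate step is this induction identifying $s^{\circ m}(n_0)$ with $j(m)$. It is the point where the recursive nature of the constructor enters, as opposed to the non-recursive product case. It is also where one must make sure that $s$ is defined through $i$ and $j$ rather than chosen independently. Everything else is bookkeeping with unique existence and function extensionality. As in the proof of Prop.~\ref{nat-ind-def}, the argument relies on the inductively defined natural type $\mathbb{N}$ being available.
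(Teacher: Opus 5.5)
Your proof is correct and follows the paper's route. In both, the inverse of $i$ is the iterate map $m \mapsto s^{\circ m}(n_0)$, and the converse transports $0$ and $\mathtt{succ}$ along the bijection; the paper cites the inverseness from the universal-property argument in the proof of Prop.~\ref{nat-ind-def} and only sketches the converse, whereas you verify the inverseness directly from the uniqueness in Def.~\ref{nat-cons-def}(ii) and spell out the induction $s^{\circ m}(n_0) = j(m)$, which makes your version slightly more self-contained.
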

\begin{proof}
If $N$ is natural, the condition Def.~\ref{nat-cons-def}(ii) provides the function $i : N \rightarrow \mathbb{N}$. In the proof of Prop.~\ref{nat-ind-def}, we already constructed an inverse map to $i$, hence the existence and uniqueness.

Conversely, existence and uniqueness mean nothing else than $i$ being invertible. So we can extract $n_0$ and $s : N \rightarrow N$, together with their required properties, from $\mathbb{N}$ using $i$ and its inverse map. 
\end{proof}

The existence of an inverse of a map from $N$ to $\mathbb{N}$ automatically implies that $N$ is a set because $\mathbb{N}$ is a set (see \cite[Ex.12.3.2 \& Prop.12.4.5]{Rij22}). 
So to ensure unique equalities between natural types there is no need to restrict them to sets, as was the case for Cartesian products.

The categorical version of this characterisation identifies a natural type with a terminal object in the category of types with a map to $\mathbb{N}$:
\begin{prop}   
A type $N$ with a function $i : N \rightarrow \mathbb{N}$ is natural if, and only if, for all types $A$ with a function $j : A \rightarrow \mathbb{N}$ there is a unique function $f : A \rightarrow N$ such that $j = i \circ f$.
\end{prop}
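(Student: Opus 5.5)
We prove the statement by reducing it to Prop.~\ref{ext-nat-char}, in the same way as the categorical characterisation of Cartesian products by projections was reduced to Prop.~\ref{prod-proj-calc-char}. So we show that ``for all $m : \mathbb{N}$ there is a unique $n : N$ with $i(n) = m$'' is equivalent to the universal property in the statement.

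\emph{Forward direction.} Assume $N$ is natural with $i : N \rightarrow \mathbb{N}$. By Prop.~\ref{ext-nat-char}, for every $m : \mathbb{N}$ the type $\Sigma_{n : N}\, i(n) = m$ is contractible. Hence $i$ has an inverse $i^{-1} : \mathbb{N} \rightarrow N$. Given $A$ with $j : A \rightarrow \mathbb{N}$, put $f :\equiv i^{-1} \circ j$. Then $i(f(a)) = j(a)$ for all $a$, and function extensionality gives $j = i \circ f$. For uniqueness, let $f^\prime$ be another map with $i \circ f^\prime = j$. For each $a$, both $f(a)$ and $f^\prime(a)$ are elements $n$ with $i(n) = j(a)$, so by uniqueness in Prop.~\ref{ext-nat-char} they are equal. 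Function extensionality then gives $f = f^\prime$.

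The one subtlety is that ``unique function $f$ with $j = i \circ f$'' should mean that the type $\Sigma_f\,(j = i \circ f)$ is contractible, not merely that the underlying maps agree. This is where I expect to spend the most care. Since $N$ is a set (noted after Prop.~\ref{ext-nat-char}) and $\mathbb{N}$ is a set, the type $j = i \circ f$ is a proposition, because function types into sets are sets. So equality of the underlying maps suffices. Alternatively, the whole $\Sigma$-type is equivalent to $\Pi_{a : A} \Sigma_{n : N}\, i(n) = j(a)$, a product of contractible types, and is therefore contractible.

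\emph{Backward direction.} Assume the universal property. As in the product case, apply it to $A :\equiv \mathbbm{1}$ with $j(\ast) :\equiv m$, for a given $m : \mathbb{N}$. Maps $\mathbbm{1} \rightarrow N$ correspond to elements $n : N$, and the condition $j = i \circ f$ corresponds to $i(n) = m$ via function extensionality on $\mathbbm{1}$. So the universal property gives a unique $n$ with $i(n) = m$, and Prop.~\ref{ext-nat-char} yields that $N$ is natural.

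A second route avoids the transfer along $\mathbbm{1}$. Take $A :\equiv \mathbb{N}$ with $j :\equiv \mathrm{id}$ to get $g$ with $i \circ g = \mathrm{id}$. Then take $A :\equiv N$ with $j :\equiv i$: both $g \circ i$ and $\mathrm{id}_N$ factor $i$, so uniqueness gives $g \circ i = \mathrm{id}_N$. Thus $i$ is invertible, and Prop.~\ref{ext-nat-char} applies. I would present this second version, since it needs no transport of the equality condition along $\mathbbm{1}$.
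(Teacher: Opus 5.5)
Your proposal is correct and follows the paper's proof. Your forward direction is exactly the paper's $f = i^{-1}\circ j$. Your preferred backward route is precisely the ``usual yoga'' the paper invokes: apply the property to $\mathrm{id}_{\mathbb{N}}$ to get $g$ with $i\circ g = \mathrm{id}$, then use uniqueness for $j = i$ to get $g\circ i = \mathrm{id}_N$. The paper writes ``$f : N \rightarrow \mathbb{N}$'' there, which should read $\mathbb{N} \rightarrow N$. Your extra care about contractibility of $\Sigma_f\,(j = i\circ f)$ and your alternative via $\mathbbm{1}$ are both sound, but they go beyond what the paper's argument needs.
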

\begin{proof}
If $N$ satisfies the previous characterisation, $i$ is invertible, hence $f = i^{-1} \circ j$ is the unique function with $j = i \circ f$, for given type $A$ with $j : A \rightarrow \mathbb{N}$. Conversely, the usual yoga of universal properties shows that the function $f: N \rightarrow \mathbb{N}$ associated to $\mathrm{id} : \mathbb{N} \rightarrow \mathbb{N}$ is inverse to $i$. 
\end{proof}

It may look tautological to characterize natural types using the natural numbers $\mathbb{N}$ --- especially in the characterisation of Prop.~\ref{ext-nat-char}, which simply states that there is an invertible map from the type to $\mathbb{N}$. 
But the only way to describe the recursive generation of objects in a type is by a recursively constructed type. There are systematic recipes to find such types, for example the well-founded trees of the $W$-types (see \cite[5.3]{HoTTBook}). But in this article, we will supply them on an ad-hoc basis, and the simplest one for a natural type is $\mathbb{N}$. For our purposes, it is more important to realize that the resulting characterisations contain proper mathematical substance and have non-trivial consequences, as the example of unary and binary natural numbers in Sec.~\ref{univ-ssec} shows: Constructing a bijection between $\mathbb{N}$ and $\mathrm{Bin}\mathbb{N}$ that respects the zero object and successor function and other algebraic operations on both sets is necessary to use the more efficient calculation algorithms on binary natural numbers for statements on unary natural numbers.   

\section{Universal Properties from Higher Inductive Constructions}

The results of the previous section, as long as they did not use univalence, are valid for any foundations in dependent type theory with inductive constructions. But for more advanced concepts than the natural numbers and Cartesian products (of sets), and especially for algebraic ones, using a central feature of Homotopy Type Theory can pay off immensely: the so-called higher inductive types, for short HITs. 

\subsection{Set quotients}  \label{set-quot-ssec}

We start discussing HITs on one of the basic examples, set quotients. They are used for the fundamental construction of identifying all the elements in the same equivalence class of an equivalence relation on a set, with one element in a new set, the set quotient. The formalisation of this construction, and especially its efficiency, are a difficult issue in many formal proof systems (see for example \cite{Setoid}). 

In HoTT, \textit{relations} are given by maps $R : A \rightarrow A \rightarrow \mathtt{Prop}$ associating pairs of objects of a set $A$ to \textit{propositions}, that are types whose objects, if they exist, are all equal. \textit{Equivalence relations} require that $R$ is \textit{reflexive}, that is,  $R(a,a)$ is a \textit{true} (or inhabited) proposition, and that $R$ is \textit{symmetric} and \textit{transitive}. The \textit{equivalence class} of $a : A$ is the (dependent pair) type of all objects $b : A$ such that $R(a, b)$ is true. 

It is straightforward to characterize a set quotient of an equivalence relation:
\begin{Def} \label{set-quot-def}  
A set $Q$ is a \textbf{set quotient} of an equivalence relation $R : A \rightarrow A \rightarrow \mathtt{Prop}$ on a set $A$ if
\begin{itemize}
\item[(i)] there is \textit{quotient map} $q : A \rightarrow Q$, 
\item[(ii)] the map $q$ is surjective, that is, for all $\overline{a} : Q$ there (merely) exists $a : A$ such that $q(a) = \overline{a}$, and
\item[(iii)] for all $a, b : A$, $q(a) = q(b)$ if, and only if $R(a, b)$ is true. 
\end{itemize}
\end{Def}

Let us first clarify once again the meaning of ``merely" in Def.~\ref{set-quot-def}. It weakens the requirement of having a proper function $s : Q \rightarrow A$ such that $q(s(\overline{a})) = \overline{a}$ for all $\overline{a} : Q$ (in other words, a \textit{section} of $q$). Instead, it only postulates the existence of an $a$ such that $q(a) = \overline{a}$, without further specifying how to obtain this $a$. This is enough to state that the map $q$ is \textit{surjective}. As noted in \ref{prop-set-ssec}, postulating the ``mere existence" instead of outright existence is mainly done to avoid a choice, which would not matter anyway, and not because choices cannot be made. We just need to be aware that we only can use an explicit preimage if we want to show a proposition or construct a uniquely determined object.

In some special cases of equivalence relations it is easy to identify such set quotients directly.
\begin{exm}  
On a set $A$, the relation given by $R(a, b) := (a =_A b)$ is an equivalence relation because there is the reflexive path for any object $a : A$ and equality paths have inverses and can be concatenated. Obviously, $A$ is a set quotient by this equivalence relation. 
\end{exm} 

\begin{exm}    
If $q: A \rightarrow B$ is a map from a type $A$ to a set $B$ then the image $Q$ of $q$ is a set quotient of $A$ by the equivalence relation $R$ given by $R(a, a^\prime) := q(a) =_B q(a^\prime)$. Note that $b : B$ is in the image of $q$ if there \textit{merely} exists $a : A$ with $q(a) = b$, which allows to deduce Def.~\ref{set-quot-def}(ii).
\end{exm}

\begin{exm} \label{surj-equiv-exm}  
On a set $A$, the relation given by $R(a,b) := \mathbbm{1}$, the unit type with only object $\ast : \mathbbm{1}$, is obviously an equivalence relation, because $R(a,b)$ holds for all $a, b : A$. A set quotient of $A$ by $R$ is simply $\mathbbm{1}$, with quotient map $a \mapsto \ast$, if $A$ is (merely) inhabited by an object $a : A$.
\end{exm}

To construct a set quotient as characterized in Def.~\ref{set-quot-def} for arbitrary equivalence relations it is not enough to introduce a constructor $q : A \rightarrow Q$ as in a standard inductive construction since free generatedness would imply that $q(a) \neq q(b)$ as soon as $a \neq b$, even if $R(a, b)$ is true. A \textit{higher inductive type} also allows constructors for equalities between its objects, and for the set quotient we need a constructor which produces for each $a, b : A$ with true $R(a, b)$ an equality $q(a) = q(b)$. 

Since we still want the constructors of a HIT to satisfy free generation, the new equalities $q(a) = q(b)$ are at least not a priori equal to any other equality in the identity type $q(a) = q(b)$. For example, $R(a, a)$ is always true, hence we obtain a second equality besides the reflexive equality $\mathtt{refl}_a$ produced by Martin-L\"of's inductive construction of equality. If we do not want these equalities to differ, we must introduce a further higher constructor, which for any objects $\overline{a}, \overline{b} : Q$ and equalities $r, s : \overline{a} = \overline{b}$ produces an equality $r = s$. In other words, this constructor makes the set quotient into a set, by \textit{truncating} all the higher equalities. Therefore it is called the \textit{set truncation}.

The recursion principle of a higher inductive type delivers for any type equipped with the same structure a function from the HIT to the type, also mapping the constructors to that structure. If we denote the HIT constructed above from a set $A$ and an equivalence relation $R$ on $A$ by $A/R$, with quotient map $q_R : A \rightarrow A/R$, this means that for each set $B$ with a map $f : A \rightarrow B$ such that $f(a) = f(b)$ whenever $R(a, b)$ is true, there is a map $g : A/R \rightarrow B$ such that $f = g \circ q_R$ and if $R(a, b)$ is true the equality $f(a) = f(b)$ is obtained by applying $g$ on the equality $q_R(a) = q_R(b)$. We need that $B$ is a set because of the set truncation constructor, and thus the last property of $g$ above is automatically satisfied.

The induction principle is the analogue of the recursion principle for dependent types $B$ associating a set $B(\overline{a})$ to every $\overline{a} : A/R$. If we have a dependent function $f$ mapping $a : A$ to an object of $B(q(a))$ such that $f(a)$ is transported to $f(a^\prime) : B(q_R(a^\prime))$ along the equality $q_R(a) = q_R(a^\prime)$ holding if $R(a, a^\prime)$ is true then induction delivers a function $g$ mapping $\overline{a} : A/R$ to an object of $B(\overline{a})$ such that $f = g \circ q_R$. 

\begin{prop} \label{HIT-set-quot-prop}  
For a set $A$ and an equivalence relation $R$ on $A$, the HIT $A/R$ with quotient map $q_R : A \rightarrow A/R$ is a set quotient.
\end{prop}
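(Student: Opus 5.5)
We check the three conditions of Def.~\ref{set-quot-def} for $Q := A/R$ with quotient map $q := q_R$. Condition (i) is immediate from the point constructor. $A/R$ is a set by the set truncation constructor, so Def.~\ref{set-quot-def} applies to it at all.

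For (ii) we use the induction principle of $A/R$ with the dependent type
\[ B(\overline{a}) :\equiv \| \Sigma_{a : A}\, q_R(a) = \overline{a} \|. \]
Each $B(\overline{a})$ is a proposition, hence a set, so induction is allowed. For $a : A$ we take $f(a) := t((a, \mathtt{refl}))$. The compatibility condition asks that $f(a)$ transported along $q_R(a) = q_R(a')$ equals $f(a')$. This holds automatically, because both live in the proposition $B(q_R(a'))$. Induction then gives a section of $B$, which is exactly mere surjectivity.

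Condition (iii) splits into two directions. The direction ``$R(a,b)$ true implies $q_R(a) = q_R(b)$'' is just the path constructor. The converse, that an equality $q_R(a) = q_R(b)$ forces $R(a,b)$, is the main obstacle. It needs an encode-decode argument as in the case of $\mathbbm{2}$, with univalence providing the code family.

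For the converse, I would first define $\mathtt{code} : A/R \rightarrow A/R \rightarrow \mathtt{Prop}$ with $\mathtt{code}(q_R(a), q_R(b)) \equiv R(a,b)$. This is done by double recursion into $\mathtt{Prop}$, which is a set by univalence and propositional extensionality. Fixing $a$, the map $b \mapsto R(a,b)$ respects $R$: if $R(b,b')$ holds, then $R(a,b)$ and $R(a,b')$ are logically equivalent by symmetry and transitivity. Since both are propositions, they are equivalent as types and hence equal by univalence. This gives a map $\mathtt{code}(q_R(a), -) : A/R \rightarrow \mathtt{Prop}$. The outer recursion in $a$ works the same way: for $R(a,a')$ the two resulting functions agree pointwise, by a further induction on the second argument whose goal is an equality in the set $\mathtt{Prop}$, hence a proposition. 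They are then equal by function extensionality, and the function type $A/R \rightarrow \mathtt{Prop}$ is a set, so the recursion applies.

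Next I would define $\mathtt{encode} : (\overline{a} = \overline{b}) \rightarrow \mathtt{code}(\overline{a}, \overline{b})$ by transporting a proof of $\mathtt{code}(\overline{a}, \overline{a})$ along the given equality. So it suffices to show $\mathtt{code}(\overline{a}, \overline{a})$ for all $\overline{a}$. This goal is a proposition, so by induction it reduces to $R(a,a)$, which is reflexivity. Applied to $\overline{a} = q_R(a)$ and $\overline{b} = q_R(b)$, $\mathtt{encode}$ turns $q_R(a) = q_R(b)$ into a proof of $R(a,b)$. No decode map or inverseness is needed, since we only want the logical equivalence demanded in (iii). The delicate points are that every target of recursion (namely $\mathtt{Prop}$, $A/R \rightarrow \mathtt{Prop}$, and the equality goals) is a set, and the use of univalence to turn logically equivalent propositions into equal ones.
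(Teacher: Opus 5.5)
Your proof is correct and follows essentially the same route as the paper, which in turn follows Lemma 10.1.8 of the HoTT Book. Both get surjectivity by induction into a family of propositions, descend $R$ via univalence to a $\mathtt{Prop}$-valued relation on $A/R$, and obtain the converse of (iii) by transport/path induction plus reflexivity of $R$. The one difference is that the paper also builds the decode direction for arbitrary $\overline{a}, \overline{b} : A/R$ (via surjectivity) to get a full equivalence $(\overline{a} = \overline{b}) \simeq \widetilde{R}(\overline{a}, \overline{b})$, whereas you rightly note that Def.~\ref{set-quot-def}(iii) only needs it at $q_R(a), q_R(b)$, where it is just the path constructor.
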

\begin{proof}
We can apply induction to show surjectivity of $q_R$: For $a : A$ we obviously (merely) have $a : A$ such that $q_R(a) = q_R(a)$, and the transport condition is satisfied because the dependent types are propositions.

The proof of Def.~\ref{set-quot-def}(iii) is a little bit more involved; we follow \cite[Lem.10.1.8]{HoTTBook}. The first step to characterise equalities in $A/R$ is to descend the relation $R$ on $A$ to a relation $\widetilde{R} : A/R \rightarrow A/R \rightarrow \mathtt{Prop}$ on $A/R$. We can do this by double induction applied on the function $R$; if $R(a,a^\prime)$ and $R(b, b^\prime)$ hold then symmetry and transitivity of $R$ show that $R(a,b)$ and $R(a^\prime, b^\prime)$ are equivalent propositions and therefore equal, by univalence.

Similarly, we can show that for all $\overline{a}, \overline{b} : A$, the identity type $\overline{a} = \overline{b}$ is equivalent to $\widetilde{R}(\overline{a}, \overline{b})$. One direction follows by path induction and the reflexivity of $R$, hence $\widetilde{R}$. The other direction uses that all the types involved are propositions, hence by surjectivity of $q_R$ we only have to produce a map from $\widetilde{R}(q_R(a), q_R(b)) = R(a, b)$ to $q_R(a) = q_R(b)$ for all $a, b : A$, which is simply the higher constructor of the HIT $A/R$. 
\end{proof}

If we add uniqueness the recursion principle translates to a universal property of set quotients equivalent to Def.~\ref{set-quot-def}.
\begin{prop} \label{set-quot-univ-prop}  
A set $Q$ is a set quotient of a set $A$ by an equivalence relation $R$ on $A$ and with quotient map $q : A \rightarrow Q$ where $R(a, b)$ implies $q(a) = q(b)$ if, and only if for all sets $B$ with a map $f : A \rightarrow B$ such that $f(a) = f(b)$ whenever $R(a, b)$ is true, there is a unique map $g : Q \rightarrow B$ such that $f = g \circ q$.
\end{prop}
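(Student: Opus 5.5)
We prove the two directions separately, using the HIT $A/R$ of Prop.~\ref{HIT-set-quot-prop} as an auxiliary object in the converse direction, in the same way as $A \times B$ and $\mathbb{N}$ were used in Prop.~\ref{cons-ind-prod-Def} and Prop.~\ref{nat-ind-def}.

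\textbf{From Def.~\ref{set-quot-def} to the universal property.} Let $Q$ be a set quotient with quotient map $q$, and let $f : A \rightarrow B$ be a map to a set $B$ that respects $R$.

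\emph{Uniqueness.} If $g, g^\prime : Q \rightarrow B$ both satisfy $f = g \circ q = g^\prime \circ q$, then by function extensionality it suffices to show $g(\overline{a}) = g^\prime(\overline{a})$ for every $\overline{a} : Q$. Since $B$ is a set, this is a proposition. So we may use induction on propositional truncation, via surjectivity Def.~\ref{set-quot-def}(ii), to pick $a$ with $q(a) = \overline{a}$. Then $g(\overline{a}) = f(a) = g^\prime(\overline{a})$.

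\emph{Existence.} This is the main obstacle, because the target $B$ is not a proposition, so we cannot simply pick a preimage. Instead we define, for each $\overline{a} : Q$, the type
\[ P(\overline{a}) :\equiv \Sigma_{b : B}\, \| \Sigma_{a : A}\, (q(a) = \overline{a}) \times (f(a) = b) \|. \]
First we show that $P(\overline{a})$ is a proposition. Given two inhabitants $(b, u)$ and $(b^\prime, u^\prime)$, it is enough to prove $b = b^\prime$, since the second components lie in a proposition and $B$ is a set. The claim $b = b^\prime$ is itself a proposition, so we may untruncate $u$ and $u^\prime$ to obtain $a, a^\prime$ with $q(a) = \overline{a} = q(a^\prime)$. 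Def.~\ref{set-quot-def}(iii) then gives $R(a, a^\prime)$, hence $b = f(a) = f(a^\prime) = b^\prime$.

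Next we show that $P(\overline{a})$ is inhabited. Because it is a proposition, we may again use surjectivity to pick $a$ with $q(a) = \overline{a}$ and take $(f(a), |(a, \mathtt{refl}, \mathtt{refl})|)$.

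We then set $g(\overline{a})$ to be the first component of this inhabitant. For $a : A$ the witness $(a, \mathtt{refl}, \mathtt{refl})$ shows $g(q(a)) = f(a)$, and function extensionality gives $f = g \circ q$.

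\textbf{From the universal property to Def.~\ref{set-quot-def}.} Condition (i) and the implication ``$R(a,b)$ implies $q(a) = q(b)$'' hold by hypothesis. Apply the universal property to $q_R : A \rightarrow A/R$, which is legitimate because $A/R$ is a set respecting $R$. This yields $h : Q \rightarrow A/R$ with $h \circ q = q_R$. Conversely, the recursion principle of $A/R$ yields $k : A/R \rightarrow Q$ with $k \circ q_R = q$.

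Uniqueness in the universal property of $Q$, applied to the map $q$ itself, shows $k \circ h = \mathrm{id}_Q$, since both sides composed with $q$ give $q$. The induction principle of $A/R$, whose target here is a family of propositions, shows $h \circ k = \mathrm{id}_{A/R}$. Hence $Q$ and $A/R$ are equivalent by an equivalence carrying $q_R$ to $q$.

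The properties (ii) and (iii) of Def.~\ref{set-quot-def} hold for $A/R$ by Prop.~\ref{HIT-set-quot-prop}. They transfer to $Q$ along this equivalence, using $q = k \circ q_R$ and the injectivity of $k$.
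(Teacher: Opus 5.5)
Your proposal is correct and follows the paper's proof essentially step for step. It uses the same surjectivity argument for uniqueness, the same ``unique $b$ with a merely existing preimage $a$'' construction for existence (which you make explicit as the proposition $P(\overline{a})$), and the same comparison with the HIT $A/R$ via mutually inverse maps for the converse. The only cosmetic difference is that you obtain $k : A/R \rightarrow Q$ and $h \circ k = \mathrm{id}_{A/R}$ directly from the recursion and induction principles of $A/R$, whereas the paper gets them by applying the first direction to $A/R$, which is a set quotient by Prop.~\ref{HIT-set-quot-prop}.
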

\begin{proof}
We will show first that for a set quotient $Q$ with quotient map $q : A \rightarrow Q$ and a set $B$ with $f : A \rightarrow B$ such that $f(a) = f(b)$ if $R(a, b)$ is true, there is at most one function $g : Q \rightarrow B$ such that $g(q(a)) = f(a)$ for all $a : A$. If we have two such functions $g, g^\prime$ then for a given $\overline{a} : Q$, the identity type $g(\overline{a}) = g^\prime(\overline{a})$ will be a proposition, since $B$ is a set. Surjectivity gives us $a : A$ such that $q(a) = \overline{a}$, hence 
\[ g(\overline{a}) = g(q(a)) = g^\prime(q(a)) = g^\prime(\overline{a}), \]
as requested (by function extensionality).

Next, we construct such a function $g : Q \rightarrow B$: For each $\overline{a} : Q$ we need $b : B$ such that there (merely) exists an $a : A$ with $q(a) = \overline{a}$ and $f(a) = b$. Such a $b$ is unique: If $b^\prime$ is another such object then we can show $b = b^\prime$ by using propositional truncation, as $B$ is a set. We can assume that we have $a : A$ with $q(a) = \overline{a}$, $f(a) = b$ and $a^\prime : A$ with $q(a^\prime) = \overline{a}$, $f(a^\prime) = b^\prime$. Def.~\ref{set-quot-def}(iii) implies $R(a, a^\prime)$ is true, hence also $b = f(a) = f(a^\prime) = b^\prime$. Applying once again propositional truncation we obtain an $a : A$ with $q(a) = \overline{a}$ and can choose $b := f(a)$.

Conversely, we can apply the recursion principle for $Q$ on the HIT $A/R$ with quotient map $q_R$, as constructed above, because by Prop.~\ref{HIT-set-quot-prop} condition \ref{set-quot-def}(iii) holds for $A/R$. We thus obtain a map $g : Q \rightarrow A/R$ with $q_R = g \circ q$. The first part of the proof and Prop.~\ref{HIT-set-quot-prop} yield a map $h : A/R \rightarrow Q$ such that $q = h \circ q_R$. As usual, uniqueness also implies that $h \circ g = \mathrm{id}_Q$ and $g \circ h = \mathrm{id}_{A/R}$. Consequently, $g$ is injective and surjective, hence $q$ satisfies Def.~\ref{set-quot-def}(ii) and (iii).
\end{proof}

\begin{prop} \label{set-quot-eq-prop}  
Two set quotients of a set $A$ by an equivalence relation $R$ are equal, with quotient maps equal after transport. 
\end{prop}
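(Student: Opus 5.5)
Given two set quotients $(Q, q)$ and $(Q', q')$ of $A$ by $R$, I will use Prop.~\ref{set-quot-univ-prop} to build mutually inverse maps. The map $q$ satisfies the hypothesis "$R(a,b)$ implies $q(a)=q(b)$" by Def.~\ref{set-quot-def}(iii), and likewise for $q'$. The universal property of $Q$, applied to the set $Q'$ with $f := q'$, gives a unique $g : Q \rightarrow Q'$ with $q' = g \circ q$. Symmetrically, there is a unique $h : Q' \rightarrow Q$ with $q = h \circ q'$.

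Then $h \circ g : Q \rightarrow Q$ satisfies $(h\circ g)\circ q = q$. The identity satisfies the same equation, so uniqueness in Prop.~\ref{set-quot-univ-prop}, applied with $B := Q$ and $f := q$, gives $h \circ g = \mathrm{id}_Q$. Likewise $g \circ h = \mathrm{id}_{Q'}$. Since $Q$ and $Q'$ are sets, these two inverse maps already form an equivalence, with no adjointification data to check. By univalence (the proposition in Sec.~\ref{univ-ssec} identifying equalities of sets with bijections) we obtain $p : Q = Q'$ whose underlying equivalence is $g$.

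It remains to see that the quotient maps agree after transport. Transport of $q : A \rightarrow Q$ along $p$ in the family $X \mapsto (A \rightarrow X)$ is post-composition with the equivalence induced by $p$. This is a standard lemma proved by path induction on $p$, combined with the computation rule of univalence, which says that $\mathrm{transport}^{X \mapsto X}(\mathrm{ua}(g), x) = g(x)$. Hence the transported map is $g \circ q$, which equals $q'$ by construction, using function extensionality. The statement about the full structure then follows from the characterisation of equality in $\Sigma$-types from Sec.~1.1. The remaining data of a set quotient, namely being a set, surjectivity and condition (iii), are propositions, so they transport automatically. Hence $(Q,q) = (Q',q')$ as dependent pairs.

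The main obstacle is the transport step, i.e.\ making precise that transport along $\mathrm{ua}(g)$ in the function-type family is post-composition with $g$. I would first prove the general lemma for arbitrary $p$ by path induction, reducing the $\mathtt{refl}$ case to the identity. Then I would specialise it via the $\beta$-rule of univalence.
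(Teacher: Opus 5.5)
Your proof is correct and follows the paper's route. Like the paper, you build mutually inverse maps $g, h$ from the universal property in Prop.~\ref{set-quot-univ-prop}, apply univalence, and read off that the transported quotient map is $g \circ q = q'$. You also make explicit two points the paper leaves implicit: the transport-in-function-types lemma, and the fact that the remaining conditions of Def.~\ref{set-quot-def} are propositions.
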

\begin{proof}
We apply univalence to the equivalence between the quotients obtained by the usual yoga on the universal property in Prop.~\ref{set-quot-univ-prop}. The construction of the equivalence in the proof of Prop.~\ref{set-quot-univ-prop}
shows equality of the quotient maps after transport.
\end{proof}

Note that the uniqueness statement above once again implies equality of the quotient sets together with (transported) equality of the structure of a quotient set  --- in the language of \cite[11.6]{Rij22}, equality of quotient sets and their quotient maps is a dependent identity system for quotient sets together with their structure. 
It is actually more efficient to apply the Structure Identity Principle on the structure of a quotient set given in Def.~\ref{set-quot-def}.

\begin{exm} \label{well-def-exm} 
The universal property of set quotients justifies the widespread construction of \textit{well-defined} functions: Given a surjective map $q: A \rightarrow B$ from a type $A$ to a set $B$ we can construct a unique map $g: B \rightarrow C$ to a set $C$ if we have a map $f: A \rightarrow C$ such that $q(a) = q(a^\prime)$ implies $f(a) = f(a^\prime)$. This is just the universal property applied to $B$ interpreted as the set quotient of $A$ by the equivalence relation defined in Ex.~\ref{surj-equiv-exm}.
\end{exm}

We can also extract generators from elements of the quotient set and thus interpret it as a terminal object:
\begin{prop}  \label{sec-quot-char-prop}
A set $Q$ is a set quotient of a set $A$ by an equivalence relation $R$ on $A$ and with quotient map $q : A \rightarrow Q$ where $R(a, b)$ implies $q(a) = q(b)$ if, and only if there merely exists a section $s : Q \rightarrow A$, that is, for all $a : A$ there is a $q$ such that $R(s(q), a)$ holds, and $R(s(q_1), s(q_2))$ always implies $q_1 = q_2$. 
\end{prop}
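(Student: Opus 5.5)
The statement is slightly garbled; I read it as follows. $Q$ with $q$ (where $R(a,b)$ implies $q(a)=q(b)$) is a set quotient if and only if there merely exists $s : Q \rightarrow A$ such that two conditions hold. First, for all $a : A$ there is $\overline{a} : Q$ with $R(s(\overline{a}), a)$. Second, $R(s(\overline{a}_1), s(\overline{a}_2))$ implies $\overline{a}_1 = \overline{a}_2$. I also expect that I will need $s$ to be compatible with $q$, namely $q(s(\overline{a})) = \overline{a}$.

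The plan is to prove both directions via the characterisation in Def.~\ref{set-quot-def}. Being a set quotient is a proposition, since $Q$ is a set and conditions (ii) and (iii) are propositions. So in the backward direction I may apply induction on propositional truncation and work with an actual section $s$.

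\textbf{Backward direction.} I would argue in three steps.
\begin{itemize}
\item \emph{Surjectivity of $q$.} Given $\overline{a} : Q$, the element $s(\overline{a})$ is a preimage, provided $q(s(\overline{a})) = \overline{a}$. This is the compatibility condition. If it is not part of the hypotheses, I would derive it from the second condition. Take $\overline{b}$ with $R(s(\overline{b}), s(\overline{a}))$, which exists by the first condition applied to $a = s(\overline{a})$. Then injectivity modulo $R$ gives $\overline{b} = \overline{a}$. I still need to connect $q$ with $s$. I suspect the cleanest route is to use the first condition with $a$ and set $\overline{a} := q(a)$, i.e.\ to read the hypothesis as $R(s(q(a)), a)$. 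Then $q(s(q(a))) = q(a)$ by the hypothesis on $q$, and surjectivity follows.
\item \emph{Condition (iii), ``if'' part.} $R(a,b) \Rightarrow q(a) = q(b)$ is assumed.
\item \emph{Condition (iii), ``only if'' part.} Suppose $q(a) = q(b)$. Then $R(s(q(a)), a)$ and $R(s(q(b)), b)$ hold, and $s(q(a)) = s(q(b))$. Symmetry and transitivity then give $R(a,b)$.
\end{itemize}

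\textbf{Forward direction.} This direction is the real content, and I expect it to be the main obstacle. Given a set quotient, I must produce merely a section, i.e.\ a choice of representative in each class. Surjectivity gives, for each $\overline{a} : Q$, an element of $\| \Sigma_{a:A}\, q(a) = \overline{a} \|$. Since $Q$ and $A$ are sets, $\mathrm{AC}_0$ from Sec.~\ref{prop-set-ssec} yields an element of $\| \Pi_{\overline{a}:Q} \Sigma_{a:A}\, q(a)=\overline{a} \|$.

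Mapping into the truncated target, I then obtain $s$ with $q \circ s = \mathrm{id}_Q$. The two conditions follow from this:
\begin{itemize}
\item $R(s(q(a)), a)$ holds by Def.~\ref{set-quot-def}(iii), since $q(s(q(a))) = q(a)$.
\item $R(s(\overline{a}_1), s(\overline{a}_2))$ gives $\overline{a}_1 = q(s(\overline{a}_1)) = q(s(\overline{a}_2)) = \overline{a}_2$.
\end{itemize}

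I expect the proof must invoke $\mathrm{AC}_0$ here and should remark on this. Without choice, the forward direction fails in general, because sections of quotient maps need not exist constructively.
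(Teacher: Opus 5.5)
This is essentially the paper's proof. The forward direction is $\mathrm{AC}_0$ applied to the surjectivity condition Def.~\ref{set-quot-def}(ii), and the backward direction eliminates the truncation because the goal is a proposition. Your reason for the latter is more direct than the paper's appeal to Prop.~\ref{set-quot-eq-prop}: with $q$ fixed, (ii) and (iii) are propositions.

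Your suspicion that $s$ must be tied to $q$ is correct. Read literally, the backward direction fails: take $A=Q$ with two elements, $R$ equality, $s=\mathrm{id}$ and $q$ constant. The paper implicitly uses ``section'' in the sense introduced after Def.~\ref{set-quot-def}, namely $q\circ s=\mathrm{id}_Q$, and leaves the verification of (ii) and (iii) unstated.

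One step in your write-up needs repair. The identity $q(s(q(a)))=q(a)$ only says that $q\circ s$ fixes points already in the image of $q$, so surjectivity does not follow from it. Instead, given $\overline{a}:Q$, apply your reading $R(s(q(a)),a)$ at $a:=s(\overline{a})$. Injectivity modulo $R$ then gives $q(s(\overline{a}))=\overline{a}$; this is just your earlier ``take $\overline{b}$'' step with $\overline{b}:=q(s(\overline{a}))$.

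If you take $q\circ s=\mathrm{id}_Q$ as the hypothesis instead, surjectivity is immediate. The relation $R(s(q(a)),a)$ then follows from the first condition, since any witness $\overline{a}$ with $R(s(\overline{a}),a)$ satisfies $\overline{a}=q(s(\overline{a}))=q(a)$.
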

\begin{proof}
If $Q$ is a set quotient in the sense of Def.~\ref{set-quot-def}, the existence of a section is actually equivalent to the Axiom of Choice $\mathrm{AC}_0$ as discussed in \ref{prop-set-ssec}, applied on Def.~\ref{set-quot-def}(ii). 

Conversely, we can apply induction on propositional truncation, because Prop.~\ref{set-quot-eq-prop} states that set quotients in the sense of Def.~\ref{set-quot-def} are all equal.
\end{proof}

It is awkward but not surprising that we need the Axiom of Choice to produce a section because we indeed need to choose it among the possibly many existing. On the other hand, incorporating the datum of a particular section into the characterisation of a quotient set would destroy its uniqueness.

\begin{prop}  
A set $Q$ is a set quotient of a set $A$ by an equivalence relation $R$ on $A$ and with quotient map $q : A \rightarrow Q$ where $R(a, b)$ implies $q(a) = q(b)$ if, and only if there merely exists a map $s : Q \rightarrow A$ such that for each map $s_P : P \rightarrow A$ from a set $P$ there exists a unique map $f : P \rightarrow Q$ with 
$R(s_P(p), s(f(p))$ for all $p : P$.
\end{prop}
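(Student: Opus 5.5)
We reduce the statement to the section characterisation in Prop.~\ref{sec-quot-char-prop}. Both sides are propositions: being a set quotient is one (Def.~\ref{set-quot-def} consists of propositions, since $Q$ is a set and $R$ is $\mathtt{Prop}$-valued), and the right-hand side is propositionally truncated. So we may argue by induction on propositional truncation in both directions, and work with an actual map $s : Q \rightarrow A$. It therefore suffices to show the following for a fixed $s$, under the standing hypothesis that $R(a,b)$ implies $q(a) = q(b)$:
\[ s \text{ is a section in the sense of Prop.~\ref{sec-quot-char-prop}} \iff s \text{ has the universal property for all } s_P : P \rightarrow A. \]
The whole statement then follows from Prop.~\ref{sec-quot-char-prop}.

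For the direction ``section $\Rightarrow$ universal property'', let $s_P : P \rightarrow A$ be given and set $f := q \circ s_P$.

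\emph{Existence.} We need $R(s_P(p), s(q(s_P(p))))$. I read the section condition as saying that $s$ picks representatives, i.e. $q(s(\overline{a})) = \overline{a}$. By Def.~\ref{set-quot-def}(iii) this gives $R(s(q(a)), a)$ for every $a$, and symmetry finishes.

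\emph{Uniqueness.} Suppose $f^\prime$ also satisfies $R(s_P(p), s(f^\prime(p)))$. By transitivity and symmetry, $R(s(f(p)), s(f^\prime(p)))$ holds. The section's injectivity clause ($R(s(q_1), s(q_2))$ implies $q_1 = q_2$) then gives $f(p) = f^\prime(p)$, and function extensionality gives $f = f^\prime$. Since $Q$ is a set, the type of such $f$ is a proposition, so no coherence issues arise.

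For the converse, apply the universal property to $P := A$ and $s_P := \mathrm{id}_A$. This yields a unique $f : A \rightarrow Q$ with $R(a, s(f(a)))$ for all $a$, which is the first clause of the section condition with $f(a)$ as the witness.

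For the injectivity clause, suppose $R(s(q_1), s(q_2))$. Apply the universal property to $P := \mathbbm{1}$ and $s_P : \ast \mapsto s(q_1)$. Both maps $\ast \mapsto q_1$ and $\ast \mapsto q_2$ satisfy the required relation: the first by reflexivity, the second by the hypothesis. Uniqueness forces them to be equal, hence $q_1 = q_2$.

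The main obstacle is the interpretation of the section condition in Prop.~\ref{sec-quot-char-prop}, and its link with the given quotient map $q$. That condition is phrased only through $R$ and $s$, so we must check that the $q$ in the statement is the one determined by $s$. From $R(a, s(f(a)))$ we get $q(a) = q(s(f(a)))$. We need $q \circ s = \mathrm{id}_Q$ to conclude $q = f$.

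I would try to establish $q \circ s = \mathrm{id}_Q$ from the universal property applied to $P := Q$ and $s_P := s$. Both $\mathrm{id}_Q$ and $q \circ s$ are candidates there, but showing that $q \circ s$ is one requires $R(s(\overline{a}), s(q(s(\overline{a}))))$, which loops back to the same question. If this fails, I would instead strengthen the reading of ``section'' to include $q \circ s = \mathrm{id}_Q$, as the word suggests. The converse direction then has to show $q \circ s = \mathrm{id}_Q$, using the fact (which I would expect to have to derive) that $q$ and $f$ agree up to $R$.
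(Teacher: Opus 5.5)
Your clause-by-clause comparison with Prop.~\ref{sec-quot-char-prop} is essentially the paper's own proof. The paper reduces to that proposition and uses maps $\mathbbm{1} \rightarrow A$ for the injectivity clause, and you do the same, with $P := A$, $s_P := \mathrm{id}_A$ supplying the witnesses. The gap is the one you flag at the end, and it cannot be closed as you plan, because the right-hand side of the statement never mentions $q$.

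Take $A = Q = \mathbbm{2}$, $R(a,b) := (a =_{\mathbbm{2}} b)$, $q$ the constant map with value $0_{\mathbbm{2}}$, and $s := \mathrm{id}$. The standing hypothesis on $q$ holds trivially. For every $s_P : P \rightarrow \mathbbm{2}$, the only $f$ with $s_P(p) = f(p)$ for all $p$ is $f = s_P$, so the right-hand side holds. Yet $q$ is not surjective and violates Def.~\ref{set-quot-def}(iii). Hence no choice of $P$ and $s_P$, including your loop through $P := Q$, can yield $q \circ s = \mathrm{id}_Q$ or any relation between $q$ and $f$. Strengthening the reading of ``section'' in Prop.~\ref{sec-quot-char-prop} to include $q \circ s = \mathrm{id}_Q$ only moves the unprovable step into your converse direction.

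The paper's proof has the same hole. The ``that is'' description of a section in Prop.~\ref{sec-quot-char-prop} also omits $q$, and read that way, that proposition is refuted by the same example.

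The repair is to build the link into the right-hand side: require that $s$ be a section of $q$, i.e.\ $q \circ s = \mathrm{id}_Q$. Alternatively, drop the given $q$ and use as quotient map the $f$ produced by $s_P := \mathrm{id}_A$. Your own observation then closes the converse directly, without Prop.~\ref{sec-quot-char-prop}:
\begin{itemize}
\item For the $f$ belonging to $s_P := \mathrm{id}_A$, the relation $R(a, s(f(a)))$ and the standing hypothesis give $q(a) = q(s(f(a))) = f(a)$, so $q = f$.
\item Surjectivity of $q$ is witnessed by $s$.
\item If $q(a) = q(b)$, then $R(a, s(f(a)))$ and $R(b, s(f(a)))$ hold, hence $R(a,b)$ by symmetry and transitivity.
\end{itemize}

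In the forward direction, a mere section comes from $\mathrm{AC}_0$, as in the paper, and your existence argument works. Note, though, that it uses Def.~\ref{set-quot-def}(iii). That condition is available only because $Q$ is assumed to be a quotient in this direction, not from the standing hypothesis alone. Uniqueness then needs no injectivity clause: $R(s_P(p), s(f^\prime(p)))$ gives $q(s_P(p)) = q(s(f^\prime(p))) = f^\prime(p)$, so $f^\prime = q \circ s_P$.
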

\begin{proof}
We just need to show that the characterisations in this proposition and Prop.~\ref{sec-quot-char-prop} imply each other. To show that $s$ is a section, consider suitable maps $s_\mathbbm{1} : \mathbbm{1} \rightarrow A$.  
\end{proof}

The higher inductive construction of a set quotient $A/R$ of a type $A$ also works when $R : A \rightarrow A \rightarrow \mathtt{Prop}$ is not an equivalence relation, but an arbitrary relation:
\begin{itemize}
\item Objects are generated by the quotient map $q : A \rightarrow A/R$, and
\item for all $a, b : A$, we get an equality $q(a) = q(b)$ if $R(a, b)$ is true.
\end{itemize}

This set quotient, with the same quotient map, is actually equal to the set quotient of $A$ by the \textit{equivalence relation} $\overline{R} : A \rightarrow A \rightarrow \mathtt{Prop}$ \textit{generated by the relation} $R$, which can be constructed inductively:
\begin{itemize}
\item $\overline{R}(a, b)$ holds if $R(a, b)$ is true, for all $a, b : A$;
\item $\overline{R}(a, a)$ holds, for all $a : A$;
\item if $\overline{R}(a, b)$ is true then also $\overline{R}(b, a)$, for all $a, b : A$; 
\item $\overline{R}(a, b)$ and $\overline{R}(b, c)$ imply $\overline{R}(a, c)$, for all $a, b, c : A$.
\end{itemize}

However, this does not \textit{freely} generate the relation $\overline{R}(a,b)$. For example, $\overline{R}(a,b)$ can also be produced by transitivity from $\overline{R}(a,a)$ and $\overline{R}(a,b)$. Thus, relations $\overline{R}(a,b)$ are characterized by the \textit{mere existence} of an inductive construction following the rules above.

The reason why $A/R$ also includes equalities induced by reflexive, symmetric and transitive relations, is that a higher inductive construction \textit{freely} generates the equalities that it introduces, but the inductive construction of equalities prescribed by the Martin-L\"of definition is still in place. Thus if $q : A \rightarrow A/R$ is the quotient map for $R$, the reflexive identity gives us $q(a) = q(a)$, reversing equalities yields $q(b) = q(a)$ from $q(a) = q(b)$, and transitivity is simply concatenating path equalities.

\begin{prop}  
Let $R : A \rightarrow A \rightarrow \mathtt{Prop}$ be a relation on a set $A$. Then the set quotient of $A$ by $R$ is equal to the set quotient of $A$ by $\overline{R}$, the equivalence relation generated by $R$, and the quotient maps are equal after transport. 
\end{prop}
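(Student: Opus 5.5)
The plan is to show that the HIT $A/R$, with its quotient map $q : A \rightarrow A/R$, satisfies the characterisation of Def.~\ref{set-quot-def} for the equivalence relation $\overline{R}$. Prop.~\ref{set-quot-eq-prop} then identifies it with any set quotient of $A$ by $\overline{R}$, in particular with the HIT $A/\overline{R}$, and it also gives equality of the quotient maps after transport.

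\textbf{Step 1.} Condition~(i) of Def.~\ref{set-quot-def} holds by construction, since $q$ is the quotient map of the HIT. Condition~(ii), surjectivity of $q$, follows by induction on $A/R$ exactly as in the proof of Prop.~\ref{HIT-set-quot-prop}. The target type is a proposition, so only the point constructor has to be handled.

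\textbf{Step 2.} For the direction ``$\overline{R}(a,b)$ implies $q(a)=q(b)$'' of condition~(iii), we note that $q(a)=q(b)$ is a proposition because $A/R$ is a set. So we may use induction on the mere existence of an inductive derivation of $\overline{R}(a,b)$, as in the discussion preceding the proposition. The four generating rules are handled as follows:
\begin{itemize}
\item a generator $R(a,b)$ gives $q(a)=q(b)$ by the higher constructor;
\item reflexivity gives $\mathtt{refl}$;
\item symmetry gives the inverse path;
\item transitivity gives the concatenation of paths.
\end{itemize}

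\textbf{Step 3.} The converse direction ``$q(a)=q(b)$ implies $\overline{R}(a,b)$'' is the main obstacle. Here I would follow the encode-decode argument of Prop.~\ref{HIT-set-quot-prop}, but with $\overline{R}$ in place of $R$. By recursion on $A/R$ in each variable, the propositions $\overline{R}(a,-)$ descend to a relation $\widetilde{R} : A/R \rightarrow A/R \rightarrow \mathtt{Prop}$. This recursion is permitted because $\mathtt{Prop}$ is a set by univalence. The required compatibility is that $R(b,b^\prime)$ implies $\overline{R}(a,b) = \overline{R}(a,b^\prime)$. This follows from symmetry and transitivity of $\overline{R}$, together with $R \subseteq \overline{R}$ and propositional univalence.

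We then show that $\overline{a}=\overline{b}$ implies $\widetilde{R}(\overline{a},\overline{b})$ by path induction, using reflexivity of $\overline{R}$. Specialising to $q(a)=q(b)$ yields $\overline{R}(a,b)$. The delicate points are the double induction and checking that the compatibility conditions are exactly what the recursion principle demands, which is why descending along $R$ rather than $\overline{R}$ matters.

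\textbf{Alternative for Steps 2 and 3.} One can instead verify the universal property of Prop.~\ref{set-quot-univ-prop} for $A/R$ with respect to $\overline{R}$. A map $f : A \rightarrow B$ into a set respects $R$ if and only if it respects $\overline{R}$. The ``only if'' part is again induction on the derivation, with $f(a)=f(b)$ a proposition. Hence the recursion principles of $A/R$ and of $A/\overline{R}$ coincide. The usual yoga then produces mutually inverse maps $A/R \leftrightarrows A/\overline{R}$ commuting with the quotient maps, and univalence turns these into the desired equality. I would present this argument as a cross-check.
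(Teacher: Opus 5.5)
Your proposal is correct and follows the paper's proof: you show that the HIT $A/R$ satisfies Def.~\ref{set-quot-def} for $\overline{R}$, with surjectivity by induction and condition (iii) by descending $\overline{R}$ to $\widetilde{R}$ on $A/R$, and then conclude with Prop.~\ref{HIT-set-quot-prop} and Prop.~\ref{set-quot-eq-prop}. Your Step~2, which uses induction on the merely existing derivation of $\overline{R}(a,b)$, simply spells out the direction of the equivalence $(\overline{a}=\overline{b}) \simeq \widetilde{R}(\overline{a},\overline{b})$ that the paper leaves implicit.
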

\begin{proof}
By Prop.~\ref{HIT-set-quot-prop} and Prop.~\ref{set-quot-eq-prop}, it is enough to show that $A/R$ is a set quotient of $A$ by the equivalence relation $\overline{R}$. 

Surjectivity follows by (higher) induction on the objects of $A/R$, since surjectivity is defined by a proposition.

Def.~\ref{set-quot-def}(iii) follows for $A/R$ in the same way as it follows for $A/\overline{R}$: We descend the equivalence relation $\overline{R}$ to the relation $\widetilde{R} : A/R \rightarrow A/R \rightarrow \mathtt{Prop}$ and then construct an equivalence $(\overline{a} = \overline{b}) \simeq \widetilde{R}(\overline{a}, \overline{b})$ for all $\overline{a}, \overline{b} : A/R$.
\end{proof}

\begin{rem}
Note that despite this proposition it is not possible to relax the characterisation in Def.~\ref{set-quot-def}(iii) to $q(a) = q(b)$ if $R(a, b)$ is true. The uniqueness of quotients would get lost; for example, the trivial map to $\mathbbm{1}$ would always be a quotient map.
\end{rem}

\begin{rem}   
It is possible to define the set quotient $A \sslash R$ of a type $A$ by an equivalence relation as the set of equivalence classes, described as predicates $A \rightarrow \mathtt{Prop}$. It is equivalent to the set quotients above (see \cite[Thm.6.10.6]{HoTTBook}, but it requires a lift of the universe unless we allow \textit{propositional resizing}.
\end{rem}

\subsection{Subsets}

Before we continue to use HITs for constructing types with algebraic structure we need to discuss \textit{subsets} of sets, as the underlying type of algebraic subobjects.

Usually subsets are described as \textit{predicates} $A \rightarrow \mathtt{Prop}$, reflecting the fact that (a propositionally resized) $\mathtt{Prop}$ is a classifier in the category of sets (see \cite[Thm.10.1.12]{HoTTBook}). But alternatively, a subset can be described as a set $B$ together with an injective map $\iota : B \rightarrow A$, which corresponds to the definition of \textit{subobjects} in category theory. This point of view pays off when algebraic structures get involved, as I will explain below. 

Before showing that both definitions are coinciding, let us remove possible concerns about the second one: Of course, there are different ways to embed a set into another set without changing the image. However, two injections $\iota : B \rightarrow A$, $\iota^\prime : B^\prime \rightarrow A$ will have the same image if, and only if there is a bijection $f : B \rightarrow B^\prime$ such that $\iota = f \circ \iota^\prime$, and $f$ is uniquely determined by $\iota$ and $\iota^\prime$. The bijection therefore induces a unique equality between the pairs $(B, \iota)$ and $(B^\prime, \iota^\prime)$, by univalence.

\begin{prop} \label{set-equiv-def-prop}   
Let $A$ be a set. Then the type of predicates $A \rightarrow \mathtt{Prop}$ and the type of pairs of sets $B$ and injections $\iota : B \rightarrow A$ are equivalent. 
\end{prop}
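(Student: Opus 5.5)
I will construct explicit maps in both directions and show that they are inverse to each other. By the remark on adjointification in Sec.~\ref{univ-ssec}, this yields an equivalence.

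\emph{From predicates to subsets.} Given $P : A \rightarrow \mathtt{Prop}$, take $B :\equiv \Sigma_{a : A} P(a)$ with $\iota$ the first projection. $B$ is a set, since it is a $\Sigma$-type over a set with propositional fibres. $\iota$ is injective: if $\iota(a,x) = \iota(a',x')$, we get $p : a = a'$, and the transport of $x$ along $p$ equals $x'$ because $P(a')$ is a proposition. The characterisation of equality in $\Sigma$-types then gives $(a,x) = (a',x')$.

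\emph{From subsets to predicates.} Given $(B,\iota)$, define $P(a) :\equiv \Sigma_{b : B}\, \iota(b) = a$, the fibre of $\iota$ over $a$. This is a proposition: for $(b,p),(b',p')$ in the fibre, $\iota(b) = a = \iota(b')$ gives $b = b'$ by injectivity, and the transported path agrees with $p'$ because $A$ is a set.

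\emph{Round trip starting from a predicate.} The fibre of the first projection $\Sigma_{a:A}P(a) \rightarrow A$ over $a$ is equivalent to $P(a)$, via the standard $\Sigma$-type equivalence $\Sigma_{(a',x)}(a' = a) \simeq P(a)$, which contracts the pair $(a', a' = a)$. Since both sides are propositions, logical equivalence already suffices, and univalence applied to propositions gives equality. Function extensionality then gives equality of the predicates.

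\emph{Round trip starting from a subset.} This is the expected main obstacle, because it is the step where the ``pairs up to bijection'' must become an honest equality in a $\Sigma$-type over the universe. Starting from $(B,\iota)$, we obtain $(\Sigma_{a:A}\Sigma_{b:B}\,\iota(b) = a,\ \mathrm{pr}_1)$. There is a bijection $f : B \rightarrow \Sigma_{a}\Sigma_b\, \iota(b)=a$ given by $b \mapsto (\iota(b), b, \mathtt{refl})$. Its inverse is the projection to $b$, and it is inverse by contracting the based path space $\Sigma_a\,(\iota(b) = a)$. By Prop.~1.3 (bijections of sets correspond to equalities), $f$ yields an equality $B = \Sigma_a\Sigma_b\,\iota(b)=a$. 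To conclude equality of the pairs, I will use the $\Sigma$-type criterion: transport of $\iota$ along $\mathrm{ua}(f)$ is $\iota \circ f^{-1}$, which equals $\mathrm{pr}_1$ pointwise, and hence equals it by function extensionality. Injectivity is a proposition, so that component needs no check; this is exactly the observation made before the proposition about unique equalities of pairs $(B,\iota)$.

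Two points need care. The first is the universe level: the predicates land in $\mathtt{Prop}$ of the same universe as $B$. The second is that the transport lemma for function types along univalence, $\mathrm{transport}^{X \mapsto (X \rightarrow A)}(\mathrm{ua}(f), \iota) = \iota \circ f^{-1}$, must be invoked explicitly. This is the step I would verify first, using path induction after reducing to $f$ being an identity equivalence.
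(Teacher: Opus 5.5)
Your proposal is correct and takes the same route as the paper: in one direction the $\Sigma$-type $\Sigma_{a:A}P(a)$ with its first projection, and in the other the fibres $\Sigma_{b:B}\,\iota(b)=a$. The paper only asserts that the two constructions are inverse, and your round-trip arguments correctly fill in that step: contracting based path spaces, applying univalence to $b \mapsto (\iota(b),b,\mathtt{refl})$, and transporting $\iota$ to $\iota\circ f^{-1}$.
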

\begin{proof}
Given a predicate $B : A \rightarrow \mathtt{Prop}$ all the pairs of $a : A$ and inhabitants (or proofs) of $B(a)$ form a set which we also call $B$, and the projection to $a$ is an injection, because $B(a)$ is a proposition and thus has at most one inhabitant, up to equality. Conversely, an injection $\iota : B \rightarrow A$ associates to every $a$ the type of pairs of $b : B$ and equalities $\iota(b) = a$, and injectivity guarantees that these types are propositions. Both constructions are easily shown to be inverse to each other. 
\end{proof}

By univalence, this implies that the two types describing subsets are actually equal. The equivalence inducing the equality actually shows that both ways of constructing a subset a priori require the same amount of work: To define a predicate $B : A \rightarrow \mathtt{Prop}$ we need to construct a type for each $a : A$ and check that all these types are propositions. For an injective map $\iota : B \hookrightarrow A$, we need to introduce a type $B$, associate an $a : A$ to each $b : B$, and check that the type of pairs consisting of an element $b : B$ and an equality $p : \iota(b) = a$ is a proposition for each $a : A$. Note that the proof of Prop.~\ref{set-equiv-def-prop} also shows how injectivity automatically implies that $B$ is a set, so we do not need to verify that fact. 

In specific cases one way of defining a subset may look more natural, considering previously provided information. But the equivalence of both ways
always allows us to denote a subset $B$ of a set $A$ by $B \subseteq A$, and for $a : A$ the element relation $a \in B$ checks whether the proposition $B(a)$ is true or equivalently, whether $a$ is in the image of $\iota$.

\subsection{Monoids}  \label{monoid-ssec}

As a first algebraic structure defined on sets $M$ we will discuss \textit{monoids}. They are sets provided with
\begin{itemize}
\item a composition map $\cdot : M \rightarrow M \rightarrow M$;
\item a neutral element $e_M : M$;
\item associativity: for all $m_1, m_2, m_3 : M$, we have 
\[ (m_1 \cdot m_2) \cdot m_3 = m_1 \cdot (m_2 \cdot m_3); \]
\item neutrality: for all $m : M$, we have $m \cdot e_M = e_M \cdot m = m$;
\end{itemize}

A monoid is \textit{commutative} if furthermore,
\begin{itemize}
\item $m \cdot n = n \cdot m$, for all $m, n : M$.
\end{itemize}
The most basic example of a (commutative) monoid is the unit monoid $\mathbbm{1}$, whose only object will be identified with the neutral element.

A homomorphism between monoids $M, N$ is a map $f : M \rightarrow N$ of the underlying sets satisfying
\begin{itemize}
\item $f(e_M) = e_N$, and
\item $f(m_1 \cdot m_2) = f(m_1) \cdot f(m_2)$ for all $m_1, m_2 : M$.
\end{itemize}

There always is the trivial homomorphism $u : M \rightarrow N$ between two monoids, mapping each element of $M$ to the neutral element $e_N : N$.

It is straightforward to show that the identity map is a homomorphism, that homomorphisms $f : N \rightarrow M$, $g : M \rightarrow L$ between monoids $N, M, L$ can be composed to a homomorphism $g \circ f : N \rightarrow L$, and that the composition of (identity) homomorphisms satisfies the usual laws: This makes monoids into a \textit{precategory}, in the sense discussed in \ref{cat-ssec}. 
The additional property turning it into a category, namely the identification of isomorphisms and equalities, is most efficiently shown by constructing it as an iterated \textit{concrete category} over sets, \textit{magmas} (sets with a binary operation) and \textit{semigroups} (magmas whose binary operations are associative). In each step we can derive an equality of isomorphic objects of the concrete category from the induced isomorphisms of the underlying objects in the base category, satisfying the additional properties of a homomorphism in the concrete category, and we start the process by producing an equality from isomorphisms between sets, that is an equivalence of the underlying types, using univalence.

Finally, homomorphisms of monoids are uniquely determined by the map of their underlying sets. In categorical terms: The forgetful functor from monoids to sets is faithful.

Rearranging the material in \cite[6.11]{HoTTBook} we now discuss \textit{free monoids} on sets. Their universal property can be used to characterise such free monoids.

\begin{Def}  
A monoid $F$ with a map $h : A \rightarrow F$ is a \textbf{free monoid} on a set $A$ if for all monoids $M$ and maps $h^\prime : A \rightarrow M$ there is a unique homomorphism $f : F \rightarrow M$ such that $h^\prime = f \circ h$.
\end{Def}

When interpreting the universal property as the higher recursive principle of a higher inductive type $F(A)$ on $A$, the constructors of the HIT will consist of
\begin{itemize}
\item a function $h : A \rightarrow F(A)$;
\item a function $m : F(A) \rightarrow F(A) \rightarrow F(A)$;
\item an element $e : F(A)$;
\item for each $x, y, z : F(A)$, an equality $m(x,m(y, z)) = m(m(x, y), z)$;
\item for each $x : F(A)$, equalities $m(x, e) = x$ and $m(e, x) = x$;
\item the $0$-truncation constructor: for any $x, y : F(A)$ and $p, p^\prime : x = y$, we have $p = p^\prime$.
\end{itemize} 

However, in the case of a free monoid, this higher inductive construction is not necessary (in contrast to free groups, see~\ref{groups-ssec}): There is a canonical way to present elements of the free monoid over a set $A$ by \textit{words}, with objects of $A$ as \textit{letters}, and ``canonical" means that equality between two words can be computably decided. Words can be encoded in the type $\mathrm{List}(A)$ of lists with entries in $A$: Objects $a : A$ are embedded into $\mathrm{List}(A)$ as one-letter words $h(a) := [a]$, two words are associatively multiplied by concatenating them, and the empty list $[]$ is a neutral element. It also easy to show that $\mathrm{List}(A)$ is a set if $A$ is a set.
\begin{prop}[{\cite[Lem.6.11.5]{HoTTBook}}] \label{list-free-monoid-prop}
$\mathrm{List}(A)$ is a free monoid over the set $A$.
\end{prop}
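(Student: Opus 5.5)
We first check that $\mathrm{List}(A)$ with $h(a) := [a]$ is a monoid. Concatenation $\ell_1 \cdot \ell_2$ is defined by recursion on the first list: $[] \cdot \ell_2 :\equiv \ell_2$ and $(a :: \ell_1) \cdot \ell_2 :\equiv a :: (\ell_1 \cdot \ell_2)$. With this definition, left neutrality $[] \cdot \ell = \ell$ holds judgmentally. Right neutrality $\ell \cdot [] = \ell$ and associativity $(\ell_1 \cdot \ell_2)\cdot \ell_3 = \ell_1 \cdot (\ell_2 \cdot \ell_3)$ follow by induction on $\ell$ resp.\ $\ell_1$. Since $\mathrm{List}(A)$ is a set, these equalities are propositions, so no coherence data is needed.

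Next, given a monoid $M$ and a map $h' : A \rightarrow M$, we construct $f : \mathrm{List}(A) \rightarrow M$ by list recursion: $f([]) :\equiv e_M$ and $f(a :: \ell) :\equiv h'(a) \cdot f(\ell)$. We then verify three things.
\begin{itemize}
\item $f(h(a)) = f([a]) = h'(a)\cdot e_M = h'(a)$ by neutrality; function extensionality then gives $h' = f \circ h$.
\item $f$ preserves the neutral element by definition.
\item $f(\ell_1 \cdot \ell_2) = f(\ell_1)\cdot f(\ell_2)$, proved by induction on $\ell_1$. The base case is $f(\ell_2) = e_M \cdot f(\ell_2)$. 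The step is
\[ f(a :: (\ell_1\cdot \ell_2)) = h'(a)\cdot (f(\ell_1)\cdot f(\ell_2)) = (h'(a)\cdot f(\ell_1))\cdot f(\ell_2), \]
using the induction hypothesis and associativity in $M$.
\end{itemize}
Since the forgetful functor from monoids to sets is faithful and the homomorphism conditions are propositions ($M$ is a set), this map of sets determines a homomorphism.

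For uniqueness, let $g : \mathrm{List}(A) \rightarrow M$ be any homomorphism with $h' = g \circ h$. We show $g(\ell) = f(\ell)$ by induction on $\ell$. In the base case, $g([]) = e_M = f([])$ because $g$ preserves the neutral element. In the step, we write $a :: \ell = [a]\cdot \ell$, which holds judgmentally by the definition of concatenation. Multiplicativity then gives
\[ g(a :: \ell) = g([a])\cdot g(\ell) = h'(a)\cdot f(\ell) = f(a :: \ell). \]
Function extensionality yields $g = f$ as maps of sets, and faithfulness of the forgetful functor gives equality of homomorphisms. Equality of the homomorphism-proof components is automatic, since these are propositions.

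The main obstacle is essentially bookkeeping rather than mathematics. We must choose the recursion variable for concatenation so that the identity $a :: \ell = [a]\cdot \ell$ is available definitionally, which keeps both the uniqueness step and the multiplicativity induction short. We must also make sure every equality we need lives in a set, so that no higher coherences have to be produced; this is guaranteed because $M$ and $\mathrm{List}(A)$ are sets.
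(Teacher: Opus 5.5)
Your proposal is correct and follows the paper's argument. You define $f$ by list recursion with $f([]) = e_M$ and $f(a :: \ell) = h'(a)\cdot f(\ell)$, check the homomorphism laws, and get uniqueness by induction on lists, which the paper phrases as $h' \mapsto f$ being inverse to $f \mapsto f \circ h$; you simply supply the details the paper calls straightforward, including the monoid laws on $\mathrm{List}(A)$ and the fact that all equalities involved are propositions because $M$ and $\mathrm{List}(A)$ are sets.
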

\begin{proof}
If $h^\prime : A \rightarrow M$ is a map from $A$ to a monoid $M$, we define the homomorphism $f : \mathrm{List}(A) \rightarrow M$ inductively by mapping $[]$ to $e_G$ and $[a, b, \ldots]$ to $m(h^\prime(a), f([b, \ldots]))$. The homomorphism laws are straightforward, and we can show by induction on lists that $h^\prime \mapsto f$ is right-inverse to $f \mapsto f \circ h$.
\end{proof}

We can characterize products of two monoids using products of the underlying sets.
\begin{Def}    
A monoid $P$ is called a product of two monoids $M$ and $N$ if the underlying set is a product of the underlying sets of $M$ and $N$ and the projection maps are underlying maps of monoid homomorphisms $p_M : P \rightarrow M$ and  $p_N : P \rightarrow N$.
\end{Def}
Then it is easy to check that the product set $M \times N$ with identity $(e_M, e_N)$, where $e_M$ and  $e_N$ are the identities of $M$ and $N$, and multiplication $(m_1, n_1) \cdot (m_2, n_2) = (m_1 \cdot m_2, n_1 \cdot n_2)$ for all $m_1, m_2 : M$, $n_1, n_2 : N$, is a product monoid. 

The adjointness of the construction of free monoids to the forgetful functor allows us to show the universal property of products in the category of monoids:
\begin{prop}  
A monoid $P$ is a product of two monoids $M$ and $N$ if, and only if there are two projection homomorphisms $p_M : P \rightarrow M$ and $p_N : P \rightarrow N$ and for all monoids $Q$ with monoid homomorphisms $q_M : Q \rightarrow M$ and  $q_N : Q \rightarrow N$ there exists a unique monoid homomorphism $f : Q \rightarrow P$ such that $q_M = p_M \circ f$ and $q_N = p_N \circ f$.
\end{prop}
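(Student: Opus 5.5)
For the forward direction, assume $P$ is a product of $M$ and $N$ in the sense of the definition above: its underlying set is a Cartesian product of the underlying sets with projections $p_M, p_N$, and these are monoid homomorphisms. Given a monoid $Q$ with homomorphisms $q_M, q_N$, apply the projection characterisation of Cartesian products to the underlying maps. This gives a unique map of sets $f : Q \rightarrow P$ with $p_M \circ f = q_M$ and $p_N \circ f = q_N$.

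It remains to check that $f$ is a homomorphism. Both $f(e_Q)$ and $e_P$ project to $e_M$ and $e_N$, because $q_M, q_N, p_M, p_N$ preserve neutral elements. Likewise, $f(x \cdot y)$ and $f(x) \cdot f(y)$ have the same projections $q_M(x)\cdot q_M(y)$ and $q_N(x) \cdot q_N(y)$. By the uniqueness part of Prop.~\ref{prod-proj-calc-char}(ii), the two elements are equal in each case. Uniqueness of $f$ as a homomorphism then follows from uniqueness of the underlying map, since the forgetful functor from monoids to sets is faithful.

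For the converse, assume the categorical universal property. We must show that the underlying set of $P$, with the underlying maps of $p_M, p_N$, satisfies Prop.~\ref{prod-proj-calc-char}(ii): for each $a : M$ and $b : N$ there is a unique $p : P$ with $p_M(p) = a$ and $p_N(p) = b$. The set-level argument used the unit type $\mathbbm{1}$, but the maps $\ast \mapsto a$ are not homomorphisms. Instead we probe with the free monoid $\mathrm{List}(\mathbbm{1})$ of Prop.~\ref{list-free-monoid-prop}. By freeness, homomorphisms $\mathrm{List}(\mathbbm{1}) \rightarrow M$ correspond exactly to elements of $M$, via $g \mapsto g([\ast])$. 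So the homomorphisms $[\ast] \mapsto a$ and $[\ast] \mapsto b$ yield a unique homomorphism $f : \mathrm{List}(\mathbbm{1}) \rightarrow P$ with the right projections, and $p := f([\ast])$ satisfies $p_M(p) = a$ and $p_N(p) = b$.

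The main obstacle is uniqueness. Given $p'$ with the same projections, let $f'$ be the homomorphism determined by $[\ast] \mapsto p'$. Then $p_M \circ f'$ and $q_M$ are homomorphisms out of a free monoid agreeing on $[\ast]$, so they are equal by freeness; the same holds for $N$. The uniqueness in the universal property gives $f = f'$, hence $p = p'$. This is exactly where the adjointness of free monoids to the forgetful functor is used.
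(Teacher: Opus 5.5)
Your proof is correct and follows the paper's approach. The forward direction is identical. For the converse you use the adjointness of free monoids to the forgetful functor, as the paper does, except that the paper probes with the free monoid $F(Q)$ on an arbitrary set $Q$ to recover the set-level universal property, while you probe only with $\mathrm{List}(\mathbbm{1})$ to get Prop.~\ref{prod-proj-calc-char}(ii) directly (as the paper itself does in Prop.~\ref{mono-mon-inj-prop}), which also spells out the uniqueness step the paper leaves terse.
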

\begin{proof}
If $P$ is a product of $M$ and $N$ and $Q$ is a monoid with homomorphisms $q_M : Q \rightarrow M$ and  $q_N : Q \rightarrow N$, then the map $f : Q \rightarrow P$ induced by the universal property of set products satisfies the properties of a monoid homomorphism:
\begin{itemize}
\item The identity $e_Q$ of $Q$ is mapped to the identities $e_M$, $e_N$ of $M$ and $N$ by $q_N$ and $q_M$, as is the identity $e_P$ of $P$ by $p_M$ and $p_N$, hence $e_Q$ is mapped to $e_P$ by $f$, using Prop.~\ref{prod-proj-calc-char}(iii).
\item For all $m_1, m_2 : Q$, the product $f(m_1) \cdot f(m_2)$ in $P$ is mapped to $q_M(m_1) \cdot q_M(m_2)$ in $M$ by $p_M$ and to $q_N(m_1) \cdot q_N(m_2)$ in $N$ by $p_N$, hence $f(m_1 \cdot m_2) = f(m_1) \cdot f(m_2)$, using again Prop.~\ref{prod-proj-calc-char}(iii).
\end{itemize}

Conversely, the underlying set of $P$ must be a product of the underlying sets of $M$ and $N$: If $Q$ is a \textit{set} with maps $q_M : Q \rightarrow M$, $q_N : Q \rightarrow N$ to (the underlying sets of) $M$ and $N$, the maps $q_M$ and $q_N$ induce monoid homomorphisms $q_M^\prime : F(Q) \rightarrow M$ and $q_N^\prime : F(Q) \rightarrow N$ from the free monoid $F(Q)$ on $Q$ to $M$ and $N$. The homomorphism $F(Q) \rightarrow P$ induced by the universal property, composed with the embedding $Q \rightarrow F(Q)$, shows the existence of a map $Q \rightarrow P$. The uniqueness follows from the uniqueness statements for free monoids and the universal property.
\end{proof}

A \textit{submonoid} of a monoid $M$ is a subobject of $M$ in the category of monoids, that is, a monoid $N$ together with a \textit{monomorphism} $\iota: N \rightarrow M$. 

\begin{Def} \label{monomorphism-def}  
A homomorphism $f : N \rightarrow M$ of monoids is called a monomorphism if for all monoid homomorphisms $g_1, g_2 : L \rightarrow N$ we have the implication 
\[ f \circ g_1 = f \circ g_2\ \Rightarrow\ g_1 = g_2. \] 
\end{Def}

Since the forgetful functor from monoids to sets is faithful, a monoid homomorphism will be a monomorphism if the underlying map of sets is a monomorphism, that is, if it is injective. 
The converse is also true:
\begin{prop}   \label{mono-mon-inj-prop}
The underlying map of sets of a monomorphism $f : N \rightarrow M$ of monoids $N, M$ is injective.
\end{prop}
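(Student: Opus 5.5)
We test injectivity against a free monoid, using the universal property of $\mathrm{List}(A)$ from Prop.~\ref{list-free-monoid-prop}. The natural test object is the free monoid on one generator, $L := \mathrm{List}(\mathbbm{1})$. A monoid homomorphism $L \rightarrow N$ is uniquely determined by the image of the single letter $h(\ast) = [\ast]$, and any element of $N$ can occur as that image. So $L$ plays the role of the ``point'' $\mathbbm{1}$ that was used for Cartesian products in the category of sets.

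The steps are as follows.

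\textbf{Step 1.} Let $n_1, n_2 : N$ with $f(n_1) = f(n_2)$. Define the maps $h_i : \mathbbm{1} \rightarrow N$ by $\ast \mapsto n_i$. By Prop.~\ref{list-free-monoid-prop} these extend to unique homomorphisms $g_1, g_2 : \mathrm{List}(\mathbbm{1}) \rightarrow N$ with $g_i([\ast]) = n_i$.

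\textbf{Step 2.} The homomorphisms $f \circ g_1$ and $f \circ g_2 : \mathrm{List}(\mathbbm{1}) \rightarrow M$ both restrict along $h : \mathbbm{1} \rightarrow \mathrm{List}(\mathbbm{1})$ to the maps $\ast \mapsto f(n_1)$ and $\ast \mapsto f(n_2)$. These maps are equal by function extensionality and our assumption. The uniqueness part of the universal property of the free monoid then gives $f \circ g_1 = f \circ g_2$ as homomorphisms. Here we use that homomorphisms are determined by their underlying maps, i.e.\ faithfulness of the forgetful functor, so the equality of homomorphisms is simply an equality of maps.

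\textbf{Step 3.} Since $f$ is a monomorphism (Def.~\ref{monomorphism-def}), we get $g_1 = g_2$. Evaluating at $[\ast]$ yields $n_1 = g_1([\ast]) = g_2([\ast]) = n_2$. This is exactly injectivity of the underlying map.

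The only delicate point is Step 2. We must make sure that the uniqueness clause in the definition of a free monoid is applied to the correct comparison map $h^\prime := f \circ h_1 = f \circ h_2 : \mathbbm{1} \rightarrow M$. We also need that $\mathbbm{1}$ is a set, so that Prop.~\ref{list-free-monoid-prop} applies. Everything involved here is a proposition (equalities in the sets $N$ and $M$), so no higher coherence issues arise, and no truncation or choice is needed.
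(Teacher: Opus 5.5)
Your proof is correct and follows the paper's argument: you test against the free monoid $F(\mathbbm{1}) = \mathrm{List}(\mathbbm{1})$ on one generator, cancel the monomorphism $f$, and evaluate at the generator. Your Step 2 only spells out the appeal to the uniqueness clause of the free monoid's universal property, which the paper leaves implicit.
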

\begin{proof}
Assume that $f(n_1) = f(n_2)$ for $n_1, n_2 : N$, and consider the homomorphisms $g_1, g_2 : F(\mathbbm{1}) \rightarrow N$ from the free monoid $F(\mathbbm{1})$ generated by one object $\ast$ to $N$ uniquely determined by $g_1(\ast) = n_1$ and $g_2(\ast) = n_2$, respectively. Then $f(g_1(\ast)) = f(g_2(\ast))$ shows that $f \circ g_1 = f \circ g_2$, hence $g_1 = g_2$, so $n_1 = g_1(\ast) = g_2(\ast) = n_2$.
\end{proof}

Alternatively, we can show that the underlying map of $f$ satisfies the universal property of a monomorphism in the category of sets, by factorizing maps $g_1, g_2 : A \rightarrow N$ from a set $A$ through the free monoid $\mathbb{F}(A)$.

Since a subobject of a set is a subset, we can characterize submonoids as subsets of a monoid that inherit the monoid structure:
\begin{prop}   \label{submon-subset-prop}  
A subset $N \subseteq M$ of a monoid $M$ is a \textbf{submonoid} of $M$ if
\begin{itemize}
\item[(i)] $e_M \in N$;
\item[(ii)] for all $n_1, n_2 \in N$, we have $n_1 \cdot n_2 \in N$. \hfill $\Box$
\end{itemize}
\end{prop}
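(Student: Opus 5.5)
The plan is to read the statement as an equivalence between two descriptions of the same data. On one side we have a monoid $N$ together with a monomorphism $\iota : N \rightarrow M$, up to the unique equality discussed for subsets. On the other side we have a subset $N \subseteq M$, that is, a predicate $M \rightarrow \mathtt{Prop}$, satisfying (i) and (ii). I will construct maps in both directions and check they are inverse. Because (i) and (ii) are propositions, and because the monoid structure on a given subset making the inclusion a homomorphism is unique, this yields the desired identification via Prop.~\ref{set-equiv-def-prop} and univalence.

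First direction: start from a submonoid $\iota : N \rightarrow M$. By Prop.~\ref{mono-mon-inj-prop} the underlying map of $\iota$ is injective, so Prop.~\ref{set-equiv-def-prop} turns $(N, \iota)$ into a subset of $M$. Condition (i) holds because $\iota(e_N) = e_M$, so $e_M$ lies in the image of $\iota$. For (ii), take $m_1 = \iota(n_1)$ and $m_2 = \iota(n_2)$; then $m_1 \cdot m_2 = \iota(n_1 \cdot n_2)$ lies in the image. Membership is a proposition, so we may use induction on propositional truncation to pick the preimages $n_1, n_2$.

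Second direction: start from a subset $N$, given as the $\Sigma$-type of pairs $(m, p)$ with $p : N(m)$, satisfying (i) and (ii). Define $e_N := (e_M, \text{proof of (i)})$ and $(m_1,p_1)\cdot(m_2,p_2) := (m_1 \cdot m_2, \text{proof from (ii)})$. Associativity and neutrality follow from the corresponding laws in $M$, using the characterisation of equality in $\Sigma$-types: since $N(m)$ is a proposition, it suffices to compare first components. The projection $\iota$ is then a homomorphism by construction. It is injective, and hence a monomorphism by faithfulness of the forgetful functor.

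The main obstacle is showing the round trips are identities, specifically that the monoid structure on $N$ is recovered when we go from a submonoid to a subset and back. I would handle this through the categorical univalence of monoids. Concretely, the bijection $N \simeq \Sigma_{m} N(m)$ given by Prop.~\ref{set-equiv-def-prop} commutes with the maps to $M$. Since $\iota$ is an injective homomorphism, this bijection preserves the unit and the multiplication, so it is a monoid isomorphism. It therefore yields an equality of pairs $(N,\iota)$, and uniqueness holds because equalities of monomorphisms into a set-level object form a proposition. The other round trip is immediate, since the predicates agree pointwise and hence are equal by function extensionality and univalence for propositions.
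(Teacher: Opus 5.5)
Your proposal is correct and follows the route the paper intends. The paper leaves the proof as immediate: a monomorphism is injective by Prop.~\ref{mono-mon-inj-prop}, hence a subset via Prop.~\ref{set-equiv-def-prop}, and (i), (ii) are exactly what lets that subset inherit the monoid structure; your explicit round-trip check via univalence of the category of monoids spells this out. One small simplification: because $\iota$ is an injection between sets, the fibre $\Sigma_{n : N}\, \iota(n) = m$ is already a proposition, so in the first direction the preimages are available outright and no induction on propositional truncation is needed.
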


\begin{exm}
The \textit{image} $\mathrm{im}(f)$ of a homomorphism $f : M \rightarrow N$ of monoids $M, N$ is the submonoid of elements in $N$ that have a preimage in $M$. It is obviously \textit{minimal} among all submonoids $L \subseteq N$ such that $f(M) \subseteq L$.

The submonoid $\langle L \rangle$ of a monoid $N$ \textit{generated by elements} of a subset $L \subseteq N$ is the image of the homomorphism $f_L : \mathbb{F}(L) \rightarrow N$ from the free monoid on $L$ to $N$, induced by the inclusion map $L \hookrightarrow N$ of the subset $L$ into $N$. It is also the minimal submonoid of $N$ containing the elements of $L$.
\end{exm}

\begin{exm}  
To each homomorphism $f : M \rightarrow N$ of monoids $M, N$ we can associate the \textit{kernel pair} $\ker f$, the submonoid of $M \times M$ consisting of all pairs $(m_1, m_2) : M \times M$ with $f(m_1) = f(m_2)$, that is, the elements of the fibered product $M \times_f M$. Obviously, $(1_M, 1_M) \in \ker f$, and also $f(m_1) = f(m_2)$ and $f(m_1^\prime) = f(m_2^\prime)$ imply 
\[ f(m_1 \cdot m_1^\prime) = f(m_1) \cdot f(m_1^\prime) = f(m_1) \cdot f(m_1^\prime) = f(m_2) \cdot f(m_2^\prime) = f(m_2 \cdot m_2^\prime). \]
\end{exm}

We characterize a quotient of a monoid by an equivalence relation using the quotient set of the underlying set:
\begin{Def}  \label{quot-monoid-def} 
A monoid $Q$ with a monoid homomorphism $\pi : M \rightarrow Q$ is called a quotient monoid of the monoid $M$ by an equivalence relation $R$ if the underlying set of $Q$ is a quotient set of the underlying set of $M$ by $R$ and the underlying map of $\pi$ is the projection map.
\end{Def}

Leading to a quotient monoid of a monoid $M$ has consequences for the equivalence relation $R$ on the underlying set of $M$:
\begin{Def}  
An equivalence relation $R$ on the underlying set of a monoid $M$ is called a congruence if for all $m_1, m_1^\prime, m_2, m_2^\prime : M$ with $m_1 \equiv_R m_1^\prime$ and $m_2 \equiv_R m_2^\prime$, 
\[ m_1 \cdot m_2 \equiv_R m_1^\prime \cdot m_2^\prime. \]
\end{Def} 

\begin{exm}   
If $f : M \rightarrow N$ is a homomorphism of monoids $M, N$, then the kernel pair $\ker f$ describes a congruence $R$ on the underlying set of $M$, by setting $m_1 \equiv_R m_2$ if and only if $(m_1, m_2) \in \ker f$ (that is, $f(m_1) = f(m_2)$). The relation $R$ is obviously reflexive, symmetric and transitive, and it satisfies the congruence condition because $f$ is a monoid homomorphism.
\end{exm}

\begin{prop}    \label{monoid-cong-quot-prop}
A monoid $Q$ with a monoid homomorphism $\pi : M \rightarrow Q$ is a quotient monoid of the monoid $M$ by an equivalence relation $R$ if, and only if, the underlying set of $Q$ is a quotient set of the underlying set of $M$ with projection map underlying $\pi$ and $R$ is a congruence on the underlying set of $M$.
\end{prop}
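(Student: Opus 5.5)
The plan is to unfold Def.~\ref{quot-monoid-def} and to show that, once the set-level condition is fixed, the remaining content on each side is exactly the congruence property of $R$. By definition, $(Q,\pi)$ is a quotient monoid of $M$ by $R$ precisely when the underlying set of $Q$ is a set quotient of the underlying set of $M$ with quotient map underlying $\pi$. Both sides of the equivalence therefore share this set-level hypothesis. It remains to show that, given a monoid homomorphism $\pi$ whose underlying map is such a quotient map, $R$ is automatically a congruence. For the converse direction, the extra congruence hypothesis is simply discarded.

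For the forward direction, take $m_1 \equiv_R m_1^\prime$ and $m_2 \equiv_R m_2^\prime$. By Def.~\ref{set-quot-def}(iii), applied in the direction ``$R$ implies equality'', we get $\pi(m_1) = \pi(m_1^\prime)$ and $\pi(m_2) = \pi(m_2^\prime)$. Since $\pi$ is a homomorphism,
\[ \pi(m_1 \cdot m_2) = \pi(m_1)\cdot\pi(m_2) = \pi(m_1^\prime)\cdot \pi(m_2^\prime) = \pi(m_1^\prime\cdot m_2^\prime). \]
Applying Def.~\ref{set-quot-def}(iii) in the other direction then yields $m_1\cdot m_2 \equiv_R m_1^\prime\cdot m_2^\prime$. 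All statements involved are propositions, because $R$ is $\mathtt{Prop}$-valued and $Q$ is a set, so no coherence issues arise.

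If the intended reading is stronger, namely that a monoid structure on the set quotient making $\pi$ a homomorphism exists as soon as $R$ is a congruence, I would add the following construction. Use the universal property in Prop.~\ref{set-quot-univ-prop}, in the form of well-defined functions as in Ex.~\ref{well-def-exm}, twice to descend $\cdot$ to $Q$. First, for fixed $m_1$, the map $m_2 \mapsto \pi(m_1\cdot m_2)$ respects $R$ by the congruence property with $m_1 \equiv_R m_1$, so it descends to a map $Q \rightarrow Q$. Second, the resulting map $M \rightarrow (Q\rightarrow Q)$ respects $R$, using function extensionality together with surjectivity of $\pi$, so it descends to a map $Q \rightarrow Q \rightarrow Q$. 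Here $Q\rightarrow Q$ is a set because $Q$ is one. The neutral element is $\pi(e_M)$. Associativity and neutrality are propositions because $Q$ is a set, so by surjectivity (Def.~\ref{set-quot-def}(ii)) and induction on propositional truncation they reduce to the corresponding laws in $M$ for chosen preimages. The homomorphism property of $\pi$ then holds by the computation rule $g\circ q = f$.

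The main obstacle is this double descent of the binary operation, specifically checking that the curried map into the set $Q\rightarrow Q$ respects $R$. That check requires function extensionality and the congruence property in one variable at a time, obtained by combining the congruence condition with reflexivity of $R$ in the other variable. Everything else is routine propositional reasoning on a set.
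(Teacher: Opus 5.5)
Your proof is correct and follows the paper's argument. The forward direction is exactly the paper's: $\pi(m_1\cdot m_2)=\pi(m_1^\prime\cdot m_2^\prime)$ via the homomorphism property and both directions of Def.~\ref{set-quot-def}(iii). For the converse, the paper only remarks that the congruence property makes the quotient map a monoid homomorphism, adding afterwards that it induces a monoid structure on the set quotient; your observation that the literal converse is immediate, together with your double descent of the multiplication, spells out that same remark in more detail.
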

\begin{proof}
If $Q$ is a quotient monoid of $M$ by $R$, the projection map $\pi$ preserves the monoid structure of $M$ and $Q$ and the equivalence relation $R$. Therefore, if $m_1 \equiv_R m_1^\prime$ and $m_2 \equiv_R m_2^\prime$ for $m_1, m_1^\prime, m_2, m_2^\prime : M$ we have $\pi(m_1 \cdot m_2) = \pi(m_1^\prime \cdot m_2^\prime)$ and consequently
\[ m_1 \cdot m_2 \equiv_R m_1^\prime \cdot m_2^\prime. \]
Conversely, $R$ being a congruence implies that the quotient map underlying $\pi$ has the structure of a monoid homomorphism. 
\end{proof}

Note that $R$ being a congruence also provides the quotient set of the underlying set of $M$ by $R$ with the structure of a monoid, leading to the construction of a quotient monoid of $M$ by $R$.

We can also characterize quotient monoids by a universal property in the category of monoids if we add a characterisation of equality in the quotient by the equivalence relation:
\begin{thm}  \label{univ-monoid-quot-thm}
A monoid $Q$ with a monoid homomorphism $\pi : M \rightarrow Q$ is a quotient monoid of a monoid $M$ by an equivalence relation $R$ if, and only if
\begin{itemize} 
\item[(i)] for all $m_1, m_2 : M$, the relation $m_1 \equiv_R m_2$ is equivalent to $\pi(m_1) = \pi(m_2)$, and
\item[(ii)] for all homomorphisms $f : M \rightarrow L$ from $M$ to a monoid $L$ such that $f(m_1) = f(m_2)$ if $m_1 \equiv_R m_2$ for all $m_1, m_2 : M$, there is a unique homomorphism $g : Q \rightarrow L$ such that $f = g \circ q$.
\end{itemize}
\end{thm}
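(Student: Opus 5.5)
We prove both directions by reducing to the set-level characterisations, Def.~\ref{set-quot-def} and Prop.~\ref{set-quot-univ-prop}, and using Prop.~\ref{monoid-cong-quot-prop} to handle the monoid structure. (The $q$ in (ii) is read as $\pi$.)

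\emph{Forward direction.} Let $Q$ be a quotient monoid. Condition (i) is Def.~\ref{set-quot-def}(iii) for the underlying quotient set.

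For (ii), let $f : M \rightarrow L$ be a homomorphism that is constant on $R$-classes. Applying Prop.~\ref{set-quot-univ-prop} to the underlying set of $L$ gives a unique map of sets $g : Q \rightarrow L$ with $f = g \circ \pi$. It remains to see that $g$ is a homomorphism. Both $g(\overline{a}\cdot\overline{b}) = g(\overline{a})\cdot g(\overline{b})$ and $g(e_Q)=e_L$ are equalities in the set $L$, hence propositions. So by surjectivity of $\pi$ (induction on propositional truncation) we may assume $\overline{a}=\pi(a)$ and $\overline{b}=\pi(b)$. Then
\[ g(\pi(a)\cdot\pi(b)) = g(\pi(a\cdot b)) = f(a\cdot b) = f(a)\cdot f(b), \]
and $g(e_Q)=g(\pi(e_M))=f(e_M)=e_L$. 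Uniqueness of $g$ as a homomorphism follows from uniqueness as a map, since the forgetful functor is faithful.

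\emph{Converse.} Assume (i) and (ii). By Def.~\ref{quot-monoid-def} it suffices to show that the underlying set of $Q$ is a set quotient with quotient map $\pi$. Condition (i) gives Def.~\ref{set-quot-def}(iii) and also shows that $R$ is a congruence: $\pi$ is a homomorphism, so $\pi(m_1\cdot m_2)=\pi(m_1)\cdot\pi(m_2)=\pi(m_1')\cdot\pi(m_2')=\pi(m_1'\cdot m_2')$. What remains is surjectivity, Def.~\ref{set-quot-def}(ii), and this is the main obstacle, since (ii) only quantifies over monoids, not sets. We cannot use the image-as-set trick directly.

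The plan is to use the monoid version of the usual yoga.
\begin{enumerate}
\item Since $R$ is a congruence, the remark after Prop.~\ref{monoid-cong-quot-prop} provides a genuine quotient monoid $M/R$ on the HIT quotient set, with homomorphism $q_R$.
\item By the forward direction, $M/R$ satisfies (ii). The map $q_R$ is constant on $R$-classes, so (ii) for $Q$ gives a homomorphism $g : Q \rightarrow M/R$ with $q_R = g\circ\pi$.
\item Conversely, $\pi$ is constant on $R$-classes by (i), so (ii) for $M/R$ gives $h : M/R \rightarrow Q$ with $\pi = h\circ q_R$.
\item The uniqueness clauses of (ii), applied to $\pi$ itself and to $q_R$, force $h\circ g = \mathrm{id}_Q$ and $g\circ h=\mathrm{id}_{M/R}$.
\item Hence $\pi = h\circ q_R$ is a composite of the surjection $q_R$ (Prop.~\ref{HIT-set-quot-prop}) with a bijection, and is therefore surjective.
\end{enumerate}

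As a fallback for surjectivity, one can apply (ii) to the image submonoid $\mathrm{im}(\pi)\subseteq Q$, using the two homomorphisms $\mathrm{id}_Q$ and the composite $Q\rightarrow \mathrm{im}(\pi)\hookrightarrow Q$ and uniqueness; but this presupposes the factorisation through $\mathrm{im}(\pi)$, so the route via $M/R$ is the one I would follow.
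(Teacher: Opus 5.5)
Your proposal is correct and follows essentially the same route as the paper. The forward direction uses the set-level universal property plus a check that the induced map is a homomorphism. The converse uses (i) to see that $R$ is a congruence, forms the quotient monoid $M/R$, and obtains surjectivity of $\pi$ from the isomorphism $Q \cong M/R$ given by the universal-property yoga; your write-up is more explicit than the paper's, for instance in the propositional-truncation argument for the homomorphism laws.
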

\begin{proof}
First, let $Q$ be a quotient monoid of $M$ by the equivalence relation $R$, with projection homomorphism $\pi : M \rightarrow Q$, and let $f : M \rightarrow L$ be a monoid homomorphism such that $m_1 \equiv_R m_2$ implies $f(m_1) = f(m_2)$ for all $m_1, m_2 : M$. Then the map from the underlying set of $Q$ to the underlying set of $L$ given by the universal property of quotient sets factorizes the underlying map of $f$ by the underlying map of $\pi$, and is unique. It also has the structure of a monoid homomorphism. Furthermore, the underlying map of $\pi$ and hence $\pi$ satisfy (i).

Condition (i) together with the quotient map being a monoid homomorphism implies that $R$ is a congruence. Hence we can construct a quotient monoid as above, and we have shown that it satisfies the universal property. So if $Q$ with projection homomorphism $p : M \rightarrow Q$ is another monoid satisfying the universal property, then the standard arguments show that they are isomorphic, hence the projection map to the underlying set of $Q$ is surjective and $m_1 \equiv_R m_2$ is equivalent to $p(m_1) = p(m_2)$ for all $m_1, m_2 : M$. Therefore, the underlying set of $Q$ together with the underlying map of $p$ is a quotient set of the underlying set of $M$ by $R$, and $Q$ is a quotient monoid of $M$ by $R$.
\end{proof}

Note that $M$ with the identity homomorphism $M \rightarrow M$ satisfies condition (ii).

If $R$ is a congruence, the universal property codified in this definition is the higher recursion principle of the higher inductive type $M/R$ constructed by
\begin{itemize}
\item a quotient function $q : M \rightarrow M/R$;
\item a multiplication $m : M/R \rightarrow M/R \rightarrow M/R$;
\item a neutral element $e : M/R$;
\item associativity: for each $x, y, z : M/R$, an equality 
\[ m(x,m(y, z)) = m(m(x, y), z); \]
\item neutrality : for each $x : M/R$, equalities $m(x, e) = x$ and $m(e, x) = x$;
\item neutral invariance: $q(e_M) = e$;
\item multiplicative compatibility: for all $x, y : M$, an equality 
\[ m(x \cdot y) = m(q(x), q(y)); \] 
\item quotient identifications : for all $x, y : M$, we have $q(x) = q(y)$ if $x \equiv_R y$;  
\item the $0$-truncation constructor: for any $x, y : M/R$ and $p, p^\prime : x = y$, we have $p = p^\prime$.
\end{itemize}

This construction automatically produces a quotient monoid of $M$ by $R$ and thus seems a perfect starting point to discuss quotients of monoids. However, in ordinary Homotopy Type Theory, HITs are still depending on axioms, for example postulating the existence of truncations and type quotients. In those theories and their formalisations it is more efficient to construct general HITs from the few axiomatically given ones, as we will do from now on. In Cubical Type Theory, HITs are constructive and can be introduced directly.

As for set quotients, we can define monoid quotients by arbitrary relations $R : M \rightarrow M \rightarrow \mathrm{Prop}$ if we take the monoid quotient by the congruence $\overline{R}$ \textit{generated} by $R$.
We proceed inductively as follows:
\begin{itemize}  
\item $\overline{R}(m, n)$ holds if $R(a, b)$ is true.
\item $\overline{R}(a, a)$ holds, for all $a : A$;
\item if $\overline{R}(a, b)$ is true then also $\overline{R}(b, a)$, for all $a, b : A$; 
\item $\overline{R}(a, b)$ and $\overline{R}(b, c)$ imply $\overline{R}(a, c)$, for all $a, b, c : A$.
\item If $\overline{R}(m, m^\prime)$ and $\overline{R}(n, n^\prime)$ hold, then $\overline{R}(m \cdot n, m^\prime \cdot n^\prime)$ is true.
\item  $\overline{R}(m, n)$ holds if we can merely produce this relation following the rules above.
\end{itemize}

$\overline{R}$ can also be characterized as the \textit{smallest} congruence extending the equivalence relation $R$. 
Note that even if $R$ already is an equivalence relation, we must include the rules for reflexivity, symmetry and transitivity in the construction of $\overline{R}$, because the multiplication rule may contribute new instances of them. Also,
in contrast to set quotients by equivalence relations we cannot construct a monoid quotient $M/R$ without the multiplication rule for $\overline{R}$, because it is not a consequence of Martin-L\"of's construction of equalities and free generation.

\subsection{Groups}  \label{groups-ssec}

We now extend the algebraic structure of monoids to that of a group: A monoid $G$ is a group if it is furthermore provided with
\begin{itemize}
\item an inverse $g^{-1} : G$, for each $g : G$;
\item inverseness: for all $g : G$, we have $g \cdot g^{-1} = g^{-1} \cdot g = e_G$.
\end{itemize}

A group $G$ is \textit{commutative}, or \textit{abelian} if the underlying monoid is commutative.

The most basic example of a (commutative) group is again the trivial group $\mathbbm{1}$, whose only object will be identified with the neutral element, inverse to itself.

A homomorphism between groups $G, H$ is a homomorphism $f : G \rightarrow H$ of the underlying monoids. In categorical terms, this means that the forgetful functor from groups to monoids is fully faithful.
Note in particular that a homomorphism $f : G \rightarrow H$ of groups preserves inverses:
\[ f(g^{-1}) = f(g)^{-1}\ \mathrm{for\ all\ } g \in G. \]
Thus, we can easily construct the \textit{category} of groups as a concrete category over the category of monoids.
Also, a group homomorphism is a monomorphism if, and only if the underlying monoid homomorphism is a monomorphism if, and only if the underlying map of sets is injective.

As with monoids, let us define a \textit{free group} by its standard universal property:
\begin{Def}   
A group $F$ with a map $h : A \rightarrow F$ is a \textbf{free group} on a set $A$ if for all groups $G$ and maps $h^\prime : A \rightarrow G$ there is a unique homomorphism $f : F \rightarrow G$ such that $h^\prime = f \circ h$.
\end{Def}

Again, this universal property can be interpreted as the higher recursive principle of a higher inductive type. But following our general strategy, we construct a free group $F(A)$ over a set $A$ by using only a few basic HITs:
As with monoids, we can \textit{present} the elements of a free group on a set $A$ by words concatenating arbitrarily many elements of $A$ and their inverses. Such words can be encoded as lists with entries in $A + A$, a sum type of two copies of $A$ where the first summand denotes elements $a : A$ and the second summand their inverses $a^{-1}$. By Prop.~\ref{list-free-monoid-prop} $\mathrm{List}(A + A)$ is a free monoid on $A + A$, but to make it into a group we need to divide out the inverseness equalities.
\begin{prop} [{\cite[Thm.6.11.7]{HoTTBook}}] \label{free-word-group-prop}
The quotient $Q(A)$ of the monoid $\mathrm{List}(A + A)$ by the congruence relation generated by all the word congruences $a \cdot a^{-1} = a^{-1} \cdot a = 1$ is a free group $F(A)$ on $A$, with map $h : A \rightarrow Q(A)$ induced by the left inclusion $A \rightarrow A + A$.
\end{prop}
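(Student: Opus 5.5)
The plan has three stages: put a group structure on $Q(A)$, then prove the universal property of a free group by combining the universal property of the free monoid with that of the quotient monoid.

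First I need $Q(A)$ to be a group. Write $\pi : \mathrm{List}(A+A) \rightarrow Q(A)$ for the quotient homomorphism, which exists by Thm.~\ref{univ-monoid-quot-thm} since the generated relation $\overline{R}$ is a congruence. On letters, define the involution $\sigma : A + A \rightarrow A + A$ that swaps the two summands. Extend it to words by reversing the list and applying $\sigma$ to every letter, giving $w \mapsto w^{-1}$. This operation reverses products, $(w_1 w_2)^{-1} = w_2^{-1} w_1^{-1}$, and it maps each generating relation $a\cdot a^{-1}\sim 1$, $a^{-1}\cdot a\sim 1$ to another generating relation. By induction on the construction of $\overline{R}$, $w \equiv_{\overline{R}} w'$ then implies $w^{-1} \equiv_{\overline{R}} w'^{-1}$. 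In the multiplication rule the factors are swapped, which is harmless because $\overline{R}$ is symmetric in each factor. Since the target is a proposition, induction on the mere derivation is allowed. Ex.~\ref{well-def-exm} then gives a well-defined map $\pi(w) \mapsto \pi(w^{-1})$ on $Q(A)$.

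Inverseness $\pi(w)\cdot\pi(w^{-1}) = e$ is a proposition because $Q(A)$ is a set. So by surjectivity of $\pi$ it suffices to check it on words, which I do by induction on lists. The empty word is immediate. For a word $x\cdot w$ with $x$ a letter,
\[ \pi(x w w^{-1} \sigma(x)) = \pi(x)\,\pi(w w^{-1})\,\pi(\sigma(x)) = \pi(x\,\sigma(x)) = e, \]
using the induction hypothesis and a generating relation. The other side is symmetric.

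Second, I prove the universal property. Let $G$ be a group and $h' : A \rightarrow G$. Extend it to $k : A + A \rightarrow G$ by $\mathrm{inl}(a) \mapsto h'(a)$ and $\mathrm{inr}(a) \mapsto h'(a)^{-1}$. Prop.~\ref{list-free-monoid-prop} gives a monoid homomorphism $\tilde f : \mathrm{List}(A+A) \rightarrow G$ extending $k$. The kernel pair of $\tilde f$ is a congruence, and it contains the generating relations because $h'(a)h'(a)^{-1} = e$. Since $\overline{R}$ is the smallest congruence containing $R$, we get $\overline{R} \subseteq \ker \tilde f$. Thm.~\ref{univ-monoid-quot-thm}(ii) then provides a monoid homomorphism $f : Q(A) \rightarrow G$ with $f\circ\pi = \tilde f$. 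It is a group homomorphism because the forgetful functor from groups to monoids is fully faithful. It satisfies $f \circ h = h'$ by construction, where $h = \pi \circ [\,\cdot\,] \circ \mathrm{inl}$.

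Third, uniqueness. Suppose $f_1, f_2 : Q(A)\rightarrow G$ are group homomorphisms with $f_i\circ h = h'$. On $\mathrm{inr}$-letters they also agree, because homomorphisms preserve inverses and $\pi([\mathrm{inr}(a)]) = h(a)^{-1}$ in $Q(A)$, by the generating relation together with uniqueness of inverses. So $f_1\circ\pi$ and $f_2\circ\pi$ agree on one-letter words, hence are equal by the uniqueness part of Prop.~\ref{list-free-monoid-prop}. The uniqueness in Thm.~\ref{univ-monoid-quot-thm}(ii) then gives $f_1 = f_2$, and function extensionality turns this into an equality.

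The main obstacle is the first step, namely showing that the word inversion respects the generated congruence $\overline{R}$. This requires a careful induction over the mere, non-free derivations of $\overline{R}$, and the multiplication rule has to be handled with the reversed order of factors. Everything else is a formal consequence of the universal properties established earlier.
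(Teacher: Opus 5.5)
Your proposal is correct and follows essentially the same route as the paper: you make $Q(A)$ a group by reversing words and swapping letters, let this descend to the quotient, and then factor the free-monoid homomorphism induced by $\mathrm{inl}(a)\mapsto h'(a)$, $\mathrm{inr}(a)\mapsto h'(a)^{-1}$ through the quotient. You supply details the paper leaves implicit, namely the induction over the mere derivations of $\overline{R}$, the list induction for cancellation, and the explicit uniqueness argument via agreement on $\mathrm{inr}$-letters, and all of these are sound.
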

\begin{proof}
$F(A)$ is a monoid, by Prop.~\ref{monoid-cong-quot-prop}. The inverse of a word is obtained by reverting it and exchanging elements of $A$ and their inverses; this process maps word congruences to word congruences, hence descends to the quotient $F(A)$, and the concatenation of a word and its inverse cancels down to the empty word using word congruences. So $F(A)$ is a group. 

A map $g : A \rightarrow G$ induces a map $\overline{g} : A + A \rightarrow G$ mapping $a$ from the left copy of $A$ to $a$ and from the other copy to $a^{-1}$. The unique monoid homomorphism from $h^\prime : \mathrm{List}(A + A) \rightarrow G$ to a group $G$ associated to $\overline{g}$ maps $a \cdot a^{-1}$ and $a^{-1} \cdot a$ to $e_G$, hence $h$ factorizes through $Q(A)$. The resulting group homomorphism $\overline{h}^\prime : Q(A) \rightarrow G$ satisfies $g = h \circ \overline{h}^\prime$, and it is the unique such homomorphism. 
\end{proof}

A \textit{subgroup} $H \subseteq G$ of a group $G$ is a subobject of $G$ in the category of groups, that is, a group $H$ together with a monomorphisms $\iota : H \rightarrow G$. 
As in Prop.~\ref{mono-mon-inj-prop}, this is equivalent to the underlying map of $\iota$ being injective. 
Hence as in Prop.~\ref{submon-subset-prop}, we can characterize subgroups $H \subseteq G$ as subsets respecting the structure of $G$; in particular, $e_G \in H$, and for all $g, h \in H$, we have $g \cdot h \in H$ and $g^{-1} \in H$. 

The \textit{kernel} and the \textit{image} of a group homomorphism $f : G \rightarrow H$ are subgroups of $G$ and $H$, uniquely maximal among those mapping to $\{e_H\}$ and uniquely minimal among those containing all image elements of the map $f$, respectively. 

Kernel and images can be used to construct subgroups from a given group homomorphism. For example, the
\textit{conjugated subgroup} $g \cdot H \cdot g^{-1}$ of $H$ in $G$ is the image of $H$ under the conjugation homomorphism $\iota_g : G \rightarrow G, h \mapsto ghg^{-1}$.
 
We can generate a subgroup of $G$ from a subset of elements $A \subseteq G$ as the image $H$ of the homomorphism from the free group on this set to $G$ induced by the inclusion. 
Then, by the construction of a free group in Prop.~\ref{free-word-group-prop}, every element of $H$ can be represented by a word
\[  g_1^{\epsilon_1} \cdot \cdots \cdot g_n^{\epsilon_n},\ g_i \in A, \epsilon_i \in \{-1, 1\}, i = 1, \ldots, n. \]
From this, we can deduce that for a group homomorphism $f : G \rightarrow G^\prime$, the subgroup $\mathrm{im}(f_{|H}) \subseteq G^\prime$ is generated by $f(A)$, as for any $g \in G$,
\[ f(g_1^{\epsilon_1} \cdot \cdots \cdot g_n^{\epsilon_n}) =
   f(g_1)^{\epsilon_1} \cdot \cdots \cdot f(g_n)^{\epsilon_n}. \] 
Note that this cannot be shown just using the minimality of images -- for example, we need in addition that epimorphisms are strong.

Kernels in turn allow to characterize monomorphisms in a more direct way:
\begin{prop} \label{ker-mono-prop}  
A group homomorphism $f : G \rightarrow H$ between two groups $G, H$ is a monomorphism if, and only if $\mathrm{ker}(f) = \{e_G\}$, the trivial group. 
\end{prop}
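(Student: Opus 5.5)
We reduce the statement to injectivity of the underlying map. As remarked after the definition of groups (by the argument of Prop.~\ref{mono-mon-inj-prop}, using the free group $F(\mathbbm{1})$ in place of the free monoid), $f$ is a monomorphism if, and only if, its underlying map of sets is injective. Since both $\mathrm{ker}(f)$ and $\{e_G\}$ are subsets of $G$, equality of subgroups reduces to equality of predicates, by Prop.~\ref{set-equiv-def-prop}. So it suffices to show that, for all $g : G$, $g \in \mathrm{ker}(f)$ is equivalent to $g = e_G$. Both sides are propositions, so function extensionality and univalence for propositions turn this logical equivalence into the desired equality.

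For the forward direction, assume $f$ is injective and $g \in \mathrm{ker}(f)$, i.e.\ $f(g) = e_H$. A homomorphism maps $e_G$ to $e_H$, so $f(g) = f(e_G)$, and injectivity gives $g = e_G$. Conversely, $e_G \in \mathrm{ker}(f)$ always holds for the same reason.

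For the reverse direction, assume $\mathrm{ker}(f) = \{e_G\}$ and $f(g_1) = f(g_2)$. Then
\[ f(g_1 \cdot g_2^{-1}) = f(g_1)\cdot f(g_2)^{-1} = e_H, \]
using that homomorphisms preserve inverses. Hence $g_1 \cdot g_2^{-1} \in \mathrm{ker}(f)$, so $g_1 \cdot g_2^{-1} = e_G$. Multiplying on the right by $g_2$ and using associativity, inverseness and neutrality gives $g_1 = g_2$.

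The main obstacle is not the algebra but the bookkeeping. First, we must make sure that ``$\mathrm{ker}(f) = \{e_G\}$'' is read as an equality of subgroups, meaning subsets carrying the inherited structure. Since being a subgroup is a property of a subset, the structure-identity argument reduces this equality to equality of the underlying predicates. Second, if the group-theoretic analogue of Prop.~\ref{mono-mon-inj-prop} is not taken as given, it is proved by evaluating homomorphisms out of the free group $F(\mathbbm{1})$ of Prop.~\ref{free-word-group-prop} on its generator.
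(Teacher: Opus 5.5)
Your proof is correct and follows essentially the same route as the paper: reduce ``monomorphism'' to injectivity of the underlying map, note that the forward direction is immediate, and for the converse show that an element built from two elements with the same image lies in the trivial kernel (you use $g_1 \cdot g_2^{-1}$ and preservation of inverses, while the paper uses $g^{-1}\cdot g^\prime$). Your extra bookkeeping is what the paper compresses into ``one direction is obvious'' together with its earlier remark that group monomorphisms are exactly the injective homomorphisms. That bookkeeping consists of reading $\mathrm{ker}(f) = \{e_G\}$ as an equality of predicates via Prop.~\ref{set-equiv-def-prop}, and proving the group version of Prop.~\ref{mono-mon-inj-prop} with the free group $F(\mathbbm{1})$.
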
 
\begin{proof}
One direction is obvious. For the other direction, assume that $f(g) = f(g^\prime)$ for $g, g^\prime : G$. Then
\[ e_H = f(e_G) = f(g^{-1} \cdot g) = f(g^{-1}) \cdot f(g) = f(g^{-1}) \cdot f(g^\prime) = f(g^{-1} \cdot g^\prime),   \]
hence $g^{-1} \cdot g^\prime = e_G$ because $\mathrm{ker}(f) = \{e_G\}$. This implies $g = g^\prime$ and therefore the injectivity of $f$.
\end{proof}

A monoid quotient $Q$ of a group $G$ by a congruence relation $R : G \rightarrow G \rightarrow \texttt{Prop}$, with projection homomorphism $q : G \rightarrow Q$, inherits the group structure from $G$ by setting $q(g)^{-1} := q(g^{-1})$. Therefore, we can also call $Q$ a \textit{group quotient} by $R$.
However, to characterize group quotients in the category of groups we can use that the extra structure of a group allows to produce the congruence relation from subgroups of $G$:  

\begin{Def}
A subgroup $H \subseteq G$ of a group $G$ is \textbf{normal} if the conjugated subgroups $g \cdot H \cdot g^{-1}$ are equal to $H$ for all $g : G$. 
\end{Def}

To show that a subgroup $H \subseteq G$ is normal it actually suffices to verify that $H \subseteq g \cdot H \cdot g^{-1}$ for all $g : G$ since then $H \subseteq g^{-1} \cdot H \cdot g$ also implies
\[ g \cdot H \cdot g^{-1} \subseteq g \cdot g^{-1} \cdot H \cdot g \cdot g^{-1} = H. \]

\begin{prop} \label{ker-norm-prop}    
The kernel $\mathrm{ker}(f) \subseteq G$ of a homomorphism $f : G \rightarrow H$ between two groups $G, H$ is normal.
\end{prop}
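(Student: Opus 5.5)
By the remark just after the definition of normal subgroups, it suffices to show $\mathrm{ker}(f) \subseteq g \cdot \mathrm{ker}(f) \cdot g^{-1}$ for every $g : G$. Both sides are subgroups of $G$, hence subsets, so by Prop.~\ref{set-equiv-def-prop} they can be read as predicates $G \rightarrow \mathtt{Prop}$. Inclusion then means that for every $k \in \mathrm{ker}(f)$ the proposition ``$k \in g \cdot \mathrm{ker}(f) \cdot g^{-1}$'' holds. That proposition says that $k$ lies in the image of the conjugation homomorphism $\iota_g$ restricted to $\mathrm{ker}(f)$, i.e. that there merely exists $k^\prime \in \mathrm{ker}(f)$ with $g k^\prime g^{-1} = k$.

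The plan is to give an explicit witness, $k^\prime := g^{-1} \cdot k \cdot g$. The equation $g \cdot (g^{-1} k g) \cdot g^{-1} = k$ follows from associativity, inverseness and neutrality. It remains to check that $k^\prime \in \mathrm{ker}(f)$. Since $f$ is a monoid homomorphism that preserves inverses, we have
\[ f(g^{-1} k g) = f(g)^{-1} \cdot f(k) \cdot f(g) = f(g)^{-1} \cdot e_H \cdot f(g) = e_H. \]
Feeding this explicit element into the propositional truncation constructor gives the required mere existence.

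No step here is a real obstacle. The only point needing care is the formal bookkeeping: membership in the conjugated subgroup is a mere existence statement over the image, so we must supply an element together with the proof that it lies in the kernel. We also need that $\mathrm{ker}(f)$ is a subgroup, which was already noted above. Finally, the inclusion upgrades to the equality $g \cdot \mathrm{ker}(f) \cdot g^{-1} = \mathrm{ker}(f)$ of subsets by the argument given after the definition of normality. This equality of subsets is the same as pointwise equality of the predicates, which follows from propositional extensionality, an instance of univalence, together with function extensionality.
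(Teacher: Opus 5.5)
Your proof is correct and is essentially the paper's argument. The paper checks $g \cdot \mathrm{ker}(f) \cdot g^{-1} \subseteq \mathrm{ker}(f)$ by computing $f(g h g^{-1}) = e_H$ and gets the reverse inclusion by swapping $g$ and $g^{-1}$, whereas you verify $\mathrm{ker}(f) \subseteq g \cdot \mathrm{ker}(f) \cdot g^{-1}$ with the explicit witness $g^{-1} k g$ (the same computation with $g^{-1}$ in place of $g$) and let the remark after the definition of normality do the swapping.
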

\begin{proof}
Let $h : G$ satisfy $f(h) = e_H$. Then $f(g \cdot h \cdot g^{-1}) = f(g) \cdot f(h) \cdot f(g^{-1}) = e_H$ for $g : G$, so $g \cdot \mathrm{ker}(f) \cdot g^{-1} \subseteq \mathrm{ker}(f)$. Since the other inclusion holds by the same argument, with roles of $g$ and $g^{-1}$ reversed, $g \cdot \mathrm{ker}(f) \cdot g^{-1} = \mathrm{ker}(f)$, and $\mathrm{ker}(f)$ is normal.
\end{proof}

\begin{prop} \label{comm-norm-prop}
The \textbf{commutator subgroup} $[G,G] \subseteq G$ of a group $G$ generated by the commutators $aba^{-1}b^{-1}$, $a, b \in G$, is normal.
\end{prop}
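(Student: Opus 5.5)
By the remark after the definition of normal subgroups, it suffices to show $[G,G] \subseteq g \cdot [G,G] \cdot g^{-1}$ for all $g : G$. Replacing $g$ by $g^{-1}$, this is equivalent to $g^{-1} \cdot [G,G] \cdot g \subseteq [G,G]$. So I will show the inclusion $g \cdot [G,G] \cdot g^{-1} \subseteq [G,G]$ for every $g : G$.

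The conjugated subgroup $g \cdot [G,G] \cdot g^{-1}$ is by definition the image of $[G,G]$ under the conjugation homomorphism $\iota_g$. By the discussion after Prop.~\ref{free-word-group-prop}, the image under a homomorphism of a subgroup generated by a set $A$ is generated by the image of $A$. Here $A$ is the set of commutators, so $g \cdot [G,G] \cdot g^{-1}$ is generated by the elements $\iota_g(aba^{-1}b^{-1})$. By the minimality of generated subgroups, it therefore suffices to check that each generator $\iota_g(aba^{-1}b^{-1})$ lies in $[G,G]$.

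This follows from the calculation
\[ g\,aba^{-1}b^{-1}\,g^{-1} = (gag^{-1})(gbg^{-1})(gag^{-1})^{-1}(gbg^{-1})^{-1}, \]
which holds because $\iota_g$ is a homomorphism and preserves inverses. The right-hand side is again a commutator, of $gag^{-1}$ and $gbg^{-1}$, and so lies in $[G,G]$.

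The only delicate point is the reduction to generators in the second step. In HoTT it relies on the word representation of elements of the image of the free group, which the paper provides after Prop.~\ref{free-word-group-prop}. Because membership in $[G,G]$ is a proposition, we may use induction on propositional truncation to pick such a word for any given element. Since the conjugated subgroup is closed under the group operations, the inclusion then extends from the generators to all of $[G,G]$.
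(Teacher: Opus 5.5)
Your proposal is correct and follows the paper's proof: both rewrite $g\,aba^{-1}b^{-1}g^{-1}$ as the commutator of $gag^{-1}$ and $gbg^{-1}$, and then use that the conjugated subgroup is generated by the conjugated generators. You also spell out the reduction to the single inclusion $g \cdot [G,G] \cdot g^{-1} \subseteq [G,G]$, via minimality of generated subgroups and the remark after the definition of normality, which the paper passes over when it concludes equality directly; one small correction is that in your last paragraph the closure under the group operations being used is that of $[G,G]$, not of the conjugated subgroup.
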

\begin{proof}
For any $c \in G$, 
\[ c \cdot aba^{-1}b^{-1} \cdot c^{-1} = 
   cac^{-1} \cdot cbc^{-1} \cdot (cac^{-1})^{-1} \cdot (cbc^{-1})^{-1} \]
is a generator of $[G,G]$, too. Since the conjugations of generators generate the conjugated subgroup, this implies that $c \cdot [G,G] \cdot c^{-1} = [G,G]$, hence $[G,G]$ is normal.
\end{proof}

\begin{prop}
A subgroup $H \subseteq G$ of a group $G$ defines an equivalence relation $R_H : G \rightarrow G \rightarrow \mathtt{Prop}$ by letting $g \equiv_R g^\prime$ exactly if $g^{-1}g^\prime \in H$.

$R_H$ is a congruence if, and only if $H$ is normal. 

A congruence $R : G \rightarrow G \rightarrow \mathtt{Prop}$ defines a normal subgroup $H_R \subseteq G$ collecting all the elements $g \in G$ such that $g \equiv_R 1_G$.
\end{prop}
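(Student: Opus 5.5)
The proof has three parts, one for each claim of the proposition. Each reduces to elementary group identities plus the facts that subgroups are subsets closed under the operations (Prop.~\ref{submon-subset-prop} and its group analogue) and that normality needs only one inclusion, as remarked after the definition of normal subgroups. Since all statements involved ($g \equiv_R g^\prime$, membership in $H$, equality of subsets) are propositions, no higher coherence issues arise; equalities of subsets follow from logical equivalence of predicates by univalence for propositions (Prop.~\ref{set-equiv-def-prop}).

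First, $R_H$ is an equivalence relation. The relation $g^{-1}g^\prime \in H$ is a proposition because $H$ is a subset. Reflexivity follows from $g^{-1}g = e_G \in H$. Symmetry follows from $(g^{-1}g^\prime)^{-1} = g^{\prime -1} g \in H$ by closure under inverses. Transitivity follows from $g^{-1}g^{\prime\prime} = (g^{-1}g^\prime)(g^{\prime -1}g^{\prime\prime})$ and closure under multiplication.

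Second, the equivalence with normality. Suppose $H$ is normal and $g_1 \equiv g_1^\prime$, $g_2 \equiv g_2^\prime$. Write
\[ (g_1g_2)^{-1}g_1^\prime g_2^\prime = g_2^{-1}(g_1^{-1}g_1^\prime)g_2 \cdot g_2^{-1}g_2^\prime . \]
The first factor lies in $g_2^{-1}Hg_2 = H$, and the second lies in $H$, so the product lies in $H$. Conversely, suppose $R_H$ is a congruence, and let $h \in H$ and $g : G$. Then $e_G \equiv h$ and $g^{-1} \equiv g^{-1}$, so multiplying gives $g^{-1} \equiv hg^{-1}$, that is, $g h g^{-1} \in H$. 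For the right multiplication I first use $g \equiv g$ on the left, giving $g \equiv gh$; then I multiply on the right by $g^{-1}$. The resulting relation $e_G \equiv ghg^{-1}$ shows $ghg^{-1} \in H$. This gives $g H g^{-1} \subseteq H$ for all $g$, and by the remark after the definition this suffices for normality.

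Third, $H_R$ is a normal subgroup. Here $R$ is a congruence and $H_R = \{g : g \equiv_R e_G\}$, which is a subset since $R$ is $\mathtt{Prop}$-valued. It contains $e_G$ by reflexivity. It is closed under products: $g \equiv e$ and $g^\prime \equiv e$ give $gg^\prime \equiv e\cdot e = e$. For inverses, multiply $g \equiv e$ with $g^{-1} \equiv g^{-1}$ to get $e \equiv g^{-1}$, then apply symmetry. For normality, multiply $g^\prime \equiv g^\prime$, $h \equiv e$ and $g^{\prime -1} \equiv g^{\prime -1}$ to obtain $g^\prime h g^{\prime -1} \equiv e$.

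The main obstacle is the converse direction of the second part. There one must choose which reflexive instances to multiply so as to extract exactly the conjugate $ghg^{-1}$ from the one-sided definition $g^{-1}g^\prime \in H$, being careful about left versus right multiplication. The remaining steps are routine closure checks.
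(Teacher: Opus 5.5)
Your proof is correct and follows the paper's argument. You use the same factorisation $(g_1g_2)^{-1}g_1'g_2' = g_2^{-1}(g_1^{-1}g_1')g_2\cdot g_2^{-1}g_2'$ for normal $\Rightarrow$ congruence, and, for the converse and for $H_R$, you multiply a relation $e_G \equiv h$ (the paper uses $h^{-1}\equiv 1_G$) by reflexive instances of $g^{\pm1}$, additionally spelling out the subgroup closure of $H_R$ that the paper leaves to ``the same argument''.

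Two cosmetic points. Your first derivation, $g^{-1}\equiv hg^{-1}$, already gives $ghg^{-1}\in H$, so the second derivation is redundant. Also, the remark after the definition of normality is phrased for the inclusion $H\subseteq gHg^{-1}$, so applying it to your inclusion $gHg^{-1}\subseteq H$ needs the trivial substitution $g\mapsto g^{-1}$.
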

\begin{proof}
Reflexivity, symmetry and transitivity of $R_H$ are easy to check. If $H$ is normal then 
\[ (g_1h_1)^{-1} \cdot g_2h_2 = (h_1^{-1} \cdot (g_1^{-1}g_2) \cdot h_1) \cdot h_1^{-1}h_2 \in H  \]
when $g_1 \equiv_{R_H} g_2$ and $h_1 \equiv_{R_H} h_2$. If $R_H$ is a congruence, then $h \in H$, or equivalently $h^{-1} \equiv_{R_H} 1$, implies $g \cdot h^{-1} \cdot g^{-1} \equiv_{R_H} g \cdot 1 \cdot g^{-1}$, hence $(gh^{-1}g^{-1})^{-1} = ghg^{-1} \in H$. The same argument shows that $H_R$ is normal if $R$ is a congruence.
\end{proof}

Note that an equivalence relation on a group does not necessarily define a subgroup because it may not be compatible with multiplication.

These correspondences between relations and subgroups lead to a definition of quotient groups analogous to, but more compact than Def.~\ref{quot-monoid-def}:
\begin{Def}  \label{quot-group-def}    
A group $Q$ with a group homomorphism $\pi : G \rightarrow Q$ is called a \textbf{quotient group} of the group $G$ by the normal subgroup $H$ if
\begin{itemize}
\item[(i)] $\pi$ is surjective, and
\item[(ii)] $\ker \pi = H$.
\end{itemize}
\end{Def}

In particular, the monoid quotient of $G$ by the congruence associated to $H$, with inverses induced by the surjective quotient homomorphism, is a group quotient.
Conversely, the underlying monoid of a group quotient of $G$ by the normal group $H$ is a monoid quotient of $G$ by the congruence associated to $H$, as in Def.~\ref{quot-monoid-def}. 

It is straightforward to check the \textit{First Isomorphism Theorem of Groups}:
\begin{thm} \label{first-iso-grp-thm}   
If $f : G \rightarrow H$ is a homomorphism between groups $G$ and $H$, the image $\mathrm{im}(f) \subseteq H$ of $f$ (considered as a group) is a group quotient of $G$ by $\mathrm{ker}(f)$. 
\end{thm}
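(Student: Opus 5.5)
We use Def.~\ref{quot-group-def} directly, since it reduces the claim to two checks on a single homomorphism. The first step is to factor $f$ through its image. The image $\mathrm{im}(f) \subseteq H$ is a subgroup, given by an injective homomorphism $\iota : \mathrm{im}(f) \rightarrow H$; we treat it as the subset of those $h : H$ for which there merely exists $g : G$ with $f(g) = h$. We define the corestriction $\pi : G \rightarrow \mathrm{im}(f)$ by $\pi(g) := (f(g), |g|)$, where $|g|$ is the truncated witness of membership. Since membership in a subset is a proposition, equality in $\mathrm{im}(f)$ is determined by equality of the first components. Consequently $\iota \circ \pi = f$, and $\pi$ is a group homomorphism: the monoid laws can be checked after applying the injective homomorphism $\iota$, where they become the homomorphism laws for $f$.

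By Prop.~\ref{ker-norm-prop}, $\ker(f)$ is a normal subgroup, so the statement is meaningful. It remains to verify the two conditions of Def.~\ref{quot-group-def}.

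For (i), surjectivity of $\pi$: take an element $(h, w)$ of $\mathrm{im}(f)$, where $w$ is a truncated witness. Surjectivity is a mere proposition, namely the mere existence of a preimage. We may therefore apply induction on propositional truncation to $w$ (as explained in \ref{prop-set-ssec}) and assume an actual $g : G$ with $f(g) = h$. Then $\pi(g) = (h, w)$, because the first components agree and the second components lie in a proposition.

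For (ii), $\ker \pi = \ker f$: both are subsets of $G$, that is, predicates $G \rightarrow \mathtt{Prop}$ in the sense of Prop.~\ref{set-equiv-def-prop}. By function extensionality and univalence for propositions, it suffices to show that $\pi(g) = e_{\mathrm{im}(f)}$ and $f(g) = e_H$ are logically equivalent for each $g$. Since $\iota$ is an injective homomorphism with $\iota(e_{\mathrm{im}(f)}) = e_H$ and $\iota \circ \pi = f$, the first equation implies the second by applying $\iota$, and the second implies the first by injectivity of $\iota$.

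The only delicate point is the surjectivity step. We cannot extract a preimage $g$ from the truncated witness as data, and the argument works only because the goal is a proposition. Everything else follows from injectivity of $\iota$ together with the description of equality in a subset.
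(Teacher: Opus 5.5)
Your proof is correct and follows the paper's route: factor $f$ as $\iota \circ \pi$ through the image, observe that the corestriction $\pi$ is surjective, and deduce $\ker \pi = \ker f$ from $\iota$ being a monomorphism. Where the paper leaves these points implicit, you spell out the truncation induction for surjectivity and the equality of kernels as subsets.
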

\begin{proof}
By definition, $f : G \rightarrow H$ is factorized by the group homomorphism $\iota : \mathrm{im}(f) \hookrightarrow H$, and the factorizing group homomorphism $f^\prime : G \rightarrow \mathrm{im}(f)$ is surjective. Furthermore, $f^\prime$ has kernel $\ker f$, as $\iota$ is a monomorphism.
\end{proof}

Usually, this theorem is stated as $G/\ker(f) \cong \mathrm{im}(f)$ together with the assertion that the isomorphism is unique (or \textit{natural}). Showing that $\mathrm{im}(f)$ is a group quotient makes these statements exact because group quotients also satisfy a universal property:
\begin{prop}  
A group $Q$ is a quotient group of a group $G$ by a normal subgroup $H \subseteq G$, with quotient homomorphism $q : G \rightarrow Q$, if, and only if
\begin{itemize}
\item[(i)] $\ker q = H$;
\item[(ii)] for all group homomorphisms $f : G \rightarrow Q^\prime$ to a group $Q^\prime$ such that $H \subseteq \mathrm{ker}(f)$, there is a unique group homomorphism $g : Q \rightarrow Q^\prime$ such that $f = g \circ q$.
\end{itemize}
\end{prop}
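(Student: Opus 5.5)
The plan is to reduce the statement to Theorem~\ref{univ-monoid-quot-thm} on monoid quotients, using the dictionary between normal subgroups and congruences set up just before Def.~\ref{quot-group-def}. The congruence to use throughout is $R_H$, with $g \equiv_{R_H} g^\prime$ exactly if $g^{-1}g^\prime \in H$. For a homomorphism $f : G \rightarrow Q^\prime$ we have $f(g) = f(g^\prime)$ if and only if $g^{-1}g^\prime \in \ker f$. Hence the hypothesis $H \subseteq \ker(f)$ in (ii) is equivalent to ``$g \equiv_{R_H} g^\prime$ implies $f(g) = f(g^\prime)$''. Likewise, (i) $\ker q = H$ is equivalent to ``$g \equiv_{R_H} g^\prime$ if and only if $q(g) = q(g^\prime)$''. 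So conditions (i) and (ii) of the proposition translate verbatim into conditions (i) and (ii) of Theorem~\ref{univ-monoid-quot-thm} for the relation $R_H$. The one difference is that the maps are group homomorphisms into groups rather than monoid homomorphisms into monoids.

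For the forward direction, let $Q$ be a quotient group by $H$. Then (i) is Def.~\ref{quot-group-def}(ii). By the remark after Def.~\ref{quot-group-def}, the underlying monoid of $Q$ is a monoid quotient of $G$ by $R_H$, so Theorem~\ref{univ-monoid-quot-thm}(ii) applies. Given a group homomorphism $f : G \rightarrow Q^\prime$ with $H \subseteq \ker f$, its underlying monoid homomorphism respects $R_H$. We therefore get a unique monoid homomorphism $g : Q \rightarrow Q^\prime$ with $f = g \circ q$. Since the forgetful functor from groups to monoids is fully faithful, $g$ is a group homomorphism, and uniqueness among group homomorphisms follows from uniqueness among monoid homomorphisms.

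For the converse, assume (i) and (ii). We need surjectivity of $q$, since $\ker q = H$ is given. I would argue as in the proof of Theorem~\ref{univ-monoid-quot-thm}, via the usual yoga of universal objects. First construct one group $Q_0$ satisfying the definition: the monoid quotient of $G$ by the congruence $R_H$, which exists since $H$ is normal and becomes a group by the remark before Def.~\ref{quot-group-def}. By the forward direction, $Q_0$ has property (ii). Applying (ii) for $Q$ and for $Q_0$ mutually gives homomorphisms $Q \rightarrow Q_0$ and $Q_0 \rightarrow Q$ compatible with the quotient maps. Uniqueness in (ii), applied to $q$ itself and to the quotient map of $Q_0$, shows that both composites are identities. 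Hence $Q \simeq Q_0$ over $G$, and $q$ is surjective because the quotient map of $Q_0$ is.

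The main obstacle is this converse. Condition (ii) quantifies only over homomorphisms killing $H$, so the ``uniqueness gives identity'' step has to be checked with $q$ itself as the test map. That step is legitimate because $H = \ker q$ by (i). An alternative, slicker route would compare $q$ with the inclusion of $\mathrm{im}(q)$ using Theorem~\ref{first-iso-grp-thm}, but it still needs the same uniqueness argument, so I would follow the comparison with $Q_0$.
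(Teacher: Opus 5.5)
Your proposal is correct and follows the paper's proof. The forward direction goes through the universal property of monoid quotients for $R_H$ together with full faithfulness of the forgetful functor from groups to monoids. The converse compares $Q$ with the group quotient built from the monoid quotient by the usual universal yoga, uses (i) to make $q$ itself an admissible test map, and reads off surjectivity of $q$ from the resulting isomorphism. Your observation that (ii) only ranges over group targets, so Theorem~\ref{univ-monoid-quot-thm} cannot be invoked directly for the converse, makes explicit a point the paper leaves implicit.
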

\begin{proof}
First, let $P$ be a group and $f : G \rightarrow P$ a group homomorphism such that $H \subseteq \ker f$. If $R_H$ is the congruence associated to the normal subgroup $H$, the subgroup inclusion shows that $g_1 \equiv_{R_H} g_2$ implies $f(g_1^{-1}g_2) = e_P$ and hence $f(g_1) = f(g_2)$. By the universal property of monoid quotients we have a unique monoid and therefore group homomorphism $g : Q \rightarrow P$ such that $f = g \circ q$.

Conversely, the above arguments show that the group quotient $G/H$ of $G$ by $H$ with quotient homomorphism $\pi : G \rightarrow G/Q$ constructed from the monoid quotient satisfies the universal property. The usual universal yoga and condition (i) yield an isomorphism $i : G/H \rightarrow Q$, whose underlying map of sets is bijective. Consequently, the underlying map of $q = i \circ \pi$ is surjective. 
\end{proof}

In particular, well-definedness for maps from group quotients can be checked on generators of~$H$. And once again, $G$ with the identity homomorphism $G \rightarrow G$ satisfy condition (ii), so (i) is needed.

Note that in an (additive) abelian group $A$, all subgroups $B \subseteq A$ are abelian 
and normal, 
and a quotient group of $A$ by $B$ is automatically abelian, too. 
A quotient group of a group $G$ by its normal commutator subgroup $[G,G]$ (see Prop.~\ref{comm-norm-prop}), with quotient map $\pi$, is abelian because $\pi(aba^{-1}b^{-1}) = \pi(e_G)$ implies $\pi(a)\pi(b) = \pi(b)\pi(a)$, for all $a,b \in G$. 
The abelian quotient also satisfies a modified universal property:
\begin{prop} \label{commutator-quot-prop}
To all group homomorphisms $f : G \rightarrow A$ from an arbitrary group $G$ to an abelian group $A$, there exists a unique homomorphism $g : G/[G,G] \rightarrow A$ factorizing the quotient homomorphism $\pi : G \rightarrow G/[G,G]$, that is, $f = g \circ \pi$.
\end{prop}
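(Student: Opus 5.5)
The plan is to reduce the statement to the universal property of group quotients proved just above, applied with the normal subgroup $H = [G,G]$, which is normal by Prop.~\ref{comm-norm-prop}. That proposition yields a unique homomorphism $g : G/[G,G] \rightarrow A$ with $f = g \circ \pi$, as soon as we know $[G,G] \subseteq \ker f$. So the whole argument comes down to checking this inclusion.

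To prove $[G,G] \subseteq \ker f$, I use that $\ker f$ is a subgroup of $G$ (normal by Prop.~\ref{ker-norm-prop}). Since $[G,G]$ is the subgroup generated by the commutators, it is the minimal subgroup containing them: it is the image of the free group on the set of commutators, and every element is represented by a word $c_1^{\epsilon_1} \cdots c_n^{\epsilon_n}$ in commutators $c_i$, by Prop.~\ref{free-word-group-prop}. It therefore suffices to show that $f(aba^{-1}b^{-1}) = e_A$ for all $a, b : G$. This follows from the commutativity of $A$:
\[ f(aba^{-1}b^{-1}) = f(a)f(b)f(a)^{-1}f(b)^{-1} = f(a)f(a)^{-1}f(b)f(b)^{-1} = e_A. \]
Then every word in commutators is also mapped to $e_A$, because $f$ preserves products and inverses. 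Since membership in $\ker f$ is a proposition, we may use induction on propositional truncation to pick such a word representation of a given element of $[G,G]$.

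The only step requiring some care is this passage from generators to the whole generated subgroup. Everything else is a direct application of the group quotient universal property. Uniqueness of $g$ comes directly from that property, or alternatively from the surjectivity of $\pi$ together with the well-definedness argument in Ex.~\ref{well-def-exm}. Finally, I would note that $g$ is a homomorphism into the abelian group $A$, so nothing beyond condition (ii) of the universal property needs to be checked.
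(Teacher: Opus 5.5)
Your proposal is correct and follows essentially the paper's route. The paper defines $g([a]) := f(a)$ and checks well-definedness via $f(aba^{-1}b^{-1}) = e_A$, using that $A$ is abelian and relying on its earlier remark that well-definedness can be checked on generators of $H$; this is exactly your reduction to $[G,G] \subseteq \ker f$ through the universal property of group quotients, and your explicit passage from commutators to the generated subgroup, together with obtaining the homomorphism property and uniqueness from that universal property, just spells out the steps the paper leaves as easy.
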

\begin{proof}
An equivalence class $[a] \in G/[G,G]$ must be mapped to $f(a) \in A$ by $g$, and this map is well-defined:
$f(a \cdot b \cdot a^{-1} \cdot b^{-1}) = e_A$, as $A$ is abelian. It is easy to verify that $g$ is a group homomorphism.
\end{proof}

As above, the universal property can be interpreted as the recursion principle of a higher inductive type $G/H$ , but we will not discuss this further  -- with one exception: In contrast to standard usage of group quotients, the HIT construction, and therefore the universal property also works for non-normal subgroups and allows us to define group quotients of a group $G$ by arbitrary subgroups $H \subseteq G$. 

Of course, Prop.~\ref{ker-norm-prop} excludes that the kernel of the quotient homomorphism $q : G \rightarrow G/H$ is $H$, if $H$ is not normal. Instead, the kernel of the quotient homomorphism will always be the \textit{normal closure} $\overline{H}_N$ of $H$, the smallest normal subgroup containing $H$. Equivalently, the underlying monoid of $G/H$ is the monoid quotient of $G$ by the congruence closing up the equivalence relation associated to $H$, as discussed at the end of \ref{monoid-ssec}.

The normal closure of $H$ exists because the intersection of normal groups is normal again, and the normal closure is equal to $H$ if, and only if, $H$ is normal. There is a dual way to construct the normal closure:
\begin{prop}    
The subgroup generated by all elements of all conjugated subgroups $g \cdot H \cdot g^{-1}$, $g : G$, is the normal closure $\overline{H}_N$ of $H$ in $G$.
\end{prop}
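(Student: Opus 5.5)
Let $N$ denote the subgroup of $G$ generated by the subset $C := \{ g h g^{-1} \mid g : G,\ h \in H\}$, i.e.\ the image of the homomorphism $F(C) \rightarrow G$ from the free group on $C$ induced by the inclusion. I will show that $N$ is a normal subgroup containing $H$, and that $N$ is contained in every normal subgroup containing $H$. Since subgroups are subsets, and $\overline{H}_N$ is characterised as the smallest normal subgroup containing $H$ (the intersection of all such), the two inclusions $N \subseteq \overline{H}_N$ and $\overline{H}_N \subseteq N$ give equality of predicates $G \rightarrow \mathtt{Prop}$. By function extensionality and univalence for propositions, this is an equality of subsets, and by Prop.~\ref{set-equiv-def-prop} also of subgroups.

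First, $H \subseteq N$: take $g = e_G$, so every $h \in H$ is a generator. Second, if $K$ is any normal subgroup with $H \subseteq K$, then every generator $ghg^{-1}$ lies in $g \cdot K \cdot g^{-1} = K$. Since $N$ is the minimal subgroup containing $C$ (the image is minimal among subgroups containing the image elements, as for monoids), we get $N \subseteq K$, and in particular $N \subseteq \overline{H}_N$.

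The main step is normality of $N$, and here I would copy the argument of Prop.~\ref{comm-norm-prop}. For $c : G$, conjugation $\iota_c$ maps a generator $ghg^{-1}$ to $(cg)h(cg)^{-1}$, which is again a generator. By the discussion after Prop.~\ref{free-word-group-prop}, every element of $N$ is represented by a word $g_1^{\epsilon_1}\cdots g_n^{\epsilon_n}$ with $g_i \in C$, so $\mathrm{im}(\iota_c|_N) = c \cdot N \cdot c^{-1}$ is generated by $\iota_c(C) \subseteq C$. Hence $c \cdot N \cdot c^{-1} \subseteq N$ for all $c$, which suffices by the remark following the definition of normal subgroups. It follows that $\overline{H}_N \subseteq N$.

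The expected obstacle is the subtle point already flagged in the paper: mere minimality of images does not show that the conjugate of a generated subgroup is generated by the conjugated generators. So I must really use the word representation. Since the statement ``$\iota_c(x) \in N$'' is a proposition, I can apply induction on propositional truncation to pick a representing word for $x \in N$, and then induct on the length of the list in $\mathrm{List}(C + C)$.
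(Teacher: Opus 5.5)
Your proof is correct. The steps $H \subseteq N$ and $N \subseteq K$ for every normal $K \supseteq H$ are the same as in the paper. The difference lies in the normality step.

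\textbf{The paper's route.} The paper proves the inclusion in the direction that the remark after the definition of normal subgroups asks for, namely $\langle S\rangle \subseteq g\cdot\langle S\rangle\cdot g^{-1}$. We have $S = g\cdot S\cdot g^{-1}$, and $g\cdot\langle S\rangle\cdot g^{-1}$, the image of $\iota_g$, is a subgroup containing $g\cdot S\cdot g^{-1} = S$. Minimality of $\langle S\rangle$ then gives the inclusion at once.

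\textbf{Your route.} You prove the opposite inclusion $c\cdot N\cdot c^{-1}\subseteq N$. This forces you to show that the conjugate of a generated subgroup is generated by the conjugated generators. Hence the detour through word representations in $\mathrm{List}(C+C)$, induction on propositional truncation, and induction on list length. The ``obstacle'' you anticipate comes only from the direction you chose.

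Your direction does still give normality. The remark is stated for the other inclusion, so you need its one-line symmetric version: apply your inclusion to $c^{-1}$ to obtain $N\subseteq c\cdot N\cdot c^{-1}$.

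Moreover, even your inclusion follows from minimality alone. The preimage $\iota_c^{-1}(N)$ is a subgroup. It contains $C$ because $\iota_c(C)\subseteq C\subseteq N$. So $N\subseteq\iota_c^{-1}(N)$, which is exactly $c\cdot N\cdot c^{-1}\subseteq N$, with no words needed.

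\textbf{Comparison.} Your route gives the more explicit fact $c\cdot N\cdot c^{-1}=\langle\iota_c(C)\rangle$, in parallel with the proof of Prop.~\ref{comm-norm-prop}. The paper's route is purely order-theoretic and never touches the free-group construction, which makes it considerably cheaper to formalise.
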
 
\begin{proof}
Let $S := \{g \cdot h \cdot g^{-1} : g \in G, h \in H\}$ be the set of generators. Then obviously, $g \cdot S \cdot g^{-1} = S$ for all $g : G$. Hence the subgroup $\langle S \rangle$ generated by $S$ is contained in $g \cdot \langle S \rangle \cdot g^{-1}$, since $S = g \cdot S \cdot g^{-1} \subseteq g \cdot \langle S \rangle \cdot g^{-1}$. This implies that $\langle S \rangle$ is normal.

Since $H \subseteq S$ we have $H \subseteq \langle S \rangle$. If $N \subseteq G$ is another normal subgroup containing $H$, then $g \cdot H \cdot g^{-1} \subseteq g \cdot N \cdot g^{-1} = N$ for all $g : G$, hence $S \subseteq N$ and therefore $\langle S \rangle \subseteq N$ by the minimality of $\langle S \rangle$.
\end{proof}

\subsection{Rings and Modules} \label{ring-mod-ssec}

\textit{Commutative rings with one} are sets $R$ provided with
\begin{itemize}
\item addition: a commutative group structure, with composition results denoted by $r + s$ for $r, s : R$, the neutral element by $0$ and the element negative to $r : R$ by $-r$;
\item multiplication: a commutative monoid structure, with composition results denoted by $r \times s$ for $r, s : R$ and the neutral element by $1$;
\item a \textit{distributive law} asserting $(r + s) \times t = r \times t + s \times t$, for all $r, s, t : R$.
\end{itemize}

An element $u : R$ is a \textit{unit} of the commutative ring $R$ with one if there exists a $v : R$ such that $u \times v = 1$. The \textit{inverse} $v$ is actually uniquely determined.

A homomorphism $f : R \rightarrow S$ between two commutative rings $R, S$ with one is a group homomorphism on the underlying additive groups of $R$ and $S$, which is at the same time a monoid homomorphism on the underlying multiplicative monoids of $R$ and $S$. 
As before, equality of ring homomorphisms is decided by equality of the underlying set maps. Thus, commutative rings with one and their homomorphisms can be realized as a concrete category over the category of abelian groups, with fibers consisting of commutative monoid structures.

A \textit{module} over the ring $R$, for short \textit{$R$-module} is a set $M$ provide with 
\begin{itemize}
\item a commutative group structure, with composition results denoted by $m + n$ for $m, n : M$, the neutral element by $0$ and the element negative to $m : M$ by $-m$;
\item \textit{scalar multiplication}: a function $R \rightarrow M \rightarrow M$, composition results denoted by $r \cdot m$ for $r : R$, $m : M$.
\end{itemize}
Furthermore, there are several laws ensuring the compatibility of addition and multiplication in the ring and scalar multiplication:
\begin{itemize}
\item $r \cdot (s \cdot m) = (r \times s) \cdot m$ for all $r, s : R$, $m : M$;
\item $(r + s) \cdot m = r \cdot m + s \cdot m$ for all $r, s : R$, $m : M$;
\item $r \cdot (m + n) = r \cdot m + r \cdot n$ for all $r : R$, $m, n : M$;
\item $1 \cdot m = m$ for all $m : M$.
\end{itemize}

A homomorphism $f : M \rightarrow N$ of $R$-modules $M, N$ is a group homomorphism compatible with the scalar multiplication:
\[ f(r \cdot m) = r \cdot f(m)\ \mathrm{for\ all\ } r : R,\ m : M. \]
As a group homomorphism, an $R$-module homomorphism is completely determined by its underlying map of sets. It is straightforward to check that the identity map $M \rightarrow M$ is always an $R$-module homomorphism, and that the composition of two $R$-module homomorphisms is an $R$-module homomorphism. 
This gives the $R$-modules together with their homomorphisms the structure of a category, with a faithful forgetful functor to the category of sets and their maps. 

\begin{exm}
The trivial $R$-module consists of exactly one element and carries the group structure of the trivial group, with scalar multiplication the only possible choice. It is denoted by $(0)$, or even shorter, $0$. 

Since a homomorphism of $R$-modules always maps $0$ to $0$, for each $R$-module $M$ and the zero-module $0$ there are exactly one homomorphism $0 \rightarrow M$ and one homomorphism $M \rightarrow 0$. Categorically speaking, this makes $0$ into a zero object, that is, both initial and terminal. 
\end{exm}

\begin{exm} \label{direct-prod-ex}
Given $R$-modules $(M_i)_{i \in I}$ indexed by a set $I$ we can define their \textit{(direct) product} $\Pi_{i \in I} M_i$ as the set product of the $M_i$ provided with componentwise addition and scalar multiplication.
$\Pi_{i \in I} M_i$ actually is a product in the category of $R$-modules, with homomorphisms $\pi_i : \Pi_{i \in I} M_i \rightarrow M_i$ given by the projection to the $i$th factor, that is, $\Pi_{i \in I} M_i$ together with the $\pi_i$ satisfy the universal property of a product of the $R$-modules $M_i$. 
\end{exm}

\begin{exm} \label{direct-sum-ex}
To construct a direct sum $\bigoplus_{i \in I} M_i$ of $R$-modules $M_i$ indexed by a set $I$ we follow the same strategy as for the construction of a free group (see Prop.~\ref{free-word-group-prop}): We represent elements of $\bigoplus_{i \in I} M_i$ as finite sums of elements from any of the $M_i$, make these \textit{additive combinations} (in analogy to linear combinations) into a monoid by using concatenation and the empty sum $0$, and divide out the congruence relation generated by
\begin{itemize}
\item $m_1 + m_2 = m$ if $m_1, m_2$ and hence $m = m_1 + m_2 \in M_i$,
\item $0_i = 0$, where $0_i$ is the zero in $M_i$, and
\item $m_1 + m_2 = m_2 + m_1$ for arbitrary $m_1, m_2$ in the $M_i$.
\end{itemize} 
Addition is induced by concatenation, the zero element by the empty sum, negatives by $-(m_1 + \cdots + m_k) = (-m_1) + \cdots + (-m_k)$ and scalar multiplication by $r \cdot (m_1 + \cdots + m_k) = r \cdot m_1 + \cdots + r \cdot m_k$. Then $\bigoplus_{i \in I} M_i$ together with the obvious homomorphisms $\iota_i: M_i \rightarrow \bigoplus_{i \in I} M_i$ satisfy the universal property of a \textit{coproduct} (or \textit{sum}) of the $R$-modules $M_i$, $i \in I$: If we have homomorphisms $\phi_i : M_i \rightarrow Q$ to an $R$-module $Q$ for all $i \in I$, then $\bigoplus_{i \in I} \phi_i: \bigoplus_{i \in I} M_i \rightarrow Q$ mapping
$m_1 + \cdots + m_k$ with $m_j \in M_{i_j}$ to $\phi_{i_1}(m_1) + \cdots + \phi_{i_k}(m_k)$ is the unique $R$-module homomorphism such that $\bigoplus_{i \in I} \phi_i \circ \iota_i = \phi_i$.

As with universal constructions involving groups we can use the module structure to condense the requirements for a module to satisfy the universal property of a direct sum:
\begin{prop}
An $R$-module $M$, together with $R$-module homomorphisms $\phi_i : M_i \rightarrow M$, is a direct sum of the $R$-modules $M_i$, $i \in I$, if, and only if,
\begin{itemize}
\item[(i)] all elements of $M$ can be written as a finite sum $\sum_i \phi_i(m_i)$, $m_i \in M_i$, and 
\item[(ii)] $\sum_i \phi(m_i) = 0$ implies $\phi_i(m_i) = 0$, for all of the finitely many $i \in I$.
\end{itemize}  
\end{prop}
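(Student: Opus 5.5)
The plan is to compare $M$ with the explicitly constructed direct sum $\bigoplus_{i \in I} M_i$ of Ex.~\ref{direct-sum-ex}, using the induced homomorphism $\Phi := \bigoplus_{i \in I} \phi_i : \bigoplus_{i \in I} M_i \rightarrow M$, and to read condition (ii) with the finitely many indices $i$ pairwise distinct. Condition (i), understood as mere existence of such a representation, is then exactly surjectivity of $\Phi$. Condition (ii) is exactly triviality of $\ker \Phi$: an element of $\bigoplus_{i \in I} M_i$ merely has a representation with distinct indices, and we are proving a proposition, so induction on propositional truncation lets us use such a representation. By Prop.~\ref{ker-mono-prop}, applied to the underlying abelian groups, trivial kernel means $\Phi$ is injective. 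Everything therefore reduces to two facts about the constructed direct sum:
\begin{itemize}
\item[(a)] it satisfies (i) and (ii) with respect to the $\iota_i$;
\item[(b)] a bijective $R$-module homomorphism is an isomorphism.
\end{itemize}

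Fact (b) is routine: the set-theoretic inverse of a bijective homomorphism is again additive and compatible with scalar multiplication, because the underlying map of a homomorphism is injective. Fact (a)(i) follows by induction on lists: every element of $\bigoplus_{i \in I} M_i$ is merely the class of a finite additive combination. Using the commutativity relation $m_1 + m_2 = m_2 + m_1$ together with $m_1 + m_2 = m$ for $m_1, m_2 \in M_i$, I would collect all terms with the same index into one term, arriving at a sum $\sum_i \iota_i(m_i)$ with distinct indices.

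For (a)(ii) I would not analyse the generated congruence directly; that seems hopeless because equality in the quotient is only merely witnessed by chains of relations. Instead, for each $j \in I$, I would use the universal property of the coproduct, applied to the family given by $\mathrm{id} : M_j \rightarrow M_j$ and the zero homomorphisms $M_i \rightarrow M_j$ for $i \neq j$, to obtain projections $p_j : \bigoplus_{i \in I} M_i \rightarrow M_j$ with $p_j \circ \iota_j = \mathrm{id}$ and $p_j \circ \iota_i = 0$ for $i \neq j$. Defining these requires decidable equality on $I$, or else $\mathrm{LEM}_{-1}$ for the propositions $i = j$; I would flag this and note that it is unproblematic for index sets with decidable equality. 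Applying $p_j$ to $\sum_i \iota_i(m_i) = 0$ gives $m_j = 0$, hence $\iota_j(m_j) = 0$.

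With (a) and (b) in hand, both directions follow. If $M$ is a direct sum, the usual yoga gives an isomorphism $\bigoplus_{i \in I} M_i \rightarrow M$ compatible with the $\iota_i$ and $\phi_i$, and (i) and (ii) transport along it, since it maps sums with distinct indices to sums with distinct indices. Conversely, if (i) and (ii) hold, $\Phi$ is bijective by the argument of the first paragraph, hence an isomorphism by (b) commuting with the structure maps, so $M$ inherits the universal property of the coproduct (equivalently, by univalence the two are equal as $R$-modules with maps). I expect the main obstacle to be (a)(ii), specifically the construction of the projections $p_j$ without choice or decidability issues, together with the bookkeeping needed to pass to representations with distinct indices inside propositional truncations.
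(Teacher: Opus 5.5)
The step that fails is your claim that condition (ii) is exactly triviality of $\ker\Phi$. If $x=\sum_i\iota_i(m_i)$, with distinct indices, satisfies $\Phi(x)=\sum_i\phi_i(m_i)=0$, then (ii) only gives $\phi_i(m_i)=0$ for each $i$. To conclude $x=0$ you need $\iota_i(m_i)=0$, i.e.\ $m_i=0$. But (ii) says nothing about the kernels of the individual $\phi_i$; for a one-term sum it is a tautology.

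This cannot be repaired, because the ``if'' direction is false as literally stated. Let $I$ be a singleton, $M_1=R$ with $1\neq 0$, $M=0$ and $\phi_1=0$ (or, less degenerately, $M=R/J$ with $\phi_1$ the quotient map for a nonzero ideal $J$). Then (i) and (ii) hold, but $\phi_1$ is not an isomorphism, so $(M,\phi_1)$ is not a direct sum of the single module $M_1$. What is actually needed is (ii) with conclusion $m_i=0$, or equivalently (ii) together with injectivity of every $\phi_i$. This is exactly what your projections $p_j$ establish for the constructed sum. It is also what the paper's own converse tacitly uses when it declares $\sum_i\phi_i(m_i)\mapsto\sum_i\psi_i(m_i)$ well defined. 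With (ii) read this way, your kernel step, and hence your whole argument, goes through.

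Once corrected, your converse differs from the paper's. You prove $\Phi$ bijective and use fact (b) to transfer the coproduct property. The paper instead checks the universal property of $M$ directly, defining the factorisation $\psi$ through (i) and the strengthened (ii). Your version isolates well-definedness as injectivity of $\Phi$; the paper's avoids fact (b) and the comparison map altogether.

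In the forward direction, your projections $p_j$ supply an argument the paper only gestures at. Injectivity of the individual structure maps does not by itself give (ii): take $\phi_1=\phi_2=\mathrm{id}_N$, where $\phi_1(n)+\phi_2(-n)=0$.

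One more caveat concerns decidable equality on $I$ (or $\mathrm{LEM}_{-1}$). You need it not only for the $p_j$ but also to regroup a representation into one with pairwise distinct indices, which your kernel step relies on. Propositional truncation does not help here, since the obstruction is deciding $i=j$, not making a choice. Since (ii) only makes sense for distinct indices, the statement itself presupposes this.
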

\begin{proof}
The direct sum $\bigoplus_i M_i$ constructed above, together with the homomorphisms $\phi_i$ naturally given by the quotient construction, satisfy (i) and (ii) because the $\phi_i$ are injective. Conversely, if $M$ and the $\phi_i$ satisfy (i) and (ii) and $Q$ is a $R$-module with homomorphisms $\psi_i : M_i \rightarrow Q$ then $\psi : M \rightarrow Q$ given by $\sum_i \phi_i(m_i) \mapsto \sum_i \psi_i(m_i)$ is a well-defined $R$-module homomorphism, and we have $\psi_i = \psi \circ \phi_i$.
\end{proof}

This characterisation is the analogue of Strickland's characterisation of localisations (see Prop.~\ref{Strickland-prop}) for direct sums.

Note that a more standard way to represent elements of $\bigoplus_{i \in I} M_i$ is as finite tuples of elements of \textit{disjoint} modules $M_i$. But working with such tuples requires to decide the equality of indices $i, j$, which in general is only possible if we assume LEM.
\end{exm}

\begin{exm}
The ring structure on a commutative ring $R$ with one also provides $R$ with the structure of an $R$-module.

More generally, if $f : R \rightarrow S$ is a homomorphism of commutative rings with one then $S$ can be seen as an $R$-module, with scalar multiplication given by $r \cdot s := f(r) \times s$, for all $r : R$ and $s : S$. We call $S$ an \textit{$R$-algebra}. 

A homomorphism $f : R \rightarrow S$ of commutative rings with one can then be interpreted as an $R$-module homomorphism between the $R$-module $R$ and the $R$-algebra $S$. Furthermore, $S$-modules can be \textit{pulled back} to $R$-modules, with the same underlying set and scalar multiplication induced by the ring homomorphism and the multiplication by scalars in $S$.
\end{exm}

\begin{exm} \label{Z-mod-ex}
$\mathbb{Z}$-modules are exactly the abelian groups, with scalar multiplication given by $0 \cdot a := 0$ for $a$ an element of the abelian group, $n \cdot a := \sum_1^n a$ if $n \geq 1$ and $n \cdot a := \sum_1^n (-a)$ if $n \leq -1$. 
By choosing $1 : R$ for $a$, we can construct a unique homomorphism from $\mathbb{Z}$ to every commutative ring $R$ with one in this way, that is, the commutative ring $\mathbb{Z}$ is an \textit{initial object} in the category of commutative rings, and a commutative ring $R$ always is a $\mathbb{Z}$-algebra.
\end{exm}

\begin{exm}
By adding the values of two $R$-module homomorphisms $f, g : M \rightarrow N$ we obtain a homomorphism $f + g : M \rightarrow N$ as their sum. We can also multiply each value of $f : M \rightarrow N$ with a given $r : R$ and obtain a scalar product homomorphism $r \cdot f : M \rightarrow N$. Thus we provide the set of homomorphisms $\mathrm{Hom}_R(M, N)$ between two $R$-modules $M, N$ with the structure of an $R$-module. Composition of homomorphisms becomes bilinear with respect to this structure.
\end{exm}

\begin{exm} \label{hom-R-ex}
A homomorphism $f: R \rightarrow M$ from the $R$-module $R$ to an $R$-module $M$ is completely determined by $m := f(1_R) : M$, setting $f(r) := rm$. Furthermore, $m$ can be chosen arbitrarily. In different words, $\mathrm{Hom}_R(R, M) \cong M$. 
\end{exm}

A \textit{monomorphism} of $R$-modules is an injective $R$-module homomorphism and can be characterized by the usual universal property.

A \textit{submodule} $N \subseteq M$ of an $R$-module $M$ is a subgroup of (the underlying group of) $M$ such that scalar multiplication is closed on $N$: 
\[ r \cdot n \in N\ \mathrm{for\ all\ } r : R,\ n \in N. \]
Alternatively, a submodule of an $R$-module $M$ can be given as an $R$-module $N$ together with a module monomorphism $\iota : N \hookrightarrow M$.

An \textit{ideal} of a commutative ring $R$ with one is a submodule of the $R$-module $R^1$.

\begin{exm}
Every module $M$ contains the trivial submodule $(0)$. 
\end{exm}

\begin{exm} \label{ann-exm}
Let $R$ be a commutative ring with one, $M$ an $R$-module and $m \in M$. Then the \textit{annihilator} $\mathrm{Ann}(m)$ of $m$ in $R$ consists of all elements $r : R$ such that $r \cdot m = 0$. It is an ideal, since $r_1 \cdot m = 0$ and $r_2 \cdot m = 0$ implies $(r_1 + r_2) \cdot m = 0$ and $r \cdot m = 0$ implies $r^\prime r \cdot m = 0$.

If $S \subseteq R$ is a submonoid of the multiplicative monoid $R$ then the \textit{annihilator} $\mathrm{Ann}(S)$ of $S$ in $R$ consists of all elements $r : R$ such that there exists an $s \in S$ with $r \cdot s = 0$. This annihilator is an ideal, too: $r_1 \cdot s_1 = 0$ and $r_2 \cdot s_2 = 0$ implies $(r_1 + r_2) \cdot s_1s_2 = 0$, and $s_1s_2 \in S$ as $S$ is a monoid. Furthermore, $r \cdot s = 0$ implies $r^\prime r \cdot s = 0$.
\end{exm}

\begin{exm}
Let $R$ be a commutative ring with one and $f : M \rightarrow N$ a homomorphism of $R$-modules $M, N$. The \textit{kernel} $\ker(f)$ of $f$ is the kernel of the underlying homomorphism of groups. It is a submodule of $M$: If $f(m) = 0$ then $f(r \cdot m) = r \cdot f(m) = r \cdot 0 = 0$.

As in Prop.~\ref{ker-mono-prop}, a homomorphism of $R$-modules is a monomorphism iff its kernel is trivial, that is equal to $(0)$.

If $f: R \rightarrow S$ is a homomorphism of commutative rings with one, then interpreting $f$ as an $R$-module homomorphism we can describe the kernel of $f$ as an ideal $\ker(f)$ in $R$.
\end{exm}

\begin{exm}
Let $R$ be a commutative ring with one and $f : M \rightarrow N$ a homomorphism of $R$-modules $M, N$. The \textit{image} $\mathrm{im}(f)$ of $f$ is the image of the underlying homomorphism of groups. It is a submodule of $N$: Since $r \cdot f(m) = f(r \cdot m)$, the elements of $\mathrm{im}(f)$ are closed under scalar multiplication.
\end{exm}

\begin{exm} \label{mod-complex-exm}
A \textit{complex} of $R$-modules is a sequence $(f_n)_{n \in \mathbb{Z}}$ of $R$-modules homomorphisms, 
\[ \cdots \rightarrow M_{n+1} \stackrel{f_{n+1}}{\rightarrow} M_n \stackrel{f_n}{\rightarrow} M_{n-1} \rightarrow \cdots , \]
such that $\mathrm{Im}(f_{n+1}) \subseteq \mathrm{ker}(f_n) \subseteq M_n$ for all $n \in \mathbb{Z}$. 

A complex is called \textit{exact} if $\mathrm{Im}(f_{n+1}) = \mathrm{ker}(f_n)$ for all $n \in \mathbb{Z}$.

If $M_n = 0$ for all $n \leq N_-$ and/or all $n \geq N_+$, we can cut off the (exact) complex at $N_-$ resp.\ $N_+$ because $\mathrm{Im}(f_{n+1}) = \mathrm{ker}(f_n)$ is automatically satisfied if $M_{n+1} = M_n = M_{n-1} = 0$, and we obtain a \textit{bounded (exact) complex} (to the left resp.\ to the right).  

A \textit{short exact sequence} is a bounded exact complex of the form
\[ 0 \rightarrow M^\prime \stackrel{f}{\rightarrow} M \stackrel{g}{\rightarrow} M^{\prime\prime} \rightarrow 0, \]
that is, a complex with $5$ terms where $f$ is a monomorphism, $g$ is an epimorphism, and $\mathrm{Im}(f) = \mathrm{ker}(g)$.
\end{exm}

\begin{exm}
Since direct sums of $R$-modules are coproducts (see Ex.~\ref{direct-sum-ex}), a set $A \subseteq M$ of elements in the $R$-module $M$ induces a unique $R$-module homomorphism $f_A: \bigoplus_{m \in A} R \rightarrow M$ such that $f_m := f_A \circ \iota_m: R \rightarrow M$ is the map determined by $f_m(1_R) = m$, as in Ex.~\ref{hom-R-ex}. The image of $f_A$ is called the $R$-(sub)module (of $M$) \textit{generated} by $A$ and denoted by $\sum_{m \in A} Rm$. If the image of $f_A$ is equal to $M$, $A$ is said to \textit{generate} the module $M$, and the elements of $A$ are called \textit{generators} of $M$. If $A$ is a finite set, $M$ is called a \textit{finitely generated} module. But even if $M$ is not finitely generated, generators always exist, for example the set of all elements of $M$.

More generally, let $M_k \subseteq M$ be a set of submodules of an $R$-module $M$ indexed by the set $K$. As before, the inclusion homomorphisms induce a module homomorphism $\bigoplus_{k \in K} M_k \rightarrow M$, and $\sum_{k \in K} M_k$ denotes the $R$-(sub)module (of $M$) \textit{generated} by the submodules $M_k$. 
\end{exm}

\begin{exm} \label{free-pres-exm}
Given an arbitrary $R$-module $M$, we can find a (highly non-unique) \textit{free presentation} of $M$, 
\[ F \stackrel{h_1}{\rightarrow} G \stackrel{h_0}{\rightarrow} M \rightarrow 0, \]
that is, an exact complex of $R$-modules, where $F$ and $G$ are free, $h_0$ is an epimorphism, and   
$\mathrm{Im}(h_1) = \mathrm{ker}(h_0)$. For example, for any set of generators $\{m_i\}_{i \in I}$ of $M$ we can construct the epimorphism $h_0 : G := \bigoplus_{i \in I} R \cdot g_i \rightarrow M$, $g_i \mapsto m_i$, and for any set of generators $(h_j)_{j \in J}$ of $\mathrm{ker}(h_0) \subseteq G$ we can construct a $R$-homomorphism $h_1 : F := \bigoplus_{j \in J} R \cdot f_j \rightarrow G$, $f_j \mapsto h_j$ which satisfies $\mathrm{im}(h_1) = \mathrm{ker}(h_0)$. 
\end{exm}

Let $N \subseteq M$ be a submodule of the $R$-module $M$. Since $M$ is an (additive) abelian group, $N \subseteq M$ is a normal subgroup, and we can construct the group quotient $M/N$ with quotient homomorphism $q: M \rightarrow M/N$ such that $\ker(q) = N$, as in Def.~\ref{quot-group-def}. Scalar multiplication descends from $M$ to $M/N$ since $m - m^\prime \in N$ implies $rm - rm^\prime = r(m - m^\prime) \in N$, hence $M/N$ is an $R$-module, too, satisfying the universal properties of a quotient of $R$-modules. 

If $i: N \rightarrow M$ is a monomorphism we can interpret $N$ as a submodule of $M$, and $N$ is (uniquely isomorphic or equal to) the kernel of the quotient map $q: M \rightarrow M/N$.

The \textit{cokernel} of a homomorphism $f: M \rightarrow N$ of $R$-modules is the $R$-module quotient $\mathrm{coker}(f) = N/\mathrm{im}(f)$. Reformulating the universal property of $R$-module quotients we see that the quotient map $N \rightarrow N/\mathrm{im}(f)$ uniquely factorizes all $R$-module homomorphisms $g : N \rightarrow Q$ such that $g \circ f = 0$.

 An \textit{epimorphism} of $R$-modules is a surjective $R$-module homomorphism and can be characterised by the dual universal property. The map $p: M \rightarrow \mathrm{im}(f)$ factorizing $f : M \rightarrow N$ is surjective, hence an epimorphism. Consequently, every homomorphism $f : M \rightarrow N$ is the composition of an epimorphism (to $\mathrm{im}(f)$) and a monomorphism (embedding $\mathrm{im}(f)$).

If $f: M \rightarrow Q$ is an epimorphism of $R$-modules, then $Q$ is uniquely isomorphic or equal to the $R$-module quotient $M/\ker(f)$, see Cor.~\ref{first-iso-grp-thm}.

\begin{lem}
Let $f: M \rightarrow M^\prime$ be an epimorphism of $R$-modules, and let $g: M \rightarrow N$ be a homomorphism to an $R$-module $M$ such that $\ker(f) \subseteq \ker(g)$. Then there is a unique module homomorphism $g^\prime: M^\prime \rightarrow N$ such that $g = g^\prime \circ f$.
\end{lem}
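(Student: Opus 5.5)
The plan is to reduce the statement to the universal property of $R$-module quotients, and to pass from the quotient to $M^\prime$ via the identification of $M^\prime$ with $M/\ker(f)$ established just before the lemma. Set $K := \ker(f)$. It is a submodule of $M$, so the $R$-module quotient $q : M \rightarrow M/K$ exists with $\ker(q) = K$. Since $K \subseteq \ker(g)$, the universal property of quotients (the group version for normal subgroups, with scalar multiplication descending as explained above) gives a unique homomorphism $\overline{g} : M/K \rightarrow N$ with $g = \overline{g} \circ q$.

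Next I transfer this to $M^\prime$. By the First Isomorphism Theorem (Thm.~\ref{first-iso-grp-thm}), $f$ is surjective with kernel $K$, so $M^\prime$ together with $f$ is a quotient of $M$ by $K$ in the sense of Def.~\ref{quot-group-def}. Scalar multiplication is compatible because $f$ is $R$-linear. The universal property then applies directly to $(M^\prime, f)$ in place of $(M/K, q)$. Alternatively, Prop.~\ref{set-quot-eq-prop} and univalence identify $(M^\prime, f)$ with $(M/K, q)$, and transport carries $\overline{g}$ to the required $g^\prime$ with $g = g^\prime \circ f$.

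I could instead argue directly, without invoking the quotient. The idea is to view $M^\prime$ as the set quotient of $M$ along the surjection $f$, as in Ex.~\ref{well-def-exm}. If $f(m_1) = f(m_2)$, then $m_1 - m_2 \in \ker(f) \subseteq \ker(g)$, hence $g(m_1) = g(m_2)$. So there is a unique map of sets $g^\prime : M^\prime \rightarrow N$ with $g = g^\prime \circ f$. To see that $g^\prime$ is $R$-linear, note that the statements $g^\prime(x + y) = g^\prime(x) + g^\prime(y)$ and $g^\prime(r x) = r g^\prime(x)$ are propositions, because $N$ is a set. Hence I may use induction on propositional truncation to choose preimages $x = f(m)$ and $y = f(n)$, and compute $g^\prime(f(m) + f(n)) = g^\prime(f(m+n)) = g(m+n) = g(m) + g(n)$, and similarly for scalars. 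Uniqueness among homomorphisms follows from uniqueness among maps of sets, since the forgetful functor is faithful.

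The main obstacle is small. It is making sure that surjectivity is used only through mere existence of preimages, which is legitimate because every goal is a proposition or a uniquely determined object. I would present the direct argument as the main proof, since it avoids transport along the identification $M^\prime = M/\ker(f)$.
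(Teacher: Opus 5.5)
Your proposal is correct. Your first route is exactly the paper's proof, which consists of the single sentence that the lemma is the universal property of the $R$-module quotient $M/\ker(f)$, transferred along $M/\ker(f) \cong M^\prime$.

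You don't need Thm.~\ref{first-iso-grp-thm} there. The map $f$ is surjective and its kernel is $K$ by definition, so $(M^\prime, f)$ satisfies Def.~\ref{quot-group-def} directly.

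The direct argument you prefer to present unpacks the same idea rather than introducing a new one. It uses the set-level universal property of Ex.~\ref{well-def-exm} and then checks $R$-linearity by choosing preimages under propositional truncation. That step is legitimate because the equations to be proved live in the set $N$. The gain is that you never invoke the module-quotient universal property or transport along the identification $M^\prime = M/\ker(f)$. The cost is that you redo by hand the linearity check that the paper has already absorbed into the universal property of $R$-module quotients.
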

\begin{proof}
This is nothing else than the universal property of the $R$-module quotient $M/\ker(f) \cong M^\prime$.
\end{proof}

\begin{prop} \label{map-sum-prop}
Let $N_1, N_2 \subseteq N$ be two submodules of an $R$-module $N$. Then two module homomorphisms $f_1: N_1 \rightarrow M$ and $f_2: N_2 \rightarrow M$ to an $R$-module $M$ such that $f_{1|N_1 \cap N_2} = f_{2|N_1 \cap N_2}$ define a unique module homomorphism $f: N_1 + N_2 \rightarrow M$ such that $f_{|N_1} = f_1$ and $f_{|N_2} = f_2$.
\end{prop}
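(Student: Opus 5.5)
We realise $N_1 + N_2$ as a quotient of the direct sum and then use the universal properties already established. By the example on generated submodules, $N_1 + N_2$ is the image of the homomorphism $\sigma: N_1 \oplus N_2 \rightarrow N$ induced via the coproduct property (Ex.~\ref{direct-sum-ex}) by the two inclusions. So $\sigma$, viewed as a map onto $N_1 + N_2$, is an epimorphism, and every element of $N_1+N_2$ merely has the form $n_1 + n_2$ with $n_i \in N_i$. By the coproduct property again, $f_1$ and $f_2$ induce a unique homomorphism $\phi = f_1 \oplus f_2 : N_1 \oplus N_2 \rightarrow M$, which sends (the class of) $n_1 + n_2$ to $f_1(n_1) + f_2(n_2)$.

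The plan is to apply the preceding Lemma to the epimorphism $\sigma: N_1 \oplus N_2 \rightarrow N_1 + N_2$ and the homomorphism $\phi$. This yields a unique $f: N_1 + N_2 \rightarrow M$ with $\phi = f \circ \sigma$. Precomposing with the coproduct inclusions $\iota_1, \iota_2$ then gives $f_{|N_1} = f \circ \sigma \circ \iota_1 = \phi \circ \iota_1 = f_1$, and likewise $f_{|N_2} = f_2$.

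The only real hypothesis to verify is $\ker(\sigma) \subseteq \ker(\phi)$, and this is the main step. Since $\ker(\phi)$ is a proposition-valued predicate, we may take an element of $\ker(\sigma)$ and, by induction on propositional truncation, assume it is represented as $n_1 + n_2$ with $n_1 + n_2 = 0$ in $N$. Then $n_1 = -n_2$ lies in $N_1 \cap N_2$, so the compatibility hypothesis gives $f_1(n_1) = f_2(n_1) = -f_2(n_2)$. Hence $\phi(n_1+n_2) = f_1(n_1)+f_2(n_2) = 0$. The delicate point is that elements of the direct sum as constructed in Ex.~\ref{direct-sum-ex} are classes of finite additive combinations, not pairs. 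Each such combination first has to be reduced, using the defining congruences (commutativity and summing within a single $M_i$), to a two-term form $n_1 + n_2$. I would handle this with the characterisation of direct sums by conditions (i) and (ii), which tells us that every element merely equals $\iota_1(n_1) + \iota_2(n_2)$.

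Uniqueness of $f$ can also be seen directly. Two homomorphisms agreeing on $N_1$ and $N_2$ agree on all sums $n_1 + n_2$, and these merely exhaust $N_1 + N_2$. Since $M$ is a set, equality of values is a proposition, so surjectivity of $\sigma$ together with function extensionality gives equality. This is the same argument as the first part of the proof of Prop.~\ref{set-quot-univ-prop}.
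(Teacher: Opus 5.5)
Your proof is correct and follows the paper's argument: you induce $f_1 \oplus f_2$ on $N_1 \oplus N_2$ via the coproduct property, check $\ker(\sigma) \subseteq \ker(f_1 \oplus f_2)$ using $n_1 = -n_2 \in N_1 \cap N_2$, and apply the preceding lemma to the epimorphism $N_1 \oplus N_2 \twoheadrightarrow N_1 + N_2$. Your extra care about elements of the direct sum being classes of additive combinations rather than pairs, the propositional-truncation step, and the direct uniqueness check are refinements of details the paper glosses over, not a different route.
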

\begin{proof}
$f_1$ and $f_2$ induce a unique module homomorphism $\overline{f}: N_1 \oplus N_2 \rightarrow M$ such that $f_i = \overline{f} \circ \iota_i$, $i = 1, 2$, since direct sums are coproducts. The kernel of $p: N_1 \oplus N_2 \rightarrow N_1 + N_2$ consists of pairs $(n_1, n_2)$ such that $n_1 + n_2 = 0_N$, or $n_1 = -n_2$. But this immediately implies $n_1, n_2 \in N_1 \cap N_2$, hence 
\[ \overline{f}(n_1, n_2) = f_1(n_1) + f_2(n_2) = f_1(n_1) + f_1(n_2) = f_1(n_1 + n_2) = 0, \]
and consequently $\ker(p) \subseteq \ker(\overline{f})$, and we can apply the lemma above. 
\end{proof}

From now on, let $S$ be a submonoid of the multiplicative monoid of a commutative ring $R$ with one. We first characterize \textit{localisations} of $R$ in $S$ by their universal property:
\begin{Def}
A commutative ring $T$ with one is a \textbf{localisation} of $R$ in $S$, with localisation homomorphism $q : R \rightarrow T$ if for all commutative rings $T^\prime$ and ring homomorphisms $f : R \rightarrow T^\prime$ such that $S$ is mapped to units in $T^\prime$, there is a unique ring homomorphism $g : T \rightarrow T^\prime$ such that $f = g \circ q$.
\end{Def}

As before, the universal property is the recursion principle of a higher inductive type $R[S^{-1}]$. It has the following constructors:
\begin{itemize}
\item all the (higher) constructors making $R[S^{-1}]$ into a commutative ring with one: addition, multiplication, $0$, negative, $1$, associativity and commutativity of addition and multiplication, neutrality of $0$ and $1$, negativeness, distributivity;
\item a function $q : R \rightarrow R[S^{-1}]$;
\item an inverting function $i : S \rightarrow R[S^{-1}]$;
\item all the equalities ($=$ higher constructors) making $q$ and $i$ into ring homomorphisms;
\item inverseness: $q(s) \times i(s) = 1$ for all $s \in S$.
\end{itemize} 

\begin{prop}
$R[S^{-1}]$, together with the constructor function $q : R \rightarrow R[S^{-1}]$, is a localisation of $R$ in $S$.
\end{prop}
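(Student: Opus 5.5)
We have to show that $q$ is a ring homomorphism, that $q$ sends $S$ to units, and that for every ring homomorphism $f : R \rightarrow T^\prime$ sending $S$ to units there is a unique ring homomorphism $g : R[S^{-1}] \rightarrow T^\prime$ with $f = g \circ q$. The first two facts come directly from the constructors. The higher constructors make $q$ a ring homomorphism. The inverseness constructor $q(s) \times i(s) = 1$ shows that $q(s)$ is a unit for every $s \in S$.

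\emph{Existence.} I would apply the recursion principle of the HIT $R[S^{-1}]$ to the target $T^\prime$. For this I supply data matching every constructor. The ring constructors are interpreted by the ring operations of $T^\prime$. The point constructor $q$ is interpreted by $f$. The constructor $i$ is interpreted by $s \mapsto f(s)^{-1}$, which is well defined because inverses of units are unique, as remarked after the definition of units. All path constructors must then be matched by equalities in $T^\prime$:
\begin{itemize}
\item the ring axioms hold in $T^\prime$;
\item $f$ is a ring homomorphism;
\item $s \mapsto f(s)^{-1}$ is a monoid homomorphism from $S$ to the multiplicative monoid of $T^\prime$, since $(f(s)f(s^\prime))^{-1} = f(s)^{-1}f(s^\prime)^{-1}$ and $f(1)^{-1} = 1$;
\item $f(s) \times f(s)^{-1} = 1$.
\end{itemize}
Since $T^\prime$ is a set, the $0$-truncation constructor imposes nothing, and no higher coherences need checking. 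Recursion then yields $g$. Its computation rules give $g \circ q = f$ and show that $g$ commutes with all ring operations, so $g$ is a ring homomorphism.

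\emph{Uniqueness.} This is the step I expect to be the main obstacle, because recursion alone gives no uniqueness. Suppose $g, g^\prime : R[S^{-1}] \rightarrow T^\prime$ are ring homomorphisms with $g \circ q = g^\prime \circ q = f$. By function extensionality it suffices to show $g(x) = g^\prime(x)$ for all $x$. I would prove this by the induction principle of $R[S^{-1}]$ with the type family $P(x) :\equiv (g(x) = g^\prime(x))$. Each $P(x)$ is a proposition because $T^\prime$ is a set. Hence every path and higher constructor case is automatic, since transports in a family of propositions always agree. Only the point constructors remain:
\begin{itemize}
\item for $q(r)$, we have $g(q(r)) = f(r) = g^\prime(q(r))$;
\item for $i(s)$, applying $g$ to inverseness gives $f(s) \times g(i(s)) = 1$, so $g(i(s)) = f(s)^{-1}$ by uniqueness of inverses, and likewise $g^\prime(i(s)) = f(s)^{-1}$;
\item for the ring operations, e.g.\ addition, the inductive hypotheses $g(x) = g^\prime(x)$ and $g(y) = g^\prime(y)$ give $g(x+y) = g(x) + g(y) = g^\prime(x) + g^\prime(y) = g^\prime(x+y)$ because both maps are homomorphisms; $0$, $1$, negation and multiplication are analogous.
\end{itemize}
The delicate point is making sure the induction principle really lets us ignore the path constructors. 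This follows from the general fact, as in the proof of Prop.~\ref{HIT-set-quot-prop}, that for families of propositions only point constructors need cases.
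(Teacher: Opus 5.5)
Your proposal is correct and follows the paper's route: existence comes from the recursion principle of the HIT, with $g \circ q = f$ as a computation rule, and uniqueness comes from the induction principle. Your induction on $x$ with $P(x) :\equiv (g(x) = g^\prime(x))$ is the paper's check that $\mathrm{rec}(h \circ q) = h$ for every homomorphism $h$, phrased as two homomorphisms agreeing after composition with $q$; you also spell out what the paper leaves implicit, namely the constructor-by-constructor data, the $i(s)$ case via uniqueness of inverses, and why the path constructors are automatic for a family of propositions.
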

\begin{proof}
The existence of $g$ is a consequence of the recursion principle of the HIT $R[S^{-1}]$, the equality $f = g \circ q$ is one of the calculation rules. Uniqueness can be shown by the induction principle of $R[S^{-1}]$, for example by verifying that the composition $- \circ q$ is a right inverse of the construction of $g$ from $f$ by the recursion principle. 
\end{proof}

A more explicit construction of a localisation represents its elements by \textit{fractions} and then describes the equalities between these fractions.
\begin{prop}
Declaring $(r, s)$ and $(r^\prime, s^\prime)$ to be related if there (merely) is a $t \in S$ such that $t(rs^\prime - r^\prime s) = 0$ defines an equivalence relation on $R \times S$ whose set quotient $R\{S^{-1}\}$ together with the map $q : R \rightarrow R\{S^{-1}\}$ given by $r \mapsto \frac{r}{1}$ (where $\frac{r}{s}$ denotes the equivalence class of the pair $(r,s)$) is a localisation of $R$ in $S$.
\end{prop}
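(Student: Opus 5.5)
We will proceed in three stages: first check that the relation is an equivalence relation, then put a ring structure on the set quotient $R\{S^{-1}\}$ using the universal property of set quotients (Prop.~\ref{set-quot-univ-prop}, in the form of well-defined functions as in Ex.~\ref{well-def-exm}), and finally verify the universal property of the localisation.

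Reflexivity and symmetry are immediate (take $t = 1$, and use $-(rs'-r's) = r's - rs'$). For transitivity, suppose $t(rs'-r's)=0$ and $t'(r's''-r''s')=0$. Since we are proving a proposition, induction on propositional truncation lets us work with explicit witnesses $t, t'$. Multiplying the first relation by $t's''$ and the second by $ts$ and adding gives
\[ tt's'(rs''-r''s)=0, \]
and $tt's' \in S$ because $S$ is a submonoid. This is the only place where the extra factor $t$ is genuinely needed.

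Next we define addition and multiplication on $R\times S$ by the usual formulas,
\[ (r,s)+(r',s') = (rs'+r's,\,ss'), \qquad (r,s)(r',s')=(rr',ss'), \]
together with $0=(0,1)$, $1=(1,1)$ and $-(r,s)=(-r,s)$. We check that each operation respects the relation in each argument separately; for example, a witness $t$ for $(r,s)\sim(\tilde r,\tilde s)$ is also a witness for the sums and products with $(r',s')$ after multiplying by $s'^2$ or $s'$. By double application of the universal property of set quotients, these operations descend to $R\{S^{-1}\}$. The ring axioms are propositions, because $R\{S^{-1}\}$ is a set, so by surjectivity of the quotient map they can be checked on representatives. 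There each axiom holds either on the nose or up to the relation with $t=1$; associativity and distributivity require a common denominator, e.g.\ distributivity yields $(rs's''+\ldots,\,ss's'')$ versus $(\ldots,\,ss'ss'')$, which are related via the factor $s$. The map $q\colon r\mapsto \frac r1$ is then clearly a ring homomorphism, and it sends $s\in S$ to a unit with inverse $\frac1s$.

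For the universal property, let $f\colon R\to T'$ be a ring homomorphism mapping $S$ to units.
\begin{itemize}
\item \emph{Existence.} Define $g(\frac rs)=f(r)f(s)^{-1}$; this uses that inverses of units are unique, so the assignment is a genuine function. It is well defined: from $t(rs'-r's)=0$ we get $f(t)\bigl(f(r)f(s')-f(r')f(s)\bigr)=0$, and since $f(t)$ is a unit, $f(r)f(s)^{-1}=f(r')f(s')^{-1}$. The homomorphism laws can again be checked on representatives, and $g\circ q=f$ holds by construction.
\item \emph{Uniqueness.} Any homomorphism $g'$ with $g'\circ q=f$ satisfies $g'(\frac rs)\,f(s)=g'(\frac rs\cdot\frac s1)=f(r)$. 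Hence $g'$ agrees with $g$ on all representatives, and therefore everywhere by surjectivity and function extensionality.
\end{itemize}
We expect the main obstacle to be the bookkeeping of well-definedness for the operations and the verification of the ring axioms up to the relation, together with the need to invoke propositional truncation whenever a witness $t$ is used.
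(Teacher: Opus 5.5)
Your proposal is correct and follows essentially the same route as the paper: the same transitivity identity with witness $tt's'$, the usual fraction operations descended through the universal property of set quotients, and the map $\frac{r}{s}\mapsto f(r)f(s)^{-1}$, shown to be well defined by inverting $f(t)$. Your uniqueness argument via $g'(\frac rs)f(s)=g'(\frac rs\cdot\frac s1)=f(r)$ and your check that $q$ sends $S$ to units spell out steps the paper leaves implicit.
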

\begin{proof}
Reflexivity and symmetry of the relation are immediate. For the transitivity, assume that $(r,s) \sim (r^\prime,s^\prime)$ and $(r^\prime,s^\prime) \sim (r^{\prime\prime},s^{\prime\prime})$ or equivalently, $t(rs^\prime - r^\prime s) = 0$ and $t^\prime(r^\prime s^{\prime\prime} - r^{\prime\prime} s^\prime) = 0$ for some $t, t^\prime \in S$. Then
\[ tt^\prime s^\prime(r s^{\prime\prime} - r^{\prime\prime} s) = t^\prime s^{\prime\prime} \cdot t(rs^\prime - r^\prime s) + ts \cdot t^\prime(r^\prime s^{\prime\prime} - r^{\prime\prime} s^\prime) = 0, \]
so $(r,s) \sim (r^{\prime\prime},s^{\prime\prime})$, since $tt^\prime s^\prime$ is an element in the monoid $S$.

The usual calculation rules for fractions, $(r,s) + (r^\prime, s^\prime) = (r s^\prime + r^\prime s, ss^\prime)$ and $(r,s) \times (r^\prime, s^\prime) = (rr^\prime, ss^\prime)$ provide addition and multiplication on the set quotient, as these operations preserve the equivalence relation: If $(r,s) \sim (q,t)$, that is, there is $u \in S$ such that $u(rt - qs) = 0$, then 
\[ (r,s) + (r^\prime, s^\prime) = (rs^\prime + r^\prime s, ss^\prime) \sim (qs^\prime + r^\prime t, ts^\prime) = (q,t) + (r^\prime, s^\prime) \]
because 
\[ u((rs^\prime + r^\prime s)ts^\prime - (qs^\prime + r^\prime t)ss^\prime) = u(s^\prime)^2(rt - qs) = 0, \]
and 
\[ (r,s) \times (r^\prime, s^\prime) = (rr^\prime, ss^\prime) \sim (qr^\prime, ts^\prime) = (q,t) \times (r^\prime, s^\prime) \]
because
\[ u(rr^\prime ts^\prime - qr^\prime ss^\prime) = r^\prime s^\prime (rt - qs) = 0. \]
It is straightforward to verify additive and multiplicative associativity and commutativity. $0$ will be the equivalence class of $(0,1)$, and $1$ will be the equivalence class of $(1,1)$. The negative of (the equivalence class of) $(r,s)$ will be (the equivalence class of) $(-r,s)$: 
\[ (r,s) + (-r,s) = (rs - rs,s^2) = (0,s^2) \sim (0,1). \]
This is actually the only law which fails when trying to define the ring structure directly on $R \times S$ - this is only possible on the quotient $R\{S^{-1}\}$.

The map $R \rightarrow R\{S^{-1}\}$ given by $r \mapsto \frac{r}{1}$ is a ring homomorphism: $0$ and $1$ are mapped to $0$ and $1$, and the map is compatible with addition and multiplication.

So it remains to show that $R\{S^{-1}\}$ is a localisation. Let $f: R \rightarrow Q$ be a ring homomorphism to a commutative ring $Q$ with one that maps elements of $S$ to units of $Q$. The map (of sets) $\overline{f}: R \times S \rightarrow Q$ given by $(r,s) \mapsto f(r)f(s)^{-1}$ factorizes $f$ through the map $\lambda: r \mapsto (r,1)$. If $(r,s) \sim (r^\prime,s^\prime)$, that is, there is $u \in S$ such that $u(rs^\prime - r^\prime s) = 0$, we have $f(u)(f(r)f(s^\prime - f(r^\prime)f(s)) = 0$, and by multiplying with the inverses of $f(u)$, $f(s)$ and $f(s^\prime)$ we can conclude that
$\overline{f}(r,s) = \overline{f}(r^\prime,s^\prime)$. 

Therefore, the universal property of set quotients yields a unique map $g : R\{S^{-1}\} \rightarrow Q$ factorising the quotient map $q : R \times S \rightarrow R\{S^{-1}\}$, and it is straightforward to show that the composed map $g$ is a ring homomorphism such that $f = g \circ q \circ \lambda$. It is also the unique such ring homomorphism, as $g(\frac{r}{s}) = f(r)f(s)^{-1}$ is determined by $f(r)$ and $f(s)$.  
\end{proof}

This construction can be turned into another characterisation of localisations, condensing the description of equalities to specifying what the kernel of the map from the ring into a localisation is. This is nothing else than \textit{Strickland's characterisation}, extensively discussed in \cite{BHL+22, Buz24}.
\begin{prop} \label{Strickland-prop}
A commutative ring $T$ with one and a ring homomorphism $q : R \rightarrow T$ is a localisation of $R$ in $S$ if, and only if
\begin{itemize}
\item $q$ maps every element $S$ to a unit of $T$;
\item every element of $T$ can be written as a fraction $\frac{q(r)}{q(s)} = q(r)q(s)^{-1}$ for some $r : R$ and $s : S$ (which merely exist);
\item the kernel of $q$ is the annihilator ideal $\mathrm{Ann}(S)$ of $S$ in $R$ defined in Ex.~\ref{ann-exm} for the multiplicative submonoid $S$ of $R$.
\end{itemize} 
\end{prop}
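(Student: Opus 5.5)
The plan is to compare any pair $(T,q)$ with the explicit fraction model $R\{S^{-1}\}$ of the previous proposition, which we already know is a localisation. We show that the three conditions of Strickland's characterisation hold for $R\{S^{-1}\}$. Then we show that a pair satisfying them is uniquely isomorphic, hence by univalence equal, to $R\{S^{-1}\}$ as a ring under $R$. Since the localisation property (a universal property with uniqueness) and the three Strickland conditions are both propositions, each is transported along such an equality. This is the same strategy as for set quotients in Prop.~\ref{set-quot-univ-prop} and Prop.~\ref{set-quot-eq-prop}.

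For the forward direction, let $T$ be a localisation. By the usual yoga there is a unique ring isomorphism $T \cong R\{S^{-1}\}$ commuting with the maps from $R$, so it suffices to check the conditions for $R\{S^{-1}\}$. Units: $\frac{s}{1}\cdot\frac{1}{s} = \frac{s}{s} \sim \frac{1}{1}$. Fractions: every element is merely the class of some $(r,s)$, and $\frac{r}{s} = \frac{r}{1}\cdot(\frac{s}{1})^{-1}$. Kernel: $\frac{r}{1} = \frac{0}{1}$ holds iff there merely is $t \in S$ with $t(r\cdot 1 - 0\cdot 1) = tr = 0$, which is exactly $r \in \mathrm{Ann}(S)$ by Ex.~\ref{ann-exm}. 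This step is just unfolding the equivalence relation, using Def.~\ref{set-quot-def}(iii).

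For the converse, assume the three conditions for $(T,q)$. Since $q$ inverts $S$, the universal property of $R\{S^{-1}\}$ gives a ring homomorphism $g : R\{S^{-1}\} \to T$, $\frac{r}{s} \mapsto q(r)q(s)^{-1}$, with $g(\frac{r}{1}) = q(r)$. We then show that $g$ is bijective; the inverse is automatically a ring homomorphism, and univalence (via the category structure on rings) produces the equality.
\begin{itemize}
\item \emph{Surjectivity} is immediate from the second condition. Being surjective is a proposition, so the mere existence of $r,s$ suffices.
\item \emph{Injectivity} is the condensation step, and the one I expect to need the most care. Since $g$ is additive, by the analogue of Prop.~\ref{ker-mono-prop} it is enough to show $\ker g = 0$. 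If $q(r)q(s)^{-1} = 0$, multiplying by the unit $q(s)$ gives $q(r) = 0$. The third condition then merely yields $t \in S$ with $tr = 0$, whence $t(r\cdot 1 - 0\cdot s) = 0$, i.e.\ $\frac{r}{s} = \frac{0}{1}$.
\end{itemize}
The obstacle is mostly bookkeeping: we must work on the set quotient via induction on propositional truncation, which is allowed because the target statements ($g$ injective, an element is $0$) are propositions in the set $R\{S^{-1}\}$.

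Finally, because $g$ commutes with the maps from $R$, the equality $(R\{S^{-1}\}, r\mapsto\frac{r}{1}) = (T,q)$ transports the localisation property of the previous proposition to $T$. This completes the equivalence.
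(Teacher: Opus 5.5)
Your proof is correct. The direction ``localisation $\Rightarrow$ Strickland conditions'' is the paper's argument: reduce to $R\{S^{-1}\}$ by uniqueness of localisations and unfold the fraction relation. The only difference there is that you also verify the unit and fraction conditions, which the paper leaves implicit.

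For the other direction your route is slightly different.
\begin{itemize}
\item \textbf{The paper} works at the level of sets. It shows that $q_S : R \times S \rightarrow T$, $(r,s) \mapsto q(r)q(s)^{-1}$, is surjective and that $q_S(r,s) = q_S(r',s')$ holds if and only if $t(rs'-r's)=0$ for some $t \in S$. Ex.~\ref{well-def-exm} then gives a bijection $R\{S^{-1}\} \rightarrow T$, which has to be checked separately (``straightforward'') to be a ring isomorphism.
\item \textbf{You} first take the ring homomorphism $g$ from the universal property of $R\{S^{-1}\}$. Compatibility with the ring operations and with the maps from $R$ therefore comes for free. You then use additivity to reduce injectivity to $\ker g = 0$. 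This needs only the inclusion $\ker q \subseteq \mathrm{Ann}(S)$, since the reverse inclusion is automatic once $S$ maps to units, rather than the full comparison of fibres.
\end{itemize}
The paper's version stays closer to the set-quotient characterisation of Def.~\ref{set-quot-def}, identifying $T$ directly as a quotient of $R \times S$. Yours is the more economical instance of the ``condensing to the kernel'' idea the paper advertises for this characterisation, and it avoids the unverified ring-isomorphism check.
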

\begin{proof}
First assume that $T$ is a ring satisfying the properties above. Then the map $q_S : R \times S \rightarrow T$ mapping $(r,s)$ to $q(r)q(s)^{-1}$ is surjective, and pairs mapping to the same fraction in $R\{S^{-1}\}$ are mapped to the same element in $T$ by $q_S$: $q(r)q(s)^{-1} = q(r^\prime)q(s^\prime)^{-1}$ if, and only if $q(rs^\prime - r^\prime s) = 0$. Since $\ker(q) = \mathrm{Ann}(S)$, this is equivalent to the existence of a $t \in S$ such that 
\[ t(rs^\prime - r^\prime s) = 0. \]
By the construction of well-defined functions in Ex.~\ref{well-def-exm} this means that we have a bijective map from $R\{S^{-1}\}$ to $T$, and it is straightforward to check that this bijection actually is a ring isomorphism. Hence, $T$ is a localisation.

Conversely,  we only need to check that the kernel of $q: R \rightarrow R\{S^{-1}\}$ is equal to $\mathrm{Ann}(S)$. But $\frac{r}{1} = 0 = \frac{0}{1}$ if, and only if there is a $s \in S$ such that $s(r \times 1 - 0 \times 1) = sr = 0$ which means exactly that $r \in \mathrm{Ann}(S)$. 
\end{proof}

\subsection{Tensor products of modules} \label{tensor-ssec}

To characterise \textit{tensor products} by their universal property we first need to introduce \textit{bilinear maps}:
\begin{Def}
A map $\phi: M \times N \rightarrow L$ between (the underlying sets of) the (set-theoretic) product of two $R$-modules $M, N$ and an $R$-module $L$ is called \textbf{bilinear} if
\begin{itemize}
\item[(i)] $\phi(rm, n) = \phi(m, rn) = r \cdot \phi(m,n)$ for all $m \in M$, $n \in N$, $r \in R$;
\item[(ii)] $\phi(m_1 + m_2, n) = \phi(m_1, n) + \phi(m_2,n)$, $\phi(m, n_1 + n_2) = \phi(m,n_1) + \phi(m,n_2)$ for all $m,m_1, m_2 \in M$, $n, n_1, n_2 \in N$.
\end{itemize}
\end{Def}
\begin{Def}
An $R$-module $T$ is a \textbf{tensor product} of two $R$-modules $M, N$, with $R$-bilinear map $\phi: M \times N \rightarrow T$, if for all bilinear maps $\psi: M \times N \rightarrow S$ to an $R$-module $S$ there is a unique $R$-module homomorphism $f: T \rightarrow S$ such that $\psi = f \circ \phi$. 
\end{Def}

As before, the universal property is the recursion principle of a higher inductive type $M \otimes_R N$. It has the following constructors:
\begin{itemize}
\item all the (higher) constructors making $M \otimes_R N$ into an $R$-module: addition, associativity and commutativity of addition, $0$, negative, scalar multiplication, associativity and distributivity of scalar multiplication;
\item a function $t: M \times N \rightarrow M \otimes_R N$;
\item the equalities making $t$ into an $R$-bilinear map.
\end{itemize}
\begin{prop}
$M \otimes_R N$, together with the $R$-bilinear map 
\[ t: M \times N \rightarrow M \otimes_R N, \] 
is a tensor product of $M$ and $N$.
\end{prop}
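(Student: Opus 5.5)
The proof follows the pattern of the analogous statement for the localisation HIT $R[S^{-1}]$: existence comes from the recursion principle of $M \otimes_R N$, and uniqueness from its induction principle.

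\textbf{Existence.} Let $\psi: M \times N \rightarrow S$ be a bilinear map into an $R$-module $S$. Then $S$ carries every structure the constructors of $M \otimes_R N$ ask for:
\begin{itemize}
\item its own $R$-module operations and laws interpret the module constructors;
\item $\psi$ interprets the point constructor $t$;
\item the bilinearity equalities of $\psi$ interpret the path constructors that make $t$ bilinear;
\item since $S$ is a set, the $0$-truncation constructor, and any coherence it would otherwise require, is satisfied automatically.
\end{itemize}
The recursion principle therefore produces a map $f: M \otimes_R N \rightarrow S$. Its computation rules say that $f$ sends addition, $0$, negatives and scalar multiplication to the corresponding operations of $S$, so $f$ is an $R$-module homomorphism. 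They also give $f \circ t = \psi$ pointwise, hence as functions by function extensionality. Note also that $t$ itself is bilinear, because the bilinearity equalities are among the constructors.

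\textbf{Uniqueness.} Let $f, f'$ be two homomorphisms with $f \circ t = f' \circ t = \psi$. By function extensionality it suffices to show $f(x) = f'(x)$ for all $x : M \otimes_R N$. Since $S$ is a set, the family $x \mapsto (f(x) = f'(x))$ consists of propositions, so I apply the induction principle of $M \otimes_R N$ into this family.
\begin{itemize}
\item For the point constructor $t(m,n)$, both sides equal $\psi(m,n)$.
\item For the constructors $0$, $x+y$, $-x$ and $r \cdot x$, the claim follows from the induction hypotheses because $f$ and $f'$ are homomorphisms. For example, $f(x+y) = f(x) + f(y) = f'(x) + f'(y) = f'(x+y)$.
\item For the path constructors (associativity, bilinearity equalities, truncation), the required transport conditions hold automatically, since the target types are propositions.
\end{itemize}

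The alternative, as in the localisation proof, is to show that $f \mapsto f \circ t$ is a right inverse of the recursion construction $\psi \mapsto f$. This again reduces to the same induction.

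\textbf{Main obstacle.} The difficult step is bookkeeping in the uniqueness argument: one must make sure the induction principle really reduces to cases only for the point constructors. This works because every identity-type family into a set is propositional, so all higher constructors impose no conditions. The rest is routine.
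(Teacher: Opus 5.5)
Your proposal is correct and follows the paper's proof: the homomorphism exists by the recursion principle of the HIT $M \otimes_R N$, with $\psi = f \circ t$ one of the computation rules, and uniqueness comes from its induction principle. Your direct induction into the propositional family $x \mapsto (f(x) = f'(x))$ and the paper's suggested check that $- \circ t$ is a right inverse of the recursion construction are the same argument, as you note yourself.
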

\begin{proof}
For an $R$-module $T$ with bilinear map $s: M \times N \rightarrow T$, the $R$-module homomorphism $\phi: M \otimes_R N \rightarrow T$ exists because of the recursion principle of the HIT $M \otimes_R N$, and the equality $s = \phi \circ t$ is one of the calculation rules. Uniqueness can be shown by using the induction principle of the HIT $M \otimes_R N$, for example by verifying that the composite $- \circ t$ is a right inverse of the construction of $s$ from $\phi$ by the recursion principle.
\end{proof}

The definition of a tensor product can be directly used to tensorize $R$-module homomorphisms:
\begin{Def}
Let $f : M^\prime \rightarrow M$ be a $R$-module homomorphism and $N$ a $R$-module. Then the \textbf{tensorized} $R$-module homomorphism $f \otimes_R N$ is given by the $R$-bilinear map
\[ (m^\prime, n) \mapsto f(m) \otimes n. \]
\end{Def}

A more explicit construction of a tensor product represents its elements by finite sums of \textit{tensors} and then describes the equalities between these tensors by an $R$-module quotient:
\begin{prop} \label{tensor-const-prop}
The quotient $T$ of a free $R$-module $\bigoplus_{(m,n) \in M \times N} R \cdot (m \otimes n)$ by the $R$-submodule generated by 
\[ (m_1 + m_2) \otimes n - m_1 \otimes n - m_2 \otimes n,\ 
     m \otimes (n_1 + n_2) - m \otimes n_1 - m \otimes n_2,\] 
\[ r(m \otimes n) - rm \otimes n,\  r(m \otimes n) - m \otimes rn, \]
together with the $R$-bilinear map $M \times N \rightarrow T$ determining the $R$-module quotient homomorphism $\bigoplus_{(m,n) \in M \times N} R \cdot (m \otimes n) \rightarrow T$, is a tensor product of $M$ and $N$.
\end{prop}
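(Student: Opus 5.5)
The plan is to verify the universal property directly, using the coproduct property of direct sums (Ex.~\ref{direct-sum-ex}) together with the universal property of $R$-module quotients (the cokernel/quotient factorisation stated above). Write $F := \bigoplus_{(m,n) \in M \times N} R \cdot (m \otimes n)$, let $K \subseteq F$ be the submodule generated by the four families of relations, and let $\pi : F \rightarrow T = F/K$ be the quotient homomorphism. Define $\phi : M \times N \rightarrow T$ by $\phi(m,n) := \pi(m \otimes n)$, where $m \otimes n$ denotes the basis element $\iota_{(m,n)}(1_R)$.

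First, $\phi$ is bilinear. Each defining identity of bilinearity is the image under $\pi$ of one of the generators of $K$. For example, $\phi(m_1+m_2,n) - \phi(m_1,n) - \phi(m_2,n) = \pi\bigl((m_1+m_2)\otimes n - m_1\otimes n - m_2 \otimes n\bigr) = 0$ because this generator lies in $K = \ker \pi$. The scalar identities are handled in the same way.

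Next comes existence. Let $\psi : M \times N \rightarrow S$ be bilinear. By Ex.~\ref{hom-R-ex}, each pair $(m,n)$ gives a homomorphism $R \rightarrow S$ with $1 \mapsto \psi(m,n)$. By the coproduct property of Ex.~\ref{direct-sum-ex}, these assemble to a unique homomorphism $\widetilde\psi : F \rightarrow S$ with $\widetilde\psi(m\otimes n) = \psi(m,n)$. Bilinearity of $\psi$ says exactly that $\widetilde\psi$ kills every generator of $K$. Since $\ker\widetilde\psi$ is a submodule and $K$ is the smallest submodule containing the generators (the image of the map from a free module on the generators), we get $K \subseteq \ker \widetilde\psi$. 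The universal property of the quotient $F/K$ then yields $f : T \rightarrow S$ with $\widetilde\psi = f \circ \pi$, and hence $\psi = f \circ \phi$.

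Then uniqueness. Suppose $f, f' : T \rightarrow S$ both satisfy $f\circ\phi = f'\circ\phi = \psi$. Then $f\circ\pi$ and $f'\circ\pi$ agree on all basis elements $m \otimes n$, so by the uniqueness part of the coproduct property they are equal. Since $\pi$ is surjective (an epimorphism), $f = f'$; equivalently, this is the uniqueness clause of the quotient's universal property.

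The main obstacle is the step $K \subseteq \ker\widetilde\psi$. It needs the description of $K$ as the image of $\bigoplus R \to F$ on the generators, so that a homomorphism vanishing on the generators vanishes on all of $K$. I would prove this by applying the coproduct uniqueness to $\widetilde\psi \circ (\text{generator map})$ and the zero map. Everything else reduces to earlier universal properties. In the HoTT reading, all equalities involved are equalities in sets, so no higher coherence has to be checked. Since the statement asks for equality of functions, function extensionality closes the argument.
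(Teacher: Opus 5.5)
Your proposal is correct and follows the paper's own route. You extend $\psi$ to the free module via the coproduct property and note that bilinearity kills the generators of the relation submodule, so the map descends to $T$; uniqueness holds because the composite with the surjective quotient map is determined by $\psi$. Beyond the paper, you also make explicit the bilinearity of $\phi$ and the fact that a homomorphism vanishing on generators vanishes on the submodule they generate, two points the paper leaves implicit.
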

\begin{proof}
If $T^\prime$ is an $R$-module with a bilinear map $\psi : M \times N \rightarrow T^\prime$ then
\[ \sum_i m_i \otimes n_i \mapsto \sum_i \psi(m_i, n_i) \]
defines an $R$-module homomorphism $\Psi : T \rightarrow T^\prime$ because the bilinearity of $\psi$ implies that the generators of the kernel of the quotient homomorphism $\bigoplus_{(m,n) \in M \times N} R \cdot (m \otimes n) \rightarrow T$ are mapped to $0$. It is also the unique such homomorphism because the homomorphism $\Psi \circ q: \bigoplus_{(m,n) \in M \times N} R \cdot (m \otimes n) \rightarrow T^\prime$ is uniquely determined by $\psi$.
\end{proof}

By further condensing the description of the kernel of the quotient homomorphism to a version of the equational criterion for the vanishing of tensors (see \cite[Lem.6.4]{ECA}) we can turn this construction into another characterisation of tensor products, which will be an analogue of Strickland's characterization of localisations.
\begin{prop} \label{eq-char-tensor-prop}
Let $M$ and $N$ be $R$-modules, and $T$ an $R$-module with a bilinear map $\phi: M \times N \rightarrow T$. Let $\{n_i\}$ be a set of generators of $N$. Then $T$ is a tensor product of $M$ and $N$ if
\begin{itemize} 
\item every element of $T$ can be written as a finite sum $\sum_i \phi(m_i, n_i)$, with $m_i \in M$ arbitrary, and
\item for any finite set of pairs $(m_i, n_i) \in M \times N$ with $m_i \in M$ arbitrary, $\sum_i \phi(m_i, n_i) = 0$ implies the existence of elements $m_j^\prime \in M$, $a_{ij} \in R$ such that
\[ \sum_j a_{ij}m_j^\prime = m_i\ \mathrm{for\ all\ } i \]
and
\[ \sum_i a_{ij}n_i = 0\ \mathrm{for\ all\ } j. \] 
\end{itemize}
\end{prop}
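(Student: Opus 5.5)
We prove the statement by comparing $T$ with the explicit tensor product of Prop.~\ref{tensor-const-prop}, call it $M \otimes_R N$ with bilinear map $t$. The universal property gives a unique $R$-module homomorphism $\Phi : M \otimes_R N \rightarrow T$ with $\phi = \Phi \circ t$. It then suffices to show that $\Phi$ is bijective. A bijective homomorphism is an isomorphism, so $T$ inherits the universal property: given a bilinear $\psi$, the factorisation through $T$ is $\Psi \circ \Phi^{-1}$, and uniqueness transports along $\Phi$. Univalence then turns this into an equality of tensor products, as in Prop.~\ref{set-quot-eq-prop}.

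Surjectivity of $\Phi$ is immediate from the first condition. Every element of $T$ is a finite sum $\sum_i \phi(m_i, n_i) = \Phi(\sum_i m_i \otimes n_i)$.

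For injectivity it suffices to show $\ker \Phi = 0$, by the module analogue of Prop.~\ref{ker-mono-prop}. Take $x \in \ker\Phi$, an element of a quotient, so we may argue with a merely existing representative.
\begin{itemize}
\item[(1)] Write $x$ as a finite sum $\sum_k r_k (m_k \otimes y_k)$ with arbitrary $y_k \in N$.
\item[(2)] Move each scalar into the first slot using bilinearity: $r_k(m_k \otimes y_k) = (r_k m_k) \otimes y_k$.
\item[(3)] Express each $y_k = \sum_i b_{ki} n_i$ through the generators $n_i$.
\item[(4)] Use additivity in the second slot to obtain $x = \sum_i m_i \otimes n_i$ with $m_i = \sum_k b_{ki} r_k m_k$. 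Only finitely many generators occur, so this is a finite sum over generators.
\end{itemize}
Now $0 = \Phi(x) = \sum_i \phi(m_i, n_i)$. The second condition gives $m'_j$ and $a_{ij}$ with $m_i = \sum_j a_{ij} m'_j$ and $\sum_i a_{ij} n_i = 0$. In $M \otimes_R N$ we compute
\[ x = \sum_i \Bigl(\sum_j a_{ij} m'_j\Bigr) \otimes n_i = \sum_j m'_j \otimes \Bigl(\sum_i a_{ij} n_i\Bigr) = \sum_j m'_j \otimes 0 = 0, \]
using only bilinearity of $t$. The conclusion $x = 0$ is a proposition, so induction on propositional truncation lets us use the merely existing representatives and the merely existing $m'_j, a_{ij}$ freely.

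The main obstacle is reducing an arbitrary element of the kernel to a sum over the fixed generators with arbitrary first entries. The second condition only speaks about sums of the form $\sum_i \phi(m_i, n_i)$ with the $n_i$ taken from the chosen generating set (I read ``$m_i \in M$ arbitrary'' this way). Steps (2)--(4) handle this, and they need care: the same generator may appear several times, and those occurrences must be merged or else allowed in the finite family. The bookkeeping should be done by induction on finite lists, following the construction of direct sums in Ex.~\ref{direct-sum-ex}, rather than with indexed tuples, so that no decidable equality of indices, and hence no LEM, is needed.
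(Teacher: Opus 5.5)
Your proposal is correct and takes essentially the paper's route. The paper shows that the kernel of the surjection $\bigoplus_{(m,n)\in M\times N} R\cdot(m\otimes n)\rightarrow T$ is generated by the bilinearity relations, using the same $b_{ki}$ reduction to sums over the generators and the same final computation with the $a_{ij}$ and $m_j^\prime$; this is your injectivity of the induced map $M\otimes_R N\rightarrow T$, phrased one level up. Your explicit handling of merely existing representatives and of repeated generators is bookkeeping that the paper leaves implicit.
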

\begin{proof}
If $T$ satisfies the two conditions above, the kernel of the surjective $R$-module homomorphism
\[ \Phi : \bigoplus_{(m,n) \in M \times N} R \cdot (m \otimes n) \rightarrow T,\ m \otimes n \mapsto \phi(m,n), \]
will be generated by the bilinearity relations listed in Prop.~\ref{tensor-const-prop}: First of all, the bilinear relations are mapped to $0$ by $\Phi$, as $\phi$ is bilinear. 

Next, every element $t \in \bigoplus_{(m,n) \in M \times N} R \cdot (m \otimes n)$ is a finite sum $\sum_k r_k(m_k^\prime \otimes n_k^\prime)$, $r_k \in R$, $m_k^\prime \in M$, $n_k^\prime \in N$, and is equal to a finite sum $\sum_i m_i \otimes n_i$, $m_i \in M$, modulo the bilinear relations, because there are $b_{ki} \in R$ such that $n_k^\prime = \sum_i b_{ki}n_i$, and
\[ \sum_k r_k(m_k^\prime \otimes n_k^\prime) = \left[ \sum_k r_k(m_k^\prime \otimes \sum_i b_{ki} n_i)
    - \sum_k(r_km_k^\prime \otimes \sum_i b_{ki} n_i) \right] \]
\[ \makebox[5cm]{} + \left[ \sum_k(r_km_k^\prime \otimes \sum_i b_{ki} n_i) - \sum_k \sum_i
     (r_km_k^\prime \otimes b_{ki}n_i) \right] \]
\[ \makebox[5cm]{} + \left[ \sum_k \sum_i (r_km_k^\prime \otimes b_{ki}n_i) - \sum_k \sum_i (r_kb_{ki}
    m_k^\prime \otimes n_i) \right] \]
\[ \makebox[7cm]{} + \sum_k \sum_i (r_kb_{ki} m_k^\prime \otimes n_i), \]
where the terms in the square brackets are $R$-linear combinations of the bilinear relations, so we can set $m_i := \sum_k r_kb_{ki} m_k^\prime$.
  
Now, $\Phi$ maps an element $t = \sum_i m_i \otimes n_i \in \bigoplus_{(m,n) \in M \times N} R \cdot (m \otimes n)$ to a finite sum $\sum_i \phi(m_i, n_i) \in T$, so $t \in \mathrm{ker}(\Phi)$ iff $\sum_i \phi(m_i, n_i) = 0 \in T$. Using $m_j^\prime \in M$, $a_{ij} \in R$ such that $\sum_j a_{ij}m_j^\prime = m_i$ for all $i$ and $\sum_i a_{ij} n_i = 0$ for all $j$, we obtain
\[ \sum_i m_i \otimes n_i = \left[ \sum_i (\sum_j a_{ij} m_j^\prime \otimes n_i) - 
             \sum_i \sum_j a_{ij} (m_j^\prime \otimes n_i) \right] \]
\[ \makebox[4cm]{} + \left[ \sum_i \sum_j a_{ij} (m_j^\prime \otimes n_i) - \sum_j (m_j^\prime \otimes \sum_i a_{ij} n_i) \right] \]  
\[ \makebox[4cm]{} + \left[ \sum_j (m_j^\prime \otimes 0) - \sum_j 0 \cdot (m_j^\prime \otimes 0) \right] + \sum_j 0. \]
The terms in the brackets are $\mathbb{R}$-linear combinations of the $\mathbb{R}$-bilinear relations, whereas the last term is $0$.

Therefore, $T$ is isomorphic to $M \otimes_R N$ as constructed in Prop.~\ref{tensor-const-prop} and also a tensor product.
\end{proof}

To prove the converse of this characterisation we need to show several identifications of tensor products with differently constructed $R$-modules. In the process, it turns out that the equational criterion work is not so easy to verify in many cases, and we mainly use the defining the property of being a tensor product; but this also works efficiently.
\begin{prop} \label{tensor-prod-ident-prop}
Let $M$, $N$ and $N_i$, $i \in I$, be $R$-modules.
\begin{itemize}
\item[(a)] $M$, together with the $R$-bilinear map $R \times M \rightarrow M$, $(r,m) \mapsto rm$, is a tensor product of the $R$-modules $R$ and $M$.
\item[(b)] $N \otimes_R M$, together with the $R$-bilinear map 
\[ M \times N \rightarrow N \otimes_R M,\ (m,n) \mapsto n \otimes m, \] 
is a tensor product of the $R$-modules $M$ and $N$.
\item[(c)] $\bigoplus_i \left( M \otimes_R N_i\right)$, together with the $R$-bilinear map 
\[ M \times \bigoplus_i N_i \rightarrow \bigoplus_i \left( M \otimes_R N_i\right),\ 
   (m, \sum_i n_i) \mapsto \sum_i m \otimes n_i,\] 
is a tensor product of the $R$-modules $M$ and $\bigoplus_i N_i$.
\item[(d)] \label{tensor-right-ex-prop}
Taking tensor products is \textbf{right-exact}, that is, for every exact complex 
\[ M^\prime \stackrel{f}{\rightarrow} M \stackrel{g}{\rightarrow} M^{\prime\prime}  
      \rightarrow 0 \] 
of $R$-modules and every $R$-module $N$,
\[ M^\prime \otimes_R N \stackrel{f \otimes_R N}{\longrightarrow} M \otimes_R N 
                \stackrel{g \otimes_R N}{\longrightarrow} M^{\prime\prime} \otimes_R N \longrightarrow 0 \]
is also an exact complex. 
\end{itemize}
\end{prop}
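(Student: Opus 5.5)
The plan is to verify the universal property of the tensor product directly in each of (a)--(c), and to deduce (d) from the universal properties of cokernels and tensor products.

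For (a), I first check that $(r,m) \mapsto rm$ is bilinear; this is immediate from the module axioms. Now let $\psi : R \times M \rightarrow S$ be bilinear. Define $f : M \rightarrow S$ by $f(m) := \psi(1,m)$. Bilinearity in the second argument makes $f$ an $R$-module homomorphism. Moreover $f(rm) = \psi(1, rm) = r\psi(1,m) = \psi(r,m)$, so $\psi = f \circ \phi$. For uniqueness, any such $f$ satisfies $f(m) = f(1\cdot m) = \psi(1,m)$.

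For (b), the map $(m,n) \mapsto n \otimes m$ is bilinear because $t$ is. Given a bilinear $\psi : M \times N \rightarrow S$, the swapped map $\psi^\prime(n,m) := \psi(m,n)$ is bilinear on $N \times M$. The universal property of $N \otimes_R M$ then gives a unique $f$ with $f(n \otimes m) = \psi(m,n)$, and uniqueness transfers because the condition on $f$ is literally the same.

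For (c), bilinearity of $(m, \sum_i n_i) \mapsto \sum_i m \otimes n_i$ follows componentwise. The map is well defined because it is the composite of the homomorphism $\bigoplus_i (m \otimes -)$, coming from the coproduct property in Ex.~\ref{direct-sum-ex}, with the evaluation at $m$. Now let $\psi : M \times \bigoplus_i N_i \rightarrow S$ be bilinear. For each $i$, the restriction $\psi_i(m,n_i) := \psi(m,\iota_i(n_i))$ is bilinear, so it induces a unique $f_i : M \otimes_R N_i \rightarrow S$. The coproduct property then yields $f := \bigoplus_i f_i$, and additivity of $\psi$ in the second argument shows $f \circ \phi = \psi$. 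For uniqueness, any $f$ with $f \circ \phi = \psi$ restricts on each summand to a homomorphism factoring $\psi_i$, so it equals $f_i$, and uniqueness of coproduct maps finishes the argument.

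For (d), the main obstacle, I work in three steps.
\begin{enumerate}
\item Surjectivity of $g \otimes N$: by Prop.~\ref{eq-char-tensor-prop}-style reasoning, or directly by the induction principle of the HIT, elements of $M^{\prime\prime}\otimes_R N$ are finite sums of $m^{\prime\prime}\otimes n$. Since $g$ is surjective, each such term is $g(m)\otimes n$, and surjectivity is a proposition, so induction on propositional truncation applies.
\item The composite: $(g\otimes N)\circ(f\otimes N) = (g\circ f)\otimes N = 0$, by uniqueness in the universal property.
\item Image equals kernel: I would show that $C := \mathrm{coker}(f\otimes N) = (M\otimes_R N)/\mathrm{im}(f\otimes N)$, with the bilinear map $(m^{\prime\prime},n)\mapsto [m\otimes n]$ for any preimage $m$ of $m^{\prime\prime}$, is itself a tensor product of $M^{\prime\prime}$ and $N$.
\end{enumerate}

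Well-definedness in step 3 is a proposition-valued check. Two preimages differ by an element of $\ker g = \mathrm{im} f$, so the classes differ by $f(m^\prime)\otimes n \in \mathrm{im}(f\otimes N)$. I would construct the map via the well-defined function construction of Ex.~\ref{well-def-exm}, using surjectivity of $g\times \mathrm{id}_N$. Bilinearity is inherited from $M \times N$.

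For the universal property of $C$, take a bilinear $\psi : M^{\prime\prime}\times N\to S$. Then $\psi\circ(g\times \mathrm{id})$ is bilinear on $M \times N$ and induces $h : M\otimes_R N\to S$. This $h$ kills $\mathrm{im}(f\otimes N)$ because $g\circ f=0$, so it factors uniquely through the cokernel. Uniqueness of the factorization follows from uniqueness in both universal properties together with surjectivity of $g$.

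Finally, by the uniqueness of tensor products (the usual yoga together with univalence), the induced map $C \to M^{\prime\prime}\otimes_R N$ is an isomorphism compatible with $g\otimes N$. This identifies $\ker(g\otimes N)$ with $\mathrm{im}(f\otimes N)$, which gives exactness.
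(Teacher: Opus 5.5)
Your proof is correct. Parts (b) and (c) follow the paper: you transfer the universal property of $N \otimes_R M$, respectively combine the universal properties of the $M \otimes_R N_i$ with the coproduct property. In (c) you are more careful than the paper, since you define the bilinear map via the coproduct property rather than via a chosen representation $\sum_i n_i$.

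The differences lie in (a) and (d). For (a) the paper does not verify the universal property but invokes the equational criterion of Prop.~\ref{eq-char-tensor-prop}. Your direct argument with $f(m) := \psi(1,m)$ is shorter and self-contained. It also avoids a slip in the paper's version, whose witnesses $m_i^\prime := r_i m_i$, $a_{ii} = r_i$ do not fit the criterion: the $m_j^\prime$ have to lie in the first factor $R$, and the diagonal choice would force each $r_j m_j = 0$. The correct witnesses there are a single index with $m_1^\prime = 1$ and $a_{i1} = r_i$.

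For (d) the paper proves the dual statement. It checks that $M^{\prime\prime} \otimes_R N$ with $g \otimes_R N$ satisfies the cokernel universal property for $f \otimes_R N$. It does this by producing, for every $h_N$ killing $\mathrm{im}(f \otimes_R N)$, the bilinear map $(m^{\prime\prime}, n) \mapsto h_N(m \otimes n)$. Your map $(m^{\prime\prime}, n) \mapsto [m \otimes n]$ is exactly this construction for the universal choice where $h_N$ is the quotient map $q : M \otimes_R N \rightarrow \mathrm{coker}(f \otimes_R N)$. You then verify that $\mathrm{coker}(f \otimes_R N)$ is a tensor product of $M^{\prime\prime}$ and $N$.

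The paper's route needs only one universal-property check. However, it leaves implicit both $(g \otimes_R N) \circ (f \otimes_R N) = 0$ and the passage from ``is a cokernel'' to exactness. Your route costs the extra check $\alpha \circ q = g \otimes_R N$ for the comparison isomorphism $\alpha : \mathrm{coker}(f \otimes_R N) \rightarrow M^{\prime\prime} \otimes_R N$. In return, surjectivity of $g \otimes_R N$ and $\ker(g \otimes_R N) = \ker q = \mathrm{im}(f \otimes_R N)$ follow at once. This incidentally makes your steps 1 and 2 redundant.
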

\begin{proof} 
\begin{itemize}
\item[(a)] If $\sum r_i m_i = 0$, then take $m_i^\prime := r_im_i$ and $a_{ii} = r_i$. Thus, the equational criterion is satisfied.
\item[(b)] $N \otimes_R M$, together with the bilinear map $\phi$ given in the claim, satisfies the universal property of a tensor product of $M$ and $N$: If $\psi : M \times N \rightarrow T$ is an $R$-bilinear map to another $R$-module $T$, then we can factorize $\psi$ by $\phi$ and the $R$-module homomorphism 
\[ N \otimes_R M \rightarrow T\ \mathrm{uniquely\ given\ by\ } (n, m) \mapsto \psi(m,n), \]
using the universal property of the tensor product $N \otimes_R M$.
\item[(c)] $\bigoplus_i (M \otimes_R N_i)$, together with the bilinear map $\phi$ given in the claim, satisfies the universal property of a tensor product of $M$ and $\bigoplus_i N_i$: If $\psi : M \times \bigoplus_i N_i \rightarrow T$ is an $R$-bilinear map to another $R$-module $T$, then we can factorize $\psi$ by $\phi$ and the $R$-module homomorphism 
\[ \bigoplus_i (M \otimes_R N_i) \rightarrow T\ \mathrm{uniquely\ given\ by\ } (m, n_i) \mapsto \psi(m,n_i), \]
where we interpret $n_i$ as an element in $\bigoplus_i N_i$  and use the universal properties of a direct sum and of a tensor product $M \otimes_R N_i$.
\item[(d)] By assumption, $M^{\prime\prime}$ is a cokernel of $f$, with projection homomorphism $g$, and we need to show that $M^{\prime\prime} \otimes_R N$ is the cokernel of $f \otimes_R N$, with projection homomorphism $g \otimes_R N$. 

To this purpose, take an $R$-module homomorphism $h_N : M \otimes_R N \rightarrow Q$ to an $R$-module $Q$ such that $h_N \circ (f \otimes_R N) = 0$. Then there is a $R$-bilinear map $M^{\prime\prime} \times N \rightarrow Q$ mapping $(m^{\prime\prime},n)$ to $h_N(m \otimes n)$, where $m \in M$ satisfies $g(m) = m^{\prime\prime}$. This is well-defined, since any choice of $m$ yields the same element in $Q$ because $h_N \circ (f \otimes_R N) = 0$. The induced $R$-module homomorphism $M^{\prime\prime} \otimes_R N \rightarrow Q$ is the only one factorising $h_N$ by $g \otimes_R N$, showing that $M^{\prime\prime} \otimes_R N$ is the cokernel of $f \otimes_R N$.  
\end{itemize}
\end{proof}

We can now show the converse of the equational criterion:
\begin{prop} \label{tensor-eq-crit-prop}
If $T$ is a tensor product of the $R$-modules $M$ and $N$, with an $R$-bilinear map $\phi : M \times N \rightarrow T$ and a set $\{n_i\}_{i \in I}$ of generators of $N$, then for any finite set of pairs $(m_i, n_i) \in M \times N$ with $m_i \in M$ arbitrary, $\sum_i \phi(m_i, n_i) = 0$ if, and only if, there exist elements $m_j^\prime \in M$, $a_{ij} \in R$, such that
\[ \sum_j a_{ij}m_j^\prime = m_i\ \mathrm{for\ all\ } i \]
and
\[ \sum_i a_{ij}n_i = 0\ \mathrm{for\ all\ } j. \] 
\end{prop}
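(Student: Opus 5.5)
The ``if'' direction is a direct calculation using only the bilinearity of $\phi$. Given $m_j^\prime$ and $a_{ij}$ with the two stated properties, we have
\[ \sum_i \phi(m_i,n_i) = \sum_i \phi\bigl(\sum_j a_{ij}m_j^\prime, n_i\bigr) = \sum_j \phi\bigl(m_j^\prime, \sum_i a_{ij} n_i\bigr) = \sum_j \phi(m_j^\prime,0) = 0. \]

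For the ``only if'' direction I would follow the standard argument via a free presentation of $N$ and right-exactness, Prop.~\ref{tensor-prod-ident-prop}(d). The first step is to choose the epimorphism $h_0 : G := \bigoplus_{i \in I} R\cdot g_i \rightarrow N$, $g_i \mapsto n_i$, as in Ex.~\ref{free-pres-exm}. Its kernel $K := \ker(h_0)$ consists exactly of the finite sums $\sum_i a_i g_i$ with $\sum_i a_i n_i = 0$. I would take $F := \bigoplus_{k \in K} R \cdot f_k$, with $h_1 : F \rightarrow G$ given by $f_k \mapsto k$, so that $F \stackrel{h_1}{\rightarrow} G \stackrel{h_0}{\rightarrow} N \rightarrow 0$ is exact.

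Next I tensor this sequence with $M$ from the left. By Prop.~\ref{tensor-prod-ident-prop}(b) and (d), the sequence $M \otimes_R F \rightarrow M \otimes_R G \rightarrow M \otimes_R N \rightarrow 0$ is exact. By (a) and (c) we have identifications $M \otimes_R G = \bigoplus_{i\in I} M$ and $M \otimes_R F = \bigoplus_{k\in K} M$. Under these identifications the maps become $(m_i)_i \mapsto \sum_i m_i \otimes n_i$ and $(m_k^\prime)_k \mapsto \bigl(\sum_k a_{ik} m_k^\prime\bigr)_i$, where $k = \sum_i a_{ik} g_i$. The paper views tensor products as only determined up to unique equality, so I would apply all of this to the given $T$ via the uniqueness of tensor products (univalence), transporting $\phi$ to the bilinear map $t$ of the HIT $M\otimes_R N$.

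Now suppose $\sum_i \phi(m_i,n_i) = 0$. Then $(m_i)_i \in \bigoplus_I M$ lies in the kernel of the second map, hence in the image of the first. So there merely exist finitely many $k_j \in K$, written $k_j = \sum_i a_{ij} g_i$ with $\sum_i a_{ij} n_i = 0$, and $m_j^\prime \in M$ such that $\sum_j a_{ij} m_j^\prime = m_i$ for all $i$. Since the claim is a proposition, induction on propositional truncation turns this mere existence into the required existence. Note that the indexing must be arranged so that the finitely many $i$ with $m_i$ possibly nonzero are taken in $I$, that is, the $n_i$ in the sum really are among the chosen generators, as the statement intends.

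I expect the main obstacle to be verifying that the identifications in (a) and (c) carry $M\otimes_R h_1$ and $M \otimes_R h_0$ to the explicit maps above. This is a check on generators $m \otimes g_i$ and $m \otimes f_k$, which determine homomorphisms by the universal properties of direct sums and tensor products. It also needs the transported bilinear map to agree with $\phi$. Everything else is bookkeeping.
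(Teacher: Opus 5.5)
Your proposal is correct and is essentially the paper's proof. Both take a free presentation $\bigoplus R\cdot f \to \bigoplus_{i\in I} R\cdot g_i \to N \to 0$ on the given generators, tensor it with $M$ using right-exactness and the identifications of Prop.~\ref{tensor-prod-ident-prop}, and read off $m_j'$ and $a_{ij}$ componentwise from a preimage of $(m_i)_i$. You also spell out three things the paper leaves implicit: the easy ``if'' direction by bilinearity, the transport from $T$ to $M\otimes_R N$, and the use of propositional truncation.
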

\begin{proof}
Choose a free presentation of $N$, 
\[ \bigoplus_{j \in J} R \cdot f_j \stackrel{\psi}{\rightarrow}  
   \bigoplus_{i \in I} R \cdot e_i \stackrel{\phi}{\rightarrow} N \rightarrow 0, \]
such that $\phi(e_i) = n_i$ (see Ex.~\ref{free-pres-exm}) and $\psi(f_j) = \sum_{i} a_{ij}e_i$ for some $a_{ij} \in R$. In particular, $0 = \phi(\psi(f_j)) = \sum_{i} a_{ij}n_i$ for all $j$. If we tensorize this presentation with $M$ and use the identifications of Prop.~\ref{tensor-prod-ident-prop}, we obtain the following exact complex:
\[ \bigoplus_{j \in J} M \cdot f_j \stackrel{M \otimes \psi}{\longrightarrow}  
   \bigoplus_{i \in I} M \cdot e_i \stackrel{M \otimes \phi}{\longrightarrow} M \otimes_R N \rightarrow 0, \]
where $(M \otimes \psi)(mf_j) = \sum_i a_{ij} me_i$ and $(M \otimes \phi)(me_i) = m \otimes n_i$. Then, if $\sum_i m_ie_i$ is in the kernel of $M \otimes \phi$, it also is in the image of $M \otimes \psi$, hence there are $m_j^\prime \in M$ such that
\[ \sum_i m_ie_i = (M \otimes \psi)(\sum_j m_j^\prime f_j) = 
    \sum_j \left( \sum_i a_{ij} m_j^\prime e_i \right) = \sum_i \left( \sum_i a_{ij} m_j^\prime \right) e_i. \]
This implies $\sum_i a_{ij} m_j^\prime = m_i$, using the properties of direct sums.  
\end{proof}

\section{Canonical Isomorphisms of Non-Canonical Choices} \label{choice-sec}

We now investigate the issue of non-canonical choices in constructions appearing in homological algebra. If the resulting objects satisfy a universal property, the different choices lead to uniquely equivalent and therefore canonically equal objects, by univalence: we show this for left derived functors in the category of $R$-modules.

Also note that the construction may involve the Axiom of Choice when done uniformly for all objects of a certain type, but more explicit construction are possible, too.

To avoid a full-blown elaboration of the theory of abelian categories we restrict ourselves to constructing the chomomology of $R$-modules over a commutative ring $R$ with one. This special case is important, also underlying the construction of cohomology of sheaves on algebraic varieties. Also, the possibility of sign choices in the construction of boundary maps is even better hidden than in the group cohomology case discussed by Buzzard. Furthermore, we need the Axiom of Choice to show that there are enough injective $R$-modules, and it will be interesting to see how the presence of the axiom influences the efficiency of the construction of cohomology. This approach is in contrast to the purely constructive use of HoTT in the construction of cohomology groups (more specifically, Ext groups) presented by \cite{CTF25}.

We will closely follow the presentation in \cite[III.1]{Hart:AG}, always highlighting the places where homotopy type theory plays a role. On the other hand, we omit details that simply follow the standard arguments.

\subsection{Abelian Category of $R$-modules}

We need from Sec.~\ref{ring-mod-ssec} that the $R$-modules with their homomorphisms form a \textit{category}, and that homomorphisms between two $R$-modules can be added and multiplied with scalars from $R$, thus giving them the structure of an $R$-module. Furthermore, $R$-modules have
\begin{itemize}
\item the zero object $(0)$;
\item finite direct sums, which are both products and coproducts;
\item a kernel and a cokernel for each module homomorphism.
\end{itemize}

Also, monomorphisms are kernels of an $R$-module homomorphism, and epimorphisms are cokernels, and every homomorphism can be factored into an epimorphism followed by a monomorphism. Thus, $R$-modules and their homomorphisms form an \textit{abelian category} $R\mathrm{-Mod}$.

Note that in homotopy type theory (or at least in \cite[9.1]{HoTTBook}), categories associate \textit{sets} of homomorphisms to two objects. Furthermore, a full-blown category, in contrast to a precategory, asserts univalence in the framework of the category: isomorphisms are equivalent to equalities. Both properties hold for $R\mathrm{-Mod}$, as the type of all maps between sets is a set again, and standard univalence can be applied on two $R$-modules with isomorphisms inverse to each other between them.

We will consider \textit{additive functors} 
\[ F: R\mathrm{-Mod} \rightarrow R\mathrm{-Mod}, \] 
that is, functors on the category $R\mathrm{-Mod}$ to itself whose map on module homomorphisms between two $R$-modules $M, N$ is an $R$-module homomorphism $\mathrm{Hom}_R(M, N) \rightarrow \mathrm{Hom}_R(F(M), F(N))$.

\subsection{Homological Algebra} \label{hom-alg-ssec}

The $i^\mathrm{th}$ \textit{cohomology module} $h^i(M^\bullet)$ of a complex $M^\bullet = (M^i, f^i)_{i \in \mathbb{Z}}$ of $R$-modules $M^i$ and module homomorphisms $f^i: M^i \rightarrow M^{i-1}$ (see Ex.~\ref{mod-complex-exm}) is the quotient $\ker{f^i}/\mathrm{im}(f^{i+1})$. 

A \textit{homomorphism} $\phi: M^\bullet \rightarrow N^\bullet$ \textit{of complexes of $R$-modules} is a collection $\phi^i: M^i \rightarrow N^i$ of $R$-module homomorphisms commuting with the coboundary maps $f^i$ and $g^i$ of $M^\bullet$ and $N^\bullet$, respectively. Such a homomorphism induces a ``natural" $R$-module homomorphism $h^i(\phi): h^i(M^\bullet) \rightarrow h^i(N^\bullet)$ --- natural in the sense that a cohomology class represent by $m_i \in \ker(f^i) \subseteq M^i$ is mapped by $h^i(\phi)$ to a cohomology class represented by $\phi^i(m_i) \in \ker(g^i) \subseteq N^i$. Also, $h^i$ respects composition of homomorphisms of complexes, hence behaves ``functorial".

Two homomorphisms of complexes $\phi, \psi: M^\bullet \rightarrow N^\bullet$ are \textit{homotopic}, denoted by $\phi \sim \psi$, if there is a collection of $R$-module homomorphisms $k^i: M^i \rightarrow N^{i+1}$ such that $\phi^i - \psi^i = k^{i-1} \circ f^i + g^{i+1} \circ k^i: M^i \rightarrow N^i$, for all $i$. Two homotopic homomorphisms of complexes induce the same maps on cohomology modules. Here, the equality of functions $h^i(\phi) = h^i(\psi)$ follows from elementwise equality by function extensionality. Note that since in homotopy type theory, the cohomology modules are sets, equalities between elements of these sets are unique, too.

A functor $F: R\mathrm{-Mod} \rightarrow R\mathrm{-Mod}$ is \textit{exact} if for all short exact sequences as in Ex.~\ref{mod-complex-exm},
\[ 0 \rightarrow F(M^\prime) \stackrel{F(f)}{\rightarrow} F(M) \stackrel{F(g)}{\rightarrow} F(M^{\prime\prime}) \rightarrow 0 \]
is also exact. It is \textit{left exact} if
\[ 0 \rightarrow F(M^\prime) \stackrel{F(f)}{\rightarrow} F(M) \stackrel{F(g)}{\rightarrow} F(M^{\prime\prime}) \]
is exact, and \textit{right exact} if
\[ F(M^\prime) \stackrel{F(f)}{\rightarrow} F(M) \stackrel{F(g)}{\rightarrow} F(M^{\prime\prime}) \rightarrow 0 \]
is exact.

\begin{exm} \label{hom-left-ex-ex}
The functor $\mathrm{Hom}_R(M, -)$, where $M$ is an $R$-module, is left exact: Let $0 \rightarrow N^\prime \stackrel{i}{\rightarrow} N \stackrel{q}{\rightarrow} N^{\prime\prime} \rightarrow 0$ be a short exact sequence of $R$-modules. Then $\mathrm{Hom}_R(M, N^\prime) \rightarrow \mathrm{Hom}_R(M, N)$ composing $h \in \mathrm{Hom}_R(M, N^\prime)$ with $i$ is a monomorphism, too, and if $h \in \mathrm{Hom}_R(M, N)$ composed with $q$ is the 0-homomorphism, then $\mathrm{im}(h) \subseteq \ker(q) = N^\prime$, hence $h$ is in the image of the homomorphism $\mathrm{Hom}_R(M, N^\prime) \rightarrow \mathrm{Hom}_R(M, N)$. 
\end{exm}

\begin{exm} \label{tensor-right-ex-ex}
Prop.~\ref{tensor-right-ex-prop}(d) shows that the functor $- \otimes_R M$ is right exact, for any $R$-module $M$.
\end{exm}

A \textit{short exact sequence of complexes}
\[ 0 \rightarrow A^\bullet \stackrel{\phi}{\rightarrow} B^\bullet \stackrel{\psi}{\rightarrow} C^\bullet \rightarrow 0 \]
consists of short exact sequences
\[ 0 \rightarrow A^i \stackrel{\phi_i}{\rightarrow} B^i \stackrel{\psi_i}{\rightarrow} C^i \rightarrow 0 \]
commuting with the coboundary maps $f_i$, $g_i$ and $h_i$ of the complexes $A^\bullet$, $B^\bullet$ and $C^\bullet$, respectively.

To such a sequence of complexes we can associate $R$-module homomorphisms 
\[ \delta^i: h^i(C^\bullet) \rightarrow h^{i-1}(A^\bullet) \] 
for all $i$ such that we obtain a \textit{long exact sequence} of cohomology modules:
\[ \cdots \rightarrow h^i(A^\bullet) \stackrel{h^i(\phi)}{\rightarrow} h^i(B^\bullet) \stackrel{h^i(\psi)}{\rightarrow} h^i(C^\bullet) 
             \stackrel{\delta^i}{\rightarrow} h^{i-1}(A^\bullet) \rightarrow \cdots .\]
We can construct these \textit{boundary maps} $\delta^i$ using the following diagram chase, with some twists necessary in Homotopy Type Theory: If $c$ represents a cohomology class in $h^i(C^\bullet)$, then for all $b \in B^i$ such that $\psi_i(b) = c$, the value $g_i(b) \in B^{i-1}$ is mapped to $0$ by $\psi^{i-1}$, hence there is a unique $a \in A^{i-1}$ such that $\phi^{i-1}(a) = g_i(b)$. A different choice $b^\prime$ leads to an element $a^\prime \in A^{i-1}$ differing by $f_i(b - b^\prime)$ from $a$ (where $b - b^\prime \in A^i$ because $\psi^i(b-b^\prime) = 0$), hence representing the same cohomology class of $h^{i-1}(A^\bullet)$ as $a$. Thus, we can construct the cohomology class represented by $a$ from the mere existence of $b$ with $\psi^i(b) = c$, without explicitly constructing $b$ from a specific $c$. Different choices of $c$ also lead to elements of $A^{i-1}$ representing the same cohomology class of $h^{i-1}(A^\bullet)$ as $a$, and the whole construction respects the $R$-module structure of the cohomology groups. Finally, the resulting long exact sequences commute with homomorphisms on cohomology groups induced by homomorphisms of short exact sequences of $R$-module complexes. 

So far, so ``natural" (or  ``canonical"). But what if a colleague in the office next door in John Conway's imagined math department (see \cite[Sec.6]{Buz24}) loves theoretical jokes and declares that they prefer to choose $-a$ instead of $a$, for each $c \in \ker(h^i)$? After some deliberation we must admit that nothing is wrong with this choice, as we still obtain a long exact sequence. From a Homotopy Type Theory perspective, we could simply state that there \textit{merely exist} boundary maps that produce a long exact sequence of cohomology modules functorially from short exact sequences; the ``merely" not indicating that we cannot explicitly construct them but that we have several such constructions. And this suffices as long as we only aim to prove propositions, by induction on truncation. On the other hand, we actually \textit{have} explicitly constructed the boundary maps, and the explicit results of such a construction may be useful in other calculations. So it can pay off to record the choices made for the construction, but the results will not depend on them if they are propositions.

\subsection{$\delta$-Functors}

The choices that can be made when constructing boundary maps between cohomology objects, as discussed above and (more obviously) in group cohomology as presented by Buzzard, is the reason why we need to introduce the data of boundary maps into $\delta$-functors devised by Grothendieck to define cohomology.

\begin{Def}
A \textbf{$\delta$-functor} from $R\mathrm{-Mod}$ to $R\mathrm{-Mod}$ is a collection of additive functors $T^i: R\mathrm{-Mod} \rightarrow R\mathrm{-Mod}$, together with module homomorphisms $\delta^{i+1}: T^{i+1}(M^{\prime\prime}) \rightarrow T^i(M^\prime)$ for each short exact sequence $0 \rightarrow M^\prime \rightarrow M \rightarrow M^{\prime\prime} \rightarrow 0$ and $i \geq 0$ such that
\begin{itemize}
\item[(1)] for each short exact sequence as above, there is a long exact sequence
\[ 0 \leftarrow T^0(M^{\prime\prime}) \leftarrow T^0(M) \leftarrow T^0(M^\prime) 
             \stackrel{\delta^1}{\leftarrow} T^1(M^{\prime\prime}) \leftarrow \cdots \]
\[ \cdots \leftarrow T^i(M) \leftarrow T^i(M^{\prime}) 
             \stackrel{\delta^{i+1}}{\leftarrow} T^{i+1}(M^{\prime\prime}) \leftarrow \cdots ;\]
\item[(2)] for each homomorphism from one short exact sequence (as above) to another $0 \rightarrow N^\prime \rightarrow N \rightarrow N^{\prime\prime} \rightarrow 0$ and $i \geq 0$, the homomorphisms $T^i(M^{\prime}) \rightarrow T^i(N^{\prime})$ and $T^{i+1}(M^{\prime\prime}) \rightarrow T^{i+1}(N^{\prime\prime})$ commute with the $\delta^{i+1}$.
\end{itemize} 
\end{Def}

\begin{Def}
A $\delta$-functor $T = (T^i: R\mathrm{-Mod} \rightarrow R\mathrm{-Mod})$ is \textbf{universal} if, given another $\delta$-functor $T^\prime = (T^{\prime i}: R\mathrm{-Mod} \rightarrow R\mathrm{-Mod})$ and a natural transformation $F^{\prime 0}: T^0 \Rightarrow T^0$, there is a unique sequence of natural transformations $F^i: T^{\prime i} \Rightarrow T^i$ for $i \geq 0$ starting with $F^0$ and commuting with the $\delta^{i+1}$ on each short exact sequence.
\end{Def}

If $T$ and $T^\prime$ are two universal $\delta$-functors sharing the same (right-exact) additive functor $F = T^0 = T^{\prime 0}$ as their zeroth component the universal yoga will provide us with \textit{natural equivalences} between the functors $T^i$ and $T^{\prime i}$ for all $i > 0$. In particular, the $R$-modules $T^i(M)$ and $T^{\prime i}(M)$ will be isomorphic, and $\mathrm{Hom}_R(T^i(M),T^i(N))$ will be in bijection to $\mathrm{Hom}_R(T^{\prime i}(M),T^{\prime i}(N))$, for all $R$-modules $M, N$ and $i \geq 0$. As in the end, a functor $R\mathrm{-Mod} \rightarrow R\mathrm{-Mod}$ consists of a function on the type of $R$-modules to itself, a family of functions between hom-sets and equalities between homomorphisms in $R\mathrm{-Mod}$, in homotopy type theory these natural equivalences yield \textit{equalities} $T^i = T^{\prime i}$ via function extensionality and univalence (and the fact that equalities between homomorphism are unique since hom-sets are sets). In turn, these equalities together with the commutativity of the functors with the boundary maps show equality of the $\delta^{i+1}$, too. And finally, as the natural equivalences are uniquely determined by definition of \textit{universal} $\delta$-functors, all these equalities are uniquely determined (that is, \textit{canonical}).

This means that different choices of boundary maps will still lead to equal cohomology theories on $R\mathrm{-Mod}$ as long as the choices are universal. Note that we can use the equality to retrieve the different choices, as univalence is an equivalence. As emphasized above, recording the choices made in a specific construction may help to calculate with cohomology modules, but as long as the result of the calculation is only used to prove a proposition, it is valid independent of the choices. In the end, all these considerations can be seen as a justification of the practice of current standard mathematics, with the caveat of keeping track of choices. 

Another characterisation of $\delta$-universal functors is needed:
\begin{Def}
An additive functor $F: R\mathrm{-Mod} \rightarrow R\mathrm{-Mod}$ is coeffaceable if for each $R$-module $M$ there is an $R$-module $P$ and an epimorphism $u: P \rightarrow M$ such that $F(u) = 0$.
\end{Def}

\begin{thm} \label{eff-univ-delta-thm}
Let $T = (T^i: R\mathrm{-Mod} \rightarrow R\mathrm{-Mod})$ be a $\delta$-functor. If the $T^i$ are coeffaceable functors for each $i > 0$ then $T$ is universal.
\end{thm}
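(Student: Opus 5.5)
We argue by induction on $i$, following the standard argument of Grothendieck in the version of \cite[III.1]{Hart:AG}, adapted to the homological direction of the $\delta$-functors used here. Here $\delta^{i+1}$ goes from $T^{i+1}(M^{\prime\prime})$ to $T^i(M^\prime)$ and the long exact sequence ends with $T^0(M^{\prime\prime}) \rightarrow 0$. We are given another $\delta$-functor $T^\prime$ and $F^0: T^{\prime 0} \Rightarrow T^0$. Assuming $F^0, \ldots, F^i$ are already constructed, natural, and commuting with the boundary maps, we construct $F^{i+1}$.

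Fix an $R$-module $M$. By coeffaceability there (merely) is an epimorphism $u: P \rightarrow M$ with $T^{i+1}(u) = 0$; let $K = \ker(u)$, giving a short exact sequence $0 \rightarrow K \rightarrow P \rightarrow M \rightarrow 0$. In the long exact sequence of $T$ the segment
\[ T^{i+1}(P) \stackrel{0}{\rightarrow} T^{i+1}(M) \stackrel{\delta}{\rightarrow} T^i(K) \rightarrow T^i(P) \]
shows that $\delta$ is a monomorphism with image $\ker(T^i(K) \rightarrow T^i(P))$. Consider the composite $F^i(K) \circ \delta^\prime : T^{\prime i+1}(M) \rightarrow T^i(K)$. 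Its further composite to $T^i(P)$ equals $F^i(P) \circ T^{\prime i}(K \rightarrow P) \circ \delta^\prime$ by naturality of $F^i$, and this vanishes by exactness of $T^\prime$. Hence it factors through the kernel $\mathrm{im}(\delta) \cong T^{i+1}(M)$. This yields $F^{i+1}(M)$, characterised as the \emph{unique} homomorphism with $\delta \circ F^{i+1}(M) = F^i(K) \circ \delta^\prime$. Uniqueness of $F^{i+1}$ in the universal property is therefore forced: any compatible family must satisfy this equation, and $\delta$ is injective.

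Next we show independence of the choice of $u$. Given two choices $u: P \rightarrow M$ and $u^\prime: P^\prime \rightarrow M$, the map $(u, u^\prime): P \oplus P^\prime \rightarrow M$ is again an epimorphism killed by $T^{i+1}$, since $T^{i+1}$ is additive and $T^{i+1}((u,u^\prime))$ is determined by its restrictions $T^{i+1}(u)=0$ and $T^{i+1}(u^\prime)=0$. The inclusions $P, P^\prime \hookrightarrow P\oplus P^\prime$ give homomorphisms of short exact sequences over $\mathrm{id}_M$. Property (2) of both $\delta$-functors, naturality of $F^i$, and injectivity of the $\delta$'s then show that all three choices give the same map. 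In HoTT language, $F^{i+1}(M)$ is a uniquely determined object (the type of homomorphisms satisfying the characterising equation is a proposition, as hom-sets are sets). So we may use induction on propositional truncation on the merely existing $u$, exactly as in Sec.~\ref{prop-set-ssec}, without ever fixing a choice.

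It remains to prove naturality in $M$ and compatibility with $\delta^{i+1}$ for an arbitrary short exact sequence $0 \rightarrow M^\prime \rightarrow M \rightarrow M^{\prime\prime} \rightarrow 0$. I expect this to be the main obstacle, because it is where the diagram chase has to be arranged carefully. For naturality along $f: M \rightarrow N$, take coeffaceable epimorphisms $u_M: P_M \rightarrow M$ and $u_N: P_N \rightarrow N$. Then $(f u_M, u_N): P_M \oplus P_N \rightarrow N$ is coeffaceable, and the inclusion $P_M \hookrightarrow P_M \oplus P_N$ gives a homomorphism of short exact sequences over $f$. The two squares for $F^{i+1}(f)$ then commute after composing with the injective $\delta$, using naturality of $F^i$ and property (2). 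For compatibility with a given sequence, choose a coeffaceable epimorphism $P \rightarrow M$. The composite $P \rightarrow M^{\prime\prime}$ is then an epimorphism with $T^{i+1}(P \rightarrow M^{\prime\prime}) = T^{i+1}(M \rightarrow M^{\prime\prime}) \circ T^{i+1}(P \rightarrow M) = 0$, and its kernel maps to $M^\prime$. This gives a homomorphism of short exact sequences from $0 \rightarrow K \rightarrow P \rightarrow M^{\prime\prime} \rightarrow 0$ to the given one. Property (2) for $T$ and $T^\prime$, the defining equation of $F^{i+1}(M^{\prime\prime})$, and naturality of $F^i$ along $K \rightarrow M^\prime$ then give $\delta^{i+1} \circ F^{i+1}(M^{\prime\prime}) = F^i(M^\prime) \circ \delta^{\prime i+1}$. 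This completes the induction.
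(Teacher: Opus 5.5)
Your proposal is correct and follows essentially the same route as the paper's (Lang's) argument: uniqueness and construction of $F^{i+1}(M)$ through the monomorphism $\delta$ coming from a coeffacing epimorphism, then independence of the choice and naturality via a direct-sum epimorphism, then compatibility with $\delta^{i+1}$ via the composite $P \rightarrow M \rightarrow M^{\prime\prime}$. Your direct-sum step, which uses $(f u_M, u_N): P_M \oplus P_N \rightarrow N$ together with the inclusions of $P_M$ and $P_N$, is actually the correctly dualized form of the paper's intermediate diagram, whose map $u_M + (u_N \circ f)$ and projections $p_M, p_N$ do not typecheck as written. Likewise, proving independence of the choice before invoking induction on propositional truncation is what properly justifies the paper's appeal to uniqueness at that point.
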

\begin{proof}
We follow the proof of \cite[Thm.XX.7.1]{Lang-Alg}, fill in some gaps as outlined in the lecture notes of Mazur \cite{Maz} and (very slightly) twist the proof to Homotopy Type Theory. 

We first check the uniqueness statement:
Let $T^\prime = (T^{\prime i}: R\mathrm{-Mod} \rightarrow R\mathrm{-Mod})$ be a second $\delta$-functor, and $F: T^{\prime 0} \Rightarrow T^0$ a natural transformation. Suppose we have two sequences of natural transformations $F^i, G^i: T^{\prime i} \Rightarrow T^i$, $i \geq 0$, commuting with the $\delta^i$ for each short exact sequence, such that $F^0 = F = G^0$.

We show $F^n = G^n$ by induction on $n \geq 0$, where $F^0 = G^0$ is already one of our assumptions. Suppose that $F^k = G^k$ for all $0 \leq k < n$. We want to prove that $F^n(M) = G^n(M): T^{\prime n}(M) \rightarrow T^n(M)$ for all $R$-modules $M$. This equality is a proposition, hence we can choose a coeffacing epimorphism $u: P \twoheadrightarrow M$ from an $R$-module $P$. The short exact sequence
\[ 0 \rightarrow \mathrm{ker}(u) =: K \stackrel{i}{\rightarrow} P \stackrel{u}{\twoheadrightarrow} M \rightarrow 0 \]
leads to the following commutative diagram with exact rows, as part of the associated long exact sequence:

\begin{center}
\begin{tikzpicture}
  \matrix (m) [matrix of math nodes,row sep=3em,column sep=4em,minimum width=2em]
  { T^{\prime n-1}(K) & T^{\prime n}(M) & T^{\prime n}(P)  \\
     T^{n-1}(K) & T^n(M) & T^n(P). \\};
  \path[-stealth]
    (m-1-1) edge node [left] {\tiny $F^{n-1}(K) = G^{n-1}(K)$} (m-2-1)
    (m-1-2) edge node [above] {\tiny $\delta^{n-1}$} (m-1-1)
    (m-2-2) edge node [below] {\tiny $\delta^{n-1}$} (m-2-1)
    (m-1-2) edge [transform canvas={xshift=-1.5mm}] node [left] {\tiny $F^n(M)$} (m-2-2)
            edge [transform canvas={xshift=1.5mm}] node [right] {\tiny $G^n(M)$} (m-2-2)
    (m-1-3) edge node [above] {\tiny $T^{\prime n}(u)$} (m-1-2)
    (m-2-3) edge node [below] {\tiny $T^n(u)$} (m-2-2);
\end{tikzpicture}
\end{center}

Commutativity shows that
\[ \delta^{n-1} \circ F^n(M) = F^{n-1}(K) \circ \delta^{n-1} = G^{n-1}(K) \circ \delta^{n-1} = \delta^{n-1} \circ G^n(M), \]
and $T^n(u) = 0$ implies that $\delta^{n-1}$ in the lower row is a monomorphism. Hence $F^n(M) = G^n(M)$.

Next, we show the existence of natural transformations $F^n: T^{\prime n} \Rightarrow T^n$ as requested for given $T^\prime$ and $F = F^0$. Suppose that we have already constructed all the $F^k$ for $k < n$, commuting with the $\delta^{k-1}$. We can use the uniqueness shown in the first part to choose a coeffacing monomorphism $u_M: P_M \twoheadrightarrow M$ from an $R$-module $P_M$ for each $R$-module $M$, by induction on propositional truncation. Considering the same exact sequence as above we obtain the commutative diagram with exact rows

\begin{center}
\begin{tikzpicture}
  \matrix (m) [matrix of math nodes,row sep=3em,column sep=4em,minimum width=2em]
  {  T^{\prime n-1}(P) & T^{\prime n-1}(K) & T^{\prime n}(M) &   \\
     T^{n-1}(P) & T^{n-1}(K) & T^n(M) & 0 \\};
  \path[-stealth]
    (m-1-1) edge node [left] {\tiny $F^{n-1}(P)$} (m-2-1)
    (m-1-2) edge node [above] {\tiny $T^{\prime n-1}(i)$} (m-1-1)
    (m-2-2) edge node [below] {\tiny $T^{n-1}(i)$} (m-2-1)
    (m-1-2) edge node [left] {\tiny $F^{n-1}(K)$} (m-2-2)
    (m-1-3) edge node [above] {\tiny $\delta^{n-1}$} (m-1-2)
    (m-2-3) edge node [below] {\tiny $\delta^{n-1}$} (m-2-2)
    (m-1-3) edge [dashed] node [left] {\tiny $F_{u_M}$} (m-2-3)
    (m-2-4) edge (m-2-3);
\end{tikzpicture}
\end{center}

Then 
\[ T^{n-1}(i) \circ F^{n-1}(K) \circ \delta^{n-1} = F^{n-1}(P) \circ T^{\prime n-1}(i) \circ \delta^{n-1} = 0 \circ F^{n-1}(P) = 0, \]
hence $\mathrm{im}(F^{n-1}(K) \circ \delta^{n-1}) \subseteq \ker(T^{n-1}(i)) = \mathrm{im}(\delta^{n-1})$, and we have a unique homomorphism $F_{u_M}$ commuting the diagram above because the homomorphism $\delta^{n-1} : T^n(M) \rightarrow T^{n-1}(K)$ is a monomorphism.

The $F_{u_M}$ form a natural transformation $T^{\prime n} \Rightarrow T^n$: Let $f: M \rightarrow N$ be an $R$-module homomorphism, $u_M: P_M \rightarrow M$ and $u_N: P_N \rightarrow N$ two effacing epimorphisms with kernels $K_M$ and $K_N$, respectively. First, assume that we have a commutative diagram of short exact sequences:

\begin{center}
\begin{tikzpicture}
  \matrix (m) [matrix of math nodes,row sep=3em,column sep=4em,minimum width=2em]
  {
     0 & K_M & P_M & M & 0 \\
     0 & K_N & P_N & N  & 0. \\};
  \path[-stealth]
    (m-1-1) edge (m-1-2)
    (m-2-1) edge (m-2-2)
    (m-1-2) edge (m-1-3)
                edge node [left] {\tiny $g$} (m-2-2)
    (m-2-2) edge (m-2-3)
    (m-1-3) edge node [above] {\tiny $u_M$} (m-1-4)
                edge (m-2-3)
    (m-2-3) edge node [below] {\tiny $u_N$} (m-2-4)
    (m-1-4) edge (m-1-5)
                edge node [right] {\tiny $f$} (m-2-4)
    (m-2-4) edge (m-2-5);
\end{tikzpicture}
\end{center}

From this we obtain a cube diagram

\begin{center}
\begin{tikzpicture}
  \matrix (m) [matrix of math nodes, row sep=3em, column sep=3em]{
    & T^{n-1}(K_M) & & T^n(M) \\
    T^{\prime n-1}(K_M) & & T^{\prime n}(M) & \\
    & T^{n-1}(K_N) & & T^n(N) \\
    T^{\prime n-1}(K_N) & & T^{\prime n}(N), & \\};
  \path[-stealth]
    (m-1-4) edge node [above] {\tiny $\delta^{n-1}$} (m-1-2) 
    (m-2-1) edge node [left,xshift=-1ex] {\tiny $F^{n-1}(K_M)$} (m-1-2)
    (m-1-2) edge node [right,yshift=3ex] {\tiny $T^{n-1}(g)$} (m-3-2)
    (m-1-4) edge node [right] {\tiny $T^n(f)$} (m-3-4) 
    (m-2-3) edge node [right,xshift=1ex] {\tiny $F_{u_M}$} (m-1-4)
    (m-2-3) edge node [above,xshift=-3ex] {\tiny $\delta^{n-1}$} (m-2-1) 
    (m-2-1) edge node [right] {\tiny $T^{\prime n-1}(g)$} (m-4-1)
    (m-2-3) edge node [right,yshift=3ex] {\tiny $T^{\prime n}(f)$} (m-4-3)
    (m-3-4) edge node [above,xshift=-3ex] {\tiny $\delta^{n-1}$} (m-3-2)
    (m-4-1) edge node [right,xshift=1ex] {\tiny $F^{n-1}(K_N)$} (m-3-2)
    (m-4-3) edge node [right,xshift=1ex] {\tiny $F_{u_N}$} (m-3-4)
    (m-4-3) edge node [above] {\tiny $\delta^{n-1}$} (m-4-1);
\end{tikzpicture}
\end{center}

\noindent whose faces but possibly the right-hand one commute by what we assume or have previously proven. Since the $\delta^{n-1}$ in the diagram are monomorphisms,
\[ \delta^{n-1} \circ T^n(f) \circ F_{u_M} = \delta^{n-1} \circ F_{u_N} \circ T^{\prime n}(f) \]
implies the commutativity of the right-hand side square. But this equality follows by taking other commuting paths in the cube.

In general, we cannot expect $f$ to lift to a homomorphism $P_M \rightarrow P_N$ which then automatically restricts to a homomorphism $K_M \rightarrow K_N$. But we can use an intermediate step, using that $u_M + (u_N \circ f)$ also is an epimorphism: As above, we can complete the two right-hand vertical homomorphisms in the following diagram to make it commutative:

\begin{center}
\begin{tikzpicture}
  \matrix (m) [matrix of math nodes,row sep=3em,column sep=4em,minimum width=2em]
  {
     0 & M & P_M & K_M & 0 \\
     0 & M & P_M \oplus P_N & Q & 0 \\
     0 & N & P_N & K_N  & 0. \\};
  \path[-stealth]
    (m-1-2) edge (m-1-1)
    (m-2-2) edge (m-2-1)
    (m-3-2) edge (m-3-1)
    (m-1-3) edge node [above] {\tiny $u_M$} (m-1-2)
    (m-2-2) edge node [left] {\tiny $\mathrm{id}$} (m-1-2)
                edge node [left] {\tiny $f$} (m-3-2)
    (m-2-3) edge node [above] {\tiny $u_M + (u_N \circ f)$} (m-2-2)
    (m-3-3) edge node [above] {\tiny $u_N$} (m-3-2)
    (m-1-4) edge (m-1-3)
    (m-2-4) edge (m-2-3)
    (m-2-3) edge node [right] {\tiny $p_M$} (m-1-3)
                edge node [right] {\tiny $p_N$} (m-3-3)
    (m-3-4) edge (m-3-3)
    (m-1-5) edge (m-1-4)
    (m-2-5) edge (m-2-4)
    (m-2-4) edge (m-1-4)
                edge (m-3-4)
    (m-3-5) edge (m-3-4);
\end{tikzpicture}
\end{center}

Since $T^n$ is additive,
\[ T^n(u_M + (u_N \circ f)) = T^n(u_M) + T^n(u_N)T^n(f) = 0, \]
so $u_M + (u_N \circ f)$ is a coeffacing epimorphism for $T^n$. Applying the previous arguments to the top two and to the bottom two rows of this diagram, we obtain 
\[ F_{u_M + (u_N \circ f)} \circ T^n(\mathrm{id}) = T^{\prime n}(\mathrm{id}) \circ F_{u_M}\ \mathrm{and\ }
  F_{u_M + (u_N \circ f)}  \circ T^n(f) = T^{\prime n}(f) \circ F_{u_N}, \]
so $F_{u_M}  \circ T^n(f) = T^{\prime n}(f) \circ F_{u_N}$, which shows the naturality of $F^n$.

Furthermore, choosing $N := M$ and $f := \mathrm{id}$, this also shows the independence of $F_u$ from the coeffacing homomorphism $u: P \rightarrow M$. 

Finally, we show that the $F^n$ commute with the $\delta^{n-1}$. This is a proposition, so we can freely use induction on propositional truncation. Start with a short exact sequence $0 \rightarrow M^\prime \stackrel{i}{\rightarrow} M \stackrel{j}{\rightarrow} M^{\prime\prime} \rightarrow 0$ of $R$-modules. If $j$ is a coeffacing epimorphism, the diagram

\begin{center}
\begin{tikzpicture}
  \matrix (m) [matrix of math nodes,row sep=3em,column sep=4em,minimum width=2em]
  {  T^{\prime n-1}(M^{\prime\prime}) & T^{\prime n}(M^\prime)  \\
     T^{n-1}(M^{\prime\prime}) & T^n(M^\prime). \\};
  \path[-stealth]
    (m-1-1) edge node [left] {\tiny $F^{n-1}(M^{\prime\prime})$} (m-2-1)
    (m-1-2) edge node [above] {\tiny $\delta^{n-1}$} (m-1-1)
    (m-2-2) edge node [below] {\tiny $\delta^{n-1}$} (m-2-1)
    (m-1-2) edge node [right] {\tiny $F^n(M^\prime)$} (m-2-2);
\end{tikzpicture}
\end{center}

\noindent commutes by construction of $F^n(M^\prime) := F_j$ as above. 

If $j$ is not coeffacing, let $u: P_M \rightarrow M$ be a coeffacing epimorphism. Then $j \circ u: P_M \rightarrow M^{\prime\prime}$ is also a coeffacing epimorphism, and we have a commutative diagram

\begin{center}
\begin{tikzpicture}
  \matrix (m) [matrix of math nodes,row sep=3em,column sep=4em,minimum width=2em]
  {  0 & K_{M^{\prime\prime}} & P_M & M^{\prime\prime}  & 0 \\
     0 & M^\prime & M & M^{\prime\prime} & 0. \\};
  \path[-stealth]
    (m-1-1) edge (m-1-2)
    (m-2-1) edge (m-2-2)
    (m-1-2) edge  (m-1-3)
                edge node [left] {\tiny $w$} (m-2-2)
    (m-2-2) edge node [below] {\tiny $i$}  (m-2-3)
    (m-1-3) edge node [below] {\tiny $j \circ u$} (m-1-4)
                edge node [left] {\tiny $u$} (m-2-3)
    (m-2-3) edge node [above] {\tiny $j$} (m-2-4)
    (m-1-4) edge (m-1-5)
                edge node [left] {\tiny $\mathrm{id}$} (m-2-4)
    (m-2-4) edge (m-2-5);
\end{tikzpicture}
\end{center}

This leads to the cube

\begin{center}
\begin{tikzpicture}
  \matrix (m) [matrix of math nodes, row sep=3em, column sep=3em]{
    & T^{n-1}(M^\prime) & & T^n(M^{\prime\prime}) \\
    T^{\prime n-1}(M^\prime) & & T^{\prime n}(M^{\prime\prime}) & \\
    & T^{n-1}(K_{M^{\prime\prime}}) & & T^n(M^{\prime\prime}) \\
    T^{\prime n-1}(K_{M^{\prime\prime}}) & & T^{\prime n}(M^{\prime\prime}), & \\};
  \path[-stealth]
    (m-1-4) edge node [above] {\tiny $\delta^{n-1}$} (m-1-2) 
    (m-2-1) edge node [left,yshift=1ex] {\tiny $F^{n-1}(M^\prime)$} (m-1-2)
    (m-3-2) edge node [right,yshift=3ex] {\tiny $T^{n-1}(w)$} (m-1-2)
    (m-3-4) edge node [right] {\tiny $T^n(\mathrm{id})$} (m-1-4) 
    (m-2-3) edge node [right,xshift=1ex] {\tiny $F^n(M^{\prime\prime})$} (m-1-4)
    (m-2-3) edge node [above,xshift=-3ex] {\tiny $\delta^{n-1}$} (m-2-1) 
    (m-4-1) edge node [left] {\tiny $T^{\prime n-1}(w)$} (m-2-1)
    (m-4-3) edge node [right,yshift=3ex] {\tiny $T^{\prime n}(\mathrm{id})$} (m-2-3)
    (m-3-4) edge node [above,xshift=-3ex] {\tiny $\delta^{n-1}$} (m-3-2)
    (m-4-1) edge node [right,xshift=1ex] {\tiny $F^{n-1}(K_{M^{\prime\prime}})$} (m-3-2)
    (m-4-3) edge node [right,xshift=1ex] {\tiny $F^n(M^{\prime\prime})$} (m-3-4)
    (m-4-3) edge node [above] {\tiny $\delta^{n-1}$} (m-4-1);
\end{tikzpicture}
\end{center}

\noindent where all the faces but the top one are commutative by what we have assumed or shown before. The commutativity of the top square, and hence the commuting of $F^{n-1}$, $F^n$ and the $\delta^{n-1}$, follows by extending the paths $F^{n-1}(M^\prime) \circ \delta^{n-1}$ and $\delta^{n-1} \circ F^n(M^{\prime\prime})$ with $T^{\prime n}(\mathrm{id}) = \mathrm{id}_{T^{\prime n}(M^{\prime\prime})}$, and replacing the resulting paths with commuting paths in the cube. 
\end{proof}

Effaceability is actually a necessary condition for universality of $\delta$-functors, too. This is an immediate consequence of the uniqueness of universal $\delta$-functors, once we have constructed one such $\delta$-functor with effaceable functors in Sec.~\ref{der-funct-ssec}.

\subsection{Projective $R$-modules and Projective Resolutions} \label{proj-mod-res-ssec}

To construct universal $\delta$-functors from $R\mathrm{-Mod}$ to itself, one standard way is to use \textit{projective resolutions} of $R$-modules by \textit{projective $R$-modules} and apply the machinery of homological algebra developed above on it.

\begin{Def}
An $R$-module $P$ is \textbf{projective} if for all module epimorphisms $p : N \twoheadrightarrow M$ and module homomorphisms $f: P \rightarrow M$ there merely is a module homomorphism $\overline{f}: P \rightarrow N$ such that $f = p \circ \overline{f}$.
\end{Def}  

Projective resolutions of $R$-modules exist because $R\mathrm{-Mod}$ has \textit{enough projectives}, that is, to every $R$-module there merely exists an epimorphism from a projective $R$-module. This is an immediate consequence of the following fact and the existence of a free presentation for each $R$-module, as discussed in Ex.~\ref{free-pres-exm}.
\begin{prop}
A free module $\bigoplus_{i \in I} R \cdot e_i$ is a projective module.
\end{prop}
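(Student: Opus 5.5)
We prove the statement with the universal property of the direct sum as a coproduct (Ex.~\ref{direct-sum-ex}) together with the description of homomorphisms out of $R$ (Ex.~\ref{hom-R-ex}). Let $p : N \twoheadrightarrow M$ be an epimorphism and $f : \bigoplus_{i \in I} R \cdot e_i \rightarrow M$ a homomorphism. By Ex.~\ref{hom-R-ex}, giving a homomorphism $R \cdot e_i \rightarrow N$ is the same as giving an element of $N$, namely the image of $e_i$. By the coproduct property, giving a homomorphism $\bigoplus_i R \cdot e_i \rightarrow N$ is therefore the same as giving a family $(n_i)_{i \in I}$ of elements of $N$.

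So it suffices to produce, merely, a family $(n_i)_{i \in I}$ with $p(n_i) = f(e_i)$ for all $i$. Then set $\overline{f}$ to be the induced homomorphism with $\overline{f}(e_i) = n_i$. The composites $p \circ \overline{f}$ and $f$ agree on each $e_i$, hence on each summand $R \cdot e_i$ by Ex.~\ref{hom-R-ex}. By the uniqueness clause of the coproduct universal property, $p \circ \overline{f} = f$.

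The main obstacle is the choice step. Surjectivity of $p$ gives, for each $i \in I$, only mere existence $\| \Sigma_{n : N}\, p(n) = f(e_i) \|$. To assemble a whole family we invoke $\mathrm{AC}_0$ from Sec.~\ref{prop-set-ssec} for the set $I$ and the family of sets $Y(i) := \Sigma_{n : N}\, p(n) = f(e_i)$; these are sets because $N$ is a set and $M$ is a set, so the identity types are propositions. This yields merely a function $\Pi_{i : I} Y(i)$, that is, merely a lift $(n_i)_i$.

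Since the conclusion in the definition of projectivity is itself a mere existence statement, hence a proposition, we may apply induction on propositional truncation to this merely given choice function and carry out the construction above with an actual family. If $I$ is finite, or more generally if the relevant preimages can be produced explicitly, we can instead do finitely many truncation inductions without $\mathrm{AC}_0$. We will note that the general case genuinely uses the Axiom of Choice, in line with the remarks at the start of Sec.~\ref{choice-sec}.
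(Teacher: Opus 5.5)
Your proof is correct and follows the paper's argument: pick preimages $n_i$ of $f(e_i)$ under $p$, extend them to $\overline{f}$ by the universal property of the free module, and conclude $p \circ \overline{f} = f$. Your treatment of the choice step is more careful than the paper's. The paper only notes that projectivity is a proposition before picking each $n_i$; truncation induction of that kind handles only finitely many indices at once. For arbitrary $I$ one needs your explicit appeal to $\mathrm{AC}_0$ to obtain the whole family, and the paper leaves that implicit.
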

\begin{proof}
Let $p : N \twoheadrightarrow M$ be an $R$-module epimorphism and $f : \bigoplus_{i \in I} R \cdot e_i \rightarrow M$ any $R$-module homomorphism. The property of being projective is a proposition, hence for each $i \in I$, we can find $n_i \in N$ such that $p(n_i) = f(e_i)$. Then the universal property of a free module yields a (unique) $R$-module homomorphism $\overline{f} : \bigoplus_{i \in I} R \cdot e_i \rightarrow N$ given by $e_i \mapsto n_i$, and $f = p \circ \overline{f}$, as requested. 
\end{proof}

\begin{cor}
For each $R$-module $M$ there merely exists a \textbf{projective resolution}, that is, a complex of $R$-modules
\[ P^\bullet_M: 0 \leftarrow P^0_M \stackrel{d_M^1}{\leftarrow} P^1_M \stackrel{d_M^2}{\leftarrow} \cdots \stackrel{d_M^n}{\leftarrow} P^n_M \leftarrow \cdots \]
everywhere exact but at $P_M^0$, such that $\mathrm{coker}(d_M^1) = M$ and all the $R$-modules $P^i_M$, $i \geq 0$, are projective.
\end{cor}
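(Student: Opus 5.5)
Since the statement asserts only \emph{mere} existence, the claim is a proposition, so I am free to use induction on propositional truncation at every stage. That lets me work with explicitly chosen epimorphisms instead of merely existing ones. The plan is to build the resolution step by step, always using the fact that $R\mathrm{-Mod}$ has enough projectives. That fact follows from the preceding proposition together with the free presentations of Ex.~\ref{free-pres-exm}. Concretely, for any $R$-module $N$ I take the free module on all elements of $N$ as generators. This even gives an explicit epimorphism $\bigoplus_{n \in N} R \cdot e_n \twoheadrightarrow N$, $e_n \mapsto n$, from a projective module, so no choice is needed for this step.

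First I set $P^0_M := \bigoplus_{m \in M} R \cdot e_m$ with the epimorphism $\varepsilon : P^0_M \twoheadrightarrow M$. Inductively, suppose $d^n_M : P^n_M \rightarrow P^{n-1}_M$ has been constructed (with $d^0_M := \varepsilon$ for the base case). I put $K_n := \ker(d^n_M)$ and let $P^{n+1}_M$ be the free module on the elements of $K_n$. The map $d^{n+1}_M$ is the composite of the canonical epimorphism $P^{n+1}_M \twoheadrightarrow K_n$ with the inclusion $K_n \hookrightarrow P^n_M$. By construction, $\mathrm{im}(d^{n+1}_M) = \ker(d^n_M)$, which gives exactness at every $P^n_M$ with $n \geq 1$ and the complex property $d^n_M \circ d^{n+1}_M = 0$. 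Formally this is a recursion on $\mathbb{N}$ that produces the dependent tuple (module, map, kernel) at each stage. By Sec.~\ref{nat-type-ssec} this is legitimate, since each step is an explicit function of the previous data.

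Second, I identify $\mathrm{coker}(d^1_M)$ with $M$. We have $\mathrm{im}(d^1_M) = \ker(\varepsilon)$, and $\varepsilon$ is surjective. So by the First Isomorphism Theorem (Thm.~\ref{first-iso-grp-thm}, upgraded to modules as explained in Sec.~\ref{ring-mod-ssec}), $M$ together with $\varepsilon$ is a quotient of $P^0_M$ by $\mathrm{im}(d^1_M)$. By the uniqueness of quotients (Prop.~\ref{set-quot-eq-prop} and its module version), univalence turns this into an equality $\mathrm{coker}(d^1_M) = M$ that respects the projection maps. Projectivity of every $P^i_M$ follows from the preceding proposition on free modules.

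The only delicate point I expect is the bookkeeping in the recursion. The type of the next module depends on the previously constructed map, so the recursion must run over a $\Sigma$-type of pairs (module, homomorphism into the previous module) rather than over modules alone. Once that is set up, everything else is routine, and the truncation in the statement is only needed to stay consistent with the definition of projectivity as mere lifting.
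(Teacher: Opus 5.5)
Your proposal is correct and is essentially the paper's argument: you iterate the free presentation of Ex.~\ref{free-pres-exm}, taking all elements of each successive kernel as generators, to get a free resolution. Its terms are projective by the preceding proposition, and its first cokernel is $M$ by the first isomorphism theorem. You are also right that the resolution itself is then explicit; the only non-constructive ingredient is the projectivity of free modules (choosing preimages of all generators at once, which in general needs $\mathrm{AC}_0$), and you inherit it from the paper exactly as the paper's own proof does.
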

\begin{proof}
By extending a free presentation of $M$ in the same way as we constructed the free presentation itself, we obtain the desired projective resolution, as a free resolution. 
\end{proof}

The mere existence once again indicates that there will be many different projective resolutions for the same $R$-module $M$. The situation, however, is worse than before: there is no explicit construction of projective resolutions working in all cases. For example, in the proof above the construction depends on choosing preimages under epimorphisms, which cannot be done explicitly in all cases.

Similarly, it is possible to lift an $R$-module homomorphism $f : M \rightarrow N$ to a homomorphism of projective resolutions $f^\bullet : P^\bullet_M \rightarrow P^\bullet_N$, but only in a highly non-unique way. At least, two such liftings of the same homomorphism to the same projective resolutions are homotopic --- this will lead to the uniqueness of derived functors. Also, a short exact sequence of $R$-modules can be lifted to a short exact sequence of projective resolutions --- this allows to construct boundary maps for the derived functors.
\begin{prop} \label{proj-res-lift-prop}
Let $f: M \rightarrow N$ be a homomorphisms of $R$-modules $M, N$ and $P^\bullet_M, P^\bullet_N$ projective resolutions of $M$ and $N$, respectively. Then $f$ merely lifts to a homomorphism $f^\bullet: P^\bullet_M \rightarrow P^\bullet_N$ of $R$-module complexes, that is, we have a commutative diagram

\begin{center}
\begin{tikzpicture}
  \matrix (m) [matrix of math nodes,row sep=3em,column sep=4em,minimum width=2em]
  {
     0 & M & P^0_M & P^1_M & \cdots \\
     0 & N & P^0_N & P^1_N  & \cdots. \\};
  \path[-stealth]
    (m-1-2) edge (m-1-1)
    (m-2-2) edge (m-2-1)
    (m-1-3) edge (m-1-2)
    (m-1-2) edge node [left] {\tiny $f$} (m-2-2)
    (m-2-3) edge (m-2-2)
    (m-1-4) edge node[above] {\tiny $d_M^1$} (m-1-3)
    (m-1-3) edge node [left] {\tiny $f^0$}(m-2-3)
    (m-2-4) edge node[above] {\tiny $d_N^1$} (m-2-3)
    (m-1-5) edge node[above] {\tiny $d_M^2$} (m-1-4)
    (m-1-4) edge node [left] {\tiny $f^1$} (m-2-4)
    (m-2-5) edge node[above] {\tiny $d_N^2$} (m-2-4);
\end{tikzpicture}
\end{center}

Furthermore, two liftings $f_1^\bullet, f_2^\bullet: P_M^\bullet \rightarrow P_N^\bullet$ of $f$ to the same projective resolutions are homotopic.

Finally, if $0 \rightarrow M^\prime \stackrel{f}{\rightarrow} M \stackrel{g}{\rightarrow} M^{\prime\prime} \rightarrow 0$ is a short exact sequence of $R$-modules, then it merely lifts to a short exact sequence
\[ 0 \rightarrow P_{M^\prime}^\bullet \stackrel{f^\bullet}{\rightarrow} P_M^\bullet \stackrel{g^\bullet}{\rightarrow} P_{M^{\prime\prime}}^\bullet \rightarrow 0 \]
of suitably chosen projective resolutions of $M^\prime$, $M$ and $M^{\prime\prime}$, where $f^\bullet$ and $g^\bullet$ lift $f$ and $g$.
\end{prop}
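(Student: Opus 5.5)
The plan is to prove the three parts in order, each by induction along the resolutions using only projectivity and exactness. Since every conclusion is a mere existence (a propositionally truncated statement) or a proposition, we may use induction on propositional truncation throughout. This lets us actually pick the liftings that projectivity merely provides, one degree at a time. Only countably many such choices are needed, and we perform them sequentially. So within the induction on the degree we always hold an explicit lift of the previous stage, and we truncate only at the end. We avoid any appeal to $\mathrm{AC}_0$ by formulating each inductive step as ``given explicit $f^0,\dots,f^{n}$, there merely exists $f^{n+1}$''. The full sequence is assembled from these steps by induction on $n$ inside the truncation.

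For the lifting, write $\epsilon_M : P^0_M \twoheadrightarrow M$ and $\epsilon_N : P^0_N \twoheadrightarrow N$ for the augmentations. Projectivity of $P^0_M$ applied to the epimorphism $\epsilon_N$ and the map $f \circ \epsilon_M$ gives $f^0$. Now suppose $f^{n-1}, f^n$ are given with $d_N^n f^n = f^{n-1} d_M^n$, where for $n=0$ we read $d^0 = \epsilon$ and $f^{-1}=f$. Then $f^n \circ d_M^{n+1}$ lands in $\ker(d_N^n) = \mathrm{im}(d_N^{n+1})$, since its composite with $d_N^n$ equals $f^{n-1} d_M^n d_M^{n+1} = 0$. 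Projectivity of $P^{n+1}_M$ applied to the epimorphism $P^{n+1}_N \twoheadrightarrow \mathrm{im}(d_N^{n+1})$ then yields $f^{n+1}$.

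For homotopy uniqueness, set $g := f_1^\bullet - f_2^\bullet$. This is a lift of $0$, and we construct $k^n : P^n_M \to P^{n+1}_N$ with $g^n = d_N^{n+1} k^n + k^{n-1} d_M^n$ and $k^{-1}=0$. The base case holds because $\epsilon_N g^0 = 0$, so $g^0$ maps into $\mathrm{im}(d_N^1)$ and lifts by projectivity. For the step, $g^n - k^{n-1} d_M^n$ maps into $\ker d_N^n$: applying $d_N^n$ gives $g^{n-1}d_M^n - d_N^n k^{n-1} d_M^n = k^{n-2} d_M^{n-1} d_M^n = 0$. So this map lifts to $k^n$ through $d_N^{n+1}$. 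All of this is a routine diagram chase, and the resulting homotopy merely exists, which is all the statement requires.

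For the short exact sequence (the horseshoe lemma), we choose resolutions $P^\bullet_{M'}$ and $P^\bullet_{M''}$ and set $P^n_M := P^n_{M'} \oplus P^n_{M''}$, which is projective as a direct sum of projectives. Here we use the universal property of direct sums, Ex.~\ref{direct-sum-ex}. The augmentation is $(f\epsilon', \lambda)$, where $\lambda : P^0_{M''} \to M$ is obtained by projectivity as a lift of $\epsilon''$ through $g$. Surjectivity of this augmentation follows by a snake-type chase. We then iterate on the kernels $0 \to K' \to K \to K'' \to 0$, which again form a short exact sequence. The maps $f^\bullet, g^\bullet$ are the split inclusion and projection, so they are degreewise exact. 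This is the main obstacle: the exactness of the kernel sequence and the recursive bookkeeping, where each stage must feed the next short exact sequence of kernels. I would first prove a one-step lemma, stating that a short exact sequence together with projective covers of the outer terms merely yields a compatible cover of the middle term with an exact sequence of kernels, and then apply it inductively.
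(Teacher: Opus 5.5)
Your three constructions match the paper's proof: the lift of $f^n\circ d_M^{n+1}$ through $P^{n+1}_N \twoheadrightarrow \mathrm{im}(d_N^{n+1})$, the inductive homotopy for the lift $f_1^\bullet - f_2^\bullet$ of $0$, and the horseshoe on $P^n_{M'}\oplus P^n_{M''}$ iterated on the short exact sequence of kernels. Your check that $g^n - k^{n-1}d_M^n$ lands in $\ker d_N^n$ is correct, and the algebra is fine throughout.

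The gap is your foundational claim that this route avoids choice. Let $A_n$ be the set of compatible finite stages $(f^0,\dots,f^n)$. A recursion on $n$ that builds $f^\bullet$ needs a function $A_n \to A_{n+1}$. Projectivity only gives $A_n \to \|A_{n+1}\|$, and you may strip a truncation only when the goal is a proposition, which $A_{n+1}$ is not. So induction on $n$ inside the truncation yields only $\Pi_{n}\|A_n\|$: each finite stage merely exists. It does not yield $\|\lim_n A_n\|$, a single merely existing infinite lift $f^\bullet$. Passing from ``every finite stage merely extends'' to ``an infinite compatible sequence merely exists'' is exactly the axiom of dependent choice, which HoTT does not prove. ``Countably many sequential choices'' is that axiom, not a way around it. The same objection applies to your homotopy $k^\bullet$ and to the horseshoe resolution.

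The repair is to invoke $\mathrm{AC}_0$ once. Take the set $X := \Sigma_{n:\mathbb{N}}\mathrm{Hom}_R(P^{n+1}_M, \mathrm{im}(d_N^{n+1}))$, and let $Y(n,h)$ be the set of lifts of $h$ through $P^{n+1}_N \twoheadrightarrow \mathrm{im}(d^{n+1}_N)$. For the horseshoe, take $X$ to be the set of all compatible finite stages and $Y$ their one-step extensions. Projectivity gives $\Pi_{x:X}\|Y(x)\|$, so $\mathrm{AC}_0$ gives merely a selection function $\sigma$. Since the goal is a proposition you may eliminate this truncation and the one for $f^0$. Then $f^{n+1} := \sigma(n, f^n\circ d^{n+1}_M)$, carrying the compatibility along, defines $f^\bullet$ by ordinary recursion. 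The paper's ``construct inductively'' tacitly relies on the same principle, and it treats the Axiom of Choice as available in this section. So this corrects your framing, not your mathematics.
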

\begin{proof}
The (mere) existence of a lifting $f^\bullet: P^\bullet_M \rightarrow P^\bullet_N$ follows from the universal property of projective $R$-modules and a split up of the projective resolutions into short exact sequences $0 \leftarrow M \leftarrow P^0_M \leftarrow K^0_M \leftarrow 0$, $0 \leftarrow K^0_M \leftarrow P^1_M \leftarrow P^1_M \leftarrow 0$, \ldots\ and $0 \leftarrow N \leftarrow P^0_N \leftarrow K^0_N \leftarrow 0$, $0 \leftarrow K^0_N \leftarrow P^1_N \leftarrow K^1_N \leftarrow 0$, \ldots, respectively.

If $f_1^\bullet, f_2^\bullet: P_M^\bullet \rightarrow P_N^\bullet$ are two liftings of $f$, then $f_1^\bullet - f_2^\bullet: P_M^\bullet \rightarrow P_N^\bullet$ is a lifting of the zero map $o: M \rightarrow N$, and we want to show that this lifting is homotopic to $0$. First, $f^0-g^0$ maps into $\mathrm{ker}(d_N^0) = \mathrm{im}(d_N^1)$, so the projectivity of $P_M^0$ yields a homomorphism $k^0 : P_M^0 \rightarrow P_N^1$ such that $d_N^1 \circ k^0 = f^0-g^0$. In the next step, note that
\[ d_N^1 \circ ((f^1 - g^1) - k^0 \circ d_M^1) = d_N^1 \circ (f^1 - g^1) - (f^0-g^0) \circ d_M^1 = 0, \]
hence $(f^1 - g^1) - k^0 \circ d_M^1$ maps $P_M^1$ into $\mathrm{ker}(d_N^1) = \mathrm{im}(d_N^2)$. Again, the projectivity of $P_M^1$ yields a homomorphism $k^1 : P_M^1 \rightarrow P_N^2$ such that
\[ f^1 - g^1 = k^0 \circ d_M^1 + d_N^2 \circ k^1. \]
In that way, we can construct inductively the requested homotopy.

The construction of a short exact sequence of complexes lifting a short exact sequence $0 \rightarrow M^\prime \stackrel{f}{\rightarrow} M \stackrel{g}{\rightarrow} M^{\prime\prime} \rightarrow 0$ starts with arbitrary projective resolutions $P_{M^\prime}^\bullet$ and $P_{M^{\prime\prime}}^\bullet$ of $M^\prime$ and $M^{\prime\prime}$. Then we can set $P_M^n := P_{M^\prime}^n \oplus P_{M^{\prime\prime}}^n$, obviously a projective module for all $n \geq 0$, and construct the differential $d^n_M : P_M^n \rightarrow P_M^{n-1}$ inductively as follows: For $n = 0$, we take the direct sum of the composition of the epimorphism $P_{M^\prime}^0 \rightarrow M^\prime$ and the inclusion $M^\prime \hookrightarrow M$ with the lifting of $P_{M^{\prime\prime}}^0 \rightarrow M$ through the epimorphism $M \rightarrow M^{\prime\prime}$, possible because $P_{M^{\prime\prime}}^0$ is projective. It is straightforward to show that the resulting homomorphism $P_{M^\prime}^0 \oplus P_{M^{\prime\prime}}^0 \rightarrow M$ is surjective. In the next step we replace $M^\prime$, $M$ and $M^{\prime\prime}$ with the kernels of the epimorphisms $P_{M^\prime}^0 \rightarrow M^\prime$,  $P_M^0 \rightarrow M$ and $P_{M^{\prime\prime}}^0 \rightarrow M^{\prime\prime}$, and proceed as before. This can be continued inductively.  
\end{proof}

\subsection{Derived Functors} \label{der-funct-ssec}

We finally construct a universal $\delta$-functor 
\[ T = (T^n: R\mathrm{-Mod} \rightarrow R\mathrm{-Mod}) \] 
extending a right-exact additive functor $F = T^0: R\mathrm{-Mod} \rightarrow R\mathrm{-Mod}$, as the \textit{left derived functors} $L^n F$, $n \geq 0$, of $F$. Since such a universal $\delta$-functor is unique, we can freely use induction on propositional truncation, that is, make choices of merely existing objects.

In particular, we can choose a projective resolution
\[ P^\bullet_M: 0 \leftarrow M \leftarrow P^0_M \leftarrow \cdots \leftarrow P^n_M \leftarrow P^{n+1}_M \rightarrow \cdots  \]
for each $R$-module $M$. Setting $L^n F(M) := h^n(F(P^\bullet_M))$ and $L^n F(f) := h^n(F(P^\bullet_f))$ as constructed in homological algebra (see Sec.~\ref{hom-alg-ssec}), a straightforward check shows that $L^n F$ is an additive functor. The homotopy statement in Prop.~\ref{proj-res-lift-prop} shows the independence of the construction of $L^n F$ of the choice of projective resolutions, up to \textit{canonical}, that is, unique isomorphism or equivalently, equality.

The right-exactness of $F$ shows that $L^0 F = F$. As in homological algebra, there exist boundary morphisms $\delta^n: L^n F(M^{\prime\prime}) \rightarrow L^{n-1} F(M^\prime)$ closing the gaps in long exact sequences associated to short exact sequence $0 \rightarrow M^\prime \rightarrow M \rightarrow M^{\prime\prime} \rightarrow 0$ and commmuting with homomorphisms $L^n F(f)$ and $L^{n-1} F(f)$. 

All this shows: The $L^n F$, $n \geq 0$, are a $\delta$-functor.
\begin{thm}
The $L^n F$, $n \geq 0$, are a universal $\delta$-functor.
\end{thm}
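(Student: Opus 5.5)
The plan is to apply Thm.~\ref{eff-univ-delta-thm}: since the $L^n F$ already form a $\delta$-functor, it is enough to show that each $L^n F$ with $n > 0$ is coeffaceable. Coeffaceability asks, for every $R$-module $M$, for an epimorphism $u: P \rightarrow M$ with $L^n F(u) = 0$. Universality of a $\delta$-functor is a proposition: the uniqueness part of Thm.~\ref{eff-univ-delta-thm} shows that the data it asserts are uniquely determined. So I may use induction on propositional truncation throughout and work with chosen objects. In particular I will use the chosen projective resolutions $P^\bullet_M$ defining $L^n F$, together with a merely existing epimorphism from a projective (even free) module, obtained from a free presentation as in Ex.~\ref{free-pres-exm}.

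The key step is to show that $L^n F(P) = 0$ for every projective $R$-module $P$ and every $n > 0$. The complex $0 \leftarrow P \stackrel{\mathrm{id}}{\leftarrow} P \leftarrow 0 \leftarrow \cdots$ is a projective resolution of $P$, and applying $F$ and taking $h^n$ gives $0$ for all $n > 0$. This resolution may differ from the chosen $P^\bullet_P$. Prop.~\ref{proj-res-lift-prop}, however, gives liftings of $\mathrm{id}_P$ in both directions between the two resolutions. Both composites lift the identity, so each is homotopic to the identity. Additive functors preserve homotopies, and homotopic maps induce equal maps on cohomology, so $h^n$ of the two resolutions are isomorphic, and hence equal by univalence. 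Therefore $L^n F(P) = h^n(F(P^\bullet_P)) = 0$ for $n > 0$.

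Coeffaceability then follows. Take the (merely existing) epimorphism $u: G \twoheadrightarrow M$ from a free module $G$, which is projective by the proposition above. The map $L^n F(u): L^n F(G) \rightarrow L^n F(M)$ has zero source, so it is the zero homomorphism. Thm.~\ref{eff-univ-delta-thm} now gives universality.

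The main obstacle is making the independence of the choice of resolution precise enough that the computation of $L^n F(P)$ is legitimate. This means checking that the chosen $P^\bullet_M$, the functoriality via lifted maps, and the homotopy invariance fit together, in particular that $F$ applied to a homotopy is again a homotopy, which uses additivity of $F$. The remaining concern is the truncation bookkeeping: the existence of $u$ is only mere existence, but the coeffaceability condition as used in the proof of Thm.~\ref{eff-univ-delta-thm} is only ever applied under truncation induction toward propositions, so this is harmless.
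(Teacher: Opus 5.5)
Your proposal is correct and takes the same route as the paper. Like the paper, you apply Thm.~\ref{eff-univ-delta-thm}, get coeffaceability from the resolution $0 \leftarrow P \leftarrow P \leftarrow 0$ of a projective module (giving $L^nF(P)=0$ for $n>0$), and use an epimorphism from a free module; your comparison of this resolution with the chosen $P^\bullet_P$ via Prop.~\ref{proj-res-lift-prop} just spells out the independence of resolutions that the paper invokes implicitly.
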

\begin{proof}
This follows from Thm.~\ref{eff-univ-delta-thm} because the $L^n F$ are coeffaceable functors: Projective $R$-modules $P$ have a projective resolution $0 \leftarrow P \leftarrow P \leftarrow 0$, hence $L^n F(P) = 0$ for $n > 0$, and every $R$-module has an epimorphism from a projective module.
\end{proof}

\section{Calculating with Cohomology Classes} \label{calc-cohom-sec}
 
The constructions and considerations of the previous section show that in HoTT, ``calculations" with arbitrary cohomology groups will be different from calculations with numbers or more advanced algebraic objects like polynomials: The cohomology classes with which we calculate are usually chosen by induction on propositional truncation, and therefore we can only show proper theorems (propositions in HoTT terminology). In particular, for such purposes cohomology classes need not be explicitly constructed, but in special cases, an explicit construction, or even a particular choice of a construction, may nevertheless be possible and useful, without the theorem depending on the choice.

A typical application of cohomology groups is to deduce properties from their \textit{vanishing} -- generations of students have wondered why to introduce such a complicated machinery just to show that it produces nothing else than $0$. We present one instance of such an application by proving a criterion for flatness, a subtle commutative-algebraic concept capturing how ``smoothly" modules vary over the spectrum of a ring, and use it for a specific example:

\begin{prop}[{\cite[Ex.6.4]{ECA}}] \label{flat-hyper-prop}
Let $R$ be a Noetherian ring, $S = R[x_1, \ldots, x_r]$ the polynomial ring in $r$ variables over $R$ and $f$ a non-zerodivisor. Then $S/(f)$ is a flat $R$-module if, and only if the coefficients of $f$ generate the unit ideal in $R$.
\end{prop}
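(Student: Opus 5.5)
The plan is to use the derived functors $L^n F$ of Sec.~\ref{der-funct-ssec} for $F = - \otimes_R N$, that is $\mathrm{Tor}$. Flatness of an $R$-module $N$ will mean that $- \otimes_R N$ is exact. Equivalently, $L^1 F(M) = \mathrm{Tor}_1^R(M, N) = 0$ for all $M$, and by the standard reduction (every ideal inclusion, then finitely generated modules, then filtered colimits) it suffices to check $\mathrm{Tor}_1^R(R/\mathfrak{m}, N) = 0$ for maximal ideals $\mathfrak{m}$, after localising. Flatness is a proposition, so every choice below (projective resolutions, preimages, coefficient representatives) can be made by induction on propositional truncation, as stressed in Sec.~\ref{calc-cohom-sec}.

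First step: since $f$ is a non-zerodivisor, we have a short exact sequence of $S$-modules, hence of $R$-modules,
\[ 0 \rightarrow S \stackrel{\cdot f}{\rightarrow} S \rightarrow S/(f) \rightarrow 0. \]
The module $S$ is free over $R$, with basis the monomials, hence projective, so $L^n F(S) = 0$ for $n > 0$. The long exact sequence of the universal $\delta$-functor $\mathrm{Tor}^R(M, -)$, applied with $M = R/I$ for an ideal $I$, then identifies
\[ \mathrm{Tor}_1^R(R/I, S/(f)) \cong \ker\bigl( (R/I)[x_1,\ldots,x_r] \stackrel{\cdot \bar f}{\longrightarrow} (R/I)[x_1,\ldots,x_r] \bigr), \]
using Prop.~\ref{tensor-prod-ident-prop} to identify $R/I \otimes_R S$ with the polynomial ring over $R/I$. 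The connecting map can be left as merely existing. So $S/(f)$ is flat if and only if $\bar f$ is a non-zerodivisor in $(R/I)[x]$ for every ideal $I$. By the reduction to finitely generated modules and right exactness, it suffices to take $I$ prime, or even $I = \mathfrak{m}$ after localisation.

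Second step, the direction ``coefficients generate the unit ideal implies flat''. For every prime $\mathfrak{p}$ the image $\bar f$ in $(R/\mathfrak{p})[x]$ is nonzero, since otherwise all coefficients would lie in $\mathfrak{p}$. A nonzero polynomial over a domain is a non-zerodivisor, so $\mathrm{Tor}_1(R/\mathfrak{p}, S/(f)) = 0$. To pass from primes to all finitely generated $M$, I would use that over a Noetherian ring $M$ has a filtration with subquotients $R/\mathfrak{p}$, together with the long exact sequence. Conversely, if the coefficients lie in a proper ideal, they lie in some maximal $\mathfrak{m}$, and then $\bar f = 0$ in $(R/\mathfrak{m})[x]$. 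The kernel above is then the whole, nonzero, polynomial ring, so $\mathrm{Tor}_1 \neq 0$ and $S/(f)$ is not flat.

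I expect the main obstacle to be the reduction of flatness to the vanishing of $\mathrm{Tor}_1(R/\mathfrak{p}, -)$ for primes, that is, the prime filtration of finitely generated modules over a Noetherian ring and the passage from finitely generated modules to arbitrary ones. This is where the Noetherian hypothesis enters, and it is not developed earlier in the paper. I would first try the elementary route: flatness is equivalent to injectivity of $I \otimes N \rightarrow N$ for finitely generated ideals $I$, which follows from the equational criterion of Prop.~\ref{tensor-eq-crit-prop}. Then I would induct on a prime filtration of $R/I$ using the long exact sequence, and choose the filtration by induction on truncation.
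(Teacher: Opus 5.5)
Your proposal is correct, and your first step is the paper's own computation. You use the sequence $0\to S\stackrel{\cdot f}{\to}S\to S/(f)\to 0$ and the freeness of $S$. You then identify $\mathrm{Tor}_1^R(R/I,S/(f))$ with the kernel of multiplication by $\bar f$ on $S/IS\cong(R/I)[x_1,\ldots,x_r]$.

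Your ``only if'' direction is also the paper's, except that you pass to a maximal ideal containing the coefficients. The paper simply takes $I$ to be the coefficient ideal itself. There $\bar f=0$ and the kernel is $S/IS\neq 0$, so no maximal ideal has to be chosen.

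The real difference is the ``if'' direction. The paper evaluates $\mathrm{Tor}_1$ only at the coefficient ideal and reads off ``flat iff $S/IS=0$'' from Prop.~\ref{Tor-flat-crit}. But when $I=R$, the vanishing of $\mathrm{Tor}_1^R(R/I,S/(f))$ is automatic. It says nothing about the other finitely generated ideals over which Prop.~\ref{Tor-flat-crit} quantifies, so the paper's argument as written only proves ``flat $\Rightarrow I=R$''.

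Your second step is exactly what closes that direction. You note that $\bar f\neq 0$ in the domain $(R/\mathfrak p)[x_1,\ldots,x_r]$ for every prime $\mathfrak p$, so $\mathrm{Tor}_1^R(R/\mathfrak p,S/(f))=0$. A prime filtration of $R/J$ and the long exact sequence in the first variable then finish the argument. This is also where your proof uses the Noetherian hypothesis.

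If you want to avoid prime filtrations, McCoy's theorem (in its multivariate form) treats every ideal $J$ at once. A zero-divisor in $(R/J)[x_1,\ldots,x_r]$ is annihilated by a nonzero constant $c$. That $c$ then kills all coefficients of $\bar f$, hence $1$, a contradiction. This route even dispenses with the Noetherian hypothesis.

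Some smaller points follow.

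First, your opening claim that it suffices to check $\mathrm{Tor}_1(R/\mathfrak m,N)=0$ at maximal ideals after localising is not one of the standard reductions. It is the local flatness criterion, which needs finiteness hypotheses and fails in general. Take $R=k[x,y]_{(x,y)}$, $\mathfrak m=(x,y)$ and $N=R_x/yR_x$. Then $\mathrm{Tor}_1^R(R/\mathfrak m,N)=0$, because $x$ acts on it both as $0$ (through $R/\mathfrak m$) and invertibly (through $N$). Yet $N$ is not flat, because the non-zerodivisor $y$ kills $N\neq 0$. You never use this claim, so drop it and keep the reduction to primes.

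Second, the reduction to finitely generated ideals that you want to extract from the equational criterion is already Prop.~\ref{Tor-flat-crit}. The only ingredient missing from the paper is the prime filtration.

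Third, the paper's $\delta$-functor is $\mathrm{Tor}^R_\bullet(-,M)$, derived in the first variable. A long exact sequence for $\mathrm{Tor}^R(R/I,-)$ applied to $0\to S\to S\to S/(f)\to 0$ therefore needs the balancing $\mathrm{Tor}^R_n(A,B)\cong\mathrm{Tor}^R_n(B,A)$. The paper makes the same unproved switch from $\mathrm{Tor}_1^R(S/(f),R/I)$ to $\mathrm{Tor}_1^R(R/I,S/(f))$. So this is a gap you share with the paper, not a defect of your route.
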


\begin{cor}
Let $k$ be a field. Then $k[x,y]/(1+xy)$ is a flat $k[x]$-module.
\end{cor}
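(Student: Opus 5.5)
The plan is to apply Prop.~\ref{flat-hyper-prop} with $R := k[x]$, $r = 1$, the variable $y$ and $f := 1 + xy \in k[x][y]$. Then $S = k[x][y] = k[x,y]$, and $k[x]$ is Noetherian because it is a principal ideal domain (Hilbert's basis theorem would also do). Two hypotheses remain to be checked: $f$ is a non-zerodivisor in $S$, and the coefficients of $f$ generate the unit ideal of $k[x]$.

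\emph{The coefficients.} As a polynomial in $y$ over $k[x]$, the element $f$ has coefficients $1$ (degree $0$) and $x$ (degree $1$). Since $1$ is one of them, the ideal they generate is all of $k[x]$. This step is immediate.

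\emph{The non-zerodivisor condition.} Since $k$ is a field, $k[x,y]$ is an integral domain, by iterating the statement that a polynomial ring over a domain is a domain via leading coefficients. So it suffices that $f \neq 0$, which is clear because its constant term is $1$. Concretely, if $g \neq 0$ had leading term $g_d y^d$ in $y$ with $g_d \neq 0$, then $fg$ would have leading term $x g_d y^{d+1} \neq 0$, because $k[x]$ is a domain.

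\emph{Where care is needed.} The main obstacle is only bookkeeping. We must make sure Prop.~\ref{flat-hyper-prop} is applied with $R = k[x]$ and $S = R[y]$, and that the $k[x]$-module structure on $k[x,y]/(1+xy)$ is the one obtained by pulling back along $k[x] \to k[x,y] \to k[x,y]/(1+xy)$, as in the example on $R$-algebras. The statement to be shown, flatness, is a proposition in the HoTT sense. Therefore any choices made inside the proof of Prop.~\ref{flat-hyper-prop}, such as projective resolutions or boundary maps, do not affect the conclusion, and no further verification is needed.
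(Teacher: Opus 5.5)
Your proposal is correct and is exactly the paper's argument: apply Prop.~\ref{flat-hyper-prop} with $R = k[x]$, $S = R[y] = k[x,y]$ and $f = 1+xy$, and observe that the coefficients $1$ and $x$ generate the unit ideal of $k[x]$. The only difference is that you also check the remaining hypotheses, namely that $k[x]$ is Noetherian and that $f$ is a non-zerodivisor in $k[x,y]$, which the paper leaves implicit.
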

\begin{proof}
Apply Prop.~\ref{flat-hyper-prop} on $R = k[x]$, $S = k[x,y]$ and $f = 1 + xy$: The cofficients of $f$ in $k[x]$ are $1$ and $x$ and generate the unit ideal in $k[x]$.
\end{proof}

To prove the proposition we need to introduce some further commutative-algebraic concepts and some of their properties. The material can be found in standard monographs (e.g. \cite{Hart:AG} or \cite{ECA}); the fact that our presentation looks very similar to theirs actually sends an encouraging message: Formalisation of these topics in the framework of HoTT does not require a complete redevelopment but can follow well-established patterns of mathematical practice put on a solid logical foundation.

We assume that every ring $R$ in this section is commutative with one.

\subsection{Noetherian rings and modules}

A ring $R$ is called \textit{Noetherian} if all the ideals of $R$ are finitely generated. Thus, the Noetherian property is a special case of the Axiom of Choice: it allows to pick finitely many generators of a given ideal -- but without providing a recipe how to do so, that is, \textit{merely}.

There are equivalent definitions:
\begin{prop} \label{Noether-submodule-prop}
Let $R$ be a Noetherian ring. Then submodules of a finitely generated $R$-module $M$ are finitely generated.
\end{prop}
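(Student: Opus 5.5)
We prove the statement by induction on the number $n$ of generators of $M$. Since being finitely generated is a proposition (it asserts the mere existence of a finite generating set), we may use induction on propositional truncation throughout: we fix generators $m_1, \ldots, m_n$ of $M$, and whenever the Noetherian property of $R$ merely provides finitely many generators of an ideal, we may pick them explicitly.

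For $n = 0$ the module $M$ is $(0)$, so every submodule is $(0)$ and generated by the empty set. For $n = 1$, the homomorphism $R \rightarrow M$, $r \mapsto r m_1$ (Ex.~\ref{hom-R-ex}) is an epimorphism. A submodule $N \subseteq M$ pulls back to an ideal $I \subseteq R$, namely the preimage of $N$. The ideal $I$ is finitely generated because $R$ is Noetherian, and the images of its generators generate $N$.

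For the inductive step, let $M' := \sum_{i < n} R m_i \subseteq M$. Given a submodule $N \subseteq M$, consider the composition $N \hookrightarrow M \rightarrow M/M'$. Its kernel is $N \cap M'$, a submodule of $M'$, which has $n-1$ generators; by the induction hypothesis $N \cap M'$ is finitely generated. Its image is a submodule of $M/M'$, which is generated by the single class of $m_n$; by the case $n = 1$ this image is finitely generated, say by the classes of elements $x_1, \ldots, x_k \in N$, again chosen by truncation induction. It then remains to check that the generators of $N \cap M'$ together with $x_1, \ldots, x_k$ generate $N$. Given $x \in N$, write its image as $\sum_j r_j \bar x_j$; then $x - \sum_j r_j x_j$ lies in $N \cap M'$, which gives the claim.

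The only delicate point is the bookkeeping of choices. The induction hypothesis must be applied as the proposition \emph{every submodule of an $R$-module with $n-1$ generators is finitely generated}, quantified over all such modules, so that the merely existing generators of $N \cap M'$ and of the image can be extracted inside a proof of a proposition. Beyond this, the argument follows the standard one. We expect no real obstacle, apart from phrasing the extension argument, that a module is finitely generated once both the kernel and the image of $N \rightarrow M/M'$ are, cleanly via the quotient homomorphism rather than via a choice of lifts.
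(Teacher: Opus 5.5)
Your proof is correct and follows essentially the paper's argument. Both argue by induction on the number of generators, using that a module is finitely generated once the kernel and image of a surjection are, with lifts of the image generators chosen inside a proof of a proposition. The only cosmetic difference is that the paper first reduces to submodules of $R^n$ via a surjection $R^n \to M$, so that the quotient step lands directly in an ideal of $R$. You instead work with $M$ itself and perform that pullback only in the cyclic case $n=1$.
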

\begin{proof}
As $M$ is a finitely generated $R$-module there exists a surjective $R$-module homomorphism $R^n \rightarrow M$ for some $n \in \mathbb{N}$. Hence it is enough to show that every submodule of $R^n$ is finitely generated.

This follows from $R$ Noetherian by induction on $n$, using the fact that an $R$-module $N$ is finitely generated if in a short exact sequence $0 \rightarrow N^\prime \rightarrow N \rightarrow N^{\prime\prime} \rightarrow 0$, the $R$-modules $N^\prime$ and $N^{\prime\prime}$ are finitely generated. But every element of $N$ is the sum of an $R$-linear combination of generators of $N^\prime$ and an $R$-linear combination of chosen preimages of generators of $N^{\prime\prime}$.
\end{proof}
\begin{prop}[Ascending Chain Condition]
Let $R$ be a Noetherian ring and $M$ a finitely generated $R$-module. Then any chain 
\[ N_1 \subseteq N_2 \subseteq \cdots N_i \subseteq \cdots \subseteq M \]
of submodules $N_i$ of $M$ has a \textit{maximal} element, that is, there is a $k \in \mathbb{N}$ such that
\[ N_k = N_{k+1} = \ldots. \]
\end{prop}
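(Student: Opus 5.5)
The plan is to take the union of the chain, invoke Prop.~\ref{Noether-submodule-prop} to obtain finitely many generators of it, and then locate all of these generators in a single member of the chain.

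\emph{Step 1: the union is a submodule.} Let $N := \bigcup_i N_i \subseteq M$, described as the predicate $m \mapsto \| \Sigma_{i : \mathbb{N}}\, m \in N_i \|$. This is a proposition, so it describes a subset as in Prop.~\ref{set-equiv-def-prop}. To check that $N$ is a submodule:
\begin{itemize}
\item $0 \in N_1$;
\item given $m \in N$, $m^\prime \in N$ and $r : R$, induction on propositional truncation (the goal being a proposition) lets us pick $i$ with $m \in N_i$ and $j$ with $m^\prime \in N_j$;
\item since the chain is ascending, $m + m^\prime$ and $rm$ then lie in $N_{\max(i,j)}$.
\end{itemize}

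\emph{Step 2: finitely many generators.} By Prop.~\ref{Noether-submodule-prop}, $N$ is finitely generated. This only gives mere existence of generators $g_1, \ldots, g_s \in N$.

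\emph{Step 3: the chain stabilises.} The conclusion ``there is a $k$ with $N_k = N_{k+1} = \cdots$'' is read as a truncated existence and is therefore a proposition. So we may again use induction on propositional truncation: first to obtain the generators $g_1, \ldots, g_s$, then to obtain for each $g_l$ an index $i_l$ with $g_l \in N_{i_l}$. Put $k := \max_l i_l$. Then all $g_l \in N_k$, hence $N \subseteq N_k$ because $N_k$ is a submodule containing the generators. For $k^\prime \geq k$ we get $N_k \subseteq N_{k^\prime} \subseteq N \subseteq N_k$, so $N_{k^\prime} = N_k$ as subsets, and by the equivalence of subset descriptions this is an equality of submodules.

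\emph{Main obstacle.} The only delicate point is the bookkeeping of truncations. The union must be defined with a truncated $\Sigma$ so that it is a subset at all. The conclusion must be stated as mere existence of $k$; otherwise we would need an untruncated choice of $k$, which the Noetherian hypothesis, being a mere-existence statement, cannot supply. Choosing finitely many indices $i_l$ at once is harmless, since it is a finite iterated truncation elimination and needs no $\mathrm{AC}_0$.
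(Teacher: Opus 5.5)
Your proof is correct and follows the same route as the paper: you take the union $N=\bigcup_i N_i$, apply Prop.~\ref{Noether-submodule-prop} to get finitely many generators, place each in some member of the chain, and take the maximum index. Your additions (checking that the union is a submodule, reading the conclusion as mere existence of $k$, and noting that only finitely many truncation eliminations are needed) make explicit what the paper leaves implicit; you also avoid the paper's slip of writing $\bigoplus R\cdot n_j$ where $\sum R\cdot n_j$ is meant.
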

\begin{proof}
$N = \bigcup_{i=1}^\infty \subseteq M$ is a submodule, hence finitely generated by Prop.~\ref{Noether-submodule-prop}, say by $n_1, \ldots, n_l \in N$. Each $n_j$ must be contained in some $N_{k_j}$, hence all the $n_j$ are contained in $N_k$ with $k = \max \{k_j : j = 1,\ldots, l\}$. Then $N = \bigoplus R \cdot n_j \subseteq N_k$, hence
\[ N_k = N_{k+1} = \cdots = N, \]
as requested.
\end{proof}

Note that the Ascending Chain Condition only implies finite generatedness of ideals if we can decide whether an ideal $(f_1, \ldots, f_k) \subseteq R$ generated by elements $f_1, \ldots, f_k$ in an ideal $I \subseteq R$ is equal to $I$ or not. If we assume the Law of Excluded Middle this implication of course holds in any case.

\subsection{Polynomial rings over a coefficient ring $R$}

The polynomials 
\[ r_nX^n + r_{n-1}X^{n-1} + \cdots + r_1X + r_0,\ r_n \neq 0, \] 
in a variable $X$ with coefficients $r_n, \ldots, r_0$ in the coefficient ring $R$ form a ring, using the standard addition and multiplication of polynomials. The polynomial has degree $n$ if $r_n \neq 0$, and then $r_nX^n$ is called the initial term with initial coefficient $r_n$. Such a ring satisfies a universal \textit{adjoint} property:
\begin{prop}
Let $f: R \rightarrow S$ be a ring homomorphism. Then the set of ring homomorphisms $\phi: R[X] \rightarrow S$ is in one-to-one correspondence with the elements $s \in S$, associating $s$ to the unique ring homomorphism $R[X] \rightarrow S$ mapping $X$ to $s$ (and $r$ to $f(r)$). \hfill $\Box$
\end{prop}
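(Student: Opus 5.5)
The plan is to prove the statement in two halves: construct the homomorphism attached to each $s \in S$, then show that every ring homomorphism $R[X] \rightarrow S$ extending $f$ is determined by the image of $X$. Restriction $\phi \mapsto \phi(X)$ then gives a map from homomorphisms to $S$, evaluation gives a map back, and proving both composites are identities yields the bijection. Since ring homomorphisms are determined by their underlying set maps and $S$ is a set, the types involved are sets. Function extensionality therefore reduces equality of homomorphisms to pointwise equality, and no higher coherences arise.

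For existence, given $s \in S$ I would define the evaluation map
\[ \mathrm{ev}_s\bigl(r_nX^n + \cdots + r_1X + r_0\bigr) := f(r_n)s^n + \cdots + f(r_1)s + f(r_0). \]
To avoid deciding whether the leading coefficient is nonzero, I would present polynomials as coefficient lists modulo trailing zeros. Concretely, $R[X]$ is a set quotient of $\mathrm{List}(R)$ by the relation identifying lists that differ by trailing zeros. The evaluation on lists is defined by Horner recursion, $[\,] \mapsto 0$ and $r :: p \mapsto f(r) + s\cdot \mathrm{ev}_s(p)$. Since $f(0)=0$, appending a zero does not change the value, so the map descends to the quotient by Prop.~\ref{set-quot-univ-prop} (or Ex.~\ref{well-def-exm}). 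The check that $\mathrm{ev}_s$ respects addition and multiplication is then an induction on lists, using that $f$ is a ring homomorphism and that $S$ is commutative. By construction $\mathrm{ev}_s(X)=s$ and $\mathrm{ev}_s(r)=f(r)$.

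For uniqueness, suppose $\phi$ is a ring homomorphism with $\phi(X)=s$ and $\phi(r)=f(r)$ for all $r \in R$. Every polynomial is the finite sum $\sum_i r_iX^i$, so additivity and multiplicativity force
\[ \phi\Bigl(\sum_i r_iX^i\Bigr)=\sum_i f(r_i)s^i = \mathrm{ev}_s\Bigl(\sum_i r_iX^i\Bigr). \]
This equality is a proposition, so it can be checked on list representatives via surjectivity of the quotient map, by induction on the list. Function extensionality then gives $\phi=\mathrm{ev}_{\phi(X)}$. Together with $\mathrm{ev}_s(X)=s$ this shows the two constructions are inverse, and the correspondence in the statement follows.

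I expect the main obstacle to be foundational rather than algebraic: the description of polynomials via a leading coefficient $r_n\neq 0$ implicitly uses decidable equality in $R$. I would first try to sidestep this with the list-modulo-trailing-zeros model. If that model makes the multiplicativity proof awkward, I would instead use finitely supported functions $\mathbb{N}\rightarrow R$, realised as the direct sum $\bigoplus_{n\in\mathbb{N}} R\cdot X^n$ from Ex.~\ref{direct-sum-ex}. In that model the additive part of $\mathrm{ev}_s$ comes directly from the coproduct property, and only multiplicativity remains, which I would check on the monomials $X^n$ and extend by bilinearity.
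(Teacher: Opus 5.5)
The paper gives no proof of this proposition; it is closed with $\Box$ as a standard fact. So there is no argument of the paper's to match, and your proposal is correct and supplies one. You are right to read the statement as a bijection between $S$ and the ring homomorphisms $R[X]\to S$ that restrict to $f$ on $R$, which is what the parenthetical intends.

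Your strategy is the polynomial analogue of the paper's proof of Prop.~\ref{list-free-monoid-prop} for free monoids:
\begin{itemize}
\item fix an explicit list model;
\item define the induced map by recursion on lists and descend it through the set quotient;
\item get uniqueness by list induction on representatives.
\end{itemize}
The last step is legitimate because the equalities being checked are propositions in the set $S$.

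Compared with the paper's description of $R[X]$ via a nonzero leading coefficient, your model never has to decide whether a coefficient vanishes. That description needs such a decision even to define addition, since leading terms can cancel. The price is that your result is about your model. Transferring it to the normalized presentation requires a ring isomorphism between the two, which normal forms provide once equality in $R$ is decidable; univalence then transports the statement. Alternatively, in the spirit of the paper's treatment of free monoids and localisations, you could take the statement itself as the defining universal property of $R[X]$, with your list model as the concrete construction satisfying it.

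Two details to spell out:
\begin{itemize}
\item The descent along trailing zeros is itself an induction on the list, with base case $\mathrm{ev}_s([0]) = f(0) + s\cdot 0 = 0$. It is not an immediate consequence of $f(0)=0$.
\item If you define multiplication recursively by $(a::p)\cdot q = a\cdot q + (0::(p\cdot q))$, multiplicativity of $\mathrm{ev}_s$ follows by a short induction. It uses additivity and $\mathrm{ev}_s(a\cdot q) = f(a)\,\mathrm{ev}_s(q)$; the latter is exactly where commutativity of $S$ enters, through $s\,f(a) = f(a)\,s$.
\end{itemize}
With this definition the fallback to $\bigoplus_{n\in\mathbb{N}} R\cdot X^n$ is unnecessary.
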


By induction, we can construct polynomial rings $R[X_1, \ldots, X_n]$ in finitely many variables $X_i$ over the coefficient ring $R$. 

\begin{thm}[Hilbert's Basis Theorem]
If a ring $R$ is Noetherian, then the polynomial ring $R[X]$ is also Noetherian.
\end{thm}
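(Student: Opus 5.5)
We follow the classical proof by leading coefficients, keeping track of where choices are only mere existence statements. Since being finitely generated is a proposition (it asserts mere existence of generators), we may use induction on propositional truncation throughout.

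Let $I \subseteq R[X]$ be an ideal. For each $d \in \mathbb{N}$ we define the set $J_d \subseteq R$ of all $r : R$ such that there merely exists $g \in I$ of degree $\leq d$ whose coefficient of $X^d$ is $r$. This formulation deliberately avoids the decision whether a polynomial has degree exactly $d$, which would require LEM; it includes $0$ automatically. Each $J_d$ is an ideal, since $I$ is closed under sums and scalar multiples. We also have $J_d \subseteq J_{d+1}$, by multiplying $g$ with $X$. Let $J := \bigcup_d J_d$, again an ideal because the chain is ascending. Since $R$ is Noetherian, $J$ is finitely generated by some $a_1, \ldots, a_l$. Each $a_j$ lies in some $J_{d_j}$ and so merely comes with a witness $g_j \in I$. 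Multiplying by powers of $X$, we may take all $g_j$ to have degree $\le D := \max d_j$ with $X^D$-coefficient $a_j$. Furthermore, for each $d < D$ the ideal $J_d$ is finitely generated, say by $b_{d,1}, \ldots, b_{d,k_d}$, with witnesses $h_{d,k} \in I$ of degree $\le d$.

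The claim is that the finitely many polynomials $g_j$ and $h_{d,k}$ generate $I$. Let $I'$ be the ideal they generate; clearly $I' \subseteq I$. For the converse, we show by induction on $n$ that every $g \in I$ of degree $\le n$ lies in $I'$.

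If $n \ge D$, the $X^n$-coefficient $c$ of $g$ lies in $J_n \subseteq J$, so $c = \sum_j r_j a_j$. Then $g - \sum_j r_j X^{n-D} g_j$ lies in $I$ and has degree $\le n-1$.

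If $n < D$, we write $c = \sum_k r_k b_{n,k}$ and subtract $\sum_k r_k h_{n,k}$, again lowering the degree bound to $n-1$.

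The base case $n=0$ is the same step, and leaves the zero polynomial. Every polynomial has some degree bound given by the length of its coefficient list, so this covers all of $I$.

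The main obstacle is constructive bookkeeping rather than mathematics. We have to phrase ``leading coefficient'' via degree bounds instead of exact degrees, since $r_n \neq 0$ is not decidable. We must also observe that the infinitely many ideals $J_d$ are not all needed: only finitely many, namely $d < D$, plus the union $J$. This is important because choosing generators for countably many $J_d$ simultaneously would need countable choice. With $J$ handled by one application of Noetherianity and the $J_d$, $d<D$, by finitely many applications, only finitely many truncation eliminations occur. Everything is legitimate because the goal ``$I$ is finitely generated'' is a proposition.
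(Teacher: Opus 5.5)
Your proof is correct and follows the same leading-coefficient argument as the paper: ideals of top coefficients in $R$, a degree bound obtained from Noetherianity of their union (which the paper packages as the Ascending Chain Condition), finitely many witnesses in $I$, and an induction that lowers the degree. Your bookkeeping is tighter than the paper's in exactly the respects the paper cares about: degree bounds instead of exact degrees avoid deciding whether coefficients vanish, and fixing $D$ before choosing generators of $J_0,\ldots,J_{D-1}$ keeps the truncation eliminations finite, whereas the paper chooses generators of every $I_n$, $n \in \mathbb{N}$, before invoking the ACC, which read literally in HoTT is an instance of countable choice (easily repaired by reordering, since only $n \le N$ is used).
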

\begin{proof}
Let $I \subseteq R[X]$ be an ideal. We need to show that $I$ is finitely generated.

Let $I_n \subseteq R$ be the ideal of leading coefficients of polynomials of degree $n$ in $I$. Since $R$ is Noetherian, the $I_n$ are finitely generated, say by $r_1^{(n)}, \ldots, r_{k_n}^{(n)} \in R$ which are leading coefficients of $f_1^{(n)}, \ldots, f_{k_n}^{(n)} \in I$. The Ascending Chain Condition implies that there is $N$ such that $\sum_{n=0}^\infty I_n = \sum_{n=0}^N I_n \subseteq R$. Then we can show that the $f_1^{(0)}, \ldots, f_{k_N}^{(N)}$ generate $I$.

We do this by induction on the degree $n$ of a polynomial $f \in I$: If $\deg f = 0$ then $f \in I_0$, hence by construction it is an $R$-linear combination of the $f_i^{(0)} = r_i^{(0)} \in I_0$. If $\deg f = n \leq N$, the leading coefficient $r_n \in R$ of $f$ is in $I_n$, hence is an $R$-linear combination $r_n = \sum_{j=1}^{k_n} u_j r_j^{(n)}$. Then $f - \sum_{j=1}^{k_n} u_j f_j^{(n)}$ has degree $< n$, so the claim follows from the induction hypothesis.

If $\deg f = m > N$, we write the leading coefficient $r_m \in R$ of $f$ as an $R$-linear combination of all the generators $r_j^{(n)}$ of the $I_0, \ldots, I_N$ introduced above:
\[ r_m = \sum_{n=0}^N \sum_{j=1}^{k_n} u_j^{(n)} r_j^{(n)}. \]
Then $f - \sum_{n=0}^N \sum_{j=1}^{k_n} u_j^{(n)} f_j^{(n)} \cdot X^{m - n}$ once again has degree $< \deg f$, and the claim follows by induction. 
\end{proof}

The argument follows Gordan's constructive proof of Hilbert's Basis Theorem (see \cite[Ex.15.15]{ECA}). Hilbert's original proof (see \cite[Thm.1.2]{ECA}) uses the Law of Excluded Middle: It constructs a sequence $f_1, \ldots, f_m, f_{m+1}, \ldots \in R[X]$ adding in each step a new polynomial $f_{m+1}$ if $(f_1, \ldots, f_m) \neq I$. Thus, we need to decide whether $(f_1, \ldots, f_m) = I$ or $(f_1, \ldots, f_m) \neq I$.

\subsection{Tor groups and flat $R$-modules} 

Tensorizing with an $R$-module $M$ is a right-exact functor $- \otimes_R M$, by Prop.~\ref{tensor-prod-ident-prop}(d). Thus, we can construct the left-derived functors of $- \otimes_R M$ as in Sec.~\ref{der-funct-ssec}, and we denote them by $\mathrm{Tor}_n^R(M): R-\mathrm{Mod} \rightarrow R-\mathrm{Mod}$. In particular, for each short exact sequence of $R$-modules $0 \rightarrow N^\prime \rightarrow N \rightarrow N^{\prime\prime} \rightarrow 0$ there is a long exact sequence starting with
\[ 0 \leftarrow N^{\prime\prime} \otimes_R M \leftarrow N \otimes_R M \leftarrow N^\prime \otimes_R M \leftarrow    
   \mathrm{Tor}_1^R(N,M^\prime) \leftarrow \mathrm{Tor}_1^R(N,M) \leftarrow \cdots . \]
\begin{lem} \label{tor-free-zero-lem}
For all $R$-modules $M$ and sets $I$, $\mathrm{Tor}_n^R(R^I,M) = 0$ if $n \geq 1$.
\end{lem}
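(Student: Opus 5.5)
The plan is to use the construction of the left derived functors $\mathrm{Tor}_n^R(-,M) = L^n(-\otimes_R M)$ in Sec.~\ref{der-funct-ssec} directly. The statement is a proposition (vanishing of an $R$-module, that is, equality with the zero module), so we may use induction on propositional truncation and work with any choice of projective resolution. Since universal $\delta$-functors are unique, and hence unique up to canonical equality, the value of $L^n F(R^I)$ does not depend on that choice.

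First, $R^I$, read as the free module $\bigoplus_{i\in I} R\cdot e_i$, is projective by the proposition on free modules in Sec.~\ref{proj-mod-res-ssec}. So it has the trivial projective resolution
\[ 0 \leftarrow R^I \stackrel{\mathrm{id}}{\leftarrow} R^I \leftarrow 0 \leftarrow 0 \leftarrow \cdots, \]
with $P^0 = R^I$ and $P^n = 0$ for $n \geq 1$. This is the same observation used in the proof that the $L^nF$ are coeffaceable.

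Next, apply the functor $-\otimes_R M$ to the deleted resolution. This gives the complex with $R^I\otimes_R M$ in degree $0$ and the zero module in all degrees $n\geq 1$, because $0\otimes_R M = 0$: the zero module is a tensor product of $0$ and $M$, since every bilinear map out of $0\times M$ vanishes. For $n\geq 1$ the cohomology module $h^n$ is then a quotient of a submodule of $0$, hence is $0$. Therefore $\mathrm{Tor}_n^R(R^I,M)=L^n(-\otimes_R M)(R^I)=0$.

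The main obstacle I expect is bookkeeping rather than mathematics. Sec.~\ref{der-funct-ssec} chose a fixed projective resolution $P^\bullet_M$ for every module, so I must justify replacing it by the trivial one. I would do this by invoking Prop.~\ref{proj-res-lift-prop}. Lifts of $\mathrm{id}_{R^I}$ in both directions between the chosen resolution and the trivial one compose to maps homotopic to the identity. Homotopic maps induce equal maps on cohomology, so the two cohomology modules are isomorphic, and by univalence in $R\mathrm{-Mod}$ they are equal.

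Alternatively, and perhaps more cleanly, I would argue that $R^I$ is coeffaceable-trivially acyclic: the zero-th component being $\mathrm{id}$ makes the chosen resolution of a projective module homotopy equivalent to the trivial one.
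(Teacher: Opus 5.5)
Your proposal is correct and follows the paper's proof, which simply observes that $0 \rightarrow R^I \rightarrow R^I \rightarrow 0$ is a projective resolution of the free module $R^I$ and reads off the vanishing of $\mathrm{Tor}_n^R(R^I,M)$ for $n \geq 1$ from the construction of the left derived functors. Your extra bookkeeping only makes explicit what the paper leaves to the remarks in Sec.~\ref{der-funct-ssec}: independence of the chosen resolution via the homotopy statement of Prop.~\ref{proj-res-lift-prop}, and the fact that $0 \otimes_R M = 0$.
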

\begin{proof}
$0 \rightarrow R^I \rightarrow R^I \rightarrow 0$ is a projective resolution of the free $R$-module $R^I$, hence the claim follows by construction of $\mathrm{Tor}_n^R(R^I,M)$.
\end{proof}

We now turn to flat $R$-modules and their properties:
\begin{Def}
An $R$-module $M$ is flat if tensorizing with $M$ is a left-exact functor, that is, for all monomorphisms $\iota: N^\prime \rightarrow N$, the induced $R$-module homomorphism $\iota \otimes M : N^\prime \otimes_R M \rightarrow N \otimes_R M$ also is a monomorphism.
\end{Def}
\begin{prop}[{\cite[Prop.6.1]{ECA}}] \label{Tor-flat-crit}
An $R$-module $M$ is flat if, and only if 
\[(\ast) \hspace{1cm} \mathrm{Tor}_1^R(R/I,M) = 0\ \mathrm{for\ all\ finitely\ generated\ ideals\ } I \subseteq R. \]
\end{prop}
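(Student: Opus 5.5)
The forward direction is immediate from the long exact Tor sequence. Take a finitely generated ideal $I \subseteq R$ and apply the $\delta$-functor $\mathrm{Tor}^R_\bullet(-,M)$ to the short exact sequence $0 \rightarrow I \rightarrow R \rightarrow R/I \rightarrow 0$. This gives an exact piece
\[ \mathrm{Tor}_1^R(R,M) \rightarrow \mathrm{Tor}_1^R(R/I,M) \rightarrow I \otimes_R M \rightarrow R \otimes_R M. \]
By Lemma~\ref{tor-free-zero-lem} the leftmost term vanishes, since $R$ is free. So $\mathrm{Tor}_1^R(R/I,M)$ is identified with the kernel of $I \otimes_R M \rightarrow R \otimes_R M$, and flatness of $M$ forces this kernel to be $0$.

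The converse needs more work, and I would follow the standard route of \cite[Prop.6.1]{ECA}. By the same sequence, $(\ast)$ says that $I \otimes_R M \rightarrow M$ is injective for every finitely generated ideal $I$.

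First I extend this to arbitrary ideals $I$. Let $x \in I \otimes_R M$ map to $0$ in $M$. Then $x$ is merely a finite sum $\sum \phi(a_k, m_k)$, so it is the image of an element $x'$ of $J \otimes_R M$ for the finitely generated ideal $J = (a_k)$. The image of $x'$ in $M$ is still $0$, so $x' = 0$ by the finitely generated case, and therefore $x = 0$. Since we are proving a proposition, we may pick the representation of $x$ by induction on propositional truncation.

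Next, for a monomorphism $N' \hookrightarrow N$, I show that $N' \otimes_R M \rightarrow N \otimes_R M$ is injective. Any element of the kernel involves only finitely many elements of $N$. Using the equational criterion (Prop.~\ref{tensor-eq-crit-prop}), or directly the explicit construction of Prop.~\ref{tensor-const-prop}, the vanishing of an element in $N \otimes_R M$ is already witnessed in $N_0 \otimes_R M$ for some finitely generated submodule $N_0 \subseteq N$. Enlarging $N_0$, we may also assume that the element comes from $(N' \cap N_0) \otimes_R M$. This reduces the problem to the case where $N$ is finitely generated, say by $n_1,\ldots,n_k$.

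Now I induct on $k$, using the filtration $N_{j} = N' + Rn_1 + \cdots + Rn_j$. Each step $N_{j-1} \subseteq N_j$ has cyclic quotient $N_j/N_{j-1} \cong R/I_j$, where $I_j$ is an ideal. So it suffices to prove injectivity for each inclusion $N_{j-1} \subseteq N_j$ and then compose. For one such inclusion, use the exact sequence $0 \rightarrow N_{j-1} \rightarrow N_j \rightarrow R/I_j \rightarrow 0$. Its long exact Tor sequence shows that the kernel of $N_{j-1}\otimes_R M \rightarrow N_j \otimes_R M$ is the image of $\mathrm{Tor}_1^R(R/I_j,M)$. That Tor group vanishes by the arbitrary-ideal case: apply the first paragraph's sequence to $0 \rightarrow I_j \rightarrow R \rightarrow R/I_j \rightarrow 0$.

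I expect the main obstacle to be the reduction to finitely generated modules. It requires showing that the vanishing of a tensor is detected on a finitely generated submodule, i.e.\ a direct-limit compatibility of $\otimes$ that the paper has not stated. I would try to derive it from Prop.~\ref{tensor-const-prop}: a relation in the free module on $N \times M$ is a finite combination of bilinearity relations, so it involves only finitely many elements of $N$. All choices of representatives along the way are made by induction on propositional truncation, which is legitimate because injectivity is a proposition.
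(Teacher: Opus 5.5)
Your proof is correct and follows the paper's argument step for step. The forward direction uses the long exact sequence together with $\mathrm{Tor}_1^R(R,M)=0$; the converse passes from finitely generated to arbitrary ideals, reduces to finitely generated $N$, and inducts along a filtration with cyclic quotients $R/I_j$.

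The finiteness step you single out is exactly the one the paper dismisses with ``only involves finitely many elements of $N$''. Your idea of extracting it from Prop.~\ref{tensor-eq-crit-prop} does work. Apply it to $\sum_i n'_i \otimes m_i = 0$ in $N \otimes_R M$, using a generating family of $M$ that contains the $m_i$. It yields finitely many $n''_j \in N$ with $n'_i = \sum_j a_{ij} n''_j$ and $\sum_i a_{ij} m_i = 0$. Hence the relation already holds in $N_0 \otimes_R M$ for $N_0 = \sum_j R n''_j$.
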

\begin{proof}
Assume first that $M$ is flat. Then $I \otimes_R M \rightarrow R \otimes_R M \cong M$ is injective for each finitely generated ideal $I \subseteq R$, and the injection is part of the long exact sequence
\[0 \leftarrow R/I \otimes_R M \leftarrow R \otimes_R M \leftarrow I \otimes_R M \leftarrow \mathrm{Tor}_1^R(R/I,M) \leftarrow \mathrm{Tor}_1^R(R,M) \leftarrow \cdots . \]
By Lem.~\ref{tor-free-zero-lem}, $\mathrm{Tor}_1^R(R,M) = 0$, so the exactness of the sequence implies $\mathrm{Tor}_1^R(R/I,M) = 0$. 

Conversely,  if $(\ast)$ is satisfied for all finitely generated ideals $I \subseteq R$, it also holds for arbitrary ideals $I^\prime \subseteq R$: An element $0 \neq x^\prime \in I^\prime \otimes_R M$ is a finite sum of elements $r^\prime \otimes m$, $r^\prime \in I^\prime$, $m \in M$. Thus $x^\prime$ is the image of an element $x \in I \otimes_R M$, where $I$ is a finitely generated ideal containing the $r^\prime$, and necessarily $x \neq 0$. By $(\ast)$ it maps to a non-zero element in $R \otimes_R M$, which is also the image of $x^\prime$, as required.

Next, we can restrict the check of injectivity to finitely generated $R$-modules~$N$: As before, the statement that $x \in N^\prime \otimes_R M$ goes to $0 \in N \otimes_R M$ only involves finitely many elements of $N$.

Thus, we can assume that there exists a finite sequence of submodules
\[ N^\prime = N_0 \subseteq N_1 \subseteq \cdots \subseteq  N_k = N \]
such that each $N_{i+1}/N_i$ is generated by one element, that is, is $\cong R/I$ for an ideal~$I \subseteq R$. By induction, we can assume right away that $N/N^\prime \cong R/I$. The short exact sequence $0 \rightarrow N^\prime \rightarrow N \rightarrow N/N^\prime \rightarrow 0$ gives rise to a long exact sequence containing the terms 
\[ \mathrm{Tor}_1^R(N/N^\prime,M) \rightarrow n^\prime \otimes_R M \rightarrow N \otimes_R M. \]
Since $\mathrm{Tor}_1^R(N/N^\prime,M) \cong \mathrm{Tor}_1^R(R/I,M) = 0$, we are done.
\end{proof}

\subsection{The Flatness Criterion}

We now prove Prop.~\ref{flat-hyper-prop}.

Let $I \subseteq R$ be the ideal generated by the coefficients of $f$. Let $\mu_f : S \rightarrow S$ be the $R$-module homomorphism given by multiplication with $f$. Since $f$ is a non-zero divisor, $\mu_f$ is injective. The short exact sequence $0 \rightarrow S \stackrel{\mu_f}{\longrightarrow} S \rightarrow S/(f) \rightarrow 0$ gives rise to the long exact sequence starting with 
\[ 0 \leftarrow S/(f) \otimes_R R/I \leftarrow S \otimes_R R/I \stackrel{\mu_f \otimes 1}{\longleftarrow} 
   S \otimes_R R/I  \leftarrow \mathrm{Tor}_1^R(S/(f), R/I) \leftarrow \mathrm{Tor}_1^R(S, R/I). \]
Since $S$ is a free $R$-module, $\mathrm{Tor}_1^R(S, R/I) = 0$, hence $\mathrm{Tor}_1^R(S/(f), R/I)$ is the kernel of the homomorphism $ S \otimes_R R/I \stackrel{\mu_f \otimes 1}{\longrightarrow} 
   S \otimes_R R/I$. Tensorizing the short exact sequence $0 \rightarrow I \rightarrow R \rightarrow R/I \rightarrow 0$ with $S$ gives rise to the exact sequence
\[ I \otimes_R S \rightarrow R \otimes_R S \rightarrow R/I \otimes_R S \rightarrow 0, \]
and since the image of $I \otimes_R S$ in $r \otimes_R S \cong S$ is $I \cdot S$, we deduce that 
\[ S \otimes_R R/I \cong R/I \otimes_R S \cong S/I \cdot S. \]
The map $\mu_f \otimes 1$ corresponds to the $R$-module homomorphism $S/I \cdot S \rightarrow S/I \cdot S$ induced by multiplication of elements in $S$ with $f$. On the other hand, $f \in I \cdot S$ by construction, so multiplication with $f$ induces the $0$-map, and the kernel is $\cong S/I \cdot S$. Then Prop.~\ref{Tor-flat-crit} implies that $S/(f)$ is flat if, and only if $S/I \cdot S = 0$, that is, $I \cdot S = S$ is the unit ideal in $S$. But since $S$ is a graded $R$-algebra with degree $0$ part $= R$ this is equivalent to $I \subseteq R$ being the unit ideal.

\bibliographystyle{alpha}

\bibliography{doktor}

\end{document}